\documentclass{article}
\usepackage{amsrefs}
\usepackage{enumitem}
\usepackage{amsmath}
\usepackage{tikz}
\usepackage{amssymb}
\usepackage{amsthm}
\usepackage{color,soul}
\usepackage{soul}
\usepackage{xcolor}
\usepackage{graphicx}
\usepackage{hyperref, multicol}

\usepackage{tikz-cd}
\usepackage{faktor}
\usepackage{ulem}

\usepackage{hyperref}

\newcounter{essaypart} 
\newtheorem{thm}{Theorem}[essaypart] 
\newtheorem{prop}[thm]{Proposition} 

\newtheorem{corol}[thm]{Corollary}
\newtheorem{defn}[thm]{Definition}
\newtheorem{rmk}[thm]{Remark}
\newtheorem{notn}[thm]{Notation}
\newtheorem{question}[thm]{Question}
\newtheorem*{thm name}{Theorem}
\newtheorem{ex}[thm]{Example}

\providecommand{\customgenericname}{}
\newcommand{\newcustomtheorem}[2]{%
  \newenvironment{#1}[1]
  {%
   \renewcommand\customgenericname{#2}%
   \renewcommand\theinnercustomgeneric{##1}%
   \innercustomgeneric
  }
  {\endinnercustomgeneric}
}

\newcustomtheorem{customthm}{Theorem}

\DeclareMathOperator{\lcm}{lcm}
\newcommand{\Par}{\operatorname{Par}}
\newcommand{\BM}{\operatorname{BM}}
\newcommand{\Hilb}{\operatorname{Hilb}}
\newcommand{\inv}{\operatorname{inv}}

\newcommand{\core}{\operatorname{core}}

\newcommand{\South}{\operatorname{South}}
\newcommand{\East}{\operatorname{East}}
\newcommand{\So}{\operatorname{So}}
\newcommand{\Ea}{\operatorname{Ea}}
\newcommand{\ch}{\operatorname{ch}}
\newcommand{\wind}{\operatorname{wind}}
\newcommand{\middxc}{\operatorname{mid}_{x,c}}

\newcommand{\critplusxc}{\operatorname{crit}_{x,c}^+}

\newcommand{\critminusxc}{\operatorname{crit}_{x,c}^-}
\newcommand{\critpmxc}{\operatorname{crit}_{x,c}^{\pm}}

\newcommand{\divcparts}{_{\square}^{c*}}
\newcommand{\Eout}{{\operatorname{E}_{\operatorname{out}}}}
\newcommand{\Ein}{{\operatorname{E}_{\operatorname{in}}}}
\newcommand{\Sout}{{\operatorname{S}_{\operatorname{out}}}}
\newcommand{\Sin}{{\operatorname{S}_{\operatorname{in}}}}
\newcommand{\Einplus}{\operatorname{E}^+_{\operatorname{in}}}
\newcommand{\Sinplus}{\operatorname{S}^+_{\operatorname{in}}}
\newcommand{\Eoutplus}{\operatorname{E}^+_{\operatorname{out}}}
\newcommand{\Soutplus}{\operatorname{S}^+_{\operatorname{out}}}



\usepackage{graphicx}
\title{A combinatorial proof of Buryak-Feigin-Nakajima}



\author{Eve Vidalis\footnote{School of Mathematics and Statistics, University of Sheffield: epound1@sheffield.ac.uk}}
\date{August 2022}





\begin{document}

\maketitle
\stepcounter{essaypart}

\begin{abstract}
     Buryak, Feigin and Nakajima computed a generating function for a family of partition statistics by using the geometry of the $\mathbb{Z}/c\mathbb{Z}$ fixed point sets in the Hilbert scheme of points on $\mathbb{C}^2$.  Loehr and Warrington had already shown how a similar observation by Haiman using the geometry of the Hilbert scheme of points on $\mathbb{C}^2$ could be made purely combinatorial. We extend Loehr and Warrington's techniques to also account for cores and quotients. In particular, we construct a multigraph $M_{r,s,c}$ that is a direct refinement of Loehr and Warrington's multigraphs $M_{r,s}$, retains the relevant partition data, and is preserved by an involution $I_{r,s,c}$ which we use to prove the equidistribution of a family of partition statistics. As a consequence, we obtain a purely combinatorial proof of Buryak, Feigin, and Nakajima's result.
    
    More precisely, we define a family of partition statistics 
    $\left\{h_{x,c}^+, x\in (0,\infty]\right\}$  and give a combinatorial proof that for all $x$ and all positive integers $c$,
    \begin{equation*}
        \sum q^{|\lambda|}t^{h_{x,c}^+(\lambda)}=q^{|\mu|}\prod_{i\geq 1}\frac{1}{(1-q^{ic})^{c-1}}\prod_{j\geq 1}\frac{1}{1-q^{jc}t},
    \end{equation*}
    where the sum ranges over all partitions $\lambda$ with $c$-core $\mu$.
    
\end{abstract}

\section{Introduction}
A \textit{partition} $\lambda$ of a positive integer $n$ is a non-increasing sequence of positive integers $\lambda_1\geq \lambda_2\geq\ldots\geq\lambda_l$ such that $\lambda_1+\cdots+\lambda_l=n.$ We write $|\lambda|=n.$  We represent partitions as \textit{Young diagrams}, informally by drawing $\lambda_i$ unit squares in a row, left to right, starting with a square with bottom left corner $(0,i-1).$ 
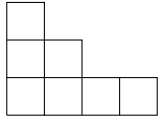
\begin{figure}[ht]
\begin{center}
\begin{tikzpicture}\begin{scope}[yscale=1,xscale=-1,rotate=90,scale=0.5]
\draw (0,0)--(0,4)--(1,4)--(1,0)--(3,0)--(3,1)--(0,1);
\draw (0,0) --(1,0);
\draw (0,2) --(1,2);
\draw (0,3) --(1,3);
\draw (2,0) --(2,1);
\draw (1,2)--(2,2)--(2,1);
\end{scope}\end{tikzpicture}
\caption{The Young diagram for the partition $(4,2,1)$ of $7$.}
\end{center}
\end{figure}

For a square $\square$ in a Young diagram, $a(\square)$ is the number of squares to the right of $\square$ in the same row, and $l(\square)$ is the number of squares above $\square$ in the same column. For example, the square with bottom left corner $(1,0)$ in Figure 1 has $a(\square)=2$, $l(\square)=1.$ We also define $h(\square)=a(\square)+l(\square)+1$ and let
$h_{r,s}(\lambda)$ count the number of squares in the Young diagram of $\lambda$ such that $ (r+s)\mid h(\square)$ and  $rl(\square)=s(a(\square)+1).$

Buryak, Feigin, and Nakajima gave a geometric proof of the following \cite[Corollary 1.3]{BFN} 
\begin{equation}\label{BFN cor}
    \sum_{\lambda \in \Par} q^{h_{r,s}(\lambda)}t^{|\lambda|}=\prod_{\substack{i\geq 1\\ r+s\nmid i}}\frac{1}{1-q^{i}}\prod_{i\geq 1}\frac{1}{1-q^{i(r+s)}t}
\end{equation}
where $\Par$ denotes the set of all partitions. One result of this paper is a purely combinatorial proof of the same result.

We now explain the geometric significance of generating function~\eqref{BFN cor}. The \textit{Hilbert Scheme of n points on $\mathbb{C}^2$}, $\Hilb_n(\mathbb{C}^2)$, parametrises the ideals $I\subset\mathbb{C}[x,y]$ such that $\dim_{\mathbb{C}}\left(\mathbb{C}[x,y]/I\right)=n.$ $\Hilb_n(\mathbb{C}^2)$ admits a torus action by lifting the $\left(\mathbb{C}^*\right)^2$ action on $\mathbb{C}^2$ given by\begin{equation}\label{act on plane}(t_1,t_2)\cdot(x,y)=(t_1x,t_2y)\end{equation} to the action on ideals $I\subset\mathbb{C}[x,y]$ given by
\begin{equation}\label{act on ideals}
(t_1,t_2)\cdot I=\{p(t_1^{-1}x,t_2^{-1}y): p(x,y)\in I\}.
\end{equation}
Let \begin{equation}\Gamma_m=\left\langle\left(e^{\frac{2\pi i}{m}},e^{\frac{-2\pi i}{m}}\right) \right\rangle\end{equation} be a finite subgroup of $\mathbb{C}^2$ of order $m$ and let $T_{r,s}$ be the one-parameter subtorus of $\mathbb{C}^2$ given by \begin{equation}T_{r,s}=\{(t^r,t^s) : t\in\mathbb{C}^*\}.\end{equation}

Let $H_*^{\BM}(X;\mathbb{Q})$ denote the Borel-Moore homology of $X$ with rational coefficients and let \begin{equation}
    P_q^{\BM}(X)=\sum_{i\geq 0} \dim H_i^{\BM}(X;\mathbb{Q})q^{\frac{i}{2}}.
\end{equation}

Buryak, Feigin and Nakajima \cite[Theorem 1.2]{BFN} proved that, if $r,s$ are non-negative integers with $r+s\geq 1,$

\begin{equation}\label{BFN thm}\sum_{n\geq 0}P_q^{\BM}\left(\Hilb_n(\mathbb{C}^2)^{\Gamma_{r+s}\times T_{r,s}}\right)t^n=\prod_{\substack{i\geq 1\\ r+s\nmid i}}\frac{1}{1-q^{i}}\prod_{i\geq 1}\frac{1}{1-q^{i(r+s)}t},\end{equation}

where $\Hilb_n(\mathbb{C}^2)^{T_{r,s}\times \Gamma_{r+s}}$ is the fixed point locus of 
$\Hilb_n(\mathbb{C}^2)$ under the action of $T_{r,s}\times \Gamma_{r+s}.$ The proof is split into two results. One \cite[Lemma 3.1]{BFN} shows that the left hand side of \eqref{BFN thm} is dependent only on $r+s$. The other \cite[Lemma 3.2]{BFN} computes the left hand side of \eqref{BFN thm} in the case $s=0.$ Broadly speaking, Buryak, Feigin, and Nakajima compute the dimension of the Białynicki-Birula cells when the ``slope'' of the acting one parameter torus is very steep, and prove that the slope itself does not affect the eigenspace.

Finally, using the methods of \cite{BB}, a cell decomposition of $\Hilb_n(\mathbb{C}^2)^{T_{r,s}\times \Gamma_{r+s}}$ shows that the left hand side of \eqref{BFN thm} in the Grothendieck ring of varieties is given by 
\begin{equation}
    \sum_{\lambda \in \Par} q^{h_{r,s}(\lambda)}t^{|\lambda|}.
\end{equation}

In \cite{LW}, Loehr and Warrington gave a bijective proof that a partition statistic $h_{x}^+$ is independent of the parameter $x$. In a similar vein to the above, Haiman observed that $h_{x}^+$ accounts for the distribution of the dimension of the Białynicki-Birula cells associated to the action of $(\mathbb{C}^*)^2$ on $\Hilb_n(\mathbb{C}^2),$ i.e. the case when $\Gamma_m$ is the trivial group.

We are interested in
\begin{question}\label{main q}
Is there a bijection proving \eqref{BFN cor}?
\end{question}
To answer this question, we also ask the following.
\begin{question}\label{coresmash}
Can we use Loehr and Warrington's methods to produce a related bijection that preserves the core of a partition?
\end{question}

We provide an affirmative answer to Question~\ref{coresmash}, and use the bijection we produce to provide a partial answer to Question~\ref{main q}. In particular, we define a partition statistic $h_{x,c}^+$ where $x\in[0,\infty)$ and $c$ is a positive integer, and $h_{x,c}^+(\lambda)$ counts the number of squares $\square\in\lambda$ such that both
\begin{itemize} 
    \item the hook length $h(\square)$ is divisible by $c$, and 
    \item  if $a(\square)$ and $l(\square)$ denote the size of the arm and leg of $\square$ respectively,  \begin{equation}
    \frac{a(\square)}{l(\square)+1} \leq x < \frac{a(\square)+1}{l(\square)}.
    \end{equation}
    \end{itemize}
In the case $c=1,$ we recover Loehr and Warrington's statistic $h_x^+$. We then exhibit a bijection proving a refinement (Theorem ~\ref{main theorem}) of \cite[Lemma 3.1]{BFN}. The key ingredient is a bijection at rational slope showing that $h_{x,c}^+$ is equidistributed over partitions with a fixed $c$-core with the statistic $h_{x,c}^-,$ counting boxes $\square$ in the Young diagram such that both \begin{itemize} 
    \item the hook length $h(\square)$ is divisible by $c$, and 
    \item  if $a(\square)$ and $l(\square)$ denote the size of the arm and leg of $\square$ respectively,  \begin{equation}
    \frac{a(\square)}{l(\square)+1} < x \leq \frac{a(\square)+1}{l(\square)}.
    \end{equation}
    \end{itemize}
\begin{customthm}{\ref{h+ is h-}} For all positive rational numbers $x$ and all integers $n\geq 0$, 
$$\sum t^{h_{x,c}^+(\lambda)}=\sum t^{h_{x,c}^-(\lambda)}$$ where both sums range over partitions $\lambda$ of $n$ with a fixed $c$-core $\mu$.
\end{customthm}
To do so, we adapt Loehr and Warrington's construction of a bijection $I_{r,s}$ \cite{LW} to give a new bijection $I_{r,s,c}$ which preserves the $c$-core of a partition and ``picks out'' whether or not $c$ divides the hook length of a cell contributing to a partition statistic.  In the case $c=1$, $I_{r,s,c}$ specialises to $I_{r,s}$. To construct $I_{r,s,c}$, we refine Loehr and Warrington's multigraph $M_{r,s}$ to a multigraph $M_{r,s,c}$ which also sees the $c$-core of a partition. In order to do so, we recast the $c$-abacus construction first introduced in \cite{James} in terms of complete circuits of multigraphs and define an appropriate notion of homomorphism, taking $M_{r,s,c}$ to be the product of the $c$-abacus and $M_{r,s}$ with respect to these homomorphisms. 

We then give a combinatorial proof of a result (Theorem~\ref{basic partition step}), computing the distribution of $h_{0,c}^+.$ This result in particular implies \cite[Lemma 3.2]{BFN}. Whilst our proof is combinatorial, it is not bijective, as we use a multi-counting argument. The map we define was previously defined by Walsh and Waarnar \cite[\S6]{Walsh}.

\begin{customthm}{\ref{basic partition step}}\label{basecaseintro} For all $x$ in $[0,\infty),$
$$\sum q^{|\lambda|}t^{h_{0,c}^+(\lambda)}=q^{|\mu|}\prod_{i\geq 1}\frac{1}{(1-q^{ic})^{c-1}}\prod_{j\geq 1}\frac{1}{1-q^{jc}t}$$
where the sum ranges over all partitions $\lambda$ with $c$-core $\mu$, henceforth denoted $\Par^c_{\mu}.$
\end{customthm}

Finally, our main theorem (Theorem~\ref{main theorem}) uses both Theorem~\ref{basic partition step} and the bijection $I_{r,s,c}$ to compute the following distribution, and we explain how \eqref{BFN cor} follows.

\begin{customthm}{\ref{main theorem}} For all $x$ in $[0,\infty),$
$$\sum q^{|\lambda|}t^{h_{x,c}^+(\lambda)}=q^{|\mu|}\prod_{i\geq 1}\frac{1}{(1-q^{ic})^{c-1}}\prod_{j\geq1}\frac{1}{1-q^{jc}t}$$ where the sum is taken over all partitions $\lambda\in\Par^c_{\mu}$.
\end{customthm}

\subsection{Organisation of the paper}
Section 2 recalls some definitions from partition combinatorics. In particular, we recall the abacus construction (the standard reference for this is \cite[\S2.7]{JK}) and recall some basic generating functions. The section builds up to proving Theorem~\ref{basic partition step}, which uses a bijection introduced in \cite{Walsh} to compute the distribution of $h_{0,c}^+$ over $\Par^c_{\mu},$ the set of partitions with $c$-core $\mu$.

Section 3 defines the main partition statistics of interest, $\middxc,$ $\critminusxc,$ $\critplusxc,$ $h^+_{x,c}$ and $h^{-}_{x,c}$ where $h^{\pm}_{x,c}=\middxc+\critpmxc$.
Then, we introduce our main theorem, Theorem~\ref{main theorem}.
In view of Theorem~\ref{basic partition step}, it remains to prove that the left hand side is independent of $x$. An argument analogous to that in ~\cite{LW} is then used to show that the independence of the left hand side from $x$ is implied by a symmetry property when $x$ is rational, \begin{equation}
    \sum_{\lambda\in\Par^c_\mu}q^{|\lambda|}w^{h^+_{x,c}(\lambda)}y^{h_{x,c}^-(\lambda)}=\sum_{\lambda\in\Par^c_\mu}q^{|\lambda|}w^{h^{-}_{x,c}(\lambda)}y^{h_{x,c}^+(\lambda)}.
\end{equation}
We use this to give a set of criteria that constitute a sufficient condition for a bijection to prove Theorem~\ref{main theorem} in Proposition~\ref{bijection properties}.
Finally, the section concludes with a proof that the main result of \cite{BFN} is a consequence of Theorem~\ref{main theorem}.

Section 4 defines the multigraph $M_{r,s,c}(\lambda)$ corresponding to a rational $x=\frac{r}{s}$ and positive integer $c$, defines an ordering $<_{r,s,c}$ on partitions and multigraphs, and a special set of partitions $\lambda_{r,s,k}.$ It then goes on to outline the structure of our proofs that $M_{r,s,c}$ remembers partition data. Our proof is structured somewhat differently to Loehr and Warrington's proofs that $M_{r,s}$ remembers partition data in \cite{LW}. In particular, we do not prove formulae in terms of $M_{r,s,c}$ for any partition statistics except for $\critplusxc +\critminusxc$. Instead, the section works towards providing an inductive framework to prove that $M_{r,s,c}$ remembers partition data by studying how taking successor at the level of partitions and multigraphs are related, culminating in Proposition~\ref{acc points are useful}. One result of this section (Proposition~\ref{accumulation point}) is that the map $\lambda\mapsto M_{r,s,c}(\lambda)$ is injective at the $\lambda_{r,s,k},$ so the map does not lose any data at all at these points, allowing the $\lambda_{r,s,k}$ form a family of base cases. Having outlined the key principles behind the proofs, we then defer the technical checks to Section 6. 

Section 5 defines involutions $I_{r,s,c}:\Par^c_{\mu}\to \Par^c_{\mu}$ that preserve multigraphs $M_{r,s,c}(\lambda)$.

Section 6 studies how each statistic of interest in Proposition~\ref{bijection properties} changes when taking successor with respect to the ordering $<_{r,s,c}$, in particular using Proposition~\ref{acc points are useful} to prove that the map $\lambda\mapsto M_{r,s,c}(\lambda)$ remembers the statistics $\middxc(\lambda)$ and $\critplusxc+\critminusxc$. It also proves that $I_{r,s,c}$ exchanges the statistics $\critplusxc$ and $\critminusxc$. Together with the results of Section 4, this completes a combinatorial proof of Theorem~\ref{main theorem}.

\section{Background: partitions, cores, quotients}
In this section, we recall first definitions in partition combinatorics, including the abacus construction, cores and quotients. The standard reference for the abacus construction is \cite[\S 2.7]{JK}, the abacus was first introduced in \cite{James}, cores in \cite{Nakayama} and quotients in \cite{Littlewood}. We take a nonstandard view of the $c$-core, and describe it as an equivalence class of complete circuits of a directed multigraph $M_c$. The language we use to describe the abacus is also nonstandard, but the construction is equivalent. We take this approach so that we have descriptions of Loehr and Warrington's construction in \cite{LW} and the $c$-core in terms of directed multigraphs, which allows us to formulate a simultaneous refinement of the two in Section 4.  Once we have recalled this theory, we will recall a few standard generating functions and define the map $G_c$ previously defined in \cite{Walsh} and use these to give a combinatorial proof of Theorem~\ref{basic partition step}, which forms our base case.
\begin{defn}[Partition,Young diagram] A \textit{partition} of an integer $n\geq 0$ is a sequence of non-increasing positive integers $\lambda_1\geq\lambda_2\geq\ldots\geq \lambda_t$ with sum $n$. The \textit{size} of $\lambda$, denoted $|\lambda|,$ is $n$ and the \textit{length} of $\lambda$ is the number of summands, written $l(\lambda)=t.$ The \textit{Young diagram} of $\lambda$ consists of $t$ rows of $1\times 1$ boxes $\square$ in $\mathbb{R}^2$, with $\lambda_i$ boxes in the $i$th row for each $1\leq i\leq t$. The bottom left corner of the diagram sits at $(0,0)$.
\end{defn}

\begin{ex} The partition $\mu=(12,12,10,8,7,4,1,1,1)$ of $56$ has the diagram given in Figure~\ref{first partition}.
\end{ex}

Informally, the boundary of a partition $\lambda$ is the bi-infinite path traversing the $y$-axis from $+\infty$ until it hits a box of the partition, then follows the edge of the Young diagram until it hits the $x$-axis, before traversing the $x$-axis to $+\infty$. We split the boundary up into unit steps between lattice points, and view it as a directed multigraph where edges are additionally assigned a label indicating if they are south or east.

\begin{defn}[SE directed multigraph]
A \textit{SE directed multigraph} $M=(V,E,s,t,d)$ consists of a vertex set $V$, an edge set $E$, and three maps $s:E\to V,$ $t:E\to V$ and $d:E\to\{\South,\East\},$ called source, target, and direction respectively. We say the edge $e$ \textit{departs from} the vertex $v$ if $s(e)=v$ and we say that $e$ \textit{arrives at} the vertex $w$ if $t(e)=w.$ We call $e$ a \textit{south edge} if $d(e)=\South$ and an \textit{east edge} if $d(e)=\East$. We sometimes abbreviate $\South$ to $S$ and $\East$ to $E$ in contexts where there is no danger of confusion with the edge set.
\end{defn}

\begin{defn}[Boundary graph] The \textit{boundary graph} $b(\lambda)$ of a partition $\lambda$ is an SE directed multigraph. The edge set is defined as follows. For natural numbers $x,y$ there is a south edge $e$ with $s(e)=(x,y+1),$ $t(e)=(x,y)$ if either
\begin{itemize}
\item $x=0$ and $y\geq l(\lambda)$, or 
\item $x>0$ and $\lambda_{y+1}=x.$
\end{itemize}
There is an east edge $e$ with $s(e)=(x,y)$ and $t(e)=(x+1,y)$ if either
\begin{itemize}
\item $y=0$ and $x\geq \lambda_1$, or 
\item $y>0$ and $\lambda_{y+1}\leq x<\lambda_y .$
\end{itemize}
The vertex set $V(b(\lambda))$ is the union of sources and targets of the edges. 
\end{defn}

\begin{ex}
Let $\mu=(12,12,10,8,7,4,1,1,1)$. The boundary graph of $\mu$ is given in Figure~\ref{first partition}, the south edges being the downward arrows and the east edges being the rightward arrows.

\begin{figure}[ht]
\begin{center}
\begin{tikzpicture}\begin{scope}[yscale=1,xscale=-1,rotate=90,scale=0.4]
\draw (0,0)--(0,12)--(2,12)--(2,0)--(0,0);
\draw (1,0)--(1,12);
\draw (3,0)--(3,10);
\draw (4,0)--(4,8);
\draw (5,0)--(5,7);
\draw (6,0)--(6,4);
\draw (7,0)--(7,1);
\draw (8,0)--(8,1);
\draw (9,0)--(9,1);
      \foreach \x in {1,...,11} {      
        \draw (0,\x)--(2,\x); }
              \foreach \x in {0,...,10} {      
        \draw (2,\x)--(3,\x); }
              \foreach \x in {0,...,8} {      
        \draw (3,\x)--(4,\x); }
              \foreach \x in {0,...,7} {      
        \draw (4,\x)--(5,\x); }
              \foreach \x in {0,...,4} {      
        \draw (5,\x)--(6,\x); }
              \foreach \x in {0,...,1} {      
        \draw (6,\x)--(9,\x); }
         \foreach \x in {10,...,15} {      
        \draw[red, ->] (25-\x,0)--(24.5-\x,0); 
        \draw[red] (24.5-\x,0)--(24-\x,0);}
                \foreach \x in {12,...,14} {      
        \draw[red, ->] (0,\x)--(0,\x+0.5); 
        \draw[red] (0,\x+0.5)--(0,\x+1);}
        
       \draw[red,->] (9,0)--(9,0.5);
       \draw[red,->] (9,1)--(8.5,1);
       \draw[red,->] (8,1)--(7.5,1);       
       \draw[red,->] (7,1)--(6.5,1);
       \draw[red,->] (6,1)--(6,1.5);
       \draw[red,->] (6,2)--(6,2.5);       
       \draw[red,->] (6,3)--(6,3.5);
       \draw[red,->] (6,4)--(5.5,4);
       \draw[red,->] (5,4)--(5,4.5);       
       \draw[red,->] (5,5)--(5,5.5);
       \draw[red,->] (5,6)--(5,6.5);
       \draw[red,->] (5,7)--(4.5,7);
       \draw[red,->] (4,7)--(4,7.5);
       \draw[red,->] (4,8)--(3.5,8);       
       \draw[red,->] (3,8)--(3,8.5);
       \draw[red,->] (3,9)--(3,9.5);
       \draw[red,->] (3,10)--(2.5,10);
       \draw[red,->] (2,10)--(2,10.5);
       \draw[red,->] (2,11)--(2,11.5);
       \draw[red,->] (2,12)--(1.5,12);       
       \draw[red,->] (1,12)--(0.5,12);   
       
       \draw[red] (9,0.5)--(9,1);
       \draw[red] (8.5,1)--(8,1);
       \draw[red] (7.5,1)--(7,1);       
       \draw[red] (6.5,1)--(6,1);
       \draw[red] (6,1.5)--(6,2);
       \draw[red] (6,2.5)--(6,3);       
       \draw[red] (6,3.5)--(6,4);
       \draw[red] (5.5,4)--(5,4);
       \draw[red] (5,4.5)--(5,5);       
       \draw[red] (5,5.5)--(5,6);
       \draw[red] (5,6.5)--(5,7);
       \draw[red] (4.5,7)--(4,7);
       \draw[red] (4,7.5)--(4,8);
       \draw[red] (3.5,8)--(3,8);       
       \draw[red] (3,8.5)--(3,9);
       \draw[red] (3,9.5)--(3,10);
       \draw[red] (2.5,10)--(2,10);
       \draw[red] (2,10.5)--(2,11);
       \draw[red] (2,11.5)--(2,12);
       \draw[red] (1.5,12)--(1,12);       
       \draw[red] (0.5,12)--(0,12);   

\end{scope}\end{tikzpicture}
\end{center}
\caption{The Young diagram and boundary graph of $(12,12,10,8,7,4,1,1,1)$.}\label{first partition}
\end{figure}
\end{ex}

Note that for any edge $e$ in the boundary graph, the value of $x-y$ at the target of $e$ is one greater than at the source, because taking a unit step south or east increases the value of $x-y$ by 1.

So, the value of $x-y$ at the target of an edge indexes an Eulerian tour, or complete circuit, of $b(\lambda).$ For clarity, we recall the definition of a complete circuit.

\begin{defn}[Complete circuit]
Given a directed multigraph $M$, a \textit{complete circuit} of $M$ is an ordering of $E(M)$ such that if $e_i$ and $e_{i+1}$ are consecutive with respect to the ordering, then there is a vertex $v\in V(M)$ such that $t(e_i)=s(e_{i+1})$.
\end{defn}

\begin{defn}[Boundary tour, boundary sequence, index] If an edge $e\in E(b(\lambda))$ has target $(x,y),$ we say the \textit{index} of $e$ is $i(e)=x-y.$
The \textit{boundary tour} is the complete circuit of $b(\lambda)$ where the edges are ordered by index. We write the edges in this ordering as $(\ldots, e_{-2},e_{-1},e_0,e_1,e_2,\ldots).$ We say an edge $e_j$ \textit{occurs before} the edge $e_k$ if $j<k$. The \textit{boundary sequence} is the bi-infinite sequence $(d_i)_{i\in\mathbb{Z}}$ where $d_i=d(e_i).$ We write $S$ and $E$ in place of South and East respectively in the boundary sequence.
\end{defn}

\begin{ex}
The partition $\mu=(12,12,10,8,7,4,1,1,1)$ has boundary sequence $$\ldots SSSSSSESSSE_0EESEEESESEESEESSEEE\ldots$$
where $d_0$ is indicated with a 0 suffix. \end{ex}

\subsection{Anatomy of a Young Diagram}
\stepcounter{essaypart}
Next, we recall some standard partition statistics and how they relate to the boundary sequence, define rimhooks, and connect to cores. We also introduce the notion of an SE directed multigraph homomorphism.

\begin{defn}[Hand, foot, arm, leg, inversion, hook length] A box $\square\in\lambda$ can be specified by giving the row and column of the Young diagram that the box sits in. In particular, each box in the Young diagram corresponds to a pair of edges: one south, at extreme right of the row $\square$ lies in, called the \textit{hand of $\square$}, and another east, at the top of the column $\square$ lies in, called the \textit{foot of $\square$}, where the foot necessarily occurs before the hand. Conversely, given an east edge $s_1$ departing from $(x_1,y_1)$ and arriving at $(x_1+1,y_1)$ and a south edge $s_2$ departing from $(x_2,y_2)$ and arriving at $(x_2,y_2-1)$ such that $x_1-y_1<x_2-y_2$, there is a unique box $\square$ in the Young diagram with bottom left corner $(x_1,y_2-1)$ such that $s_1$ and $s_2$ are respectively the foot and hand of $\square$. We call such a pair of south and east edges an \textit{inversion}. Hence, we may identify a box in the Young diagram with its hand and foot in the boundary sequence. 

The \textit{arm} of $\square$ consists of the boxes that lie strictly to the right of $\square$ in the same row, and the $\textit{leg}$ of $\square$ consists of the boxes that lie strictly above $\square$ in the same column. We denote the number of boxes in the arm of $\square$ by $a(\square)$ and the number of boxes in the leg of $\square$ by $l(\square)$. The \textit{hook length} of $\square$ is defined to be $ h(\square)=a(\square)+l(\square)+1$. 
\end{defn}

\begin{ex} The boxes in the arm and leg of the shaded box $\square$ in Figure~\ref{armlegdiagram} are labelled with the corresponding body part. The hand of $\square$ is the red arrow, and the foot is the blue arrow, and $a(\square)=5$ and $l(\square)=1$, so $ h(\square)=7$.
\begin{figure}[ht]
\begin{center}
\begin{tikzpicture}
\begin{scope}[yscale=1,xscale=-1,rotate=90,scale=0.4]
\draw (0,0)--(0,12)--(2,12)--(2,0)--(0,0);
\draw (1,0)--(1,12);
\draw (3,0)--(3,10);
\draw (4,0)--(4,8);
\draw (5,0)--(5,7);
\draw (6,0)--(6,4);
\draw (7,0)--(7,1);
\draw (8,0)--(8,1);
\draw (9,0)--(9,1);
      \foreach \x in {1,...,11} {      
        \draw (0,\x)--(2,\x); }
              \foreach \x in {0,...,10} {      
        \draw (2,\x)--(3,\x); }
              \foreach \x in {0,...,8} {      
        \draw (3,\x)--(4,\x); }
              \foreach \x in {0,...,7} {      
        \draw (4,\x)--(5,\x); }
              \foreach \x in {0,...,4} {      
        \draw (5,\x)--(6,\x); }
              \foreach \x in {0,...,1} {      
        \draw (6,\x)--(9,\x); }

       \draw[blue,->] (6,1)--(6,1.5);
       \draw[red,->] (5,7)--(4.5,7);
       \draw[blue] (6,1.5)--(6,2);
       \draw[red] (4.5,7)--(4,7);
       
       \draw[fill=gray] (4,1)--(4,2)--(5,2)--(5,1)--(4,1);
       \node at (4.5,2.5) {\includegraphics[scale=0.035]{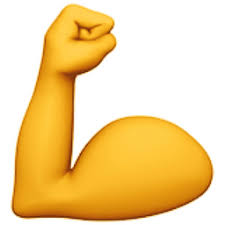}};
       \node at (4.5,3.5) {\includegraphics[scale=0.035]{"arm"}};;
       \node at (4.5,4.5) {\includegraphics[scale=0.035]{"arm"}};;       
       \node at (4.5,5.5) {\includegraphics[scale=0.035]{"arm"}};;
       \node at (4.5,6.5) {\includegraphics[scale=0.035]{"arm"}};;
       \node at (5.5,1.5) {\includegraphics[scale=0.035]{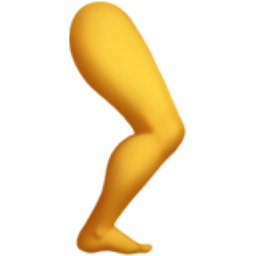}};;
              \draw[blue,->] (6,1)--(6,1.5);
       \draw[red,->] (5,7)--(4.5,7);
       \draw[blue] (6,1.5)--(6,2);
       \draw[red] (4.5,7)--(4,7);
       \end{scope}\end{tikzpicture}
       \end{center}
       \caption{the arm and leg of $\square$.}\label{armlegdiagram}
       \end{figure}
\end{ex}

\begin{prop}\label{c-hooks are c-inversions} Let $\lambda$ be a partition. A box in the Young diagram of $\lambda$ with hook length $c$ corresponds to an inversion $(d_i,d_j)$ in the boundary sequence of $\lambda$ where $j=i+c.$ 
\end{prop}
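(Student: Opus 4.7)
The plan is a direct coordinate computation. Since boxes in the Young diagram are already identified with inversions $(d_i,d_j)$ (foot-hand pairs) via the preceding definition, what remains is to show that the hook length of a box equals the difference $j-i$ of indices of its foot and hand.

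First I would pin down the locations of the foot and hand in terms of the bottom-left corner $(x_0,y_0)$ of $\square$ and the arm/leg statistics. The arm of $\square$ occupies the $a(\square)$ boxes immediately to the right, so the rightmost edge of row $y_0$ sits at $x$-coordinate $x_0+a(\square)+1$; hence the hand is the south edge from $(x_0+a(\square)+1,\,y_0+1)$ to $(x_0+a(\square)+1,\,y_0)$. Similarly the leg occupies $l(\square)$ boxes immediately above, so the topmost edge of column $x_0$ sits at $y$-coordinate $y_0+l(\square)+1$, and the foot is the east edge from $(x_0,\,y_0+l(\square)+1)$ to $(x_0+1,\,y_0+l(\square)+1)$.

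Next I would apply the definition $i(e)=x-y$ at the target of $e$. The foot has index
\begin{equation*}
i = (x_0+1)-(y_0+l(\square)+1) = x_0-y_0-l(\square),
\end{equation*}
and the hand has index
\begin{equation*}
j = (x_0+a(\square)+1)-y_0.
\end{equation*}
Subtracting gives $j-i = a(\square)+l(\square)+1 = h(\square)$. If $h(\square)=c$, then $j=i+c$, and $(d_i,d_j)=(\East,\South)$ is an inversion because the foot occurs before the hand.

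There is no real obstacle here beyond bookkeeping with coordinates; the only small subtlety is to double-check the off-by-one in locating the foot and hand at the extreme boundaries of the leg and arm (e.g.\ that the hand sits at $x_0+a(\square)+1$ rather than $x_0+a(\square)$), but this is immediate from the convention that unit boxes occupy unit intervals in each coordinate.
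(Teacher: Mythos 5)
Your proof is correct, but it takes a genuinely different route from the paper. You locate the foot and hand explicitly in coordinates --- foot at target $(x_0+1,\,y_0+l(\square)+1)$, hand at target $(x_0+a(\square)+1,\,y_0)$ --- and subtract indices to get $j-i=a(\square)+l(\square)+1$; your off-by-one bookkeeping checks out against the paper's definition of the boundary graph (the hand exists as a south edge precisely because $\lambda_{y_0+1}=x_0+a(\square)+1$, and the foot because row $y_0+l(\square)+1$ reaches column $x_0+1$ while row $y_0+l(\square)+2$ does not). The paper instead argues coordinate-free: it bijects the boxes of the arm with the east edges occurring strictly between the foot $f$ and the hand $h$ (each arm box maps to its own foot), and analogously bijects the leg boxes with the south edges strictly between $f$ and $h$, so exactly $a(\square)+l(\square)$ edges separate them and the index gap is $h(\square)$. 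The paper's counting argument buys structural insight that gets reused immediately --- Corollary~\ref{rimhookishook} builds the rimhook out of precisely those $c$ consecutive edges between foot and hand --- whereas your computation is quicker to verify line by line and is closer in spirit to the coordinate manipulations the paper performs later in Proposition~\ref{motivate Mrsc}, where hook, arm, and leg conditions are again converted into linear relations on the coordinates of the foot and hand. Either argument is a complete proof of the proposition.
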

\begin{proof} Let $h$ and $f$ be the hand and foot of $\square$ in the boundary respectively.
Consider the map from the arm of $\square$ to the boundary sending each box to its foot. The foot of any box in the arm of $\square$ is an east edge that occurs after $f$ and occurs before $h$. Conversely, each east edge that occurs after $f$ and occurs before $h$ is the foot of a box in the arm of $\square$. So, $a(\square)$ counts east edges that occur after $f$ and before $h$.

Analogously, $l(\square)$ counts south edges that occur after $f$ and before $h$. Thus, $a(\square)+l(\square)$ counts the total number of edges that occur after $f$ and before $h$. There are $h(\square)-1$ such edges.  
\end{proof}

We now turn our attention to cores and rimhooks, first introduced by Nakayama \cite{Nakayama}.

\begin{defn}[Rimhook] A \textit{rimhook} $R$ of length $c$ is a connected set of $c$ boxes in $\lambda$ such that removing $R$ gives the Young diagram of a partition, and $R$ does not contain a $2\times 2$ box.
\end{defn}

\begin{corol}\label{rimhookishook} Rimhooks of length $c$ are in bijection with boxes of hook length $c$. 
\end{corol}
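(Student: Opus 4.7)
The plan is to construct the bijection directly at the level of boundary sequences, using Proposition~\ref{c-hooks are c-inversions}. By that proposition, boxes of hook length $c$ in $\lambda$ are in natural bijection with inversions $(d_i, d_{i+c})$ in the boundary sequence of $\lambda$, where $d_i = E$ is the foot and $d_{i+c} = S$ is the hand. It therefore suffices to exhibit a bijection between such inversions and rimhooks of length $c$. Given such an inversion, I define a new bi-infinite sequence $(d'_j)$ by setting $d'_i = S$, $d'_{i+c} = E$, and $d'_j = d_j$ otherwise. Since only two entries change and the far tails are unaffected, $(d'_j)$ is the boundary sequence of a unique partition $\lambda'$, and the task becomes showing that $\lambda \setminus \lambda'$ is a rimhook of length $c$ and that every rimhook arises this way.

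First I would show $\lambda' \subseteq \lambda$ and $|\lambda| - |\lambda'| = c$. Each box of a partition corresponds to an $(E, S)$ inversion of its boundary sequence, so by comparing $(d_j)$ and $(d'_j)$ position-by-position one can check that the multiset of boxes of $\lambda'$ is obtained from that of $\lambda$ by deleting exactly one box for each index $k \in \{i, i+1, \ldots, i+c-1\}$, namely the box at position $k$ in the rim of $\lambda$. The count $c$ follows directly.

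The key geometric step is to identify $\lambda \setminus \lambda'$ as a connected width-one strip. The two boundary paths traced by $(d_j)$ and $(d'_j)$ agree outside the range $[i, i+c]$ and share their endpoints there, but the $(d'_j)$ path takes its unique south step first (at position $i$) and its unique extra east step last (at position $i+c$), whereas the $(d_j)$ path does the opposite. The region enclosed between the two paths is therefore a staircase strip running along the rim of $\lambda$, offset by one diagonal unit from the $(d'_j)$ path. Connectedness of the strip follows because, at each intermediate index $i < k < i+c$, the single edge $d_k = d'_k$ is shared, so consecutive boxes of the strip meet along that edge. The no $2\times 2$ block condition is equivalent to the strip having width one, which is forced by the two paths being offset by only a single lattice step at every intermediate column.

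Finally, surjectivity follows by running the construction backwards: removing any rimhook $R$ of length $c$ produces a partition $\lambda'$ whose boundary sequence differs from that of $\lambda$ only along the edges bordering $R$, and because $R$ is connected and $2\times 2$-free these differences collapse to precisely one $E \leftrightarrow S$ swap at distance $c$. The main obstacle is the middle step: one must verify cleanly that the symmetric difference of the two regions bounded by the before- and after-swap paths is exactly a legitimate rimhook rather than some more general skew shape. This reduces to tracking how each intermediate letter $d_{i+1}, \ldots, d_{i+c-1}$ contributes a single boundary edge of the strip, which is routine but requires care with the south/east case distinction.
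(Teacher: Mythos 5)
Your proposal is correct, but it is organized differently from the paper's proof. The paper never constructs the smaller partition: given a rimhook $R$, it takes the $c$ consecutive boundary edges $e_i,\ldots,e_{i+c-1}$ arriving at the top-right corners of the boxes of $R$, uses removability of $R$ to deduce $d(e_i)=E$ and $d(e_{i+c})=S$, and invokes Proposition~\ref{c-hooks are c-inversions} to identify this bracketing pair as the foot and hand of a box of hook length $c$; the converse direction simply reads the rimhook off the targets of $e_i,\ldots,e_{i+c-1}$. You instead start from the inversion, perform the $E\leftrightarrow S$ swap on the boundary sequence, check the result is again a boundary sequence, and identify $\lambda\setminus\lambda'$ as the width-one strip between the old path and its $(-1,-1)$-translate over the intermediate indices. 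One small point of care: the charge is preserved not merely because ``only two entries change,'' but because one $E$ becomes an $S$ while one $S$ becomes an $E$, so the counts $e_k$ and $s_k$ of Definition~\ref{kcharge} are unchanged for large $k$; two changes in the same direction would destroy the charge-zero property. What your route buys is that it proves, along the way, essentially the content of the paper's later Proposition~\ref{removing rimhooks} (removing a rimhook of length $c$ swaps exactly the two letters $d_i$ and $d_{i+c}$ and translates the intervening boundary segment by $(-1,-1)$), so the corollary and that proposition could share a single computation. What it costs is exactly the verifications you flag as routine: containment $\lambda'\subseteq\lambda$, the strip consisting of precisely $c$ boxes, its connectedness and $2\times 2$-freeness, and the collapse to a single swap in the surjectivity step --- all of which the paper's direct edge-labelled correspondence sidesteps.
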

\begin{proof} 
Let $R$ be a rimhook of length $c$ in the diagram of a partition $\lambda$. Then, by the definition of a rimhook, for every box $\square\in R$ there is an edge in the boundary graph of $\lambda$ arriving at the top right corner of $\square.$ Let $e_i,e_{i+1},\ldots,e_{i+c-1}$ be the the set of all such edges (since $R$ is connected these edges are consecutive in the boundary tour), and let $e_{i+c}$ be the next edge in the boundary tour. 

Since $R$ is removable, $d(e_i)=E$. 

We now check that $d(e_{i+c})=S$. Since $e_{i+c-1}$ arrives at the top right corner of the south-eastern-most square $\square$ in $R$, $e_{i+c}$ departs from the top right corner of $\square$. If $e_{i+c}$ were an east edge, there would be another box to the right of $\square$ in the same row, contradicting that $R$ is removable. Therefore, by Proposition~\ref{c-hooks are c-inversions}, $e_i$ and $e_{i+c}$ are the foot and hand respectively of a box of hook length $c$. 

Conversely, if $\square$ is a box of hook length $c$, with foot $e_i$ and hand $e_{i+c}$ then taking the boxes with top right corners the targets of $e_i,e_{i+1},\ldots, e_{i+c-1}$ gives a rimhook of length $c$.\end{proof}

\begin{defn} A $c$-core of a partition $\lambda$ is a partition obtained by iteratively removing rimhooks of length $c$ from $\lambda$ until a partition with no rimhooks of length $c$ is obtained. A partition $\mu$ is called a $c$-core if $\mu$ has no rimhooks of length $c$.
\end{defn}

Applying Corollary~\ref{rimhookishook} to $c$-cores gives the following.

\begin{corol} A partition $\lambda$ is a $c$-core if and only if $\lambda$ has no boxes of hook length $c$.
\end{corol}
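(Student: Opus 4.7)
The statement is essentially an immediate translation of the preceding Corollary~\ref{rimhookishook} through the definition of a $c$-core, so my plan is to exploit the bijection already established rather than reprove anything from scratch. First I would unpack the definition: $\lambda$ is a $c$-core exactly when $\lambda$ has no rimhook of length $c$ (this is literally the second sentence of the definition of $c$-core). Then I would apply Corollary~\ref{rimhookishook}, which gives a bijection between rimhooks of length $c$ in $\lambda$ and boxes of hook length $c$ in $\lambda$. In particular, the set of rimhooks of length $c$ is empty if and only if the set of boxes of hook length $c$ is empty. Chaining these two equivalences gives the claim.

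The only mild subtlety is that the definition of $c$-core has two clauses: $\lambda$ is a $c$-core if it is obtained by iteratively stripping rimhooks of length $c$ \emph{and} the terminal partition has no rimhooks of length $c$; but the second sentence of the definition takes the latter as the defining condition for a partition to itself be called a $c$-core, so no iteration argument is needed here — the equivalence is local to a single partition. Thus the proof reduces to a one-line application of Corollary~\ref{rimhookishook}:
\begin{equation*}
\lambda \text{ is a } c\text{-core} \iff \lambda \text{ has no rimhook of length } c \iff \lambda \text{ has no box of hook length } c.
\end{equation*}

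I do not anticipate any real obstacle; the only thing worth double-checking is that the bijection in Corollary~\ref{rimhookishook} is honestly a bijection on each fixed $\lambda$ (not just a bijection up to some equivalence), and a quick look at its proof confirms this — the rimhook is determined by a consecutive run $e_i,\ldots,e_{i+c-1}$ of boundary edges whose endpoints match the foot and hand of a unique box of hook length $c$. So the corollary is a direct, one-step consequence of the preceding material, and I would present it as such without introducing any new machinery.
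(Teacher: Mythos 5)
Your proof is correct and matches the paper exactly: the paper also derives this corollary by applying Corollary~\ref{rimhookishook} directly to the definitional clause that a partition is a $c$-core precisely when it contains no rimhook of length $c$. Your extra care in noting that the bijection holds within each fixed $\lambda$ is sound but not needed beyond what the paper states.
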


Our aim for now will be to redefine the $c$-core in the language we wish to use later, and then use it to see that the result of iteratively removing rimhooks of length $c$ is independent of the order in which rimhooks are removed. In order to do so, we need the notion of an SE directed multigraph homomorphism. Informally, these consist of two maps, one between edges, and another between vertices. We require that these maps preserve the direction (S or E) of the edges, and that they be compatible with the source and target maps.

\begin{defn}[SE directed multigraph homomorphism] Let $M_1=(V_1,E_1,s_1,t_1,d_1),$ $M_2=(V_2,E_2,s_2,t_2,d_2)$ be SE directed multigraphs. A homomorphism of SE directed multigraphs $\varphi:M_1\to M_2$ is a pair of maps $\varphi_V:V_1\to V_2$ and $\varphi_E:E_1\to E_2$ such that for all edges $e\in E_1,$ 
\begin{align}
    s_2(\varphi_E(e))&=\varphi_V(s_1(e))\\
    t_2(\varphi_E(e))&=\varphi_V(t_1(e))\\
    d_2(\varphi_E(e))&=d_1(e).
\end{align}
\end{defn}

In other words, $\varphi$ is a quiver homomorphism that preserves direction ($S$ or $E$).

\begin{ex}
Let $M_1$ be the boundary graph of $\mu=(12,12,10,8,7,4,1,1,1)$ and let $\varphi_V$ be the map taking each vertex $(x,y)$ to $[x-y],$ the class of $x-y$ modulo 2. This map induces the homomorphism $q_2$ illustrated in Figure~\ref{fig:2 hom}, with east edges coloured red and south edges coloured blue.

For ease of reading, we draw edges in the image of $q_2$ from left to right in order of index as $\ldots,q_2(e_{-2}), q_2(e_{-1}),q_2(e_0),q_2(e_1),q_2(e_{2}), \ldots$.
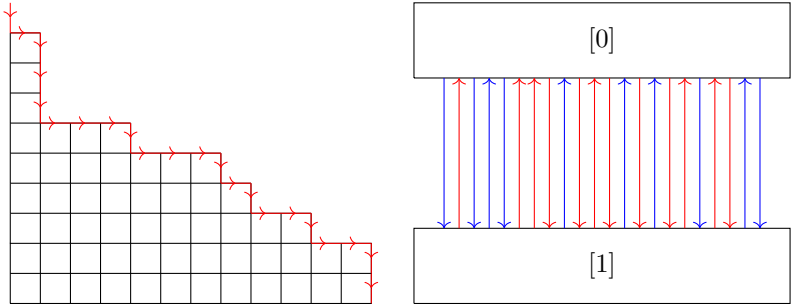
\begin{figure}[ht]
\begin{center}
\begin{tikzpicture}\begin{scope}[yscale=1,xscale=-1,rotate=90,scale=0.4]
\draw (0,0)--(0,12)--(2,12)--(2,0)--(0,0);
\draw (1,0)--(1,12);
\draw (3,0)--(3,10);
\draw (4,0)--(4,8);
\draw (5,0)--(5,7);
\draw (6,0)--(6,4);
\draw (7,0)--(7,1);
\draw (8,0)--(8,1);
\draw (9,0)--(9,1);
      \foreach \x in {1,...,11} {      
        \draw (0,\x)--(2,\x); }
              \foreach \x in {0,...,10} {      
        \draw (2,\x)--(3,\x); }
              \foreach \x in {0,...,8} {      
        \draw (3,\x)--(4,\x); }
              \foreach \x in {0,...,7} {      
        \draw (4,\x)--(5,\x); }
              \foreach \x in {0,...,4} {      
        \draw (5,\x)--(6,\x); }
              \foreach \x in {0,...,1} {      
        \draw (6,\x)--(9,\x); }

       \draw[red,->] (10,0)--(9.5,0);
       \draw[red,->] (9,0)--(9,0.5);
       \draw[red,->] (9,1)--(8.5,1);
       \draw[red,->] (8,1)--(7.5,1);       
       \draw[red,->] (7,1)--(6.5,1);
       \draw[red,->] (6,1)--(6,1.5);
       \draw[red,->] (6,2)--(6,2.5);       
       \draw[red,->] (6,3)--(6,3.5);
       \draw[red,->] (6,4)--(5.5,4);
       \draw[red,->] (5,4)--(5,4.5);       
       \draw[red,->] (5,5)--(5,5.5);
       \draw[red,->] (5,6)--(5,6.5);
       \draw[red,->] (5,7)--(4.5,7);
       \draw[red,->] (4,7)--(4,7.5);
       \draw[red,->] (4,8)--(3.5,8);       
       \draw[red,->] (3,8)--(3,8.5);
       \draw[red,->] (3,9)--(3,9.5);
       \draw[red,->] (3,10)--(2.5,10);
       \draw[red,->] (2,10)--(2,10.5);
       \draw[red,->] (2,11)--(2,11.5);
       \draw[red,->] (2,12)--(1.5,12);       
       \draw[red,->] (1,12)--(0.5,12);   
       
       \draw[red] (9.5,0)--(9,0);
       \draw[red] (9,0.5)--(9,1);
       \draw[red] (8.5,1)--(8,1);
       \draw[red] (7.5,1)--(7,1);       
       \draw[red] (6.5,1)--(6,1);
       \draw[red] (6,1.5)--(6,2);
       \draw[red] (6,2.5)--(6,3);       
       \draw[red] (6,3.5)--(6,4);
       \draw[red] (5.5,4)--(5,4);
       \draw[red] (5,4.5)--(5,5);       
       \draw[red] (5,5.5)--(5,6);
       \draw[red] (5,6.5)--(5,7);
       \draw[red] (4.5,7)--(4,7);
       \draw[red] (4,7.5)--(4,8);
       \draw[red] (3.5,8)--(3,8);       
       \draw[red] (3,8.5)--(3,9);
       \draw[red] (3,9.5)--(3,10);
       \draw[red] (2.5,10)--(2,10);
       \draw[red] (2,10.5)--(2,11);
       \draw[red] (2,11.5)--(2,12);
       \draw[red] (1.5,12)--(1,12);       
       \draw[red] (0.5,12)--(0,12);   

\end{scope}\end{tikzpicture}
\quad\begin{tikzpicture}
\node at (0,3) {$[0]$};
\node at (0,0) {$[1]$};
\draw (-2.5,2.5)--(2.5,2.5)--(2.5,3.5)--(-2.5,3.5)--(-2.5,2.5);
\draw (-2.5,-0.5)--(2.5,-0.5)--(2.5,0.5)--(-2.5,0.5)--(-2.5,-0.5);

\draw[blue,->] (-2.1,2.5)--(-2.1,0.5);
\draw[red,->] (-1.9,0.5)--(-1.9,2.5);
\draw[blue,->] (-1.7,2.5)--(-1.7,0.5);
\draw[blue,->] (-1.5,0.5)--(-1.5,2.5);
\draw[blue,->] (-1.3,2.5)--(-1.3,0.5);
\draw[red,->] (-1.1,0.5)--(-1.1,2.5);
\draw[red,->] (-0.9,0.5)--(-0.9,2.5);
\draw[red,->] (-0.7,2.5)--(-0.7,0.5);
\draw[blue,->] (-0.5,0.5)--(-0.5,2.5);
\draw[red,->] (-0.3,2.5)--(-0.3,0.5);
\draw[red,->] (-0.1,0.5)--(-0.1,2.5);
\draw[red,->] (0.1,2.5)--(0.1,0.5);
\draw[blue,->] (0.3,0.5)--(0.3,2.5);
\draw[red,->] (0.5,2.5)--(0.5,0.5);
\draw[blue,->] (0.7,0.5)--(0.7,2.5);
\draw[red,->] (0.9,2.5)--(0.9,0.5);
\draw[red,->] (1.1,0.5)--(1.1,2.5);
\draw[blue,->] (1.3,2.5)--(1.3,0.5);
\draw[red,->] (1.5,0.5)--(1.5,2.5);
\draw[red,->] (1.7,2.5)--(1.7,0.5);
\draw[blue,->] (1.9,0.5)--(1.9,2.5);
\draw[blue,->] (2.1,2.5)--(2.1,0.5);
\end{tikzpicture}
\end{center}
\caption{A portion of $M_1$ and the corresponding edges in $q_2(M_1)$.}
\label{fig:2 hom}
\end{figure}

\end{ex}

We will always work with SE directed multigraph homomorphisms where the edge map $\varphi_E$ is bijective, so from now on we assume $\varphi_E$ is bijective for any homomorphism $\varphi.$ In particular, this assumption allows us to push complete circuits through homomorphisms.

\begin{prop}
Let $\varphi:M_1\to M_2$ be an SE directed multigraph homomorphism. Let $(e_i)_{i\in I}$ be a complete circuit of $M_1$. Then $(\varphi_E(e_i))_{i\in I}$ is a complete circuit of $M_2$.
\end{prop}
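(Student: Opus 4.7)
The plan is to unpack the definition of a complete circuit and verify its two requirements for $(\varphi_E(e_i))_{i \in I}$ using the hypotheses on $\varphi$. Recall that a complete circuit of $M_2$ is an ordering of $E(M_2)$ such that consecutive edges share a vertex in the expected way. So I need to verify (i) that $(\varphi_E(e_i))_{i\in I}$ is indeed an ordering of all of $E(M_2)$, and (ii) that consecutive terms satisfy the source/target compatibility.

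For (i), the family $(e_i)_{i \in I}$ is an ordering of $E(M_1)$ by assumption, and $\varphi_E$ is bijective (by the standing assumption made just before the statement). The image family is therefore an indexing of $E(M_2)$ by $I$ with no repeats and no omissions. For (ii), suppose $e_i$ and $e_{i+1}$ are consecutive with respect to the ordering of $E(M_1)$, so that $t_1(e_i) = s_1(e_{i+1})$. Using the two compatibility conditions from the definition of an SE directed multigraph homomorphism, I would compute
\begin{equation*}
t_2(\varphi_E(e_i)) \;=\; \varphi_V(t_1(e_i)) \;=\; \varphi_V(s_1(e_{i+1})) \;=\; s_2(\varphi_E(e_{i+1})),
\end{equation*}
which is exactly the consecutivity condition for $(\varphi_E(e_i))_{i \in I}$ to be a complete circuit of $M_2$.

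There is essentially no obstacle: the proof is a direct verification from the definitions, with the only substantive input being the standing bijectivity assumption on $\varphi_E$ (which is what makes the pushed-forward ordering actually enumerate $E(M_2)$ rather than merely a subset). The direction labels $d$ play no role in the argument and are not needed for this particular statement.
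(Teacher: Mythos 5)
Your proof is correct and matches the paper's own argument exactly: bijectivity of $\varphi_E$ gives that the image enumerates $E(M_2)$, and the chain $t_2(\varphi_E(e_i))=\varphi_V(t_1(e_i))=\varphi_V(s_1(e_{i+1}))=s_2(\varphi_E(e_{i+1}))$ is precisely the computation in the paper. Nothing further is needed.
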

\begin{proof}
Since $\varphi_E$ is bijective, we need only check that $s_2(\varphi_E(e_{i+1})=t_2(\varphi_E(e_{i})$ for each $i\in I$. By definition,\begin{align}
s_2(\varphi_E(e_{i+1})&=\varphi_V(s_1(e_{i+1}))\\
&=\varphi_V(t_1(e_i))\\
&=t_2(\varphi_E(e_i).
\end{align}
\end{proof}

We have seen already that rimhooks of length $c$ correspond to boxes of hook length $c$ which in turn correspond to inversions in the boundary sequence where, if the first term has index $i$, the second has index $i+c$. Intuitively enough, then, the useful homomorphism that captures all of this information is the following.

\begin{defn}[$c$-abacus tour]
Let $(z,w)\sim_c(x,y)$ if $z-w\equiv x-y\pmod{c}$. Then, $q_c:b(\lambda)\to M_c$ is the SE directed multigraph homomorphism induced by imposing the relation $\sim_c$ on the vertices of $b(\lambda)$. The complete circuit $(q_c(e_{i}))_{i\in\mathbb{Z}}$ of $M_c$ is called the $c$-abacus tour associated to $\lambda$. 
\end{defn}

Proposition~\ref{c-hooks are c-inversions} tells us that the number of boxes with hook length divisible by $c$ can be read off from the $c$-abacus tour by looking at edges that correspond to a hand and foot arriving at the same vertex $(v,[i]).$ So, it is sometimes useful to group the edges in a complete circuit by target. This leads us to arrival words.

\begin{defn}[Arrival words, departure words]
Let $M=(V,E,s,t,d)$ be a directed SE multigraph and let $(e_i)_{i\in I}$ be a complete circuit of $M$. For $v\in V$ $I_v\subset I$ be the subset of indices such that $t(e_i)=v.$ The \textit{arrival word at $v$}, written $v_a$, is the sequence of directions $d(e_i)_{i\in I_v}.$ The departure word at $v$ is defined analogously, replacing the target map with the source map.
\end{defn}

\begin{notn} Given a sequence $\left(d_i\right)_{i\in I}$ of $S$s and $E$s, we write $\inv(d_i)$ for the number of inversions. 
\end{notn}

\begin{prop}\label{c-hook count}
Let $\lambda$ be a partition with boundary tour $(e_i)_{i\in\mathbb{Z}}$ and let $M_c$ have vertex set. Then, taking arrival words with respect to the complete circuit $(q_c(e_i))_{i\in\mathbb{Z}}$
\begin{equation}
    |\{\square\in\lambda: c\mid h(\square)\}|=\sum_{i=0}^{c-1}\inv([i]_a).
\end{equation}
\end{prop}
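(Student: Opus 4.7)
The plan is to chain together three observations: (i) for any box $\square$ with foot $e_i$ and hand $e_j$ in the boundary tour, $h(\square) = j - i$; (ii) the homomorphism $q_c$ sends $e_i$ to an edge arriving at $[i] \in \mathbb{Z}/c\mathbb{Z}$; and (iii) inversions of $[i]_a$ are precisely those inversions of the full boundary sequence whose foot and hand indices lie in the residue class $i \bmod c$.

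For (i), I would reuse the counting argument already inside the proof of Proposition~\ref{c-hooks are c-inversions}: the arm and leg of $\square$ biject respectively with the east and south edges strictly between $e_i$ and $e_j$, giving $h(\square) - 1 = (j - i) - 1$. In particular, $c \mid h(\square)$ if and only if $i \equiv j \pmod{c}$. For (ii), the target of $e_i$ has coordinates $(x,y)$ with $x - y = i$, so by construction of $q_c$ this target is identified with $[i]$; the arrival word $[i]_a$ taken with respect to the $c$-abacus tour is therefore exactly the subsequence of the boundary sequence indexed by $\{j \in \mathbb{Z} : j \equiv i \pmod{c}\}$, read in order of increasing $j$.

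Combining these, an inversion of $[i]_a$ is a pair $j < k$ with $j \equiv k \equiv i \pmod{c}$, $d_j = E$, and $d_k = S$, which by Proposition~\ref{c-hooks are c-inversions} corresponds bijectively to a box of $\lambda$ with foot $e_j$, hand $e_k$, and hook length $k - j \in c\mathbb{Z}_{>0}$. Every box with $c \mid h(\square)$ arises this way from exactly one $i \in \{0, 1, \ldots, c-1\}$, namely the common residue class of its foot and hand indices, so summing $\inv([i]_a)$ over $i = 0, 1, \ldots, c-1$ partitions the set of boxes with $c$-divisible hook length and yields the claimed identity.

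The only sanity check needed is that each $\inv([i]_a)$ is finite, which follows from the stabilisation of the boundary sequence: it consists of only $S$s for all sufficiently negative indices and only $E$s for all sufficiently positive indices, and this stabilisation is inherited by each arrival subword. Beyond this I expect no real obstacle---the proposition is essentially a bookkeeping consequence of Proposition~\ref{c-hooks are c-inversions} and the definition of the $c$-abacus tour.
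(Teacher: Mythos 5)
Your proof is correct and follows the paper's route exactly: the paper's entire proof is ``Apply Proposition~\ref{c-hooks are c-inversions},'' and your write-up simply makes explicit the bookkeeping that proof leaves implicit (the general form $j-i=h(\square)$, the identification of $[i]_a$ with the subsequence of the boundary sequence in residue class $i$, and the resulting partition of boxes with $c$-divisible hook length by residue class). No discrepancies to report.
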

\begin{proof}
Apply Proposition~\ref{c-hooks are c-inversions}.
\end{proof}

\subsection{Alignment and charge}

So far, we have associated to every partition a boundary sequence, a bi-infinite sequence of $S$s and $E$s such that if we travel far enough to the left in the sequence every entry is an $S$, and if we travel far enough to the right, every entry is an $E$. We will now study these sequences in general, and identify which of them arise as boundary sequences of a partition. Then, we will define an equivalence relation on partitions, which we shall show is equivalent to having the same $c$-core. We will use this to show that partitions have a unique $c$-core, to define the $c$-quotients originally studied by \cite{Littlewood}, and to give a bijection between partitions of fixed $c$-core and $c$-tuples of partitions.

\begin{defn}[Charge]\label{kcharge}
Let $D=\left(d_i\right)_{i\in\mathbb{Z}}$ be a bi-infinite sequence with $d_i\in\{S,E\},$ for each $i$ such that for some $M\in\mathbb{N},$ $\forall m\geq M,$ $d_{-m}=S$ and $d_{m}=E$. Fix an integer $k$. Let $e_k$ be the number of $E$s in $\left(d_i\right)_{i\in\mathbb{Z}}$ with index at most $k$, \begin{equation}
    e_k=\left|\left\{d_j: d_j=E\text{ and }j\leq k\right\}\right|.
\end{equation}
Similarly, let $s_k$ be the number of $S$s with index greater than $k$,
\begin{equation}
s_k=\left|\left\{d_j: d_j=S\text{ and }j> k\right\}\right|.
\end{equation}
Then, the $k$-charge of $D$, written $\ch_k\left(D\right)$ is $e_k-s_k-k.$
\end{defn}

\begin{prop}
If $k$ and $l$ are integers, and $D$ is as in Definition~\ref{kcharge}, then $\ch_k(D)=\ch_l(D).$
\end{prop}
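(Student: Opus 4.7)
The plan is to show that $\ch_k(D) = \ch_{k+1}(D)$ for every $k$, whence the general claim follows by induction on $|k-\ell|$. So the whole content of the proof is to track what happens to the three quantities $e_k$, $s_k$, and $k$ as we increase $k$ by one, and see that the changes cancel.

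I would split into two cases according to the value of $d_{k+1}$. If $d_{k+1} = E$, then moving from $k$ to $k+1$ adds this one new $E$ to the set of $E$s with index at most $k+1$, so $e_{k+1} = e_k + 1$; meanwhile, the set of $S$s with index greater than $k+1$ is unchanged from the set with index greater than $k$ (we only excluded the position $k+1$, which was an $E$, not an $S$), so $s_{k+1} = s_k$. Thus
\begin{equation*}
\ch_{k+1}(D) = e_{k+1} - s_{k+1} - (k+1) = (e_k+1) - s_k - (k+1) = e_k - s_k - k = \ch_k(D).
\end{equation*}
If $d_{k+1} = S$, then $e_{k+1} = e_k$ (no new $E$ was added), and $s_{k+1} = s_k - 1$ (we removed the $S$ at index $k+1$ from the tally of $S$s strictly above $k$), so again
\begin{equation*}
\ch_{k+1}(D) = e_k - (s_k - 1) - (k+1) = e_k - s_k - k = \ch_k(D).
\end{equation*}

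The two hypotheses on $D$ — namely that $d_j = S$ for $j \ll 0$ and $d_j = E$ for $j \gg 0$ — are exactly what makes $e_k$ and $s_k$ finite, so the definition makes sense to begin with; they play no further role in the argument. There isn't really a "main obstacle" here: the computation is a one‑line bookkeeping check, and the only thing to be careful about is the sign convention in the definition of $\ch_k$ (the subtracted $k$) which is precisely calibrated so that the two cases above both give cancellation.
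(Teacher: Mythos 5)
Your proof is correct and matches the paper's argument exactly: both prove $\ch_{k+1}(D)=\ch_k(D)$ by splitting on whether $d_{k+1}=E$ or $d_{k+1}=S$ and checking that the changes in $e_k$, $s_k$, and $k$ cancel. Your added remark that the hypotheses on $D$ serve only to make $e_k$ and $s_k$ finite is accurate.
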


\begin{proof}
We check that $\ch_{k+1}(D)=\ch_k(D).$ The proposition then follows by repeated application of the equality. Suppose $d_{k+1}=E.$ Then, $e_{k+1}=e_k+1$ and $s_{k+1}=s_k$. So, \begin{align}\ch_{k+1}(D)&=e_{k+1}-s_{k+1}-(k+1)\\
&=e_k+1-s_k-(k+1)\\
&=e_k-s_k-k\\
&=\ch_k(D).
\end{align}
Similarly, if $d_{k+1}=S,$ then $e_{k+1}=e_k$ and $s_{k+1}=s_k-1$,
 so $\ch_{k+1}(D)=\ch_k(D)$. Therefore, $\ch_k(D)$ is independent of $k$.
 \end{proof}

So, in place of $\ch_k(D),$ we may simply write $\ch(D)$.

\begin{prop}
A sequence $D$ as in Definition~\ref{kcharge} is the boundary sequence of a partition if and only if $\ch(D)=0.$
\end{prop}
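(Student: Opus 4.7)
The plan is to establish both directions by tracking coordinates of the path traced by the sequence $D$. Throughout, for a candidate path obtained from $D$, write $(x_k, y_k)$ for the target of $e_k$. Since each east edge increases $x$ by $1$ and leaves $y$ fixed, and each south edge decreases $y$ by $1$ and leaves $x$ fixed, any realisation of $D$ as a path has $(x_{k+1}, y_{k+1}) - (x_k, y_k)$ equal to $(1,0)$ or $(0,-1)$, and hence in every case the difference $x_k - y_k$ increases by exactly $1$ per step.

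For the forward direction, let $\lambda$ be a partition and take $D = (d_i)_{i \in \mathbb{Z}}$ to be its boundary sequence. By definition of the boundary tour, the target of $e_k$ lies at a lattice point $(x_k, y_k)$ with $x_k - y_k = k$. I would then argue that $x_k = e_k$: the path starts with $x = 0$ far to the left (on the $y$-axis), and each east edge at an index $\leq k$ contributes $+1$ to the $x$-coordinate at the target of $e_k$, while south edges leave $x$ unchanged. Symmetrically, $y_k = s_k$, because the path ends with $y = 0$ far to the right (on the $x$-axis), and only south edges at indices $> k$ decrease $y$ from $y_k$ down to $0$. Substituting into $x_k - y_k = k$ gives $e_k - s_k - k = 0$, i.e.\ $\ch(D) = 0$.

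For the reverse direction, suppose $D$ satisfies the eventual-tail hypothesis of Definition~\ref{kcharge} and $\ch(D) = 0$. I want to realise $D$ as the boundary sequence of a unique partition. I would build the path by declaring the target of $e_k$ to sit at the lattice point $(e_k, s_k) \in \mathbb{Z}_{\geq 0}^2$. The charge hypothesis guarantees $e_k - s_k = k$, which is exactly the consistency condition needed so that consecutive targets differ by $(1,0)$ when $d_{k+1} = E$ and by $(0,-1)$ when $d_{k+1} = S$, making this assignment into a legitimate path whose direction labels agree with $D$. The tail hypothesis ensures $s_k = 0$ for all sufficiently large $k$ (so the path eventually travels along the $x$-axis) and $e_k = 0$ for all sufficiently negative $k$ (so the path eventually travels down the $y$-axis); together with monotonicity in $x$ and $y$, this forces the path to bound a Young diagram $\lambda$, and by construction its boundary sequence is $D$.

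The only mildly delicate point is the bookkeeping in the reverse direction: one must verify that the weakly increasing sequence of $x$-coordinates and weakly decreasing sequence of $y$-coordinates of the targets really do describe the boundary of a Young diagram, i.e.\ that the resulting row lengths form a non-increasing, eventually-zero sequence of non-negative integers. This follows because the path can never cross itself (the direction labels move strictly south-east in the sense that $x_k - y_k$ strictly increases), and the tails pin the path to the two axes, so the region enclosed between the path and the axes is a Young diagram. Uniqueness of $\lambda$ is automatic from the uniqueness of the assignment $k \mapsto (e_k, s_k)$ forced by charge $0$.
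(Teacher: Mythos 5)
Your proposal is correct and takes essentially the same approach as the paper: the forward direction identifies the target of $e_k$ on the line $x-y=k$ as $(e_k,s_k)$, and the reverse direction reconstructs the boundary by placing that point and drawing the two half-paths, exactly as in the paper's proof. One small imprecision worth noting: consecutive targets $(e_k,s_k)$ differ by $(1,0)$ or $(0,-1)$ regardless of the charge, so that is not what $\ch(D)=0$ is needed for; rather, charge zero gives $e_k-s_k=k$, which ensures the edge arriving at $(e_k,s_k)$ has index $k$, so the reconstructed partition's boundary sequence is $D$ itself rather than an index-shift of $D$.
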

\begin{proof}
Suppose $D$ is the boundary sequence of a partition. Let $(x_1,y_1)$ be the point on the line $x-y=k$ on the boundary of a partition $\lambda.$ Since $x_1$ counts the number of south edges with index greater than $k$, and $y_1$ counts the number of east edges with index at most $k$, $\ch(D)=x_1-y_1-k=0.$

If $\ch(D)=0,$ then we may reconstruct $\lambda$ from $D$ by placing a point at $(e_k,s_k)$, and drawing the partition boundary in two halves: one as an infinite path departing from $(e_k,s_k)$ taking unit steps with orientations given by $(d_i)_{i>k}$ and the other as an infinite path arriving at $(e_k,s_k)$ taking unit steps with orientations given by $(d_i)_{i\leq k}.$
\end{proof}

\begin{defn}[The relation $\sim_c$]
Let $\lambda$ and $\mu$ be partitions and let the arrival words taken from the $c$-abacus tours of $\lambda$ and $\mu$ be $[0]^{\lambda}_a,\ldots,[c-1]^{\lambda}_a$, and $[0]^{\mu}_a,\ldots,[c-1]^{\mu}_a$, respectively. Define the relation $\lambda\sim_c\mu$ if, for all $i$ with $0\leq i\leq c-1,$ \begin{equation}
    \ch([i]^{\lambda}_a)=\ch([i]^{\mu}_a).
\end{equation}
\end{defn}

\begin{ex}\label{charges of mu}
Let $\mu=(12,12,10,8,7,4,1,1,1)$ and refer to Figure~\ref{fig:2 hom}. When $c=2,$ $[0]^{\mu}_a$ is given by

\begin{center}
\begin{tikzpicture}
\node at (-0.25,0.4) {$\cdots \text{  }\text{  }S\text{  }\text{  }S\text{  }\text{  }S\text{  }\text{  }E\text{  }\text{  }S\text{  }\text{  }E\text{  }\text{  }E\text{  }\text{  }E\text{  }\mid E\text{  }\text{  }E\text{  }\text{  }E\text{  }\text{  }S\text{  }\text{  }E\text{  }\text{  }S\text{  }\text{  }E\text{  }\text{  }E\cdots$.};
\end{tikzpicture}
\end{center}

where the bar separates terms corresponding to edges of negative or zero index from those of positive index.

So, $\ch([0]^{\mu}_a)=4-2=2.$ Analogously, $[1]^{\mu}_a$ is 

\begin{center}
\begin{tikzpicture}
\node at (0,0) {$\cdots\text{  }S\text{  }\text{  }S\text{  }\text{  }S\text{  }\text{  }S\text{  }\text{  }S\text{  }\text{  }S\text{  }\text{  }E\text{  }\text{  }S\text{  } \mid E\text{  }\text{  }S\text{  }\text{  }S\text{  }\text{  }E\text{  }\text{  }E\text{  }\text{  }S\text{  }\text{  }E\text{  }\text{  }E\text{  }\cdots$.};
\end{tikzpicture}
\end{center}

So, $\ch([1]^{\mu}_a)=1-3=-2.$

\end{ex}

\begin{prop}\label{removing rimhooks}
If $\lambda$ is a partition containing a rimhook $R$ of length $c$ and $\lambda'$ is the partition obtained from $\lambda$ by removing $R$, then $\lambda\sim_c\lambda'$.
\end{prop}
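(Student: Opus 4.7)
The plan is to track how removing a rimhook of length $c$ affects the boundary sequence, then push this through to the $c$-abacus tour and check that no charge changes. The overall structure mirrors the pattern of the earlier results in this section: first a local geometric statement about boundary edges, then a translation to arrival words via $q_c$, then a combinatorial identity about $\ch$.

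First, I would use Corollary~\ref{rimhookishook} to fix the correspondence between $R$ and a box $\square$ of hook length $c$, with foot $e_i$ east and hand $e_{i+c}$ south, and recall that the intermediate edges $e_{i+1},\dots,e_{i+c-1}$ trace the inner rim of $R$. I would then argue that removing $R$ modifies the boundary sequence of $\lambda$ in exactly two positions: it transposes $d_i$ and $d_{i+c}$, so that after removal $d_i = S$ and $d_{i+c} = E$, while every other entry (in particular the directions of the intermediate edges) is unchanged. Verifying this is the principal geometric check: the inner rim edges become part of the new outer boundary with the same directions, and at the two endpoints the east-then-south turn around $\square$ is replaced by a south-then-east turn.

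Next, I would transfer this transposition to the $c$-abacus tour $(q_c(e_j))_{j\in\mathbb{Z}}$. Since the targets of $e_i$ and $e_{i+c}$ lie on lines $x-y \equiv i \pmod{c}$, both affected edges arrive at the same vertex $[i \bmod c]$ of $M_c$. For intermediate indices $i < k < i+c$ we have $k \not\equiv i \pmod{c}$, so the letters corresponding to $e_i$ and $e_{i+c}$ are adjacent in the arrival word $[i \bmod c]_a$. Hence the rimhook removal affects only this one arrival word, changing a single adjacent pair from $ES$ to $SE$; every other arrival word $[j]_a$ with $j \not\equiv i \pmod{c}$ is identical before and after, and its charge is automatically preserved.

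Finally, I would check that the local $ES \to SE$ swap preserves $\ch$. Using $k$-independence of charge, it suffices to check one convenient $k$, but the cleaner general argument is this: if $d_p = E$ and $d_q = S$ with $p < q$ and we interchange them, then for any $k$ with $k < p$ or $k \geq q$ the counts $e_k$ and $s_k$ are literally unchanged, while for $p \leq k < q$ both $e_k$ and $s_k$ decrease by exactly one, so $e_k - s_k - k$ is unchanged in every case. Hence $\ch([i \bmod c]_a)$ is preserved, which together with the previous paragraph gives $\lambda \sim_c \lambda'$. The main obstacle I expect is the geometric verification in the first step that removing the rimhook is precisely the transposition $d_i \leftrightarrow d_{i+c}$; once this is in hand the remaining two steps are clean bookkeeping on arrival words and on the definition of $\ch$.
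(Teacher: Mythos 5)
Your proposal is correct and takes essentially the same route as the paper's proof: both identify the removal of $R$ as the transposition of the boundary-sequence letters at indices $i$ and $i+c$ (with the interior rim edges a translate by $(-1,-1)$ keeping all intermediate directions, hence all indices, unchanged), observe that this is an adjacent $ES\to SE$ swap in the single arrival word $[i\bmod c]_a$ leaving all other arrival words untouched, and conclude that $\ch$ is preserved. The only cosmetic difference is that you verify $e_k-s_k-k$ is unchanged for every $k$, whereas the paper invokes $k$-independence of charge and evaluates at the one convenient index $\ch_{q-1}$, for which both counts are trivially unchanged.
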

\begin{proof} Let the boundary tours of $\lambda$ and $\lambda'$ be $(e_i)_{i\in\mathbb{Z}}$ and $(e_i')_{i\in\mathbb{Z}}$. First, we analyse how the boundary sequences $(d(e_i))$ and $(d(e_i'))$ differ.
Let $R$ have south-eastern most box $\square_2$ and north-western most box $\square_1$. Let $e_j$ be the east edge traversing the top edge of $\square_1$, so that $e_{j+c}$ is the south edge traversing the right of $\square_2.$ 

\begin{center}
\begin{tikzpicture}\begin{scope}[yscale=1,xscale=-1,rotate=90,scale=0.5]
\draw[yellow, fill=yellow] (3,6)--(6,6)--(6,4)--(8,4)--(8,2)--(7,2)--(7,3)--(5,3)--(5,5)--(3,5)--(3,6);
\draw[dashed] (3,6)--(3,5)--(5,5)--(5,3)--(7,3)--(7,2)--(8,2);
\draw (0,6)--(6,6)--(6,4)--(8,4)--(8,2)--(9,2)--(9,1)--(10,1)--(10,0);
\draw[->] (3.6,6)--(3.5,6);
\draw[->] (3,5.4)--(3,5.5);
\draw[->] (7.6,2)--(7.5,2);
\draw[->] (8,2.4)--(8,2.5);
\node[right] at (3.5,6) {$e_{j+c}$};
\node[below] at (3,5.2) {$e'_{j+c}$};
\node[above] at (8,2.4) {$e_{j}$};
\node[left] at (7.5,2) {$e'_{j}$};
\end{scope}\end{tikzpicture}\quad
\begin{tikzpicture}\begin{scope}[yscale=1,xscale=-1,rotate=90,scale=0.5]
\draw[yellow, fill=yellow] (3,6)--(6,6)--(6,4)--(8,4)--(8,2)--(7,2)--(7,3)--(5,3)--(5,5)--(3,5)--(3,6);
\draw (0,6)--(3,6)--(3,5)--(5,5)--(5,3)--(7,3)--(7,2)--(8,2)--(9,2)--(9,1)--(10,1)--(10,0);
\end{scope}\end{tikzpicture}
\end{center} 

Since we remove $\square_1$ and $\square_2$, $d(e_j)=E,$ $d(e_j')=S$, and $d(e_{j+c})=S$ and $d(e_{j+c}')=E$. Let $j=qc+r$ for $0\leq r\leq c-1$. Since the rimhook does not contain a $2\times 2$ box and is connected, the portion of the boundary of $\lambda'$ between the lines $x-y=j+1$ and $x-y=c+j-1$ is a translate of the original partition boundary by $(-1,-1)$, so $d(e_i)=d(e_i')$ for all $i\not\in\{j,j+c\}.$ So, for all $0\leq s\leq c-1$ with $s\not=r$, $[s]^{\lambda}_a=[s]^{\lambda'}_a,$ and the arrival word
\begin{equation}([r]^{\lambda'}_a)_i=\left\{
\begin{array}{cc}
([r]^{\lambda}_a)_{q}    & i=q+1 \\
 ([r]^{\lambda}_a)_{q+1}    & i=q  \\
 ([r]^{\lambda}_a)_i & \text{otherwise.}
\end{array}\right.
\end{equation}
So, \begin{align}\ch([r]^{\lambda'}_a)&=\ch_{q-1}([r]^{\lambda'}_a)\\
&=\ch_{q-1}([r]^{\lambda}_a)\\
&=\ch([r]^{\lambda}_a).
\end{align}

\end{proof}

\begin{corol}
The $c$-core of $\lambda$ is unique, and $\lambda\sim_c\mu$ if and only if $\lambda$ and $\mu$ have the same $c$-core.
\end{corol}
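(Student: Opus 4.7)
The plan is to reduce the corollary to two facts. The first is Proposition~\ref{removing rimhooks}, which says removing a rimhook of length $c$ preserves the relation $\sim_c$. The second, which will be the main obstacle, is that two $c$-cores $\mu_1, \mu_2$ satisfying $\mu_1 \sim_c \mu_2$ must be equal. Granting these, the corollary follows by standard bookkeeping, which I carry out last.

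To show the second fact, I would first verify that each arrival word $[r]_a^\mu$ of a $c$-core $\mu$ contains no inversion, i.e.\ takes the form $\ldots SSS EEE \ldots$ with a single transition. Suppose for contradiction that some arrival word contains an inversion in the boundary tour $(e_i, e_j)$ with $j-i$ chosen to be minimal. Since both edges lie in the same arrival word, $j-i$ is a positive multiple of $c$; if $j-i > c$ then $e_{j-c}$ also lies in this arrival word, and minimality forces $d(e_{j-c}) = E$, but then $(e_{j-c}, e_j)$ is a shorter inversion, a contradiction. So $j-i = c$, and by Proposition~\ref{c-hooks are c-inversions} this produces a box of hook length $c$, contradicting that $\mu$ is a $c$-core.

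Now each $[r]_a^\mu$ has a single transition from $S$s to $E$s, and its position (in the boundary-tour indexing) is determined by $\ch([r]_a^\mu)$: if $m_r$ is the boundary index of the last $S$ in $[r]_a^\mu$, evaluating $\ch_{m_r}$ gives $e_{m_r} = s_{m_r} = 0$, so $\ch([r]_a^\mu) = -m_r$. The $c$ charges therefore determine the $c$ arrival words as indexed subsequences of the boundary sequence, and reassembling them (every index $k$ in the boundary sequence lies in the arrival word $[k \bmod c]_a^\mu$) recovers the boundary sequence, hence $\mu$.

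Finally, for the corollary: any sequence of rimhook removals reducing $\lambda$ to a $c$-core $\mu$ yields $\lambda \sim_c \mu$ by iterated application of Proposition~\ref{removing rimhooks}, so any two $c$-cores of $\lambda$ are $\sim_c$-equivalent and hence equal by the second fact, establishing uniqueness. For the biconditional, if $\lambda$ and $\mu$ share a $c$-core $\nu$ then $\lambda \sim_c \nu \sim_c \mu$; conversely, if $\lambda \sim_c \mu$ with $c$-cores $\nu_\lambda, \nu_\mu$, then $\nu_\lambda \sim_c \lambda \sim_c \mu \sim_c \nu_\mu$, so $\nu_\lambda = \nu_\mu$ again by the second fact.
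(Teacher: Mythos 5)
Your proof is correct and follows essentially the same route as the paper: reduce everything to Proposition~\ref{removing rimhooks} together with the fact that $\sim_c$-equivalent $c$-cores are equal, prove the latter by showing a $c$-core's arrival words are inversion-free and therefore uniquely determined by their charges, and conclude with the chain argument. The only substantive differences are that your minimal-gap argument spells out a step the paper leaves implicit when citing Propositions~\ref{c-hooks are c-inversions} and~\ref{c-hook count} --- namely that the absence of hooks of length exactly $c$ rules out inversions of every gap $mc$ within an arrival word --- and that your formula $\ch([r]^\mu_a)=-m_r$ should use the position of the last $S$ in the reindexed arrival word rather than its boundary index, a harmless normalization slip since the same convention is applied to both partitions.
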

\begin{proof}
If $\lambda$ has $c$-core $\nu$, then $\nu$ is obtained from $\lambda$ by iteratively removing rimhooks of length $c$ from $R$, so by Proposition~\ref{removing rimhooks}, $\lambda\sim_c \nu.$ Every partition has at least one $c$-core, so it remains to check that if $\mu$ and $\nu$ are both $c$-cores with $\mu\sim_c\nu$ then $\mu=\nu.$ By Propositions~\ref{c-hooks are c-inversions} and~\ref{c-hook count}, if $\mu$ and $\nu$ are both $c$-cores then for each $i$, the arrival words $[i]^\mu_a$ and $[i]^\nu_a$ do not contain any inversions. So, both consist of a string of $S$s up to some index, and a string of $E$s thereafter. Since $\mu\sim_c\nu$, the charge of both $[i]^\mu_a$ and $[i]^\nu_a$ must be the same, and therefore $[i]^\mu_a=[i]^\nu_a$.
\end{proof}

The important consequence for us will be the following.
\begin{corol}\label{keypreservecore}
Let $\lambda$ and $\mu$ be partitions. Then $\lambda$ and $\mu$ have the same $c$-core if there is a value of $m$ with $c\mid m$ such that for each $[i],$ both of the following hold.

\begin{itemize}
    \item the arrival words in the $c$-abacus tour of $\lambda$ and $\mu$ agree after the entry with index $m$;
    \item the portion of $[i]^\lambda_a$ with index at most $m$ is a permutation of the portion of $[i]^\mu_a$ with index at most $m$.
\end{itemize}
\end{corol}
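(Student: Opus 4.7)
The plan is to use the preceding corollary, which says $\lambda \sim_c \mu$ if and only if they share a $c$-core. So it suffices to show that the two bulleted hypotheses force $\ch([i]^\lambda_a) = \ch([i]^\mu_a)$ for every residue class $[i]$. The key preliminary observation is that the assignment of boundary-tour positions to residue classes is intrinsic to $c$ and not to the partition: since $e_j$ has target at a lattice point with $x-y = j$, the edge $q_c(e_j)$ arrives at the class $[j \bmod c]$ for any partition. Consequently, when we re-index the arrival word at $[i]$ as a bi-infinite sequence in $\mathbb{Z}$ so that Definition~\ref{kcharge} applies, the bijection between arrival-word positions and boundary-tour indices is the same for $\lambda$ and $\mu$. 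Because $c \mid m$, this bijection sends some integer $k = k(i)$ to the cut at boundary-tour index $m$ exactly --- that is, ``arrival-word position $\leq k$'' corresponds precisely to ``boundary-tour index $\leq m$'' --- and this $k$ is the same for both partitions.

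With this common reference index in hand, the plan is to invoke the $k$-independence of charge (the proposition following Definition~\ref{kcharge}) and compute $\ch([i]^\lambda_a)$ and $\ch([i]^\mu_a)$ using this common $k$. The count of $E$s in $[i]_a$ at arrival-word position $\leq k$ equals the number of $E$s in the portion of $[i]_a$ with boundary-tour index $\leq m$; the second hypothesis says these portions are permutations of one another, so their $E$-counts agree. Similarly, the count of $S$s at arrival-word position $> k$ equals the number of $S$s with boundary-tour index $> m$; the first hypothesis says these tails are identical entry-by-entry, so these $S$-counts agree as well. Hence the three quantities in $\ch([i]_a) = e_k - s_k - k$ match between $\lambda$ and $\mu$.

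Since this holds for every residue class $[i]$, we conclude $\lambda \sim_c \mu$, and the preceding corollary then gives that $\lambda$ and $\mu$ have the same $c$-core. The only subtle point in this argument is isolating a partition-independent cut-point $k = k(i)$ in the arrival word; this uses the divisibility $c \mid m$ crucially, since otherwise the cut at boundary-tour index $m$ would not land on a common arrival-word boundary across all classes $[i]$. Apart from that, the proof is a direct bookkeeping exercise in the definitions of arrival word and charge.
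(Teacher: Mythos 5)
Your proof is correct and is essentially the argument the paper leaves implicit (Corollary~\ref{keypreservecore} is stated there without its own proof): the second bullet fixes the $E$-count of each head because permutations preserve letter counts, the first bullet fixes the $S$-count of each tail because those agree entry-by-entry, so $\ch([i]^\lambda_a)=\ch([i]^\mu_a)$ for every class $[i]$, and the preceding corollary ($\lambda\sim_c\mu$ if and only if equal $c$-cores) finishes. One harmless overstatement: $c\mid m$ is a convenience ensuring the cut lands at the single common arrival-word position $m/c$ for every class, but even without it the cut at boundary index $m$ falls between consecutive arrival-word entries at a class-dependent yet partition-independent position, so your bookkeeping would go through regardless.
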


\begin{ex}
We will calculate the 2-core $\lambda$ of  $\mu=(12,12,10,8,7,4,1,1,1)$. By Corollary~\ref{keypreservecore} and the calculation in Example~\ref{charges of mu}, $\lambda$ is the unique $2$-core with $\ch([0]^{\lambda}_a)=2$ and $\ch([1]^{\lambda}_a)=-2.$ 

So, placing a bar in the bi-infinite string with no inversions to separate edges with positive index from those with negative or 0 index, the arrival words in $M_2(\lambda)$ at 0 and 1 respectively, are
\begin{center}
\begin{tikzpicture}\node at (0,0) {$\cdots\text{  }S\text{  }\text{  }S\text{  }\text{  }S\text{  }\text{  }S\text{  }\text{  }S\text{  }\text{  }S\text{  }\text{  }S\text{  }\text{  }S\text{  }\text{  }\mid S\text{  }\text{  }S\text{  }\text{  }E\text{  }\text{  }E\text{  }\text{  }E\text{  }\text{  }E\text{  }\text{  }E\text{  }\text{  }E\text{  }\cdots$};

\node at (0.2,-0.4) {$\cdots \text{  }\text{  }S\text{  }\text{  }S\text{  }\text{  }S\text{  }\text{  }S\text{  }\text{  }S\text{  }\text{  }S\text{  }\text{  }E\text{  }\text{  }E\text{  } \text{  }\mid E\text{  }\text{  }E\text{  }\text{  }E\text{  }\text{  }E\text{  }\text{  }E\text{  }\text{  }E\text{  }\text{  }E\text{  }\text{  }E\cdots$.};
\end{tikzpicture}
\end{center}

So, the 2-core is (3,2,1).

\begin{center}
\begin{tikzpicture}\begin{scope}[yscale=1,xscale=-1,rotate=90,scale=0.5]
\draw (0,3)--(1,3)--(1,2)--(2,2)--(2,1)--(3,1)--(3,0);
\draw (0,2)--(1,2)--(1,0);
\draw (0,1)--(2,1)--(2,0);
\draw (0,3)--(0,0)--(3,0);
\end{scope}\end{tikzpicture}
\end{center}

\end{ex}

\begin{prop}\label{corelattice} There is an bijective map $f$ from $c$-core partitions to a $\mathbb{Z}$-module of length $c-1.$
\begin{proof}
 Consider the $c$-abacus of a $c$-core partition. The charges $(\ch([0]),\ldots,\ch([c-1]))$ specify the $c$-core. A $c$-tuple of integers $(a_0,\ldots,a_{c-1})$ represents the charges of a partition if and only if $\sum_{i=0}^{c-1}a_i=0.$ So, sending a $c$-core to the $c$-tuple of charges gives a bijective map with the $\mathbb{Z}$-module $M = \langle e_1,\ldots, e_c: \sum_{i=0}^{c-1}e_i=0\rangle$.
\end{proof}

\end{prop}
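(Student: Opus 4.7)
The plan is to send a $c$-core $\mu$ to the $c$-tuple of charges $(\ch([0]^\mu_a), \ldots, \ch([c-1]^\mu_a))$ of its arrival words in the $c$-abacus tour, and identify the image as the sublattice of $\mathbb{Z}^c$ cut out by $\sum_i a_i = 0$, which is a free $\mathbb{Z}$-module of rank $c-1$. The strategy splits into three manageable steps: showing each arrival word of a $c$-core is determined by a single integer, verifying the sum-zero relation, and checking bijectivity.

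For the first step I would observe that for a $c$-core $\mu$ each arrival word $[i]^\mu_a$ is inversion-free. By Proposition~\ref{c-hook count}, $\sum_{i=0}^{c-1}\inv([i]^\mu_a)$ counts boxes of $\mu$ with hook length divisible by $c$. A $c$-core has no box of hook length $c$ by Corollary~\ref{rimhookishook}, and in fact no box of hook length $kc$ for any $k \geq 1$, since a rimhook of length $kc$ decomposes into $k$ rimhooks of length $c$. So each arrival word consists of all $S$s followed by all $E$s, and is therefore completely determined by its charge.

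For the second step, I would verify the sum-zero constraint. Choosing an index $k$ divisible by $c$ and partitioning the boundary tour by residue class gives
\begin{equation*}
e_k = \sum_{i=0}^{c-1} e^{(i)}_{k/c}, \qquad s_k = \sum_{i=0}^{c-1} s^{(i)}_{k/c},
\end{equation*}
where the superscripts denote the corresponding $E$- and $S$-counts inside $[i]^\mu_a$. Summing the $k/c$-charges of the arrival words,
\begin{equation*}
\sum_{i=0}^{c-1}\ch([i]^\mu_a) = e_k - s_k - c \cdot (k/c) = \ch(b(\mu)) = 0,
\end{equation*}
so $f$ lands in $M = \{(a_0, \ldots, a_{c-1}) \in \mathbb{Z}^c : \sum_i a_i = 0\}$, a free $\mathbb{Z}$-module of rank $c-1$.

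Bijectivity then follows. Injectivity is immediate: two $c$-cores with identical charge tuples have identical (sorted) arrival words, hence identical boundary tours, hence are equal. For surjectivity, given any $(a_0,\ldots,a_{c-1}) \in M$, build each arrival word $[i]_a$ as the unique inversion-free sequence of charge $a_i$ and interleave them by placing the $m$th entry of $[i]_a$ at index $i + mc$ to form a bi-infinite sequence $D$. Since each inversion-free word is eventually $S$ leftwards and eventually $E$ rightwards, so is $D$; running the sum-zero identity in reverse shows $\ch(D) = 0$, making $D$ the boundary sequence of a unique partition, and the inversion-freeness of each arrival word forces that partition to be a $c$-core. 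The only step that needs real care is the index bookkeeping in this reconstruction—confirming that the interleaved sequence really reproduces the $c$-abacus tour of the resulting partition; once that is set up correctly, the remaining verifications are mechanical.
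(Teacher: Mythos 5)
Your proposal is correct and takes essentially the same route as the paper: the paper's (very terse) proof is exactly this map sending a $c$-core to its tuple of charges, with image identified as the sum-zero sublattice $\{(a_0,\ldots,a_{c-1})\in\mathbb{Z}^c:\sum_i a_i=0\}\cong\mathbb{Z}^{c-1}$. Your three verifications---that inversion-free arrival words are determined by their charges (your rimhook-decomposition remark can be replaced by the direct observation that an inversion at distance $k$ in an arrival word forces a consecutive $E$-then-$S$ pair, i.e.\ a hook of length exactly $c$), the sum-zero identity via residue classes, and the interleaving reconstruction for surjectivity---simply supply the details the paper leaves implicit.
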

Fix a positive integer $c$, a $c$-core $\mu$, and a non-negative integer $n$. Let $\Par^c_\mu(n)$ denote the set of partitions of $E$ with $c$-core $\mu$. Let $\Par^c_\mu$ denote the set of all partitions with $c$-core $\mu$, and let $\Par$ denote the set of all partitions.

\begin{defn}[Quotient] The $c$-quotient of $\lambda$ is the $c$-tuple of partitions given by $(q_1(\lambda),q_2(\lambda),\ldots,q_c(\lambda))$, where $q_i(\lambda)$ is the partition with boundary sequence $[i]_a$, with the index shifted so that the charge is 0.
\end{defn}

\begin{defn}[Quotient map]
The quotient map $\phi:\Par^c_{\mu}\to(\Par)^c$ sends $\lambda$ to $(q_1(\lambda),\ldots,q_c(\lambda).$
\end{defn}
\begin{prop}\label{coresandquotients}
For $\lambda\in\Par^c_{\mu},$ \begin{equation} |\lambda|=|\mu|+c\sum_{i=1}^c|q_i(\lambda)|.\end{equation}
\end{prop}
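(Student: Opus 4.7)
The plan is to prove the equality by induction on the number of rimhook removals needed to pass from $\lambda$ to its $c$-core $\mu$. The key preparatory observation is that the size of any partition equals the number of inversions in its boundary sequence, because each box is identified with a unique (foot, hand) pair, which is precisely an east-edge-before-south-edge inversion in the sequence. Applied to the partition $q_i(\lambda)$, whose boundary sequence is (a charge-shifted copy of) the arrival word $[i]_a^\lambda$, this identification yields $|q_i(\lambda)| = \inv([i]_a^\lambda)$, and hence $\sum_{i=1}^c |q_i(\lambda)| = \sum_{i=1}^c \inv([i]_a^\lambda)$.

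For the inductive step, suppose $\lambda \in \Par^c_\mu$ is not a core and let $\lambda'$ be obtained by removing a single rimhook of length $c$. The calculation already carried out inside the proof of Proposition~\ref{removing rimhooks} shows that exactly one arrival word $[r]_a$ changes, and it does so by swapping an adjacent $(E,S)$ pair at positions $q, q+1$ into an $(S,E)$ pair. This destroys exactly one inversion of $[r]_a^\lambda$ and leaves every other arrival word untouched, so $\sum_{i=1}^c \inv([i]_a^{\lambda'}) = \sum_{i=1}^c \inv([i]_a^\lambda) - 1$. Combined with $|\lambda'| = |\lambda| - c$, this proves that the quantity $|\lambda| - c\sum_{i=1}^c |q_i(\lambda)|$ is invariant under rimhook removal.

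For the base case, $\lambda = \mu$ is itself a $c$-core. As noted in the proof that $c$-cores are determined by their charges, the arrival words of a core consist of a block of $S$s followed by a block of $E$s and so contain no inversions; therefore $|q_i(\mu)| = \inv([i]_a^\mu) = 0$ for each $i$, and both sides of the claimed identity collapse to $|\mu|$. Since iterating rimhook removal strictly decreases $|\lambda|$, it reaches the core in finitely many steps, and the invariance established in the previous paragraph closes the induction.

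There is no substantive obstacle in this argument: the combinatorial content is entirely supplied by Propositions~\ref{c-hooks are c-inversions} and~\ref{removing rimhooks}, and the proof amounts to chaining together the identification of $|q_i(\lambda)|$ with $\inv([i]_a^\lambda)$ with the per-rimhook bookkeeping already performed. The only point meriting attention is ensuring that the charge-shift used to define $q_i(\lambda)$ does not affect the number of inversions of $[i]_a^\lambda$, which is immediate since reindexing a bi-infinite sequence does not alter the relative order of its entries.
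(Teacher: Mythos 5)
Your proof is correct and is essentially the paper's argument run in reverse: the paper builds the $c$-abacus tour of $\lambda$ up from that of $\mu$ by adding inversions (rimhooks) one at a time, each contributing $c$ to $|\lambda|$, while you peel rimhooks off down to the core, using the same identification $|q_i(\lambda)|=\inv([i]_a^\lambda)$ and the same bookkeeping from Proposition~\ref{removing rimhooks}. Your version merely makes explicit the adjacent-swap computation showing each removal destroys exactly one inversion, which the paper leaves implicit.
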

\begin{proof}
By Proposition~\ref{c-hook count}, the number of boxes with hook length divisible by $c$ are given by $\sum_{i=1}^c\inv[i]_a$. Starting from the $c$-abacus tour of $\mu$, we can obtain the $c$-abacus tour of $\lambda$ by adding these inversions one at a time. Adding each inversion corresponds to adding a rimhook of length $c$ to the diagram, so contributes $c$ to $|\lambda|$.
\end{proof}

\subsection{The map \texorpdfstring{$G_c$}{Gc}}
Now we set about proving Theorem~\ref{basic partition step}. We first recall three standard generating functions.
\begin{prop}
 \begin{equation}\label{partitionswithparts} \sum_{\lambda\in\Par}q^{|\lambda|}t^{l(\lambda)}=\prod_{m\geq 1}\frac{1}{1-q^mt}
    \end{equation}
 \begin{equation}\label{justpartitions} \sum_{\lambda\in\Par}q^{|\lambda|}=\prod_{m\geq 1}\frac{1}{1-q^m}
    \end{equation}
 \begin{equation}\label{coresgen} \sum_{\lambda\in\Par^c_{\mu}}q^{|\lambda|}=q^{|\mu|}\prod_{m\geq 1}\frac{1}{(1-q^{mc})^c}
    \end{equation}

\end{prop}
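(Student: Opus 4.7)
The plan is to prove the three identities in sequence, each by a standard generating-function argument using constructions already set up in the paper.

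For \eqref{partitionswithparts}, I would parametrise partitions by multiplicity: a partition $\lambda$ is uniquely determined by the sequence $(k_m)_{m\geq 1}$, where $k_m$ is the number of parts of $\lambda$ equal to $m$, and conversely any sequence of non-negative integers with only finitely many non-zero entries arises this way. Under this bijection $|\lambda|=\sum_{m} m k_m$ and $l(\lambda)=\sum_{m} k_m$, so
\begin{equation*}
\sum_{\lambda\in\Par}q^{|\lambda|}t^{l(\lambda)}
=\sum_{(k_m)}\prod_{m\geq 1}(q^m t)^{k_m}
=\prod_{m\geq 1}\sum_{k\geq 0}(q^m t)^k
=\prod_{m\geq 1}\frac{1}{1-q^m t},
\end{equation*}
valid as a formal power series identity in $\mathbb{Z}[[q,t]]$. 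The identity \eqref{justpartitions} is then immediate from \eqref{partitionswithparts} on setting $t=1$.

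For \eqref{coresgen}, I would use the $c$-quotient map $\phi\colon\Par^c_\mu\to(\Par)^c$ together with Proposition~\ref{coresandquotients}. The essential input is that $\phi$ is a bijection: the $c$-core $\mu$ fixes the charges $\ch([i]^\lambda_a)$ for $0\leq i\leq c-1$, and a partition with $c$-core $\mu$ is then determined by the remaining data in each arrival word $[i]^\lambda_a$, which corresponds exactly to the choice of a partition $q_i(\lambda)$. Granted this bijection, Proposition~\ref{coresandquotients} rewrites the sum as
\begin{equation*}
\sum_{\lambda\in\Par^c_\mu}q^{|\lambda|}
=q^{|\mu|}\sum_{(\nu_1,\ldots,\nu_c)\in\Par^c}q^{c(|\nu_1|+\cdots+|\nu_c|)}
=q^{|\mu|}\left(\sum_{\nu\in\Par}q^{c|\nu|}\right)^{\!c},
\end{equation*}
and applying \eqref{justpartitions} with $q$ replaced by $q^c$ gives the stated product.

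The only step that is not purely formal is the bijectivity of $\phi$. Injectivity is clear because the arrival words $([i]^\lambda_a)_{0\leq i<c}$ together with the fixed charges $\ch([i]^\mu_a)$ reconstruct the whole boundary sequence, and hence $\lambda$. Surjectivity amounts to checking that an arbitrary $c$-tuple of partitions can be realised as the quotient of some partition with $c$-core $\mu$: one places the prescribed inversions into each arrival word while keeping the charges equal to those of $\mu$. So the main obstacle is really just careful bookkeeping with the abacus construction set up in the preceding subsections, not any new idea.
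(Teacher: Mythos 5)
Your argument is correct and matches the paper's proof essentially step for step: the multiplicity-of-parts expansion of the product for \eqref{partitionswithparts}, the specialisation $t=1$ for \eqref{justpartitions}, and the core--quotient bijection $\phi$ together with Proposition~\ref{coresandquotients} for \eqref{coresgen}. The only difference is cosmetic: you spell out the bijectivity of $\phi$ via charges and arrival words, which the paper takes as given from the abacus/core--quotient theory it has already set up.
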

\begin{proof}
We may rewrite the right hand side of~\eqref{partitionswithparts} as $$\prod_{m\geq 1} 1+q^mt+q^{2m}t^2+q^{3m}t^3+\ldots,$$
so that picking a term $q^{km}t^k$ for each $m$ corresponds to declaring that $\lambda$ contains $k$ parts of size $m$, contributing $|km|$ to $\lambda$ and $k$ to $l(\lambda)$, giving the left hand side. Setting $t=1$ in~\eqref{partitionswithparts} gives~\eqref{justpartitions}.  

For~\eqref{coresgen}, Proposition~\ref{coresandquotients} tells us that the map $\phi$ gives a bijection between $\lambda\in\Par^c_\mu$ and $c$-tuples of partitions $(q_1,\ldots,q_c)$ where $|\lambda|=|\mu|+c\sum_{i=1}^c|q_i(\lambda)|.$ The right hand side of~\eqref{coresgen} corresponds to all choices of $c$-tuples $q_1,\ldots,q_c\in \Par,$ and the weighting by $c$ corresponds to each box in $q_i$ corresponding to $c$ boxes in $\lambda$.
\end{proof}

Next, we define a partition statistic $\lambda\divcparts$ that arises as a special case of one of the statistics that we study.

 For a positive integer $d$, let $m_d(\lambda)$ denote the number of parts of $\lambda$ of size $d$, and for fixed $c$ let $\lambda\divcparts$ denote the weighted sum \begin{equation}\lambda\divcparts=\sum_{d=1}^{\infty} \left\lfloor\frac{m_d(\lambda)}{c}\right\rfloor.\end{equation} In words, $\lambda\divcparts$ counts the number of rectangles, of any width, of positive height divisible by $c$ in the diagram of $\lambda$ such that the whole right edge of the rectangle, and at least the rightmost step of the top edge, lies on the boundary of $\lambda$.

\begin{ex}
Let $c=3$. The partition $\lambda=(7,7,4,4,4,4,4,4,4,3,2,2,2,1)$ has $m_7(\lambda)=2$, $m_4(\lambda)=7$, $m_3(\lambda)=1$, $m_2(\lambda)=3$ and $m_1(\lambda)=1$. So, the only nonzero contributions to $\lambda_{\square}^{3*}$ are when $d=2$ and $d=4$, and \begin{equation*}\lambda_{\square}^{3*}=\left\lfloor\frac{m_2(\lambda)}{3}\right\rfloor+\left\lfloor\frac{m_4(\lambda)}{3}\right\rfloor=1+2=3.
    \end{equation*}
    
    \end{ex}

    We now define the map $G_c,$ previously defined in \cite{Walsh}. 
    \begin{defn}[The map $G_c$]
    The map $G_c:\Par\to \Par\times K_c$, where $K_c=\{\lambda\in\Par: \lambda_{\square}^{c*}=0\}$ is the set of partitions with no parts repeated $c$ or more times, maps a partition $\lambda$ to $(\xi,\nu)$ where for each $d\in \mathbb{N}$,
    \begin{equation}m_d(\xi)=\left\lfloor\frac{m_d(\lambda)}{c}\right\rfloor,\end{equation} and
    \begin{equation}m_d(\nu)=m_d(\lambda)-c\left\lfloor\frac{m_d(\lambda)}{c}\right\rfloor.\end{equation}
    
    We write $(G_c)^{-1}$ for the inverse map $(G_c)^{-1}:K_c\times \Par\to \Par$ where, for each $d\in\mathbb{N}$ \begin{equation}
        m_d\left(\left(G_c)^{-1}(\xi,\nu)\right)\right)= m_d(\xi)c+m_d(\nu). 
    \end{equation}
    \end{defn}

    \begin{ex} As shown in Figure~\ref{exampleGc}, the partition $\lambda=(7,7,4,4,4,4,4,4,4,3,2,2,2,1)$ has $G_3(\lambda)=((4,4,2),(7,7,3,4,1)).$
    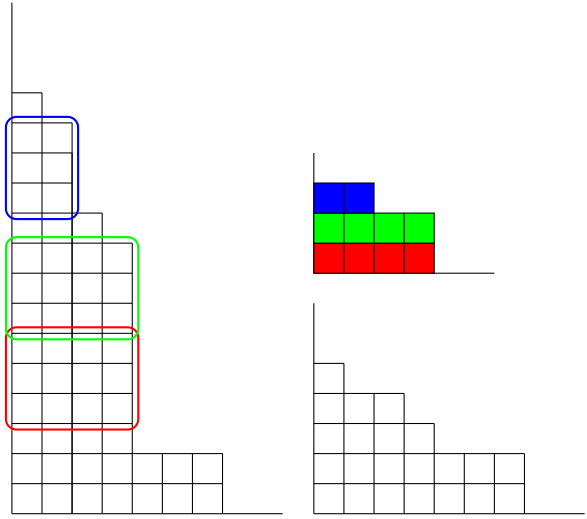
\begin{figure}\label{exampleGc}[ht]
\begin{center}
\begin{tikzpicture}\begin{scope}[yscale=1,xscale=-1,rotate=90,scale=0.4]
\draw (0,9)--(0,0);
\draw (1,7)--(1,0);
\draw (2,7)--(2,0);
\draw (3,4)--(3,0);
\draw (4,4)--(4,0);
\draw (5,4)--(5,0);
\draw (6,4)--(6,0);
\draw (7,4)--(7,0);
\draw (8,4)--(8,0);
\draw (9,4)--(9,0);
\draw (10,3)--(10,0);
\draw (11,2)--(11,0);
\draw (12,2)--(12,0);
\draw (13,2)--(13,0);
\draw (14,1)--(14,0);
\draw (2,7)--(0,7);
\draw (2,6)--(0,6);
\draw (2,5)--(0,5);
\draw (9,4)--(0,4);
\draw (10,3)--(0,3);
\draw (12,2)--(0,2);
\draw (13,2)--(0,2);
\draw (14,1)--(0,1);
\draw (17,0)--(0,0);
\draw[white] (0,9.5)--(0,10);
\draw[rounded corners,red,thick] (2.8, -0.2) rectangle (6.2, 4.2) {};
\draw[rounded corners,green,thick] (5.8, -0.2) rectangle (9.2, 4.2) {};
\draw[rounded corners,blue,thick] (9.8, -0.2) rectangle (13.2, 2.2) {};
\end{scope}\end{tikzpicture}\begin{tikzpicture}\begin{scope}[yscale=1,xscale=-1,rotate=90,scale=0.4]
\draw (8,6)--(8,0)--(12,0);
\draw[fill=red] (8,0) rectangle (9, 4) {};
\draw[fill=green] (9,0) rectangle (10,4) {};
\draw[fill=blue] (10,0) rectangle (11, 2) {};
\draw(8,3)--(10,3); 
\draw(8,2)--(10,2); 
\draw(8,1)--(11,1); 

\draw (0,9)--(0,0);
\draw (1,7)--(1,0);
\draw (2,7)--(2,0);
\draw (3,4)--(3,0);
\draw (4,3)--(4,0);
\draw (5,1)--(5,0);
\draw (2,7)--(0,7);
\draw (2,6)--(0,6);
\draw (2,5)--(0,5);
\draw (3,4)--(0,4);
\draw (4,3)--(0,3);
\draw (4,2)--(0,2);
\draw (5,1)--(0,1);
\draw (7,0)--(0,0);
\end{scope}\end{tikzpicture}
\end{center}
\caption{The partition $\lambda$ has $G_3(\lambda)=((4,4,2),(7,7,4,3,1))$.}\label{G3 figure}
\end{figure}
\end{ex}

The next proposition establishes that the $c$-core of a partition $\lambda$ is also the $c$-core of the second argument of $G_c(\lambda),$ so we may restrict $G_c$ to $\Par^c_\mu$ in a way that interacts sensibly with cores.

\begin{prop}\label{Glaisherrefinestocores}
If $\lambda\in\Par^c_{\mu}$ and $G_c(\lambda)=(\xi,\nu)$, then $\nu\in\Par^c_{\mu}$.
\end{prop}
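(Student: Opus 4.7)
My plan is to show $\nu$ and $\lambda$ have the same $c$-core by exhibiting $\nu$ as the result of iteratively removing rimhooks of length $c$ from $\lambda$ and invoking Proposition~\ref{removing rimhooks}. I induct on $|\xi|$; when $\xi$ is empty, $\nu=\lambda$ and there is nothing to prove. Otherwise I pick any part $d$ with $m_d(\xi)\geq 1$, so that $m_d(\lambda)\geq c$, and let $\lambda^\circ$ denote the partition obtained from $\lambda$ by deleting $c$ of its copies of $d$. Then $G_c(\lambda^\circ)=(\xi^\circ,\nu)$ with $|\xi^\circ|=|\xi|-1$, so if I can establish $\lambda^\circ\sim_c\lambda$, the inductive hypothesis applied to $\lambda^\circ$ will close the argument.

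For this single-step claim I construct a sequence $\lambda=\lambda_0,\lambda_1,\ldots,\lambda_d=\lambda^\circ$, each $\lambda_k$ obtained from its predecessor by removing a vertical strip of $c$ boxes. At step $k\in\{1,\ldots,d\}$, $\lambda_{k-1}$ contains a contiguous block of consecutive rows of common length $d-k+1$, and I take $R_k$ to be the rightmost column of the bottom $c$ rows of this block. The strip $R_k$ is a rimhook of length $c$: it is connected, contains no $2\times 2$ square, and removing it preserves the partition condition because the row just above the bottom $c$ rows has length at least $d-k+1$, exceeding the new length $d-k$, while the row just below the block (if any) already has length at most $d-k$. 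Proposition~\ref{removing rimhooks} then gives $\lambda_{k-1}\sim_c\lambda_k$ for each $k$, and transitivity of $\sim_c$ yields $\lambda\sim_c\lambda_d$.

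The main obstacle is verifying that the construction is well-posed and terminates at $\lambda^\circ$: the bottom $c$ rows of length $d-k+1$ in $\lambda_{k-1}$ need not coincide with the rows shortened in previous steps, because rows of length $d-k+1$ already present in $\lambda$ will fuse with the $c$ newly created rows into a single contiguous block. I will handle this with a direct multiplicity tabulation. One checks that $m_{d-k+1}(\lambda_{k-1})=m_{d-k+1}(\lambda)+c\geq c$ for $k\geq 2$ (and $m_d(\lambda_0)=m_d(\lambda)\geq c$), so the required block always has enough rows; and that $\lambda_d$ has exactly $m_d(\lambda)-c$ copies of $d$ together with the same multiplicity as $\lambda$ for every other part, since at the final step the $c$ rows of length $1$ are shortened to length $0$ and vanish from the partition. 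Thus $\lambda_d=\lambda^\circ$, and the induction closes.
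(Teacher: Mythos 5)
Your proposal is correct and takes essentially the same route as the paper's proof: both arguments reduce to removing the rightmost column of a block of at least $c$ equal-length rows as a vertical rimhook of length $c$ and invoking Proposition~\ref{removing rimhooks}, the only difference being bookkeeping---the paper removes one such strip per step (so $m_d\mapsto m_d-c$ and $m_{d-1}\mapsto m_{d-1}+c$, visibly leaving $\nu$ unchanged) and inducts on $|\lambda|$ via a minimal counterexample, whereas you bundle $d$ strips into the deletion of $c$ whole copies of the part $d$ and induct on $\xi$. Two cosmetic points that do not affect validity: deleting $c$ copies of $d$ gives $|\xi^\circ|=|\xi|-d$ (one fewer \emph{part}), not $|\xi|-1$, though the induction still terminates; and your ``bottom $c$ rows''/``row just above'' phrasing uses the convention opposite to the paper's (where row $1$ is the lowest and longest, so the removable strip sits at the \emph{top} of the block, matching the paper's ``rightmost top edge'' description)---in either convention it is the $c$ rows at the short end of the block that are shortened.
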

\begin{proof}
Suppose the proposition is false for some $\lambda$ of minimal possible size. Then, we must have $\lambda\not=\nu$, so $\lambda$ must have some part of some size $d$ repeated at least $c$ times.
The rightmost column of the rectangle of width $d$ and height $c$ which has all right edges and the rightmost top edge in the boundary of $\lambda$ is a rimhook of size $c$. Let $\lambda'$ be the partition formed by deleting this rimhook. Then, $\lambda'$ has $c$-core $\mu$ and $G_c(\lambda')=(\xi',\nu)$ for some $\xi'.$ So, since $\lambda'$ is smaller than $\lambda$, $\nu\in\Par^c_{\mu}$.
\end{proof}

Therefore, $G_c$ restricts to a bijection $G_c|_{\Par^c_{\mu}}:\Par^c_{\mu}\to\Par\times (K_c\cap \Par^c_{\mu})$. This allows us to use $G_c$ to prove the following.


\begin{prop}
For a positive integer $c$ and a $c$-core $\mu$, the following product formula holds. \begin{equation}\label{GenFunNots}\sum_{\lambda\in K_c\cap\Par^c_\mu}q^{|\lambda|}=q^{|\mu|}\prod_{m\geq 1}\frac{1}{(1-q^{mc})^{c-1}}.\end{equation}
\end{prop}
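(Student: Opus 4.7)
The plan is to combine the bijection $G_c|_{\Par^c_\mu}$ from Proposition~\ref{Glaisherrefinestocores} with the generating function~\eqref{coresgen} for $\Par^c_\mu$ and the basic partition generating function~\eqref{justpartitions}, and then solve for the unknown sum.

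First, I would verify the weight-preservation property of $G_c$: if $G_c(\lambda)=(\xi,\nu)$, then
\begin{equation*}
|\lambda|=c|\xi|+|\nu|.
\end{equation*}
This is immediate from the defining relation $m_d(\lambda)=c\,m_d(\xi)+m_d(\nu)$ by multiplying by $d$ and summing over $d$. Geometrically, this matches Figure~\ref{G3 figure}: each rectangle of height $c$ extracted from $\lambda$ becomes a column of $\xi$ contributing $c$ times its width.

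Next, Proposition~\ref{Glaisherrefinestocores} says $G_c$ restricts to a bijection
\begin{equation*}
G_c|_{\Par^c_\mu}:\Par^c_\mu\longrightarrow\Par\times\bigl(K_c\cap\Par^c_\mu\bigr).
\end{equation*}
Combined with the weight formula, this bijection gives the factorisation
\begin{equation*}
\sum_{\lambda\in\Par^c_\mu}q^{|\lambda|}=\Bigl(\sum_{\xi\in\Par}q^{c|\xi|}\Bigr)\Bigl(\sum_{\nu\in K_c\cap\Par^c_\mu}q^{|\nu|}\Bigr).
\end{equation*}
Using~\eqref{justpartitions} with $q$ replaced by $q^c$ evaluates the first factor as $\prod_{m\geq 1}(1-q^{mc})^{-1}$, and~\eqref{coresgen} evaluates the left-hand side as $q^{|\mu|}\prod_{m\geq 1}(1-q^{mc})^{-c}$. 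Rearranging yields the claimed identity
\begin{equation*}
\sum_{\nu\in K_c\cap\Par^c_\mu}q^{|\nu|}=q^{|\mu|}\prod_{m\geq 1}\frac{1}{(1-q^{mc})^{c-1}}.
\end{equation*}

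I do not anticipate a real obstacle: the hard work has already been done in establishing that $G_c$ preserves $c$-cores (Proposition~\ref{Glaisherrefinestocores}) and in the prior generating-function computations. The only small thing to be careful about is making sure the weight-preservation step and the application of the bijection are stated cleanly, since we are using $G_c$ to split a known product into a product of a known factor and the unknown factor, rather than deriving the formula from scratch.
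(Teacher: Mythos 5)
Your proposal is correct and is essentially the paper's own proof: the same restriction $G_c|_{\Par^c_\mu}$, the same weight relation, the same factorisation of $\sum_{\lambda\in\Par^c_\mu}q^{|\lambda|}$ into the unknown factor times $\prod_{m\geq 1}(1-q^{mc})^{-1}$ via~\eqref{justpartitions}, and the same rearrangement against~\eqref{coresgen}. If anything, your version is slightly cleaner, since you state the weight identity $|\lambda|=c|\xi|+|\nu|$ consistently with the definition of $G_c$ (the paper's proof swaps the names of $\xi$ and $\nu$ relative to that definition, though the identity it derives is the same).
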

\begin{proof}
Let $\lambda\in\Par_c^\mu.$ Then $G^c_\mu$ bijectively maps $\lambda$ to a pair of partitions $(\xi,\nu)$ with $|\lambda|=|\xi|+c|\nu|,$ because each part of $\nu$ corresponds to $c$ parts of $\lambda$ of the same size. So,
\begin{equation}\label{Glaisher}
    \sum_{\lambda\in\Par^c_{\mu}}q^{|\lambda|}=\sum_{\xi\in K_c\cap\Par^c_\mu}q^{|\xi|}\times \sum_{\nu\in\Par}q^{c|\nu|}.
    \end{equation}
    
    Substituting~\eqref{justpartitions} and applying Proposition~\ref{coresandquotients} to~\eqref{Glaisher} gives 
    
    \begin{equation}
   q^{|\mu|}\prod_{m\geq 1}\frac{1}{(1-q^{mc})^c}=\sum_{\xi\in K_c\cap\Par^c_\mu}q^{|\xi|}\times \prod_{m\geq 1}\frac{1}{(1-q^{mc})},
    \end{equation}
    
   which rearranges to give~\eqref{GenFunNots}.\end{proof}
   
   We are now in a position to prove the following identity, which forms the base case for Proposition~\ref{4to3}. 
\begin{thm}\label{basic partition step}
For a fixed positive integer $c$,
\begin{equation}\label{base case}\sum_{\lambda\in\Par_{\mu}^c} q^{|\lambda|}t^{\lambda\divcparts}=q^{|\mu|}\prod_{i\geq 1}\frac{1}{(1-q^{ic})^{c-1}}\prod_{j\geq1}\frac{1}{1-q^{jc}t}.\end{equation}
\end{thm}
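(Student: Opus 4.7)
The plan is to prove Theorem~\ref{basic partition step} by pushing the identity through the bijection $G_c$ restricted to $\Par^c_\mu$ and then recognising each resulting factor as one of the generating functions already established.

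First, I would observe that the map $G_c|_{\Par^c_{\mu}}:\Par^c_{\mu}\to\Par\times(K_c\cap \Par^c_{\mu})$ is a bijection by Proposition~\ref{Glaisherrefinestocores}, and that it behaves very nicely with respect to both statistics appearing on the left-hand side. On one hand, if $G_c(\lambda)=(\xi,\nu)$ then each part of $\xi$ of size $d$ corresponds to a block of $c$ parts of $\lambda$ of size $d$, so $|\lambda|=c|\xi|+|\nu|$. On the other hand, from the very definition $m_d(\xi)=\lfloor m_d(\lambda)/c\rfloor$, we have
\begin{equation*}
\lambda\divcparts=\sum_{d\geq 1}\left\lfloor\frac{m_d(\lambda)}{c}\right\rfloor=\sum_{d\geq 1}m_d(\xi)=l(\xi).
\end{equation*}

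Second, applying this bijection factorises the generating function:
\begin{equation*}
\sum_{\lambda\in\Par^c_{\mu}}q^{|\lambda|}t^{\lambda\divcparts}
=\left(\sum_{\xi\in\Par}q^{c|\xi|}t^{l(\xi)}\right)\left(\sum_{\nu\in K_c\cap\Par^c_{\mu}}q^{|\nu|}\right).
\end{equation*}
Third, I would evaluate the two factors separately. Substituting $q\mapsto q^c$ in the identity \eqref{partitionswithparts} turns the first factor into $\prod_{j\geq 1}(1-q^{jc}t)^{-1}$, while \eqref{GenFunNots} identifies the second factor as $q^{|\mu|}\prod_{i\geq 1}(1-q^{ic})^{-(c-1)}$. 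Multiplying these together yields the right-hand side of \eqref{base case}.

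There is no serious obstacle here: the content of the theorem is entirely packaged into the combinatorics of $G_c$, which has already been set up, together with the two standard product formulas. The only mildly nontrivial piece is the identification $\lambda\divcparts=l(\xi)$, but this follows immediately from how $G_c$ is defined on multiplicity sequences. Everything else is a routine substitution.
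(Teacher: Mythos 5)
Your proposal is correct and is essentially the paper's own proof: both push the identity through the bijection $G_c|_{\Par^c_{\mu}}$, use $|\lambda|=c|\xi|+|\nu|$ together with $\lambda\divcparts=l(\xi)$ to factorise the sum, and then substitute \eqref{GenFunNots} and \eqref{partitionswithparts} (with $q\mapsto q^c$). The only difference is cosmetic: you keep the labelling of $(\xi,\nu)$ from the definition of $G_c$, whereas the paper's proof swaps which letter denotes the quotient and which the $K_c$-remainder.
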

\begin{proof} 
Let $\lambda\in\Par_c^\mu.$ Then $G^c_\mu$ bijectively maps $\lambda$ to a pair of partitions $(\xi,\nu)$ with $|\lambda|=|\xi|+c|\nu|,$ where each part of $\nu$ of size $d$ corresponds to a $d\times c$ rectangle in $\lambda$ contributing to $\lambda\divcparts$. So,

\begin{equation}\label{decomposition1}\sum_{\lambda\in\Par_{\mu}^c} q^{|\lambda|}t^{\lambda\divcparts}=\sum_{\xi\in K_c\cap\Par^c_\mu}q^{|\xi|}\times \sum_{\nu\in\Par}q^{c|\nu|}t^{l(\nu)}.\end{equation}

Substituting~\eqref{GenFunNots} and~\eqref{partitionswithparts} into~\eqref{decomposition1} gives~\eqref{base case}.
\end{proof}

\section{Further partition statistics}
\stepcounter{essaypart}
In this section we define the main partition statistics of interest, $h_{x,c}^+$ and $h_{x,c}^{-},$ where $x$ is a real parameter and $c$ is a positive integer. The main aim of this paper is to compute the distribution of the statistics $h_{x,c}^+$ and $h_{x,c}^{-}$ over $\Par^c_{\mu}$, given in Theorem~\ref{main theorem}. The previous section computed the distribution of $\lambda\divcparts$ over $\Par^c_{\mu}$, giving the right hand side in Theorem~\ref{main theorem}. In this section, we connect to $\lambda\divcparts$ by observing that $\lambda\divcparts=h^+_{0,c},$ and then sketch a framework for piecing together a family of involutions $I_{r,s,c}$ defined on $\Par^c_{\mu}$ to prove that the distribution $h_{x,c}^{\pm}$ over $\Par^c_{\mu}$ is independent of both $x$ and the sign. The rest of the paper will then construct the component bijections $I_{r,s,c}.$

In order to reduce the proof of Theorem~\ref{main theorem} to the construction of appropriate bijections $I_{r,s,c},$ we first prove that Theorem~\ref{main theorem} is implied by Theorem~\ref{h+ is h-}, which states that the $h_{x,c}^+$ and $h_{x,c}^-$ have the same distribution over $\Par^c_{\mu}$. Then, we introduce three other statistics $\middxc$, $\critminusxc$ and $\critplusxc$ and decompose $h_{x,c}^+$ and $h_{x,c}^-$ in terms of these other statistics. Finally, we outline sufficient conditions for the bijections $I_{r,s,c}$ to prove Theorem~\ref{h+ is h-} in terms of these three statistics.

We conclude the section by explaining how the main result of \cite{BFN} follows from Theorem~\ref{main theorem}.

\begin{defn}
For a partition $\lambda$, $x\in [0,\infty]$ and a fixed $c\in\mathbb{N}$, 
\begin{equation} h_{x,c}^+(\lambda)=\left|\left\{\square \in \lambda \,:  c\mid  h(\square) \text{ and } \frac{a(\square)}{l(\square)+1}\leq x<\frac{a(\square)+1}{l(\square)}\right\}\right|,\end{equation} and
\begin{equation}h_{x,c}^-(\lambda)=\left|\left\{\square \in \lambda \,:  c\mid  h(\square) \text{ and } \frac{a(\square)}{l(\square)+1}< x\leq\frac{a(\square)+1}{l(\square)}\right\}\right|.\end{equation}
We interpret a fraction with denominator $0$ as $+\infty$.
\end{defn}

Note that a box $\square$ contributes to $h_{0,c}^+$ if and only if $a(\square)=0$ and $c\mid (l(\square)+1).$ That is, $\square$ is the rightmost box in its row, and there is some $m$ such that the row containing $\square$ and exactly $mc-1$ rows to the above all have the same height. The number of such boxes is exactly $\lambda\divcparts$.

Similarly, $h_{\infty,c}^-(\lambda)=\bar{\lambda}\divcparts,$ where $\bar{\lambda}$ is the partition conjugate to $\lambda$. 

We are now in a position to state our main result.

\begin{thm}\label{main theorem}
For all $x\in[0,\infty)$ we have
\begin{equation}\sum_{\lambda\in\Par^c_{\mu}}q^{|\lambda|}t^{h_{x,c}^+(\lambda)}=q^{|\mu|}\prod_{i\geq 1}\frac{1}{(1-q^{ic})^{c-1}}\prod_{j\geq1}\frac{1}{1-q^{jc}t},\end{equation} and for all $x\in (0,\infty],$\begin{equation}\sum_{\lambda\in\Par^c_{\mu}}q^{|\lambda|}t^{h_{x,c}^-(\lambda)}=q^{|\mu|}\prod_{i\geq 1}\frac{1}{(1-q^{ic})^{c-1}}\prod_{j\geq1}\frac{1}{1-q^{jc}t}.\end{equation}\end{thm}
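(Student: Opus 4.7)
The plan is to deduce Theorem~\ref{main theorem} from Theorem~\ref{basic partition step} (which handles the case $x=0$) together with the rational-slope symmetry Theorem~\ref{h+ is h-}, by showing that the generating function for $h_{x,c}^+$ on $\Par^c_{\mu}$ is independent of $x\in[0,\infty)$.

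First I would observe that $h_{0,c}^+(\lambda)=\lambda\divcparts$. A box $\square$ contributes to $h_{0,c}^+$ precisely when $a(\square)/(l(\square)+1)\leq 0$, forcing $a(\square)=0$, and when $c\mid h(\square)=l(\square)+1$; these are exactly the boxes sitting at the top of a maximal run of $c$ equal-length rows that are counted by $\lambda\divcparts$. Hence Theorem~\ref{basic partition step} immediately yields the desired product formula at $x=0$.

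Next I would run the Loehr--Warrington-style step-function argument. For each fixed $\lambda$, the maps $x\mapsto h_{x,c}^{\pm}(\lambda)$ are step functions on $[0,\infty)$ with jumps contained in the finite set of rationals
\begin{equation*}
\left\{a(\square)/(l(\square)+1),\;(a(\square)+1)/l(\square):\square\in\lambda,\;c\mid h(\square)\right\},
\end{equation*}
and $h_{x,c}^+(\lambda)=h_{x,c}^-(\lambda)$ at all other $x$. A direct check of the defining inequalities shows that at a critical rational $x_0$ and for all sufficiently small $\varepsilon>0$,
\begin{equation*}
h_{x_0-\varepsilon,c}^+(\lambda)=h_{x_0,c}^-(\lambda)\quad\text{and}\quad h_{x_0+\varepsilon,c}^+(\lambda)=h_{x_0,c}^+(\lambda).
\end{equation*}
Restricting to $\lambda\in\Par^c_\mu(n)$, the critical rationals arising from any such $\lambda$ form a finite set, so the generating function $\sum_\lambda q^{|\lambda|}t^{h_{x,c}^+(\lambda)}$ is a finite step function of $x$ whose jumps all lie at critical rationals. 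It is constant across $x_0$ precisely when $h_{x_0,c}^+$ and $h_{x_0,c}^-$ are equidistributed on $\Par^c_\mu(n)$, which is exactly Theorem~\ref{h+ is h-}. Letting $n$ vary and combining with the base case gives the $h^+$ half of Theorem~\ref{main theorem}.

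Finally, the $h^-$ half follows by a parallel argument: for any $x\in(0,\infty]$ not critical for any $\lambda\in\Par^c_\mu(n)$ (such $x$ are dense), $h_{x,c}^+(\lambda)=h_{x,c}^-(\lambda)$ for every such $\lambda$, so the two generating functions coincide. A symmetric step-function argument using $h_{x_0+\varepsilon,c}^-=h_{x_0,c}^+$ and $h_{x_0-\varepsilon,c}^-=h_{x_0,c}^-$ shows the $h^-$ generating function is constant in $x\in(0,\infty]$, hence equal to the $h^+$ generating function on all of its domain. The main obstacle is Theorem~\ref{h+ is h-}: constructing the multigraph $M_{r,s,c}$ that refines the Loehr--Warrington multigraph while tracking the $c$-core, proving that $\lambda\mapsto M_{r,s,c}(\lambda)$ remembers $\middxc(\lambda)$, $\critplusxc+\critminusxc$ and the $c$-core, and exhibiting an involution $I_{r,s,c}$ preserving $M_{r,s,c}$ that exchanges $\critplusxc$ and $\critminusxc$ is where essentially all of the combinatorial difficulty lies.
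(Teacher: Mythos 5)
Your proposal is correct and is essentially the paper's own argument: Proposition~\ref{4to3} performs exactly this reduction, cutting $[0,\infty]$ at the finitely many critical rationals for each $n$, identifying $h^{+}_{x,c}$ across each open interval with $h^{-}$ at its right endpoint and $h^{+}$ at its left endpoint, invoking Theorem~\ref{h+ is h-} at each critical rational, and concluding via $h^{+}_{0,c}(\lambda)=\lambda\divcparts$ together with Theorem~\ref{basic partition step}. Your step-function phrasing and your separate treatment of the $h^{-}$ half are only cosmetic variants of the paper's chain of equalities over $\delta\in\{+,-\}$.
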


Proposition~\ref{4to3} shows that Theorem~\ref{main theorem} is a consequence of the following result.

\begin{thm}\label{h+ is h-}
For all positive rational numbers $x$ and all integers $n\geq0$ we have \begin{equation}\sum_{\lambda\in\Par_\mu^c(n)}t^{h_{x,c}^+(\lambda)}=\sum_{\lambda\in\Par_\mu^c(n)}t^{h_{x,c}^-(\lambda)}.\end{equation}
\end{thm}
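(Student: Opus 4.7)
The plan is to follow the roadmap the introduction has already sketched: reduce the equidistribution statement to the existence of a sign-reversing involution on $\Par^c_\mu(n)$ which is built from a refined multigraph invariant. Write $x=r/s$ in lowest terms. I would first fix a decomposition $h^{\pm}_{x,c} = \middxc + \critpmxc$, where $\middxc$ counts the ``middle'' boxes (those for which $a(\square)/(l(\square)+1) < x < (a(\square)+1)/l(\square)$ strictly, plus the hook-divisibility condition) and $\critpmxc$ counts the boxes at the two kinds of critical slope. Under this decomposition, to prove Theorem~\ref{h+ is h-} it suffices to produce an involution $I_{r,s,c} : \Par^c_\mu(n) \to \Par^c_\mu(n)$ which preserves $|\lambda|$, preserves the $c$-core, preserves $\middxc$, and swaps $\critplusxc$ with $\critminusxc$.

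The next step is to construct the refined invariant that will control the involution. Loehr--Warrington build a multigraph $M_{r,s}(\lambda)$ whose complete-circuit structure records the positions of hands and feet of inversions with a given slope; I would form $M_{r,s,c}(\lambda)$ as the fibre product of $M_{r,s}(\lambda)$ with the $c$-abacus graph $q_c(b(\lambda))$ in the category of SE directed multigraphs with bijective edge maps. Concretely this just refines vertices by the residue class modulo $c$ and so remembers, for each edge, both the slope data Loehr--Warrington use and the $c$-arrival word data from Section 2. By Corollary~\ref{keypreservecore} and Proposition~\ref{c-hook count}, any operation on $\lambda$ that reshuffles edges within a single fibre of $q_c$ preserves the $c$-core and can only change hook counts through boxes whose hook is divisible by $c$.

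I would then define $I_{r,s,c}$ on each fibre of the map $\lambda\mapsto M_{r,s,c}(\lambda)$ by mimicking Loehr--Warrington's edge-swap: one reads off the ``critical'' hand-foot pairs of slope exactly $r/s$ and flips their roles, which by construction swaps the boxes contributing to $\critplusxc$ with those contributing to $\critminusxc$. The key inputs are (i) the multigraph $M_{r,s,c}(\lambda)$ records $\middxc(\lambda)$ and $\critplusxc(\lambda)+\critminusxc(\lambda)$, and (ii) the fibres of $\lambda\mapsto M_{r,s,c}(\lambda)$ carry a natural involutive action exchanging $\critplusxc$ and $\critminusxc$ while preserving $\middxc$. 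Since $I_{r,s,c}$ permutes edges only within fibres of $q_c$, Corollary~\ref{keypreservecore} gives $c$-core preservation automatically, and $|\lambda|$ is preserved because edge multiplicities (the number of $S$'s and $E$'s in each residue class) are preserved.

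The main obstacle is proving (i): that passing from $\lambda$ to $M_{r,s,c}(\lambda)$ does not lose the information needed to recover $\middxc$ and $\critplusxc+\critminusxc$. The map is not in general injective, so one cannot just read these statistics off a single $\lambda$. I would handle this exactly as the introduction signals Section 4 does: define the total order $<_{r,s,c}$ on partitions of fixed $c$-core, identify a sparse family of base partitions $\lambda_{r,s,k}$ where the map $\lambda\mapsto M_{r,s,c}(\lambda)$ is injective (Proposition~\ref{accumulation point}), and then show by induction on $<_{r,s,c}$ that taking successor changes both $\lambda$ and $M_{r,s,c}(\lambda)$ in parallel, controlled ways that change $\middxc$ and $\critplusxc+\critminusxc$ by computable amounts depending only on the multigraph (Proposition~\ref{acc points are useful}). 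Once these graph-theoretic formulas are in hand, the base case $x=0$ supplied by Theorem~\ref{basic partition step} combined with the involution $I_{r,s,c}$ propagates the equidistribution to all positive rationals $x=r/s$, proving Theorem~\ref{h+ is h-}.
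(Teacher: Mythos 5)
Your proposal follows essentially the same route as the paper: decompose $h^{\pm}_{x,c}=\middxc+\critpmxc$, build $M_{r,s,c}$ as the simultaneous refinement of Loehr--Warrington's multigraph and the $c$-abacus, show via the order $<_{r,s,c}$ and the accumulation points $\lambda_{r,s,k}$ (Propositions~\ref{accumulation point} and~\ref{acc points are useful}) that $M_{r,s,c}$ determines $|\lambda|$, the $c$-core, $\middxc$ and $\critplusxc+\critminusxc$, and define an involution within each fibre of $\lambda\mapsto M_{r,s,c}(\lambda)$ swapping $\critplusxc$ and $\critminusxc$ --- exactly the criteria of Proposition~\ref{bijection properties} as executed in Sections 4--6. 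One small correction: Theorem~\ref{basic partition step} plays no role in Theorem~\ref{h+ is h-} itself (it enters only when deducing Theorem~\ref{main theorem} via Proposition~\ref{4to3}), since the equidistribution at a fixed rational $x=r/s$ follows directly from the single involution $I_{r,s,c}$; note also that the exchange of $\critplusxc$ and $\critminusxc$ does not hold ``by construction'' but is the technical heart of the argument (the first-arrival-tree and winding-number analysis culminating in Proposition~\ref{critexchange}).
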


\subsection{Reducing to Theorem~\ref{h+ is h-}}
\begin{prop}\label{4to3}
Theorem~\ref{h+ is h-} implies Theorem~\ref{main theorem}.
\end{prop}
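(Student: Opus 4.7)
The plan is to work coefficient-by-coefficient in $q$, turning Theorem~\ref{main theorem} into a continuity-and-equality statement about each coefficient separately. For each integer $n\geq 0$, set
\begin{equation*}
f^{\pm}_{n}(x) := [q^{n}] \sum_{\lambda\in\Par^{c}_{\mu}} q^{|\lambda|}\, t^{h^{\pm}_{x,c}(\lambda)},
\end{equation*}
a polynomial in $t$ depending on the real parameter $x$, and let $R_n$ denote the coefficient of $q^n$ in the product on the right-hand side of Theorem~\ref{main theorem}. The base case is already in hand: for $x = 0$, the inequality $a(\square)/(l(\square)+1) \leq 0$ forces $a(\square) = 0$, so $h^{+}_{0,c}(\lambda) = \lambda\divcparts$ and Theorem~\ref{basic partition step} gives $f^{+}_{n}(0) = R_{n}$.

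Next I would set down the regularity that lets the hypothesis sew things together. For a fixed $\lambda$, the set of $x$ for which a given box $\square$ is counted by $h^{+}_{x,c}$ is exactly the half-open interval $\bigl[\tfrac{a(\square)}{l(\square)+1},\, \tfrac{a(\square)+1}{l(\square)}\bigr)$, while for $h^{-}_{x,c}$ it is $\bigl(\tfrac{a(\square)}{l(\square)+1},\, \tfrac{a(\square)+1}{l(\square)}\bigr]$. Summing over the boxes, $x \mapsto h^{+}_{x,c}(\lambda)$ is a right-continuous step function and $x \mapsto h^{-}_{x,c}(\lambda)$ is a left-continuous step function, each with jumps only at rationals of the form $a/(l+1)$ or $(a+1)/l$; and at every irrational $x$ the strict and non-strict inequalities agree, forcing the pointwise equality $h^{+}_{x,c}(\lambda) = h^{-}_{x,c}(\lambda)$. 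Summing over the \emph{finite} collection of $\lambda \in \Par^{c}_{\mu}$ with $|\lambda| = n$ transfers all three properties to $f^{\pm}_{n}$.

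The central step is to conclude $f^{+}_{n} \equiv f^{-}_{n}$ on $(0, \infty)$: at irrational $x$ this is the pointwise identity above, and at positive rational $x$ it is exactly Theorem~\ref{h+ is h-}. Thus $f^{+}_{n}$, which is right-continuous, coincides with the left-continuous $f^{-}_{n}$ throughout $(0, \infty)$, so it is continuous there; but a continuous step function on the connected interval $(0, \infty)$ is constant. Right-continuity of $f^{+}_{n}$ at $x = 0$ (there is a uniform $\varepsilon_{n} > 0$ strictly smaller than every positive critical value coming from the finitely many $\lambda$ with $|\lambda| = n$) extends the constancy to $[0, \infty)$, with value pinned to $R_{n}$ by the base case. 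A symmetric argument at $x = \infty$, using that once $x$ exceeds every finite $(a(\square)+1)/l(\square)$ one has $h^{-}_{x,c}(\lambda) = h^{-}_{\infty,c}(\lambda)$, covers $f^{-}_{n}$ on $(0, \infty]$. The deduction itself is almost entirely bookkeeping; the only real subtlety is marrying the one-sided continuities of $h^{\pm}$ correctly to the strict-vs-non-strict inequalities in their definitions, so that Theorem~\ref{h+ is h-} at the rationals dovetails with the automatic equality at the irrationals. The substantive content of Theorem~\ref{main theorem} therefore lies entirely in Theorem~\ref{h+ is h-}, whose proof is the business of the rest of the paper.
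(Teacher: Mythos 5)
Your proof is correct and is essentially the paper's argument in different clothing: the paper enumerates the finitely many critical rationals $C(n)$, decomposes $[0,\infty]$ into the intervening open intervals, and chains the equalities $H^+_{r_{j-1},c}(n)=H^{\delta}_{x,c}(n)=H^-_{r_j,c}(n)$ with the sign-flip from Theorem~\ref{h+ is h-} at each critical rational, which is exactly your one-sided-continuity/step-function bookkeeping, pinned at $x=0$ by the same identification $h^+_{0,c}=\lambda\divcparts$ and Theorem~\ref{basic partition step}. Your explicit treatment of the endpoint $x=\infty$ for $h^-_{x,c}$ is, if anything, slightly more careful than the paper's, but the underlying ingredients and their roles are identical.
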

\begin{proof}
For $x\in[0,\infty)$, $c\in\mathbb{N}$ and $\delta\in\{+,-\}$ define $$H_{x,c}^\delta(n)=\sum_{\lambda\in\Par^c_{\mu}(n)}t^{h_{x,c}^\delta(\lambda)}.$$ Suppose $H_{x,c}^\delta(n)$ is independent of both $x$ and $\delta$. Then $$H_{x,c}^\delta(n)=H_0^+(n)=\sum t^{h_{0,c}^+(\lambda)}=\sum t^{\lambda\divcparts}.$$ Theorem~\ref{main theorem} then follows immediately by multiplying by $q^n$, adding over all $n\geq 0$, and applying Theorem~\ref{basic partition step}. So, it suffices to prove that Theorem~\ref{h+ is h-} implies that $H_{x,c}^\delta(n)$ is independent of $x$ and $\delta$.

For an integer $n$, we call a positive rational number $r$ a \textit{critical rational for} $n$ if there is a partition $\mu\in\Par(n)$ and a box $\square\in d(\mu)$ such that $ h(\square)$ is divisible by $c$, and $\frac{a(\square)}{l(\square)+1}=r$ or $\frac{a(\square)+1}{l(\square)}=r.$ By convention, $0$ and $+\infty$ are regarded as critical rationals for all $n$.

We denote the set of all critical rationals for $n$ by $C(n).$ Since there are finitely many partitions of $n$ each containing finitely many boxes in their diagrams, $C(n)$ is finite for all $n$. For a fixed $n$, write $C(n)=\{0=r_0<r_1<\cdots<r_{k-1}<r_k=+\infty\}.$ Define open intervals $I_j=(r_{j-1},r_j)$ for each $1\leq j\leq k$. Then $[0,\infty]$ decomposes into a disjoint union $$[0,\infty]=I_1\cup I_2\cup\cdots\cup I_k\cup C(n).$$

Let $x,x'$ be two elements of the same interval $I_j$ and let $\delta,\delta'\in\{+,-\}.$ Suppose $\lambda$ is any partition of $n$. Since there are no critical rationals between $x$ and $x'$, $\square\in d(\lambda)$ contributes to $h_{x,c}^\delta(\lambda)$ if and only if it contributes to $h_{x',c}^{\delta'}(\lambda).$ So, $t^{h_{x,c}^\delta(\lambda)}=t^{h_{x',c}^{\delta'}(\lambda)}$. Adding over all $\lambda$, we see that if $x,x'\in I_j$, \begin{align} H_{x,c}^\delta(n)=H_{x',c}^{\delta'}(n).\end{align}

Similarly, for all $x\in I_j$, \begin{align}\label{two}H_{r_{j-1},c}^+(n)=H_{x,c}^\delta(n)=H_{r_j,c}^-(n).\end{align}

On the other hand, Theorem~\ref{h+ is h-} implies that \begin{align}\label{three}H_{r_j,c}^+(n)=H_{r_j,c}^-(n).\end{align}

Therefore, for $\delta,\delta'\in\{+,-\}$ and $y\geq y'$ by applying a chain of these equalities starting with $H_{y,c}^\delta(n),$ one can reduce $y$ to a critical rational and change $\delta$ to a $+$ using~\eqref{two}, or using~\eqref{three} if $y$ is already a critical rational. Then one may iteratively apply~\eqref{three} and~\eqref{two} to change $\delta$ to a $-$, and then reduce $y$ to the next lowest critical rational and change $\delta$ back to a $+$, until an equality $H_{y,c}^{\delta}(n)=H_{r_j,c}^-(n)$ is obtained for $r_{j-1}\leq y'\leq r_j$. Then, applying~\eqref{two} again with $x=y'$ (and~\eqref{three} to flip the sign of $\delta$ if $y=r_{j-1}$ and $\delta'=-$), one obtains $H_{y,c}^\delta(n)=H_{y',c}^{\delta'}(n).$ \end{proof}

\subsection{Reducing to a symmetry property}

In the case $x$ is rational, where $h_{x,c}^+$ and $h_{x,c}^-$ may differ, it is useful to separate the boxes that contribute to both statistics from those that contribute to just one. In order to do this, we define the following statistics. 

\begin{defn} For $x=\frac{r}{s}$ a rational number, we have
\begin{equation}\critplusxc(\lambda)=\left|\left\{\square\in\lambda: c\mid  h(\square)\text{ and } \frac{a(\square)}{l(\square+1)}=x\right\}\right|,\end{equation}
\begin{equation}\critminusxc(\lambda)=\left|\left\{\square\in\lambda: c\mid  h(\square)\text{ and } \frac{a(\square)+1}{l(\square)}=x\right\}\right|,\end{equation}
\begin{equation}\middxc(\lambda)=\left|\left\{\square\in\lambda: c\mid  h(\square)\text{ and }-s<sa(\square)-rl(\square)<r
\right\}\right|.\end{equation}
\end{defn}

The next proposition shows that a bijection satisfying some constraints on its behaviour with respect to these statistics will give a bijective proof of Theorem~\ref{h+ is h-}.

\begin{prop}\label{bijection properties}
Let $r,s,c$ be positive integers with $(r,s)=1$ and let $x=\frac{r}{s}$. Suppose there exists a bijection $I_{r,s,c}:\Par^c_{\mu}\to\Par^c_\mu$ such that
\begin{enumerate}
    \item $|\lambda|=|I_{r,s,c}(\lambda)|$,
    \item $\middxc(\lambda)=\middxc(I_{r,s,c}(\lambda))$,
    \item $\critplusxc(\lambda)+\critminusxc(\lambda)=\critplusxc(I_{r,s,c}(\lambda))+\critminusxc(I_{r,s,c}(\lambda))$,
    \item $\critplusxc(\lambda)=\critminusxc(I_{r,s,c}(\lambda)).$
\end{enumerate}
Then, Theorem~\ref{h+ is h-} is true.
\end{prop}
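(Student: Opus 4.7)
The plan is to establish the pointwise decompositions
\[
h_{x,c}^+(\lambda) = \middxc(\lambda) + \critplusxc(\lambda), \qquad h_{x,c}^-(\lambda) = \middxc(\lambda) + \critminusxc(\lambda),
\]
and then use properties (1), (2) and (4) of $I_{r,s,c}$ to match the two sums in Theorem~\ref{h+ is h-} term by term. Once the decompositions are in place, the argument is a few lines.

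First I would verify the decompositions. Fix $x = r/s$ with $(r,s) = 1$ and let $\square$ be a box with $c \mid h(\square)$. Clearing denominators shows that the middle condition $-s < sa(\square) - rl(\square) < r$ is equivalent to the strict inequality $\frac{a(\square)}{l(\square)+1} < x < \frac{a(\square)+1}{l(\square)}$ (with zero denominators interpreted as $+\infty$). So $\square$ satisfies exactly one of three mutually exclusive conditions: $\frac{a(\square)}{l(\square)+1} = x$ (counted by $\critplusxc$), $\frac{a(\square)}{l(\square)+1} < x < \frac{a(\square)+1}{l(\square)}$ (counted by $\middxc$), or $\frac{a(\square)+1}{l(\square)} = x$ (counted by $\critminusxc$); that the first and last are disjoint follows from $\frac{a}{l+1} = \frac{a+1}{l}$ forcing $a + l + 1 = 0$, which is impossible for nonnegative $a, l$. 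Since the defining inequality of $h_{x,c}^+$ is $\frac{a(\square)}{l(\square)+1} \leq x < \frac{a(\square)+1}{l(\square)}$, it picks out precisely the middle and $\critplusxc$ boxes, yielding the first decomposition; the second follows by the symmetric argument.

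Next I would chain properties (2) and (4): for any $\lambda \in \Par^c_\mu$, setting $\lambda' = I_{r,s,c}(\lambda)$,
\[
h_{x,c}^+(\lambda) = \middxc(\lambda) + \critplusxc(\lambda) = \middxc(\lambda') + \critminusxc(\lambda') = h_{x,c}^-(\lambda').
\]
Property (1) together with the assumption that $I_{r,s,c}$ is a bijection of $\Par^c_\mu$ implies it restricts to a bijection $\Par^c_\mu(n) \to \Par^c_\mu(n)$ for each $n$, so the change of variables $\lambda \mapsto I_{r,s,c}(\lambda)$ transforms the left-hand sum of Theorem~\ref{h+ is h-} into its right-hand sum.

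The only real obstacle in proving Theorem~\ref{h+ is h-} is thus deferred to the construction of the bijection $I_{r,s,c}$ with the stated properties, which is the content of later sections; this proposition merely repackages the theorem as a bijective statement. I note that property (3) is not strictly needed for this reduction — it is implied by (4) when $I_{r,s,c}$ is an involution — but is presumably included as a convenient invariant for the inductive arguments used later to establish (4).
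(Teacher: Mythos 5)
Your proposal is correct and takes essentially the same route as the paper: both proofs establish the decompositions $h^{\pm}_{x,c}=\middxc+\critpmxc$ by rearranging the inequalities defining $\middxc$, and then use the bijection's properties to exchange $h^{+}_{x,c}$ and $h^{-}_{x,c}$ while preserving size, summing over $\Par^c_\mu(n)$. Your side remark is also accurate: the paper invokes properties (3) and (4) together only to obtain the two-sided exchange $\critminusxc(\lambda)=\critplusxc(I_{r,s,c}(\lambda))$, whereas the one-sided identity $h^{+}_{x,c}(\lambda)=h^{-}_{x,c}(I_{r,s,c}(\lambda))$, which needs only (2) and (4), already yields Theorem~\ref{h+ is h-}.
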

\begin{proof}
Assume that $I_{r,s,c}$ exists. Then, property 3 and 4 together imply that 
\begin{equation}
    \critminusxc(\lambda)= \critplusxc(I_{r,s,c}(\lambda))
\end{equation}
so $I_{r,s,c}$ exchanges $\critplusxc$ and $\critminusxc$ whilst preserving $|\lambda|$ and $\middxc.$

Note that a box $\square$ contributes to $\middxc$ if and only if $-s<sa(\square)-rl(\square)<r$ and the $c\mid h(\square)$. Adding $s+rl(\square)$, and dividing by $sl(\square)$, the left inequality is equivalent to
\begin{equation}\label{leftrearrange}
    \frac{a(\square)+1}{l(\square)}>x.
\end{equation}
Similar manipulation of the right inequality together with \eqref{leftrearrange} shows that $\square$ contributes to $\middxc$ if and only if 
\begin{equation}\label{midequiv}
    \frac{a(\square)}{l(\square)+1}<x<\frac{a(\square)+1}{l(\square)}.
\end{equation}
So, comparing the definitions of $\critminusxc,$ $\critplusxc,$ $h_{x,c}^{+},$ $h_{x,c}^{-}$ and ~\eqref{midequiv},
\begin{equation}h^+_{x,c}(\lambda)=\middxc(\lambda)+\critplusxc(\lambda)\end{equation}
and
\begin{equation}h^-_{x,c}(\lambda)=\middxc(\lambda)+\critminusxc(\lambda).\end{equation}

So, $I_{r,s,c}$ exchanges $h_{x,c}^+(\lambda)$ and $h_{x,c}^{-}(\lambda)$ whilst preserving $|\lambda|,$ and hence proves Theorem~\ref{h+ is h-}.\end{proof}

\subsection{Connecting to Buryak-Feigin-Nakajima}
When $c$ is divisible by $r+s$, Theorem~\ref{main theorem} implies the following product formula. In the case $r+s=c$, this is the main combinatorial result of \cite{BFN}.
\begin{corol}
Let $r$ and $s$ be coprime integers, let $x=\frac{r}{s}$ and let $r+s\mid c$. Then
\begin{equation}
    \sum_{\lambda\in\Par}q^{|\lambda|}t^{\critplusxc(\lambda)}=\prod_{\substack{i\geq 1\\ c\nmid i}}\frac{1}{1-q^i}\prod_{i\geq 1}\frac{1}{1-q^{ic}t}.
\end{equation}
\end{corol}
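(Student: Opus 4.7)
The plan is to reduce the statement to Theorem~\ref{main theorem} summed over all $c$-cores, via the observation that the divisibility hypothesis $r+s \mid c$ collapses $h_{x,c}^+$ to $\critplusxc$. Concretely, for any box $\square$ with $c \mid h(\square)$, write $a = a(\square)$, $l = l(\square)$, and use $a+l+1 = h(\square)$ divisible by $c$, hence divisible by $r+s$. Then
\begin{equation*}
sa - rl = (r+s)a - r(a+l) = (r+s)a - r(h(\square)-1) \equiv r \pmod{r+s}.
\end{equation*}
The contribution condition for $h_{x,c}^+$ translates (as in the proof of Proposition~\ref{bijection properties}) to $-s < sa - rl \leq r$. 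This interval has length exactly $r+s$, so it contains exactly one representative of $r$ modulo $r+s$, namely $r$ itself. Hence $sa - rl = r$, which is precisely the condition $\frac{a}{l+1} = \frac{r}{s}$ defining $\critplusxc$. So on such $\square$, $h_{x,c}^+(\lambda) = \critplusxc(\lambda)$ identically.

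Next I would partition $\Par$ by $c$-core and apply Theorem~\ref{main theorem}:
\begin{equation*}
\sum_{\lambda\in\Par}q^{|\lambda|}t^{\critplusxc(\lambda)}
= \sum_{\mu \text{ a } c\text{-core}} \sum_{\lambda \in \Par^c_\mu} q^{|\lambda|}t^{h_{x,c}^+(\lambda)}
= \left(\sum_{\mu} q^{|\mu|}\right)\prod_{i\geq 1}\frac{1}{(1-q^{ic})^{c-1}}\prod_{j\geq1}\frac{1}{1-q^{jc}t}.
\end{equation*}
It remains to identify $\sum_{\mu} q^{|\mu|}$ (sum over all $c$-cores). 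Using the disjoint decomposition $\Par = \bigsqcup_\mu \Par^c_\mu$ together with equation~\eqref{coresgen} gives
\begin{equation*}
\prod_{m\geq 1}\frac{1}{1-q^m} = \sum_{\lambda \in \Par}q^{|\lambda|} = \sum_\mu q^{|\mu|}\prod_{m \geq 1}\frac{1}{(1-q^{mc})^c},
\end{equation*}
so $\sum_\mu q^{|\mu|} = \prod_{m\geq 1}(1-q^{mc})^c / \prod_{m\geq 1}(1-q^m)$.

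Substituting and simplifying, the factor $(1-q^{mc})^c$ cancels one copy of $(1-q^{mc})^{-(c-1)}$ leaving a single $(1-q^{mc})$ in the numerator, and the ratio $\prod_m(1-q^{mc})/\prod_m(1-q^m)$ collapses to $\prod_{c \nmid i}(1-q^i)^{-1}$ after cancelling the $c$-divisible factors in the denominator. The resulting expression is precisely the claimed product. The main conceptual step is the collapse $h_{x,c}^+ = \critplusxc$ under $r+s \mid c$; the rest is bookkeeping with standard generating functions.
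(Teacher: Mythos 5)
Your proposal is correct and follows essentially the same route as the paper: the paper phrases the collapse as $\operatorname{mid}_{x,c}(\lambda)=0$ under $r+s\mid c$ (via the observation that $s\,h(\square)$ would have to lie strictly between consecutive multiples of $r+s$), which combined with $h^+_{x,c}=\operatorname{mid}_{x,c}+\operatorname{crit}^+_{x,c}$ is the same arithmetic as your residue argument $sa-rl\equiv r\pmod{r+s}$ pinning $sa-rl=r$ in the half-open interval $(-s,r]$. The remainder — summing Theorem~\ref{main theorem} over all $c$-cores and cancelling against the core generating function $\prod_{i\geq 1}(1-q^{ic})^c/(1-q^i)$ obtained from~\eqref{coresgen} — matches the paper's computation exactly.
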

\begin{proof} First we show that under the assumption that $r+s\mid c$, then for any partition $\lambda$, $\middxc(\lambda)=0.$
Suppose $\square$ were to contribute to $\middxc(\lambda),$ then $\square$ would have to satisfy \begin{equation}
    -s<sa(\square)-rl(\square)<r.
\end{equation} Adding $rl+sl+s,$
\begin{equation}
    (r+s)l(\square)<s\left(a(\square)+l(\square)+1\right)<(r+s)\left(l(\square)+1\right)
\end{equation} 
However, the upper and lower bound are consecutive multiples of $r+s$, and therefore $s(a(\square)+l(\square)+1)$ cannot be a multiple of $r+s$, so by assumption cannot be a multiple of $c$. So, $c\nmid h(\square)$ so $\square$ cannot contribute to $\middxc(\lambda).$

So in this case $h_{x,c}^+(\lambda)=\critplusxc(\lambda)$ and Theorem~\ref{main theorem} becomes 
\begin{equation}
\sum_{\lambda\in\Par^c_{\mu}}q^{|\lambda|}t^{\critplusxc(\lambda)}=q^{|\mu|}\prod_{i\geq 1}\frac{1}{(1-q^{ic})^{c-1}}\frac{1}{1-q^{ic}t}.
\end{equation}
Summing both sides over all $c$-cores $\mu$ and applying Proposition~\ref{coresandquotients}, 
\begin{align}
\sum_{\lambda\in\Par}q^{|\lambda|}t^{\critplusxc(\lambda)}&=\prod_{i\geq 1}\frac{(1-q^{ic})^c}{1-q^{i}}\frac{1}{(1-q^{ic})^{c-1}}\frac{1}{1-q^{ic}t}\\
&=\prod_{i\geq 1}\frac{(1-q^{ic})}{1-q^{i}}\frac{1}{1-q^{ic}t}\\
&=\prod_{\substack{i\geq 1\\ c\nmid i}}\frac{1}{1-q^{i}}\prod_{i\geq 1}\frac{1}{1-q^{ic}t}.\end{align}
\end{proof}

\section{The multigraph \texorpdfstring{$M_{r,s,c}$}{Mrsc}}
\stepcounter{essaypart}
For the rest of the paper, $x=\frac{r}{s}$ is a rational number with $r$ coprime to $s$. In this section we take our first key step in the construction of the involution $I_{r,s,c}$. First, Proposition~\ref{motivate Mrsc} relates the statistics $\middxc,$ $\critminusxc$ and $\critplusxc$ to the boundary graph. We use this relationship to define a map from the boundary graph to a multigraph $M_{r,s,c}$ that picks out the information relevant to $\middxc$ and $\critplusxc$, much as the $c$-abacus tour does for the $c$-core. The rest of the section then outlines the method for proving that $M_{r,s,c}$ retains partition data, including the proofs that $M_{r,s,c}$ retains the $c$-core and the area. The proof it retains the area is a particularly easy example using the same methodology as used in the more technical proofs in Section 6, which check that $M_{r,s,c}$ retains $\middxc(\lambda)$ and $\critplusxc(\lambda)+\critminusxc(\lambda)$. 

 When we define $I_{r,s,c}$ as a bijection on partitions, we build into the definition that $I_{r,s,c}$ preserves $M_{r,s,c}(\lambda)$ for any partition $\lambda.$ So, together with these results it is immediate that $I_{r,s,c}$ does map $\Par^c_\mu$ to $\Par^c_\mu$ and satisfies hypothesis 1 in Proposition~\ref{bijection properties}. 

\begin{prop}\label{motivate Mrsc}
Let $\lambda$ be a partition and let $\square\in\lambda.$ Let $e_i$ be the foot of $\square$, departing from $(x_1-1,y_1)$ and arriving at $(x_1,y_1)$ and let $e_j$ be the hand of $\lambda$, departing from $(x_2,y_2+1)$ and arriving at $(x_2,y_2)$.  Let $t=r(y_1-y_2)+s(x_1-x_2)$. Then
\begin{enumerate}
\item $\square$ contributes to $\critplusxc$ if and only if $t=0$ and $x_1-y_1\equiv x_2-y_2\pmod{c}$;
\item $\square$ contributes to $\middxc$ if and only if  $0<t<r+s$ and $x_1-y_1\equiv x_2-y_2\pmod{c}$;
\item $\square$ contributes to $\critminusxc$ if and only if $t=r+s$ and $x_1-y_1\equiv x_2-y_2\pmod{c}.$
\end{enumerate}
\end{prop}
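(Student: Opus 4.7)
The plan is essentially a coordinate computation. One reads off $(x_1,y_1)$ and $(x_2,y_2)$ from the position of $\square$ and its arm/leg statistics, substitutes into the formula for $t$, and matches each of the three conditions defining $\critplusxc$, $\middxc$, $\critminusxc$ against the resulting expression.

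Concretely, if $\square$ has bottom-left corner at $(a,b)$, then its column contains $l(\square)+1$ boxes sitting between heights $b$ and $b+l(\square)+1$, and the foot is the east edge at the top of this column, so $(x_1,y_1)=(a+1,\,b+l(\square)+1)$. The hand is the south edge at the right end of the row containing $\square$, which contains $a(\square)+1$ boxes from column $a$ onward, so $(x_2,y_2)=(a+a(\square)+1,\,b)$. A direct substitution then gives
\begin{equation*}
t \;=\; r(y_1-y_2)+s(x_1-x_2) \;=\; r\bigl(l(\square)+1\bigr)-s\,a(\square) \;=\; r - \bigl(s\,a(\square)-r\,l(\square)\bigr),
\end{equation*}
and
\begin{equation*}
(x_2-y_2)-(x_1-y_1) \;=\; a(\square)+l(\square)+1 \;=\; h(\square).
\end{equation*}

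The second identity shows that the congruence $x_1-y_1\equiv x_2-y_2\pmod{c}$ is precisely $c\mid h(\square)$, handling the divisibility clause in all three parts at once. The first identity converts the remaining inequalities into inequalities on $t$: the equation $\frac{a(\square)}{l(\square)+1}=\frac{r}{s}$ rearranges to $s\,a(\square)-r\,l(\square)=r$, i.e. $t=0$; the strict inequalities $-s<s\,a(\square)-r\,l(\square)<r$ defining $\middxc$ become $0<t<r+s$; and $\frac{a(\square)+1}{l(\square)}=\frac{r}{s}$ rearranges to $s\,a(\square)-r\,l(\square)=-s$, i.e. $t=r+s$. There is no real obstacle here: the main step is setting up the coordinate dictionary carefully (in particular, remembering that the column of $\square$ has $l(\square)+1$ boxes, not $l(\square)$), after which each of the three cases reduces to a one-line algebraic manipulation.
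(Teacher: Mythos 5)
Your proposal is correct and matches the paper's argument in all essentials: both proofs are coordinate computations that express the arrival points of the foot and hand in terms of the bottom-left corner of $\square$, derive the key identity $sa(\square)-rl(\square)=r-t$, and then translate the three defining conditions of $\critplusxc$, $\middxc$, $\critminusxc$ into $t=0$, $0<t<r+s$, $t=r+s$ respectively. The only cosmetic difference is that you verify $c\mid h(\square)\iff x_1-y_1\equiv x_2-y_2\pmod{c}$ directly via $(x_2-y_2)-(x_1-y_1)=h(\square)$, whereas the paper cites its earlier Proposition~\ref{c-hooks are c-inversions} for the same fact.
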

\begin{proof} By the definition of index, $x_1-y_1=i$ and $x_2-y_2=j$. Let $\square \in \lambda$ have bottom left corner $(x_\square,y_\square)$.
By Proposition~\ref{c-hooks are c-inversions}, $\square$ has hook length divisible by $c$ if and only if $j\equiv i \pmod{c}$, i.e. $x_1-y_1\equiv x_2-y_2\pmod{c}.$ So, assume that $\square$ does have hook length divisible by $c$.

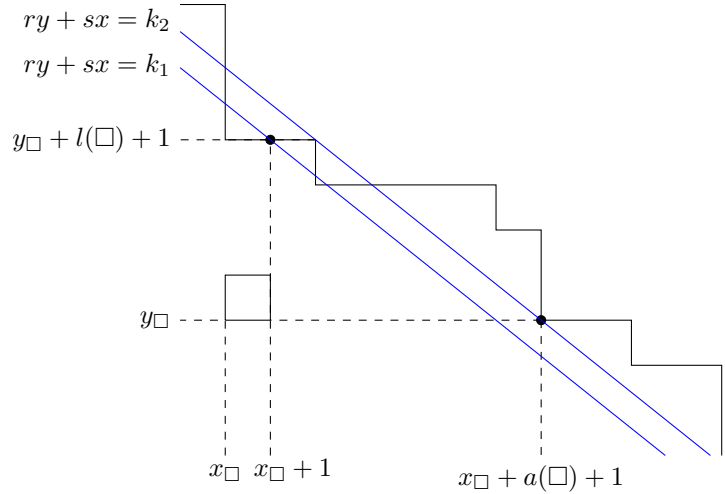
\begin{figure}[ht]
\begin{center}
\begin{tikzpicture}\begin{scope}[yscale=1,xscale=-1,rotate=90,scale=0.6]

\draw[black] (3,1)--(3,2)--(4,2)--(4,1)--(3,1);
\draw[dashed] (0,1)--(3,1)--(3,0);
\node[below] at (0,1) {$x_\square$};
\node[left] at (3,0) {$y_\square$};
\draw[dashed] (3,8)--(0,8);
\draw[dashed] (3,8)--(3,2);
\draw[fill=black] (3,8) circle[radius=0.1];
\draw[fill=black] (7,2) circle[radius=0.1];
\node[below] at (0,8) {$x_\square+a(\square)+1$};
\draw[dashed] (7,3)--(7,0);
\draw[dashed] (0,2)--(7,2);
\node[left] at (7,0) {$y_\square+l(\square)+1$};
\node[below] at (0.13,2.5) {$x_\square+1$};
\draw (0,12)--(2,12)--(2,10)--(3,10)--(3,8)--(4,8)--(5,8)--(5,7)--(6,7)--(6,4)--(6,3)--(7,3)--(7,1)--(10,1)--(10,0);
\draw[blue] (9.4,0)--(0,11.75);
\draw[blue] (8.6,0)--(0,10.75);
\node[left] at (8.65,0) {$ry+sx=k_1$};
\node[left] at (9.65,0) {$ry+sx=k_2$};
\end{scope}\end{tikzpicture}
\end{center}
\caption{The box $\square$ and the lines $ry+sx=k_1$ and $ry+sx=k_2$.}
\end{figure}

Let $k_1=ry_1+sx_1$ and $k_2=ry_2+sx_2$. Then, \begin{equation}\label{tk1k1}t=k_1-k_2,\end{equation} \begin{equation}\label{armlegbox1}s(x_\square+1)+r(y_\square+l(\square)+1)=k_1,\end{equation} and \begin{equation}\label{armlegbox2}s(x_\square+a(\square)+1)+ry_\square=k_2.\end{equation} Subtracting ~\eqref{armlegbox1} from ~\eqref{armlegbox2}, and substituting in ~\eqref{tk1k1} \begin{equation}sa(\square)-rl(\square)=r-t.\end{equation}

By definition, $\square$ contributes to $\critplusxc$ if and only if $sa(\square)-rl(\square)=r$, that is, when $t=0$, proving the first claim. Similarly, $\square$ contributes to $\middxc$ if and only if $-s<sa(\square)-rl(\square)<r$, or equivalently $0<t<r+s,$ proving the second claim.

Finally, note $sa(\square)-rl(\square)=-s$ if and only if $t=r+s$. 
\end{proof}

We define the multigraph $M_{r,s,c}(\lambda)$ accordingly.

\begin{defn}[$M_{r,s,c}$, $(r,s,c)$-tour] For a partition $\lambda$ the SE directed multigraph $M_{r,s,c}(\lambda)$ is obtained from $b(\lambda)$ by imposing the relation $\sim_{r,s,c}$ on the vertices, where $(x_1,y_1)\sim_{r,s,c} (x_2,y_2)$ if $ry_1+sx_1=ry_2+sx_2$ and $x_2-y_2\equiv x_1-y_1\pmod{c}$. Denote the equivalence class with $ry+sx=v$ and $x-y\equiv i\pmod{c}$ by $(v,[i]).$ Let $q_{r,s,c}:b(\lambda)\to M_{r,s,c}(\lambda)$ be the induced homomorphism. The $(r,s,c)$-tour of $M_{r,s,c}(\lambda)$ associated to $\lambda$ is $(q_{r,s,c}(e_i))_{i\in\mathbb{Z}}$. 
At each vertex $(v,[i])$, we count the number of east edges arriving at $(v,[i])$ in the $(r,s,c)$-tour and denote this quantity by $\Ein(v,[i])$. Similarly, we count the number of east edges departing from $(v,[i])$ in the $(r,s,c)$-tour and denote this quantity by $\Eout(v,[i])$. We define $\Sin(v,[i])$ and $\Sout(v,[i])$ analogously.
\end{defn}

Now, we explain give a useful way of drawing $M_{r,s,c}(\lambda)$ in the plane. First, we show Proposition~\ref{lattice points}, which says that when the plane is cut into strips of width $\lcm(c,r+s)$ by lines with $sx+ry$ constant, then there is a unique representative of each possible vertex of $M_{r,s,c}(\lambda)$ contained in the strip. 

\begin{prop}\label{lattice points}
If there is a lattice point $(x,y)$ satisfying both $sx+ry=v$ and $x-y\equiv i\pmod{c},$ then for any real number $m$ there is exactly one such lattice point satisfying the inequality $m\leq x-y < m+\lcm(c,r+s)$.
\end{prop}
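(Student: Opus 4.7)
The plan is to parametrise all solutions to $sx+ry=v$ and then impose the congruence modulo $c$ to get an arithmetic progression in $x-y$ whose common difference is exactly $\lcm(c,r+s)$. The result then follows because a half-open interval of length equal to the common difference contains exactly one term of such a progression.

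More concretely, fix a lattice point $(x_0,y_0)$ satisfying $sx_0+ry_0=v$ and $x_0-y_0\equiv i\pmod{c}$ (this exists by hypothesis). Since $\gcd(r,s)=1$, the set of integer solutions to $sx+ry=v$ is exactly
\begin{equation*}
\{(x_0+rk,\; y_0-sk) : k\in\mathbb{Z}\}.
\end{equation*}
Along this family, $x-y$ takes the value $(x_0-y_0)+(r+s)k$. The congruence condition $x-y\equiv i\pmod{c}$ becomes $(r+s)k\equiv 0\pmod{c}$, which is equivalent to $k$ being a multiple of $c/\gcd(r+s,c)$. Hence the set of values of $x-y$ realised by lattice points satisfying both conditions is
\begin{equation*}
(x_0-y_0) + \lcm(c,r+s)\cdot \mathbb{Z},
\end{equation*}
using $(r+s)\cdot c/\gcd(r+s,c)=\lcm(c,r+s)$.

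Since this is an arithmetic progression of common difference $\lcm(c,r+s)$, the half-open interval $[m,\,m+\lcm(c,r+s))$ contains exactly one of its terms, giving exactly one lattice point of the required form. I do not anticipate any real obstacle here; the only point that needs a little care is the identification of the step size as $\lcm(c,r+s)$ rather than, say, $c(r+s)$, which is why the coprimality of $r$ and $s$ is used (to get that $(r,-s)$ generates the kernel) and why one must track the gcd carefully when reducing $(r+s)k\equiv 0\pmod{c}$.
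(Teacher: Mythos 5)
Your proof is correct and follows essentially the same route as the paper: the paper also uses coprimality of $r$ and $s$ to see that translation by $(r,-s)$ generates all lattice points on the line $sx+ry=v$, observes that each such translate shifts $x-y$ by $r+s$, and deduces that the translates preserving the residue class modulo $c$ are exactly those by $(ar,-as)$ with $\lcm(c,r+s)\mid a(r+s)$, of which exactly one lands in the half-open window. Your explicit parametrisation and the reduction of $(r+s)k\equiv 0\pmod{c}$ to $k\in \frac{c}{\gcd(c,r+s)}\mathbb{Z}$ is just a slightly more computational phrasing of the same argument.
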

\begin{proof}
First, note that translating a lattice point $(x,y)$ by $(r,-s)$ does not change the value of $sx+ry$. Moreover, there is no lattice point on the line $sx+ry=v$ between $(x,y)$ and $(x+r,y-s),$ since if $(x+l_1,y-l_2)$ were such a point, we would have $sl_1-rl_2=0$, so since $r$ and $s$ are coprime, $s\mid l_2$ and $r\mid l_1$. 

Secondly, note that translating by $(r,-s)$ changes the value of $x-y$ by $r+s$. So, the translations that preserve both the value of $sx+ry$ and the residue class of $[y-x]$ modulo $c$ are the translations by $(ar,-as)$ where $a(r+s)$ is divisible by $c$, that is, $a(r+s)$ is divisible by $\lcm(c,r+s)$. Exactly one of these translates lies in the region $m\leq x-y < m+\lcm(c,r+s)$.
\end{proof}

So, for a fixed integer $n$, we can draw the multigraph by taking the vertices to be lattice points in the portion of $\mathbb{R}^2$ in between the lines $x-y=n$ and $x-y=\lcm(c,r+s)+n$, with an identification along the boundary lines given by $$(x,y)\sim \left(x+\frac{r \lcm(c,r+s)}{r+s},y-\frac{s \lcm(c,r+s)}{r+s}\right).$$ We identify a lattice point $(x,y)$ with the vertex $(sx+ry,[x-y])$. Then, south edges in the multigraph from $(v,[i])$ to $(v-r,[i+1])$ are south edges between lattice points in the region described. Similarly, east edges from $(v,[i])$ to $(v+s,[i+1])$ are east edges between lattice points. Moreover, each vertex $(v,[i])$ corresponds to a unique lattice point in the region. We can view the $(r,s,c)$-tour as the \textit{cylindrical} lattice path tour obtained by collapsing the boundary of the partition onto this cylinder.

\begin{ex}
When $c=2$, $r=3$ and $s=2$ we may draw the $(r,s,c)$-multigraph of $\mu=(12,12,10,8,7,4,1,1,1)$ as in Figure~\ref{Big multigraph 1}. 
\begin{figure}
\begin{center}
\begin{tikzpicture}\begin{scope}[yscale=1,xscale=-1,rotate=90,scale=1.7]
\draw (5,3)--(8.5,6.5);
\node[below] at (5,3) {$x-y=-2$};
\draw (12.5,0.5)--(10,-2);
\node[above] at (12.5,0.5) {$x-y=-12$};
\draw[fill=white] (11.85,-0.4)--(12.15,-0.4)--(12.15,0.4)--(11.85,0.4)--(11.85,-0.4);
\node at (12,0) {$(36,[0])$};
\node at (11,0) {$(33,[1])$};
\draw (10.85,-0.4)--(11.15,-0.4)--(11.15,0.4)--(10.85,0.4)--(10.85,-0.4);
\node at (10,0) {$(30,[0])$};
\draw (9.85,-0.4)--(10.15,-0.4)--(10.15,0.4)--(9.85,0.4)--(9.85,-0.4);
\node at (9,0) {$(27,[1])$};
\draw (8.85,-0.4)--(9.15,-0.4)--(9.15,0.4)--(8.85,0.4)--(8.85,-0.4);
\node at (9,1) {$(29,[0])$};
\draw (8.85,0.6)--(9.15,0.6)--(9.15,1.4)--(8.85,1.4)--(8.85,0.6);
\node at (8,1) {$(26,[1])$};
\draw (7.85,0.6)--(8.15,0.6)--(8.15,1.4)--(7.85,1.4)--(7.85,0.6);
\node at (7,1) {$(23,[0])$};
\draw (6.85,0.6)--(7.15,0.6)--(7.15,1.4)--(6.85,1.4)--(6.85,0.6);
\node at (6,1) {$(20,[1])$};
\draw (5.85,0.6)--(6.15,0.6)--(6.15,1.4)--(5.85,1.4)--(5.85,0.6);
\node at (6,2) {$(22,[0])$};
\draw (5.85,1.6)--(6.15,1.6)--(6.15,2.4)--(5.85,2.4)--(5.85,1.6);
\node at (6,3) {$(24,[1])$};
\draw (5.85,2.6)--(6.15,2.6)--(6.15,3.4)--(5.85,3.4)--(5.85,2.6);
\draw[fill=white] (5.85,3.6)--(6.15,3.6)--(6.15,4.4)--(5.85,4.4)--(5.85,3.6);
\node at (6,4) {$(26,[0])$};
\draw[fill=white] (9.85,-2.4)--(10.15,-2.4)--(10.15,-1.6)--(9.85,-1.6)--(9.85,-2.4);
\node at (10,-2) {$(26,[0])$};
\node at (9,-2) {$(23,[1])$};
\draw (8.85,-2.4)--(9.15,-2.4)--(9.15,-1.6)--(8.85,-1.6)--(8.85,-2.4);
\node at (9,-1) {$(25,[0])$};
\draw (8.85,-1.4)--(9.15,-1.4)--(9.15,-0.6)--(8.85,-0.6)--(8.85,-1.4);
\node at (8,2) {$(28,[0])$};
\draw (7.85,1.6)--(8.15,1.6)--(8.15,2.4)--(7.85,2.4)--(7.85,1.6);
\node at (7,2) {$(25,[1])$};
\draw (6.85,1.6)--(7.15,1.6)--(7.15,2.4)--(6.85,2.4)--(6.85,1.6);
\node at (7,3) {$(27,[0])$};
\draw (6.85,2.6)--(7.15,2.6)--(7.15,3.4)--(6.85,3.4)--(6.85,2.6);
\node at (7,4) {$(29,[1])$};
\draw (6.85,3.6)--(7.15,3.6)--(7.15,4.4)--(6.85,4.4)--(6.85,3.6);
\node at (10,-1) {$(28,[1])$};
\draw (9.85,-1.4)--(10.15,-1.4)--(10.15,-0.6)--(9.85,-0.6)--(9.85,-1.4);
\node at (8,0) {$(24,[0])$};
\draw (7.85,-0.4)--(8.15,-0.4)--(8.15,0.4)--(7.85,0.4)--(7.85,-0.4);
\node at (8,2) {$(28,[0])$};
\node at (8,3) {$(30,[1])$};
\draw (7.85,2.6)--(8.15,2.6)--(8.15,3.4)--(7.85,3.4)--(7.85,2.6);
\node at (8,4) {$(32,[0])$};
\draw (7.85,3.6)--(8.15,3.6)--(8.15,4.4)--(7.85,4.4)--(7.85,3.6);
\node at (8,5) {$(34,[1])$};
\draw (7.85,4.6)--(8.15,4.6)--(8.15,5.4)--(7.85,5.4)--(7.85,4.6);
\draw[fill=white] (7.85,5.6)--(8.15,5.6)--(8.15,6.4)--(7.85,6.4)--(7.85,5.6);
\node at (8,6) {$(36,[0])$};
\draw (8.85,5.6)--(9.15,5.6)--(9.15,6.4)--(8.85,6.4)--(8.85,5.6);
\node at (9,6) {$(39,[1])$};
\draw (9.85,5.6)--(10.15,5.6)--(10.15,6.4)--(9.85,6.4)--(9.85,5.6);
\node at (10,6) {$(42,[0])$};
\draw (10.85,5.6)--(11.15,5.6)--(11.15,6.4)--(10.85,6.4)--(10.85,5.6);
\node at (11,6) {$(45,[1])$};
\draw (11.85,5.6)--(12.15,5.6)--(12.15,6.4)--(11.85,6.4)--(11.85,5.6);
\node at (12,6) {$(48,[0])$};
\draw (11.85,4.6)--(12.15,4.6)--(12.15,5.4)--(11.85,5.4)--(11.85,4.6);
\node at (12,5) {$(46,[1])$};
\draw (11.85,3.6)--(12.15,3.6)--(12.15,4.4)--(11.85,4.4)--(11.85,3.6);
\node at (12,4) {$(44,[0])$};
\draw (11.85,2.6)--(12.15,2.6)--(12.15,3.4)--(11.85,3.4)--(11.85,2.6);
\node at (12,3) {$(42,[1])$};
\draw (11.85,1.6)--(12.15,1.6)--(12.15,2.4)--(11.85,2.4)--(11.85,1.6);
\node at (12,2) {$(40,[0])$};
\draw (11.85,0.6)--(12.15,0.6)--(12.15,1.4)--(11.85,1.4)--(11.85,0.6);
\node at (12,1) {$(38,1)$};
\draw[->] (11.85,0)--(11.15,0);
\draw[->] (10.85,0)--(10.15,0);
\draw[->] (9.85,0.05)--(9.15,0.05); 
\draw[->] (9.05,0.4)--(9.05,0.6); 
\draw[->] (8.85,1.05)--(8.15,1.05); 
\draw[->] (7.85,1)--(7.15,1);
\draw[->] (6.85,1)--(6.15,1);
\draw[->] (6,1.4)--(6,1.6);
\draw[->] (6,2.4)--(6,2.6);
\draw[->] (6,3.4)--(6,3.6);
\draw[->] (9.85,-2)--(9.15,-2); 
\draw[->] (9,-1.6)--(9,-1.4);
\draw[->] (9,-0.6)--(9,-0.4);
\draw[->] (8.95,0.4)--(8.95,0.6); 
\draw[->] (8.85,0.95)--(8.15,0.95); 
\draw[->] (8.05,1.4)--(8.05,1.6); 
\draw[->] (7.85,2)--(7.15,2);
\draw[->] (7,2.4)--(7,2.6);
\draw[->] (7,3.4)--(7,3.6);
\draw[->] (6.85,4)--(6.15,4);
\draw[->] (10,-1.6)--(10,-1.4);
\draw[->] (10,-0.6)--(10,-0.4);
\draw[->] (9.85,-0.05)--(9.15,-0.05); 
\draw[->] (8.85,0)--(8.15,0);
\draw[->] (8,0.4)--(8,0.6);
\draw[->] (7.95,1.4)--(7.95,1.6); 
\draw[->] (8,2.4)--(8,2.6);
\draw[->] (8,3.4)--(8,3.6);
\draw[->] (8,4.4)--(8,4.6);
\draw[->] (8,5.4)--(8,5.6);
\node at (12.6,6) {$\vdots$};
\draw[->] (12.5,6)--(12.15,6);
\draw[->] (11.85,6)--(11.15,6);
\draw[->] (10.85,6)--(10.15,6);
\draw[->] (9.85,6)--(9.15,6);
\draw[->] (8.85,6)--(8.15,6);
\draw[->] (12,0.4)--(12,0.6);
\draw[->] (12,1.4)--(12,1.6);
\draw[->] (12,2.4)--(12,2.6);
\draw[->] (12,3.4)--(12,3.6);
\draw[->] (12,4.4)--(12,4.6);
\draw[->] (12,5.4)--(12,5.6);
\draw (12,6.4)--(12,6.5);
\node at (11.98,6.65) {$\cdots$};

\end{scope}\end{tikzpicture}
\end{center}
\caption{$M_{3,2,2}(12,12,10,8,7,4,1,1,1)$.}\label{Big multigraph 1}
\end{figure}
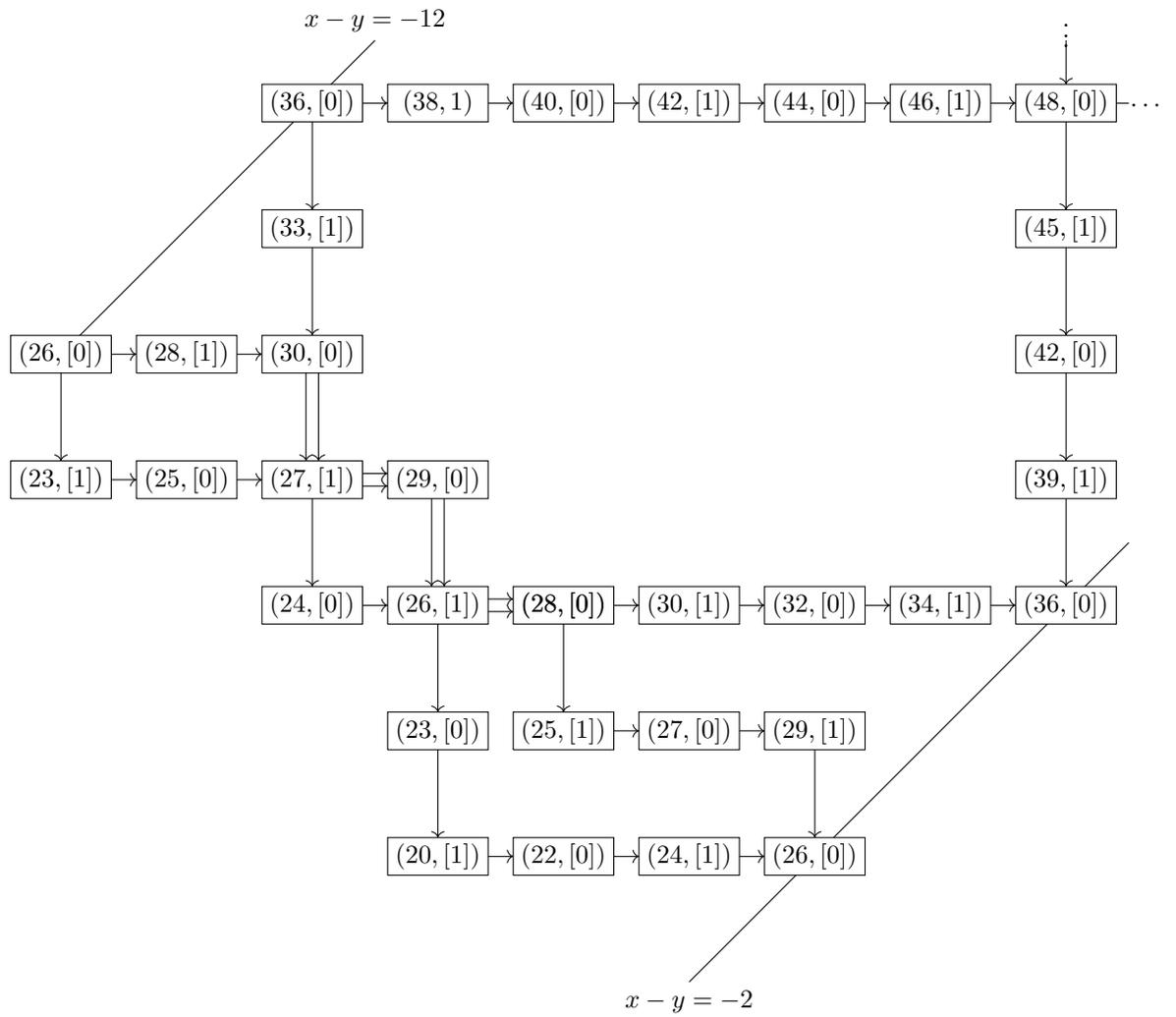
\end{ex}

\begin{rmk}
As with the boundary graph, but unlike the $c$-abacus, the direction of an edge in $M_{r,s,c}$ can be read off from its source and target. If an edge $e$ has $s(e)=(v,[i])$ and $t(e)=(w,[i+1])$ then either $d(e)=E$ and $w=v+s$ or $d(e)=S$ and $w=v-r.$
\end{rmk}

\begin{rmk}
If we act on $\mathbb{C}[x,y]$ by $T\times \mathbb{Z}/c\mathbb{Z}$ where $T=\{(t^s,t^r): t\in\mathbb{C}^*\}$ and lift to ideals as in ~\eqref{act on plane} and ~\eqref{act on ideals}, and colour boxes according to the weight of the corresponding monomial with respect to this representation, the colouring carries the same information as the multigraph.
\end{rmk}

The first property that we check is that $M_{r,s,c}(\lambda)$ determines the $c$-core of $\lambda$.
\begin{prop}\label{independence core}
If $\lambda$ and $\mu$ are partitions with $M_{r,s,c}(\lambda)=M_{r,s,c}(\mu)$ then $\lambda$ and $\mu$ have the same $c$-core.
\end{prop}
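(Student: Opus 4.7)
The plan is to reduce the statement to Corollary~\ref{keypreservecore}. Observe that $\sim_{r,s,c}$ refines $\sim_c$, so there is a natural SE directed multigraph homomorphism $p\colon M_{r,s,c}(\lambda) \to M_c$ sending $(v,[i]) \mapsto [i]$, and $q_c = p \circ q_{r,s,c}$; pushing the $(r,s,c)$-tour through $p$ recovers the $c$-abacus tour of $\lambda$. I would choose a positive integer $m$ divisible by $c$ and strictly larger than both $\lambda_1$ and $\mu_1$. For such $m$, every boundary-tour edge at a position $>m$ in either partition is an east edge on the positive $x$-axis with target $(x,0)$, $x>m$, and these targets coincide for $\lambda$ and $\mu$, so the $c$-abacus arrival words $[i]^\lambda_a$ and $[i]^\mu_a$ agree at all positions $>m$. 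This gives the first hypothesis of Corollary~\ref{keypreservecore}.

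For the permutation condition, I would argue vertex by vertex in $M_{r,s,c}$. For each vertex $(v,[i])$, the number of east edges in $b(\lambda)$ targeting $(v,[i])$ at boundary positions $>m$ is $1$ if some integer $x>m$ satisfies $sx=v$ and $x\equiv i\pmod c$, and $0$ otherwise; this count depends only on $v,i,m$, independently of the partition. Hence the east arrivals at $(v,[i])$ from positions $\leq m$ equal $\Ein(v,[i])$ minus a partition-independent quantity. The hypothesis $M_{r,s,c}(\lambda)=M_{r,s,c}(\mu)$ gives $\Ein^\lambda(v,[i])=\Ein^\mu(v,[i])$, so the vertex-by-vertex east counts at positions $\leq m$ agree, and summing over the finitely many contributing vertices yields equal total E-counts in $[i]^\lambda_a$ and $[i]^\mu_a$ at positions $\leq m$. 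The S-count matches automatically, because no south arrivals come from positions $>m$ on the $x$-axis tail, so the S-count at positions $\leq m$ at each vertex coincides with $\Sin(v,[i])$. Corollary~\ref{keypreservecore} then delivers the conclusion.

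The main obstacle is the bookkeeping around the bi-infinite boundary tour: one must verify that the restrictions to positions $\leq m$ produce finite east counts and that the axis-tail contributions at each vertex $(v,[i])$ are genuinely independent of the partition, so that the differences between $\lambda$'s and $\mu$'s counts can be handled vertex by vertex without encountering a difference of infinities. The key observation that unblocks this is that on the positive $x$-axis tail each fixed vertex of $M_{r,s,c}$ receives at most one east arrival from positions $>m$, and whether it does so is determined by $v$, $i$, and $m$ alone.
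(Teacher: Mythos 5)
Your proof is correct and follows essentially the same route as the paper: both reduce to Corollary~\ref{keypreservecore} by fixing a large $m$ divisible by $c$, observing that all boundary edges of index greater than $m$ form the common east tail along the $x$-axis (giving the first hypothesis), and using the quantities $\Ein(v,[i])$ read off the shared multigraph $M_{r,s,c}$ to match the east counts at indices at most $m$ (giving the permutation hypothesis). The only difference is bookkeeping: the paper cuts off at vertices with $w<sm$ after choosing $v$ large enough, whereas you subtract a partition-independent $0$-or-$1$ tail contribution vertex by vertex, which is an equivalent, if anything slightly more careful, accounting.
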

\begin{proof}
Let $v$ be large enough so that  $\left(\left\lceil\frac{v}{s}\right\rceil,0\right)$ is on the boundary of both $\lambda$ and $\mu$. Fix $m>\left\lceil\frac{v}{s}\right\rceil$ such that $c\mid m$. Then, in both the boundary tour of $\mu$ and the boundary tour of $\lambda$, every edge with index at least $m$ is an east edge. These edges account for every $E$ in an arrival word at a vertex $(w,[j])$ with $w\geq sm$. 

For each $[i],$ the number of east edges with index less than $m$, for both $\lambda$ and $\mu$, is given by $$\sum_{w<sm}\Ein(w,[i]).$$

Therefore $\lambda$, $\mu$ and $m$ satisfy the hypotheses of Corollary~\ref{keypreservecore}, and so $\lambda$ and $\mu$ have the same $c$-core.
\end{proof}
Next, we show how to read $\critplusxc(\lambda)$ and $\critminusxc(\lambda)$ off the $(r,s,c)$-tour of $M_{r,s,c}(\lambda)$. Rephrasing the first part of Proposition~\ref{motivate Mrsc} in terms of the $(r,s,c)$-tour gives
\begin{corol}\label{firstcritformulae}
Let $\lambda$ be a partition. Then

\begin{equation}\critplusxc(\lambda)=\sum_{(v,[i])\in M_{r,s,c}(\lambda)}\inv(v,[i])_a. \end{equation}
\end{corol}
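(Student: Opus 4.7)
The plan is to read off, from Proposition~\ref{motivate Mrsc}, a precise correspondence between boxes contributing to $\critplusxc(\lambda)$ and $E$-before-$S$ inversions in the arrival words of the $(r,s,c)$-tour, and then simply sum over vertices of $M_{r,s,c}(\lambda)$.

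First, I would unpack what the first bullet of Proposition~\ref{motivate Mrsc} says about $q_{r,s,c}$. A box $\square$ contributes to $\critplusxc$ precisely when its foot $e_i$ (east, with target $(x_1,y_1)$) and its hand $e_j$ (south, with target $(x_2,y_2)$) satisfy $r(y_1-y_2)+s(x_1-x_2)=0$ together with $x_1-y_1 \equiv x_2-y_2 \pmod{c}$. But these are exactly the conditions $sx_1+ry_1=sx_2+ry_2$ and $[x_1-y_1]=[x_2-y_2]$ modulo $c$, i.e., the statement that the two targets lie in the same equivalence class under $\sim_{r,s,c}$. Hence $\square$ contributes to $\critplusxc(\lambda)$ if and only if $q_{r,s,c}(e_i)$ and $q_{r,s,c}(e_j)$ arrive at a common vertex $(v,[i])$ of $M_{r,s,c}(\lambda)$.

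Next I would note that in any such pair the foot $e_i$ occurs strictly before the hand $e_j$ in the boundary tour (this is built into the definition of the foot/hand pair of a box), and that $d(e_i)=E$ while $d(e_j)=S$. Therefore the images $q_{r,s,c}(e_i), q_{r,s,c}(e_j)$ contribute an $E$-before-$S$ pair in the arrival word $(v,[i])_a$, that is, an inversion. Conversely, any inversion of $(v,[i])_a$ consists of an east edge $q_{r,s,c}(e_i)$ arriving at $(v,[i])$ and a south edge $q_{r,s,c}(e_j)$ arriving at $(v,[i])$ with $i<j$; lifting back, $e_i$ and $e_j$ satisfy the inversion condition from Section 2 and therefore are the foot and hand of a unique box $\square\in\lambda$. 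Because $e_i$ and $e_j$ map to the same vertex, the congruence $x_1-y_1\equiv x_2-y_2 \pmod c$ holds, so $c\mid h(\square)$ by Proposition~\ref{c-hooks are c-inversions}; and the equality of the first coordinates of the two vertex classes gives $t=0$, placing $\square$ in $\critplusxc(\lambda)$ by Proposition~\ref{motivate Mrsc}.

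Combining these two directions yields a bijection between boxes counted by $\critplusxc(\lambda)$ and inversions in the arrival words, which, on summing over all vertices, gives
\begin{equation*}
\critplusxc(\lambda)=\sum_{(v,[i])\in M_{r,s,c}(\lambda)}\inv((v,[i])_a).
\end{equation*}
The only subtlety worth double-checking is the ``unique lift'' step: I would use Proposition~\ref{lattice points} (or simply the injectivity of $q_{r,s,c}$ on edges, which the paper always assumes) to be sure that inversions of arrival words correspond to genuine pairs of distinct boundary edges, so that no double counting occurs. Everything else is a direct translation of Proposition~\ref{motivate Mrsc}(1) into the language of arrival words.
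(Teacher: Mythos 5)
Your proposal is correct and matches the paper's route exactly: the paper states Corollary~\ref{firstcritformulae} as a direct rephrasing of part 1 of Proposition~\ref{motivate Mrsc} in terms of the $(r,s,c)$-tour, which is precisely the translation you carry out (arrival at a common vertex $\Leftrightarrow$ $t=0$ together with the congruence modulo $c$, and $E$-before-$S$ pairs in arrival words $\Leftrightarrow$ foot--hand pairs of contributing boxes). Your explicit check of both directions, including the unique lift via the bijectivity of the edge map, just spells out details the paper leaves implicit.
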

A similar formula with the departure words holds for $\critminusxc(\lambda).$
\begin{corol}
\begin{equation}\critminusxc(\lambda)=\sum_{(v,[i])\in M_{r,s,c}(\lambda)}\inv(v,[i])_d. \end{equation}
\end{corol}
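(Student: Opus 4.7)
The plan is to imitate the proof of Corollary~\ref{firstcritformulae}, substituting Part 3 of Proposition~\ref{motivate Mrsc} for Part 1, and replacing arrival words with departure words throughout. The main conceptual point is to verify that the identification of sources of foot-hand pairs in $M_{r,s,c}(\lambda)$ corresponds exactly to the $t = r+s$ condition of Part 3.

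Concretely, I would first record the source-side translation of Proposition~\ref{motivate Mrsc}: if $\square$ has foot $f$ departing from $(x_1-1,y_1)$ and hand $h$ departing from $(x_2,y_2+1)$, then $q_{r,s,c}(s(f)) = q_{r,s,c}(s(h))$ in $M_{r,s,c}(\lambda)$ iff $s(x_1-1) + ry_1 = sx_2 + r(y_2+1)$ and $x_1 - 1 - y_1 \equiv x_2 - y_2 - 1 \pmod c$. A one-line rearrangement shows these two conditions are equivalent to $t = r+s$ and $x_1 - y_1 \equiv x_2 - y_2 \pmod c$, i.e., to the condition in Part 3 that $\square$ contributes to $\critminusxc$.

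Next, I would set up the bijection between boxes contributing to $\critminusxc(\lambda)$ and inversions in departure words. Given such a $\square$, the foot $f$ is east, the hand $h$ is south, both depart from the same vertex $(v,[i])$ of $M_{r,s,c}(\lambda)$, and $f$ precedes $h$ in the boundary tour; this produces an $(E,S)$ inversion in $(v,[i])_d$. Conversely, any $(E,S)$ inversion in $(v,[i])_d$ is a pair of edges $f_0, h_0$ with $f_0$ east, $h_0$ south, $s(f_0)$ preceding $s(h_0)$ in the boundary tour, and $q_{r,s,c}(s(f_0)) = q_{r,s,c}(s(h_0))$. By the unique correspondence in Proposition~\ref{c-hooks are c-inversions} between such $(E,S)$ pairs in the boundary sequence and boxes of $\lambda$, this pair determines a unique box, which by Step~1 contributes to $\critminusxc(\lambda)$.

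Summing over all vertices of $M_{r,s,c}(\lambda)$ yields the claimed identity. I do not anticipate a real obstacle; the only subtle point is checking that the source-side equivalence relation in $M_{r,s,c}$ matches the numerical condition $t=r+s$ rather than $t=0$, which is a shift of the target-side computation that underlies Corollary~\ref{firstcritformulae}.
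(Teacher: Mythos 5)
Your proposal is correct and follows essentially the same route as the paper's proof: translate the arrival-point conditions of Part 3 of Proposition~\ref{motivate Mrsc} into departure-point conditions for the foot and hand, and verify by the same one-line rearrangement that $t=r+s$ together with $x_1-y_1\equiv x_2-y_2\pmod{c}$ is exactly the statement that foot and hand depart from a common vertex of $M_{r,s,c}(\lambda)$, yielding an $(E,S)$ inversion in its departure word. The only nitpick is a citation: the unique correspondence between $(E,S)$ pairs in the boundary and boxes of $\lambda$ comes from the definition of inversions (in the hand/foot definition), with Proposition~\ref{c-hooks are c-inversions} supplying only the divisibility-of-hook-length reformulation.
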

\begin{proof}
 By the third part of Proposition~\ref{motivate Mrsc}, $\square$ contributes to $\critminusxc(\lambda)$ if and only if the foot and hand arrive at points $(x_1,y_1)$ and $(x_2,y_2)$ respectively with \begin{equation}\label{departures rs} r+s=r(y_1-y_2)+s(x_1-x_2)\end{equation} and \begin{equation}\label{departures c}x_1-y_1\equiv x_2-y_2\pmod{c}.\end{equation} 
 The foot and hand arrive at $(x_1,y_1)$ and $(x_2,y_2)$ respectively if and only if they depart from points $(x_1-1,y_1)$ and $(x_2,y_2+1)$ respectively. The condition ~\eqref{departures c} is equivalent to \begin{equation}(x_1-1)-y_1\equiv x_2-(y_2+1)\pmod{c}.\end{equation}
 The condition~\eqref{departures rs} is equivalent to 
 \begin{equation} r+s=r(y_1-(y_2+1))+s((x_1-1)-x_2)+r+s,\end{equation}
 so subtracting $r+s$ from both sides, 
  \begin{equation} r(y_1-(y_2+1))+s((x_1-1)-x_2)=0.\end{equation}
\end{proof}

We now outline a framework for inductive proofs that the statistics in hypotheses 1-3 of Proposition~\ref{bijection properties} are determined by the multigraph $M_{r,s,c},$ using an ordering $<_{r,s,c}$ on partitions and multigraphs. The key result in this direction is Proposition~\ref{acc points are useful}.

\subsection{The order \texorpdfstring{$<_{r,s,c}$}{<rsc}}
The structure of the proofs that $M_{r,s,c}(\lambda)$ determines each property of $\lambda$ will be proven by induction on $|\lambda|$, adding a box at each step. Since the structure of $M_{r,s,c}(\lambda)$ is somewhat delicate, we have to be somewhat careful when choosing a box to add. The following ordering on partitions gives us a framework for adding boxes.

If $(x_1,y_1)$ and $(x_2,y_2)$ are two points in $\mathbb{N}^2$, say $(x_1,y_1)<_{r,s,c}(x_2,y_2)$ if either of the following hold.
\begin{itemize}
\item $sx_1+ry_1<sx_2+ry_2$;
\item $sx_1+ry_1=sx_2+ry_2,$ and $x_1-y_1\equiv x_2-y_2\pmod{c}$, and $x_1-y_1<x_2-y_2$.
\end{itemize}

The partial order $>_{r,s,c}$ on points in the plane induces a partial order $>_{r,s,c}$ on partitions as follows. Say that $\lambda'>'_{r,s,c}\lambda$ if $\lambda'$ can be obtained from $\lambda$ by adding a box with bottom left corner $(x,y)$ minimal with respect to $>_{r,s,c}$ over all possible bottom left corners of boxes that can be added to $\lambda$ to obtain a partition. Then for partitions $\mu,\lambda$ say that $\mu>_{r,s,c}\lambda$ if there is a sequence of partitions $\lambda=\lambda_0,\lambda_1,\lambda_2,\ldots,\lambda_m=\mu$ such that for each $i$, $\lambda_i<'_{r,s,c}\lambda_{i+1}.$ If $\mu>'_{r,s,c}\lambda$, say that $\mu$ is a \textit{successor} for $\lambda$ with respect to $>_{r,s,c}.$ Every partition has a successor with respect to $>_{r,s,c}$, but successors are not necessarily unique. 

\begin{ex}\label{rscexamples} Let $r=3$, $s=2$, and $c=2$. 
There are three boxes that could be added to the Young diagram of $(3,1)$ to give another partition. They have bottom left corners at $(3,0)$, $(1,1)$, and $(0,2)$, with values of $2x+3y$ of $6,5$ and $6$ respectively. So $(3,1)$ has a unique successor with respect to $<_{3,2,2}$, which is $(3,2)$.
\begin{center}
\begin{tikzpicture}\begin{scope}[yscale=1,xscale=-1,rotate=90,scale=0.4]
\draw (0,4)--(0,0);
\draw (1,3)--(1,0);
\draw (2,1)--(2,0);
\draw (0,3)--(1,3);
\draw (0,2)--(1,2);
\draw (0,1)--(2,1);
\draw (0,0)--(3,0);
\end{scope}\end{tikzpicture}
\begin{tikzpicture}\begin{scope}[yscale=1,xscale=-1,rotate=90,scale=0.4]
\draw (0,4)--(0,0);
\draw (1,3)--(1,0);
\draw (2,2)--(2,0);
\draw (0,3)--(1,3);
\draw (0,2)--(2,2);
\draw (0,1)--(2,1);
\draw (0,0)--(3,0);
\end{scope}\end{tikzpicture}
\end{center} 
For $(3,2)$, the boxes that could be added to the diagram have bottom left corners $(3,0)$, $(2,1)$ and $(0,2)$, with values of $2x+3y$ of $6, 7$ and $6$ respectively. The values of $x-y$ for $(0,3)$ and $(2,0)$ have different parity so $(2,0)\not<_{3,2,2}(0,3)$, and both $(4,2)$ and $(3,2,1)$ are successors of $(3,2)$. Note that $(4,1)\not >_{3,2,2} (3,1)$.
\end{ex}

Note that if $\mu>_{r,s,c}\lambda$, then all boxes of the Young diagram of $\lambda$ are also boxes of the Young diagram of $\mu$, but as Example~\ref{rscexamples} shows the converse is not true in general. 
\begin{defn}[Accumulation point]
For a partition $\mu$ with the property that whenever $\mu$ strictly contains $\lambda$, we also have $\mu>_{r,s,c}\lambda$, we call $\mu$ an \textit{accumulation point for $>_{r,s,c}$}.
\end{defn}

The next section describes a family of accumulation points and proves some key properties.

\subsection{The accumulation points \texorpdfstring{$\lambda_{r,s,k}$}{}}

\begin{defn}[The partition $\lambda_{r,s,k}$]\label{def:lambdarsk}
For a given natural number $k$, the partition $\lambda_{r,s,k}$ is the partition with Young diagram consisting of all boxes with top right corners on or below the line $sx+ry=k$.
\end{defn}

\begin{ex}
The Young diagram for $\lambda_{3,2,54}$ is given in Figure~\ref{3,2,54}.
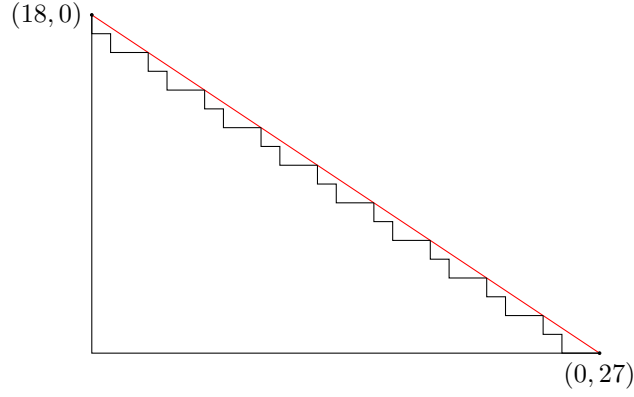
\begin{figure}[ht]
\begin{center}
\begin{tikzpicture}\begin{scope}[yscale=1,xscale=-1,rotate=90,scale=0.25]
\draw[red] (0,27)--(18,0);
\draw (18,0)--(17,0)--(17,1)--(16,1)--(16,3);
 \foreach \x in {1,...,8} {      
 \draw (2*\x, 27-3*\x)--(2*\x-1, 27-3*\x)--(2*\x-1, 27-3*\x+1)--(2*\x-2, 27-3*\x+1)--(2*\x-2, 27-3*\x+3);
 };
 \draw[fill=black] (18,0) circle[radius=0.07];
  \draw[fill=black] (0,27) circle[radius=0.07];
 \node[left] at (18,0) {$(18,0)$};
  \node[below] at (0,27) {$(0,27)$};
\draw (0,27)--(0,0)--(18,0);
\end{scope}\end{tikzpicture}
\caption{the Young diagram of $\lambda_{3,2,54}$.}\label{3,2,54}
\end{center}
\end{figure}
\end{ex}
\begin{prop}\label{accumulation point} Let $r,s,k$ be positive integers. Let $\mu$ be a partition with diagram strictly contained in the diagram of $\lambda_{r,s,k}$. Then, any successor $\mu^+$ of $\mu$ with respect to $>_{r,s,c}$ has diagram contained in the diagram of $\lambda_{r,s,k}$. In particular, $\lambda_{r,s,k}$ is an accumulation point for $\lambda_{r,s,c}.$
\end{prop}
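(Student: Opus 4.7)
The plan is to work directly with bottom-left corners of boxes in $\mathbb{N}^2$ and to exploit the fact that the first clause of the partial order $<_{r,s,c}$ involves only $sx+ry$. Since $\mu \subsetneq \lambda_{r,s,k}$, the set $S$ of bottom-left corners of boxes of $\lambda_{r,s,k}$ that are not in $\mu$ is nonempty. Pick $(x^*,y^*)\in S$ that minimises $sx+ry$. By Definition~\ref{def:lambdarsk}, the top-right corner of the corresponding box satisfies $s(x^*+1)+r(y^*+1)\leq k$, so
\begin{equation*}
sx^*+ry^*\leq k-s-r.
\end{equation*}

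First I would show that a box with bottom-left corner $(x^*,y^*)$ can actually be added to $\mu$. The potential obstructions are the positions $(x^*-1,y^*)$ (if $x^*>0$) and $(x^*,y^*-1)$ (if $y^*>0$): both must be boxes of $\mu$. Each has a strictly smaller value of $sx+ry$ than $(x^*,y^*)$, and each still lies in $\lambda_{r,s,k}$ because its top-right corner satisfies $sx+ry\leq k$. By minimality of $(x^*,y^*)$ in $S$, neither can belong to $S$, so both are in $\mu$. Hence $(x^*,y^*)$ is addable.

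Now let $\mu^+$ be an arbitrary successor of $\mu$, formed by adding a box at a position $(x^+,y^+)$ which is minimal with respect to $<_{r,s,c}$ among addable bottom-left corners. If we had $sx^*+ry^*<sx^++ry^+$, then the first clause of the definition of $<_{r,s,c}$ would give $(x^*,y^*)<_{r,s,c}(x^+,y^+)$, contradicting the minimality of $(x^+,y^+)$ since $(x^*,y^*)$ is itself addable. Therefore $sx^++ry^+\leq sx^*+ry^*\leq k-s-r$, so the top-right corner of the new box satisfies $s(x^++1)+r(y^++1)\leq k$, proving $\mu^+\subseteq\lambda_{r,s,k}$. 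For the \emph{in particular}, iterate: starting from any $\lambda\subsetneq\lambda_{r,s,k}$, successors stay inside $\lambda_{r,s,k}$ by the main claim, and since each step increases the number of boxes by one, after finitely many steps we reach $\lambda_{r,s,k}$, witnessing $\lambda_{r,s,k}>_{r,s,c}\lambda$.

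The main obstacle is really just conceptual rather than computational: the order $<_{r,s,c}$ is two-tiered, with the $\pmod c$ congruence condition hidden in its second clause, and one must avoid getting tangled up in it. The key observation that sidesteps this is that the chain of strict inequalities needed both for addability of $(x^*,y^*)$ and for bounding $(x^+,y^+)$ lives entirely within the first clause, so the congruence condition never enters the argument.
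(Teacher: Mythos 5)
Your proposal is correct and follows essentially the same route as the paper's proof: both locate an addable box inside $\lambda_{r,s,k}$ whose bottom-left corner satisfies $sx+ry\leq k-r-s$ (the paper by translating a missing box left/down, you by taking the global minimiser of $sx+ry$ over missing boxes and checking its two neighbours), and both then use only the first clause of $<_{r,s,c}$ to force the successor's added box below the line $sx+ry=k-r-s$, so the congruence condition never enters. Your neighbour-based verification of addability and the explicit iteration for the \emph{in particular} clause (which the paper defers to Proposition~\ref{rskeventualsuccessor}) are just slightly more spelled-out versions of the same argument.
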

\begin{proof} If $(x,y)$ is the top right corner of a box in $\mu$, then since the diagram for $\mu$ is contained in the diagram of $\lambda_{r,s,k},$ $sx+ry\leq k$. So, the bottom left corner of the same box is at $(x-1,y-1)$ with $s(x-1)+r(y-1)\leq k-r-s.$
Since the containment of $\mu$ in $\lambda$ is strict, there is at least one box in the diagram of $\lambda$, not contained in the diagram of $\mu$, with bottom left corner $(x-1,y-1)$ satisfying $s(x-1)+r(y-1)\leq k-r-s$. Moreover, since translating a box with top right corner $(x,y)$ left or down decreases $s(x-1)+r(y-1)$, there is a box $\square_1$ with bottom left corner $(x-1,y-1)$ that can be added to $\mu$ to give a valid partition diagram that satisfies $s(x-1)+r(y-1)\leq k-r-s$. Now, if $\mu^+$ is not contained in $\lambda$, then $\mu^+$ contains some box $\square_2$ with top right corner $(z,w)$ such that $sz+rw>k$, so the bottom left corner $(z-1,w-1)$ satisfies $s(z-1)+r(w-1)>k-r-s.$ This is a contradiction, as $(z-1,w-1)>_{r,s,c}(x-1,y-1)$, and $\square_1$ can be added to $\mu$.
\end{proof}

The accumulation points $\lambda_{r,s,k}$ will be extremely useful for two reasons. Firstly, as we check in Proposition~\ref{independence rsk}, $M_{r,s,c}(\lambda_{r,s,k})$ admits a unique $(r,s,c)$-tour whenever $rsc\mid k$, so that $M_{r,s,c}$ must determine any partition statistic in these cases, as it determines the partition itself. Secondly, as we check in Proposition~\ref{rskeventualsuccessor}, if we take successor with respect to $<_{r,s,c}$ iteratively on a given partition, we will eventually hit an accumulation point. This allows us to use the $\lambda_{r,s,k}$ as a base case for iterative proofs that statistics are independent of the choice of $(r,s,c)$-tour, and reduces the problem of understanding how a statistic interacts with $M_{r,s,c}$ to understanding how it behaves when we take successor. 

The $\lambda_{r,s,k}$ are not necessarily the only accumulation points. However, they suffice for our purposes.

\begin{ex} The partition $(1,1)$ is an accumulation point when $r=s=1$ and $c=2$, but is not a $\lambda_{1,1,k}.$ Indeed, the only successor of the empty partition is $(1)$, and the only successor of $(1)$ is $(1,1)$ since the bottom left corners of the boxes addable to $(1)$ are $(1,0)$ and $(0,1)$ with $1-0\equiv 0-1 \pmod{2}.$
\end{ex}

\begin{prop}\label{rskeventualsuccessor} If the diagram of a partition $\mu$ is contained in the diagram of $\lambda_{r,s,k}$ for some $k$, then for any sequence $$\mu=\mu_0<'_{r,s,c}\mu_1<'_{r,s,c}\cdots<'_{r,s,c}\mu_m$$ where $m=|\lambda_{r,s,k}|-|\mu|$, we must have $\mu_m=\lambda_{r,s,k}$.
\end{prop}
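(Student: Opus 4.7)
The plan is to argue by straightforward induction on the index $i$ that every term $\mu_i$ in the given sequence is contained in the diagram of $\lambda_{r,s,k}$, and then to conclude by a size comparison.

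First I would record the fact that each step $\mu_i <'_{r,s,c} \mu_{i+1}$ increases the number of boxes by exactly one (this is built into the definition of successor, since a successor is obtained by adding a single addable box). Consequently $|\mu_i| = |\mu| + i$, and in particular $|\mu_i| < |\lambda_{r,s,k}|$ for every $i < m$, so each such $\mu_i$ is \emph{strictly} contained in $\lambda_{r,s,k}$ as long as $\mu_i \subseteq \lambda_{r,s,k}$.

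The induction is then immediate. The base case $\mu_0 = \mu \subseteq \lambda_{r,s,k}$ is the hypothesis of the proposition. For the inductive step, assume $\mu_i \subseteq \lambda_{r,s,k}$ and $i < m$; by the size calculation above, the containment is strict, so Proposition~\ref{accumulation point} applies to $\mu_i$ and guarantees that any successor $\mu_{i+1}$ is again contained in $\lambda_{r,s,k}$. This is really where all the work is done, and the only nontrivial input is Proposition~\ref{accumulation point}; no new combinatorial arguments are needed.

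Finally, having shown $\mu_m \subseteq \lambda_{r,s,k}$ and $|\mu_m| = |\mu| + m = |\lambda_{r,s,k}|$, I would conclude that $\mu_m = \lambda_{r,s,k}$, since two partitions whose diagrams are nested and have the same number of boxes must coincide. There is no serious obstacle here; the content of the proposition is really just a bookkeeping consequence of the accumulation property already proved.
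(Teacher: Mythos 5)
Your proposal is correct and follows essentially the same route as the paper: iterated application of Proposition~\ref{accumulation point} along the chain, followed by the box count $|\mu_m|=|\mu|+m=|\lambda_{r,s,k}|$ to force equality. Your write-up is in fact slightly more careful than the paper's, since you explicitly verify the \emph{strict} containment hypothesis of Proposition~\ref{accumulation point} at each step (via $|\mu_i|<|\lambda_{r,s,k}|$ for $i<m$), a point the paper's one-line proof leaves implicit.
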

\begin{proof} Applying Proposition~\ref{accumulation point} to $\mu_0,\mu_1,\ldots,\mu_m,$ the diagram of $\mu_m$ must be contained in the diagram of $\lambda_{r,s,k}$, and $|\mu_m|=|\mu_0|+m=|\lambda_{r,s,k}|$, so $\mu_m=\lambda_{r,s,k}.$
\end{proof}

We now work towards proving that, in the case $rsc\mid k$, if $M_{r,s,c}(\lambda)=M_{r,s,c}(\lambda_{r,s,k}),$ then $\lambda=\lambda_{r,s,k}.$ First, we collect some restrictions on the arrival words that arise in the $(r,s,c)$-tour corresponding to $\lambda_{r,s,k}$. The condition that $rsc\mid k$ does not damage the capacity of the $\lambda_{r,s,k}$ to act as base cases, as to contain the diagram of a partition we just need $k$ to be large enough.

\begin{prop}\label{Multigraph rsk} Let $rsc\mid k$ and let $k_1=\frac{k}{rs}$. The vertices $(v,[i])$ in the multigraph of $\lambda_{r,s,k}$ all satisfy $v>k-r-s$. Moreover, we have the following constraints on the arrival words at a vertex $(v,[i]).$
\begin{itemize}
\item If $k-r-s<v \leq k-r$, then all letters in the arrival word are $S$s.
\item If $k-r<v<k$, all letters in the arrival word are $E$s. 
\item If $v=k$ there the arrival word at $(k,[0])$ has first letter $S$ and all other letters $E$. For $[i]\not=[0]$, all letters in the arrival word at $(k,[i])$ are $E$s.

\end{itemize}
\end{prop}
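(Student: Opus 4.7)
The plan is to analyse, for each lattice point on the boundary of $\lambda_{r,s,k}$, the direction of the unique arriving edge as a function of $v=sx+ry$. Writing $L(y)=\lfloor(k-ry)/s\rfloor$ so that $\lambda_y=\max(0,L(y))$, I would unwind the definitions of the boundary graph in three cases. At an interior lattice point $(x,y)$ with $x,y\geq 1$, a south edge arrives iff $\lambda_{y+1}=x$, which rearranges to $k-r-s<sx+ry\leq k-r$; an east edge arrives iff $\lambda_{y+1}<x\leq\lambda_y$, equivalently $k-r<sx+ry\leq k$. On the $x$-axis, a south edge arrives only at $(\lambda_1,0)$, where $v=s\lambda_1\in(k-r-s,k-r]$, while east edges arrive at $(x,0)$ iff $sx>k-r$. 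On the $y$-axis no east edge can arrive, and a south edge arrives at $(0,y)$ iff $y\geq l(\lambda)$; a short arithmetic check using $\gcd(r,s)=1$ and $rs\mid k$ shows that $rl(\lambda)\in(k-r-s,k-r]$.

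With this dictionary the lower bound on $v$ and the middle two bullets of the proposition are immediate. Every boundary lattice point has an arriving edge, and every arriving edge forces $v>k-r-s$, giving the first claim. For the range $v\in(k-r-s,k-r]$ no east arrival is possible (interior and $x$-axis east both require $v>k-r$), so the arrival word is all $S$. For the range $v\in(k-r,k)$ no south arrival is possible: interior and $x$-axis south require $v\leq k-r$, and a $y$-axis south would require $v=ry$ to be a multiple of $r$ strictly between $k-r$ and $k$, which is impossible since $r\mid k$.

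For the final bullet at $v=k$ the hypothesis $rsc\mid k$ is used essentially. Writing $k=rsm$, the hypothesis becomes $c\mid m$. The lattice points with $sx+ry=k$ in the first quadrant are exactly $(rj,s(m-j))$ for $j=0,1,\ldots,m$, and their index is $x-y=(r+s)j-sm\equiv(r+s)j\pmod{c}$ since $c\mid sm$. Hence each lies in equivalence class $(k,[(r+s)j\bmod c])$. The point $j=0$ is $(0,sm)$, in class $(k,[0])$, with a south arrival; the point $j=m$ is $(rm,0)$, also in class $(k,[0])$ because $c\mid m$, with an east arrival; every intermediate $0<j<m$ is interior with east arriving. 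Consequently every class $(k,[i])$ with $i\neq[0]$ has arrival word consisting entirely of $E$'s, and in class $(k,[0])$ the point $j=0$ has index $-sm$, strictly less than every other index in that class, so it contributes the first letter $S$ of the arrival word with every subsequent letter $E$.

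The main obstacle is really just the bookkeeping on the two axes, in particular the verification that $rl(\lambda)\in(k-r-s,k-r]$ in general. This reduces to the identity $rl(\lambda)=k-s-((k-s)\bmod r)$ together with the observation that $(k-s)\bmod r=r-(s\bmod r)$ when $r>1$ (using $\gcd(r,s)=1$), plus the edge case $r=1$ where $rl(\lambda)=k-s$ directly.
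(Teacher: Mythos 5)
Your proof is correct, but it runs along a genuinely different track from the paper's. You compute the parts of $\lambda_{r,s,k}$ explicitly via $\lambda_y=\lfloor (k-ry)/s\rfloor$ and translate the boundary-graph definition into inequalities on $v=sx+ry$, split by interior points, the $x$-axis, and the $y$-axis; the paper instead argues structurally: a $2\times 2$-box containment argument shows the boundary never visits a point with $v\le k-r-s$, an east arrival at $(v,[i])$ with $v\le k-r$ would be an east departure from the nonexistent vertex $(v-s,[i-1])$, and a south arrival with $v>k-r$ departs from level $v+r>k$, hence cannot border a box of the diagram and must lie on the $y$-axis, after which $r\mid v$ together with $r\mid k$ pins $v=k$ — the same divisibility step you use. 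The paper's route avoids all floor-function bookkeeping (your verification that $rl(\lambda)=k-s-((k-s)\bmod r)$ lands in $(k-r-s,\,k-r]$, with its $r=1$ edge case, simply never arises), while your route pays that cost but buys extra explicitness on the final bullet: by parametrizing the lattice points on $sx+ry=k$ as $(rj,s(m-j))$, $0\le j\le m$, and computing indices $(r+s)j-sm\equiv (r+s)j\pmod{c}$, you verify outright that the unique south arrival lies in class $[0]$ (this is exactly where $c\mid m$ enters) and, having index $-sm$ strictly smaller than every east index in that class, supplies the first letter of the arrival word — two claims of the statement that the paper's proof leaves implicit after establishing uniqueness and location of the south edge. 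Both arguments are sound; yours is the more computational and self-verifying, the paper's the shorter and more geometric.
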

\begin{proof} 
If a box $\square$ has top right corner $(x_1,y_1)$ with $ry_1+sx_1\leq k-r-s$, then the $2\times 2$ box with centre $(x_1,y_1)$ contains $\square$, along with three other boxes with top right corners $(x_1+1,y_1)$, $(x_1,y_1+1)$ and $(x_1+1,y_1+1)$. 
\begin{center}
\begin{tikzpicture}\begin{scope}[yscale=1,xscale=-1,rotate=90,scale=0.9]
\draw (0,0)--(2,0)--(2,2)--(0,2)--(0,0);
\draw(1,0)--(1,2);
\draw(0,1)--(2,1);
\node[below] at (0,1) {$x_1$};
\node[below] at (0,2) {$x_1+1$};
\node[left] at (1,0) {$y_1$};
\node[left] at (2,0) {$y_1+1$};
\draw[red] (0,2.333)--(2.667,0);
\node[left] at (2.8,0) {$sx+ry=k-r-s$};
\draw[red] (1.167,3)--(2.917,1.5);
\node[right] at (1.167,3) {$sx+ry=k$};
\end{scope}\end{tikzpicture}
\end{center}
These points satisfy $ry_1+s(x_1+1)\leq k-r<k$, $r(y_1+1)+sx_1\leq k-s<k$, and $r(y_1+1)+s(x_1+1)\leq k$. Since $\lambda_{r,s,k}$ contains \textit{all} boxes with top right corners on or below the line $sx+ry=k$, the entire $2\times 2$ box with centre $(x,y)$ is contained in $\lambda_{r,s,k}$ so the boundary never visits $(x,y)$.

Suppose $k-r-s<v \leq k-r$. Any east letter in the arrival word at a vertex $(v,[i])$ is also an east letter in the departure word of some vertex $(v-s,[i-1])$, but $v-s\leq k-r-s$, so there is no such vertex. 

Suppose now that $k-r < v \leq k$. Any south letter in the arrival word at a vertex $(v,[i])$ arriving at a point $(x_1,y_1)$ with $ry_1+sx_1=v$ is also a south letter in the departure word of some vertex $(v+r,[i-1])$. We have that $v+r>k$, so the south edge cannot be the right edge of a box in the Young diagram of $\lambda_{r,s,k}$ and must be along the $y$ axis. Therefore, $x_1=0$ and $v=ry_1$ is divisible by $r$. However, by assumption $k$ is divisible by $r$ and therefore $k-r$ and $k$ are consecutive multiples of $r$. So, this is only possible if $v=k$. Since the value of $ry$ decreases as the boundary progresses south down the $y$-axis, there is only one such edge, namely, the edge departing from $\left(0,\frac{k}{r}+1\right)$ and arriving at $\left(0,\frac{k}{r}\right)$. 
\end{proof}

We are now in a position to check our base case. We will show that, if $rsc\mid k$ and $M_{r,s,c}(\mu)=M_{r,s,c}(\lambda_{r,s,k})$ then $\mu=\lambda_{r,s,k}.$ So, the accumulation point $\lambda_{r,s,k}$ act as a base case for a claim that any statistic is independent of the choice of $(r,s,c)$-tour.

\begin{prop}\label{independence rsk}
For fixed integers $r,s,c,k$ with $k=rsk_1$ and $c\mid k_1$, there is a unique $(r,s,c)$-tour of $M_{r,s,c}(\lambda_{r,s,k})$.
\end{prop}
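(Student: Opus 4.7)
By definition an $(r,s,c)$-tour of $M_{r,s,c}(\lambda_{r,s,k})$ is the pushforward $(q_{r,s,c}(e_i))_{i\in\mathbb{Z}}$ of the boundary tour of some partition $\mu$ with $M_{r,s,c}(\mu)=M_{r,s,c}(\lambda_{r,s,k})$, and the boundary tour determines the partition; so to establish tour uniqueness, the plan is to reconstruct the boundary of any such $\mu$ directly from the multigraph, forcing its boundary lattice points to coincide with those of $\lambda_{r,s,k}$.

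First I would use Proposition~\ref{Multigraph rsk} to describe the vertex set and arrival multisets of $M_{r,s,c}(\lambda_{r,s,k})$: no vertices appear with $v \leq k-r-s$, arrivals are all $S$ for $v \in (k-r-s, k-r]$, all $E$ for $v \in (k-r, k)$, and at $(k,[0])$ the multiset is one $S$ plus several $E$s (while $(k,[i])$ for $[i]\ne[0]$ is pure $E$). Since $M_{r,s,c}(\mu)$ is the same multigraph, the same multiplicities and direction distributions apply to $\mu$. Combining this with Proposition~\ref{lattice points}, which guarantees that distinct preimages of a vertex lie in distinct strips $m \leq x-y < m + \lcm(c,r+s)$, I can interpret each vertex multiplicity as a count of non-negative lattice points in its equivalence class that lie on $\mu$'s boundary.

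The crux is that for $v \leq k$ the staircase of $\lambda_{r,s,k}$ passes through \emph{every} non-negative lattice point with $sx+ry=v$, so the multiplicity of $(v,[i])$ in $M_{r,s,c}(\lambda_{r,s,k})$ saturates the geometric count of such points. Matching this multiplicity for $\mu$ forces $\mu$'s boundary to pass through exactly the same lattice points at each such vertex. For $v > k$, only axial preimages contribute to the multiplicity for $\lambda_{r,s,k}$; any staircase preimage of $\mu$ at such a vertex would strictly exceed this multiplicity, which is impossible, while the axial count itself forces $\mu$'s $y$-axis boundary to begin at the same height $k/r$ and the $x$-axis boundary at the same horizontal extent $k/s$ as those of $\lambda_{r,s,k}$.

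The main obstacle is bookkeeping at the mixed vertex $(k,[0])$ and at vertices with high multiplicity arising from multiple lattice-point coalescences when $c$ and $r+s$ share a common factor. I would handle $(k,[0])$ by anchoring the unique $S$ arrival to the lattice point $(0,k/r)$ on the $y$-axis, then distributing the $E$ arrivals across the staircase points on $sx+ry=k$ strip by strip; the coalescence cases are handled by invoking Proposition~\ref{lattice points} to separate the preimages and verifying that the saturated count still matches. Once the boundary lattice points of $\mu$ are shown to coincide with those of $\lambda_{r,s,k}$, the edges between consecutive boundary points are uniquely determined, giving $\mu=\lambda_{r,s,k}$ and hence a unique $(r,s,c)$-tour of $M_{r,s,c}(\lambda_{r,s,k})$.
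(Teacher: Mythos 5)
Your proposal reaches the right conclusion by a genuinely different route. The paper argues by contradiction and localizes the difference: assuming a second $(r,s,c)$-tour corresponding to a partition $\mu$, it takes the vertex $(v,[i])$ with $v$ maximal at which the arrival word changes, uses Proposition~\ref{Multigraph rsk} to show the changed word must begin with an $E$, deduces that the boundary of $\mu$ steps east off the $y$-axis before reaching $(0,k_1s)$, and then counts visits to the single class $Z=(k,[0])$: since $\lambda_{r,s,k}$ visits \emph{all} of $Z$ and the two arrival words at $(k,[0])$ have equal length, $\mu$ would also have to visit all of $Z$, contradicting that it misses $(0,k_1s)$. You instead saturate globally: for every vertex $(v,[i])$ with $k-r-s<v\leq k$, the boundary of $\lambda_{r,s,k}$ passes through every non-negative lattice point of the class, so matching multiplicities (each lattice point is visited at most once by a monotone boundary) forces $\mu$ to visit exactly the same point set throughout the band, after which the boundary is recovered edge-by-edge, since each boundary step increases $x-y$ by one and a boundary is therefore determined by its set of lattice points. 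Your version proves the stronger and more transparent fact that the multigraph pins down the boundary pointwise, at the cost of verifying saturation on the whole band rather than only on the line $sx+ry=k$; that saturation does hold and is a short floor-function check (for $y\geq 1$, one has $\lambda_y=\left\lfloor (k-ry)/s\right\rfloor$, and $k-r-s<sx+ry\leq k$ is equivalent to $\lambda_{y+1}\leq x\leq \lambda_y$, with the axis rows handled similarly), but you should record it rather than assert it.

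One sub-step is wrongly justified, though harmlessly so. For $v>k$ you claim that a non-axial (``staircase'') visit by $\mu$ ``would strictly exceed'' the vertex multiplicity; that is not true in isolation, since $\mu$ could a priori trade an axial point of the class for a non-axial one with no change in the count --- as stated the argument is circular, because the axial visits of $\mu$ are exactly what is being determined. Fortunately you do not need this step: the two corner points $(0,k_1s)$ and $(k_1r,0)$ both lie on the line $sx+ry=k$ in the class $(k,[0])$ (this is where $c\mid k_1$ enters, since $\pm k_1s,\ k_1r\equiv 0\pmod{c}$), so they are already caught by your band-saturation argument; once the boundary of $\mu$ is pinned at these two points, the portions beyond them are forced along the axes by non-negativity of the boundary, and no separate analysis of the vertices with $v>k$ is required.
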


\begin{proof} 
Suppose we pick a different $(r,s,c)$-tour of $M_{r,s,c}(\lambda_{r,s,k})$ corresponding to a partition $\mu$. First, we will show that the partition boundary of $\mu$ must leave the $y$-axis earlier than the boundary of $\lambda_{r,s,k}$. 
Let $(v,[i])$ be the vertex with $v$ maximal such that the arrival word at $(v,[i])$ changes. Such a vertex certainly exists because any partition boundary differs in finitely many edges from the boundary of the empty partition. Let $(v,[i])_a^\lambda$ and $(v,[i])_a^\mu$ be the arrival words at $(v,[i])$ in the tour corresponding to $\lambda_{r,s,k}$ and $\mu$ respectively. Then, $(v,[i])_a^\mu$ must be a permutation of $(v,[i])_a^\lambda$, so since $(v,[i])_a^\lambda\not=(v,[i])_a^\mu,$ $(v,[i])_a^\lambda$ must contain both $E$s and $S$s. Proposition~\ref{Multigraph rsk} then tells us that either
\begin{itemize}
    \item $v>k$, in which case any letter in the arrival word at $(v,[i])$ must correspond to an edge on a co-ordinate axes. Since the value of $v$ decreases as the boundary steps south along the $y$ axis, and increases as it steps east along the $x$-axis, we must have $(v,[i])_a^{\lambda_{r,s,k}}=SE$.
    \item $(v,[i])=(k,[0])$, in which case Proposition~\ref{Multigraph rsk} implies $(v,[i])_a^{\lambda_{r,s,k}}$ is an $S$ followed by a string of $E$s, where the $S$ corresponds to an edge on the $y$-axis. 
\end{itemize}
In either case, $(v,[i])_a^\mu$ must begin with an $E$. So, the boundary of $\mu$ must step east off the $y$-axis before it hits the lattice point on the $y$-axis corresponding to $(v,[i])$ - otherwise $(v,[i])_a^\mu$ would have first letter $S$. So, the boundary of $\mu$ does step east off the axis earlier than the boundary of $\lambda_{r,s,k}$. In particular, the boundary of $\mu$ never visits the point $(0,k_1s).$

Now consider the arrival words $(k,[0])^{\lambda_{r,s,k}}_a$ and $(k,[0])^{\mu}_{a}.$ Let $Z$ be the set of points $(x,y)$ in the plane in the equivalence class $(k,[0])$ with respect to $\sim_{r,s,c}$, 
\begin{equation} Z=\left\{(x,y): x,y\in\mathbb{Z}_{\geq 0}, sx+ry=k\text{ and }x-y\equiv 0\pmod{c}\right\}.\end{equation}

The length of the arrival words $(k,[0])^{\lambda_{r,s,k}}_a$ and $(k,[0])^{\mu}_{a}$ count the number of times the boundaries of $\lambda_{r,s,k}$ and $\mu$ respectively visit points in $Z$. 
Both arrival words have the same length (they are permutations of each other) so the boundaries of $\lambda_{r,s,k}$ and $\mu$ must visit the same number of lattice points in $Z$. By the definition of $\lambda_{r,s,k}$, the boundary of $\lambda_{r,s,k}$ visits \textit{all} of the points in $Z$, so the boundary of $\mu$ must also visit all $|Z|$ of these points. But the boundary of $\mu$ does not visit the point $(0,k_1s),$ a contradiction.
\end{proof}

Next, we check that there is a sensible pull back of the ordering $>_{r,s,c}$ to $(r,s,c)$-multigraphs, so that taking successor can be understood to mean something at both the level of the partition and at the level of the multigraph. We abuse notation and write $>_{r,s,c}$ for the ordering on multigraphs and partitions.

\begin{prop}\label{westarrivalnorthdeparture}
Given an $(r,s,c)$-multigraph $M$, let $V_S$ be the set of vertices $(w,[i])$ with at least one south edge arriving at $(w,[i])$. Let $(v,[i])\in V_S$ such that $v$ is minimal. Then there is an edge from $(v,[i])$ to $(v+s,[i+1])$.
\end{prop}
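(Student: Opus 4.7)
The plan is to look at what happens immediately after a south edge arriving at the minimal vertex in the $(r,s,c)$-tour, and argue that the next edge must be east by a minimality-vs.-south-successor contradiction. Fix a south edge $e$ in the $(r,s,c)$-tour arriving at $(v,[i])$; this corresponds via $q_{r,s,c}$ to a south edge in the boundary of the underlying partition $\lambda$ travelling from lattice point $(x,y+1)$ down to $(x,y)$, where $sx+ry = v$ and $x-y \equiv i \pmod{c}$. Let $e'$ be the edge immediately following $e$ in the boundary tour (hence in the $(r,s,c)$-tour), which departs from $(x,y)$ in the plane, and therefore from $(v,[i])$ in $M$.

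The key step is the dichotomy for $e'$. Either $e'$ is a south edge from $(x,y)$ to $(x,y-1)$, whose image in $M$ goes from $(v,[i])$ to $(sx + r(y-1),[x-(y-1)]) = (v-r,[i+1])$; or $e'$ is an east edge from $(x,y)$ to $(x+1,y)$, whose image in $M$ goes from $(v,[i])$ to $(s(x+1)+ry,[(x+1)-y]) = (v+s,[i+1])$. In the south case, $(v-r,[i+1])$ would be a vertex in $V_S$, but $r>0$ forces $v-r < v$, contradicting the minimality of $v$. So $e'$ must be an east edge, giving the desired edge from $(v,[i])$ to $(v+s,[i+1])$.

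One bookkeeping point I would address explicitly is that $V_S$ really does have a minimum $v$: the targets of south edges in the boundary of $\lambda$ consist of the points $(0,y)$ on the $y$-axis (whose $v$-value $ry$ is bounded below, in fact arbitrarily large) together with the finitely many bottom-right corners of the rows of $\lambda$ (each contributing a bounded $v$-value). Hence $\{v : (v,[i]) \in V_S \text{ for some } i\}$ is bounded below and attains its minimum. Beyond this, the argument is essentially a one-line application of minimality; I do not anticipate any genuine obstacle, which is why this proposition can serve as a basic building block for the inductive structure results later in the section.
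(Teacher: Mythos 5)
Your proposal is correct and matches the paper's proof in essence: both rule out a south departure from the minimal vertex $(v,[i])$ because it would place $(v-r,[i+1])$ in $V_S$, contradicting minimality, and conclude that the edge departing after the south arrival must be east, landing at $(v+s,[i+1])$. Your extra remark verifying that $V_S$ attains a minimal $v$-value is a harmless bookkeeping addition that the paper leaves implicit.
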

\begin{proof}
 At least one edge arrives at $(v,[i])$ so at least one edge departs from $(v,[i])$. Any south edge departing from $(v,[i])$ would arrive at $(v-r,[i+1]),$ so $(v-r,[i+1])$ would be in $V_S$, contradicting the minimality of $v$. Therefore at least one east edge departs from $(v,[i])$, and arrives at $(v+s,[i+1]).$
\end{proof}

\begin{defn}[Multigraph successors]
Given an $(r,s,c)$-multigraph $M$, let $(v,[i])\in V_S$ as in the previous proposition. Then we say $M^+$ is a \textit{successor} of $M$ if $M^+$ can be obtained from $M$ by deleting one south edge from $(v+r,[i-1])$ to $(v,[i])$ and one east edge from $(v,[i])$ to $(v+s,[i+1])$, and adding one east edge from $(v+r,[i-1])$ to $(v+r+s,[i])$ and one south edge from $(v+r+s,[i])$ to $(v+s,[i+1])$. Sometimes we emphasize the vertex $(v,[i])$ and say $M^+$ is a successor of $M$ that \textit{changes from $(v,[i])$}. 
\end{defn}

At the level of multigraphs, we will only need the notion of successors, but for completeness we also explicitly define $<_{r,s,c}$ at the level of multigraphs.

\begin{defn}[Ordering on multigraphs] Given $(r,s,c)$-multigraphs $M=M_{r,s,c}(\lambda)$ and $M'=M_{r,s,c}(\lambda')$ we say $M<_{r,s,c}M'$ if there is a sequence of $(r,s,c)$-multigraphs $M=M_1,\ldots,M_n=M'$ such that $M_{i+1}$ is a successor of $M_i^+$ for each $i$.
\end{defn}

\begin{corol}\label{noWoutNin(l,[i])}
If $\lambda$ is a partition with $M_{r,s,c}(\lambda)=M$ and $M^+$ is a successor of $M$ changing from $(v,[i])$, then in $M$, $\Ein(v,[i])=\Sout(v,[i])=0.$
\end{corol}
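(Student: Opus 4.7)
The plan is to derive both vanishings from the minimality of $v$ in $V_S$, exploiting the structure of the boundary graph $b(\lambda)$. The argument for $\Sout(v,[i])=0$ is immediate: any south edge departing $(v,[i])$ arrives at the vertex $(v-r,[i+1])$, which would therefore lie in $V_S$ with $v$-value $v-r<v$, contradicting the minimality of $v$.

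The bulk of the work is in showing $\Ein(v,[i])=0$. I would suppose for contradiction that some east edge of $b(\lambda)$ arrives at a lattice representative $(x',y')$ of $(v,[i])$, and trace the boundary tour backwards through the maximal run of consecutive east edges at fixed height $y'$ ending at $(x',y')$. Inspection of the boundary graph definition shows that each east edge of $b(\lambda)$ is preceded in the tour either by another east edge at the same height or by a south edge at its source. Hence the run has a leftmost edge starting at some lattice point $(x_0,y')$ with $x_0\leq x'-1$, and the edge of the tour immediately before this run is a south edge with target $(x_0,y')$. This places the multigraph vertex containing $(x_0,y')$ in $V_S$, with $v$-value $sx_0+ry'<sx'+ry'=v$, contradicting the minimality of $v$ in $V_S$.

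The main technical point to verify is that the maximal east run at height $y'$ always admits such a south predecessor, and that $x_0<x'$. This reduces to a brief case analysis based on the value of $y'$: when $y'=0$ the run extends infinitely east and its leftmost edge starts at $x_0=\lambda_1$, preceded by the south edge at the right of the first row (here $V_S$ being nonempty forces $\lambda\neq\emptyset$ so that $\lambda_1$ is defined); when $0<y'<l(\lambda)$ the run starts at $x_0=\lambda_{y'+1}$, preceded by the south edge at the right of that row; when $y'\geq l(\lambda)$ the run starts at $x_0=0$, preceded by a south edge along the $y$-axis. In all three cases the definition of $b(\lambda)$ supplies the required south predecessor, and the strict inequality $x_0<x'$ follows from the existence of the east edge arriving at $(x',y')$.
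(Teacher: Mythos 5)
Your proof is correct in substance, and it takes a genuinely more concrete route than the paper's. The paper disposes of this corollary in one line, citing the proof of Proposition~\ref{westarrivalnorthdeparture}: $\Sout(v,[i])=0$ because a south departure would arrive at $(v-r,[i+1])$, putting that vertex in $V_S$ with value below $v$; and $\Ein(v,[i])=0$ by the symmetric observation that an east arrival departs from $(v-s,[i-1])$, which by the same minimality reasoning leads to a smaller-value vertex of $V_S$ --- made rigorous either by descending through consecutive east arrivals until a south arrival is found, or more slickly by noting that the vertex of $M$ of globally minimal value can have no east arrivals (its putative source would be a vertex of even smaller value), hence has only south arrivals and lies in $V_S$; so $v$ is in fact the global minimum over all of $V(M)$ and neither $(v-s,[i-1])$ nor $(v-r,[i+1])$ is a vertex of $M$ at all. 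Your argument carries out exactly this descent, but at the level of $b(\lambda)$ in lattice coordinates: you trace backwards through the maximal run of east edges at height $y'$ and exhibit the south edge preceding the run, then compare $sx_0+ry'<v$. This is a legitimate, fully explicit alternative. What it costs is the three-way case analysis on $y'$ versus $l(\lambda)$, which the multigraph-level phrasing avoids entirely; what it buys is that the existence of the south predecessor is read off directly from the definition of the boundary graph rather than from abstract in-degree/out-degree bookkeeping, and the strict inequality $x_0\leq x'-1$ makes the contradiction completely transparent.

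One sub-claim is false, though harmlessly so: $V_S$ nonempty does \emph{not} force $\lambda\neq\emptyset$. For the empty partition the south edges along the $y$-axis arrive at the vertices $(ry,[-y])$, so $V_S\neq\emptyset$, and indeed the empty partition admits a successor changing from $(0,[0])$, so this case is in scope. Your $y'=0$ case therefore cannot justify $\lambda_1\geq 1$ that way. The repair is immediate: if $\lambda=\emptyset$ then $l(\lambda)=0$ and $y'=0$ is covered by your third case ($x_0=0$, south predecessor along the $y$-axis); alternatively, for $\lambda=\emptyset$ no east edge arrives at any lattice point of value $0$, so the contradiction hypothesis is vacuous there. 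With that parenthetical deleted or rerouted, the proof stands.
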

\begin{proof}
 Identical to the proof of Proposition~\ref{westarrivalnorthdeparture}.
\end{proof}
The next proposition shows that this definition of successors at the level of multigraphs aligns with our definition at the level of partitions.

\begin{prop}\label{successors pass}
Let $\lambda$ be a partition with $M_{r,s,c}(\lambda)=M.$ If $M^+$ is a successor of $M$ that changes at $(v,[i])$, then there is a unique partition $\lambda^+$ such that $\lambda^+>'_{r,s,c}\lambda$ and $M^+=M_{r,s,c}(\lambda^+).$
\end{prop}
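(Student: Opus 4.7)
The plan is to locate the unique inside corner of $\lambda$ whose coordinates lie in the $\sim_{r,s,c}$-equivalence class of $(v,[i])$ and are minimal in $>_{r,s,c}$, add a box there, and verify both required properties together with uniqueness. The strategy is driven by the observation that a successor at $(v,[i])$ only affects edges incident to the four vertices $(v,[i])$, $(v+r,[i-1])$, $(v+s,[i+1])$, $(v+r+s,[i])$, which is exactly the local effect on $b(\lambda)$ of adding a single box whose bottom-left corner is a lattice point in the class $(v,[i])$.

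First I would identify the candidates. By Corollary~\ref{noWoutNin(l,[i])}, $\Ein(v,[i])=\Sout(v,[i])=0$, so at every lattice point $(p,q)\in V(b(\lambda))$ with $sp+rq=v$ and $p-q\equiv i\pmod c$, the unique incident arrival is a south edge and the unique incident departure is an east edge, making $(p,q)$ an inside corner of $\lambda$. By Proposition~\ref{lattice points}, the lattice points in this class with nonnegative coordinates form a finite set, and any two of them differ in $p-q$ by a nonzero multiple of $\lcm(c,r+s)$, so the minimum value of $p-q$ is uniquely attained, say at $(p_*,q_*)$. I would define $\lambda^+$ by adjoining to $\lambda$ the box with bottom-left corner $(p_*,q_*)$; this is a legitimate Young diagram because $(p_*,q_*)$ is an inside corner.

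Next I would verify the two required properties. For $M_{r,s,c}(\lambda^+)=M^+$, I would compute the local change on $b(\lambda)$: the south edge from $(p_*,q_*+1)$ to $(p_*,q_*)$ and east edge from $(p_*,q_*)$ to $(p_*+1,q_*)$ are replaced by an east edge from $(p_*,q_*+1)$ to $(p_*+1,q_*+1)$ and a south edge from $(p_*+1,q_*+1)$ to $(p_*+1,q_*)$. Plugging $sp_*+rq_*=v$ and $p_*-q_*\equiv i\pmod c$ into the projection $q_{r,s,c}$ shows these four edges map exactly to the edges deleted and added in the definition of the successor at $(v,[i])$. For $\lambda^+>'_{r,s,c}\lambda$, any other addable position $(p',q')$ is also an inside corner, so its south arrival places $(sp'+rq',[p'-q'])$ in $V_S$; minimality of $v$ in $V_S$ gives $sp'+rq'\geq v$, and within the class $(v,[i])$ the minimality of $p_*-q_*$ rules out any candidate strictly below $(p_*,q_*)$ in $>_{r,s,c}$.

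For uniqueness, if $\mu>'_{r,s,c}\lambda$ also satisfies $M_{r,s,c}(\mu)=M^+$, then writing $\mu=\lambda\cup\{\square\}$ with $\square$ having bottom-left corner $(p',q')$, the change on $b(\mu)$ versus $b(\lambda)$ is localised at $(p',q')$; tracing through $q_{r,s,c}$, the vertex of $M_{r,s,c}$ at which $\mu$ and $\lambda$ differ has class $(sp'+rq',[p'-q'])$, which must equal $(v,[i])$. Minimality of $(p',q')$ under $>_{r,s,c}$ within this class then forces $(p',q')=(p_*,q_*)$, so $\mu=\lambda^+$. The main obstacle is the structural observation in the second paragraph, that every lattice point of the class $(v,[i])$ on the boundary is automatically an inside corner of $\lambda$, since the boundary typically visits several points in this class and it is essential that the minimum-$(p-q)$ choice picks out a unique addable box whose addition realises the prescribed multigraph successor.
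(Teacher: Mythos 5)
Your proposal is correct and takes essentially the same approach as the paper's proof: both locate the added box at the boundary lattice point of the class $(v,[i])$ minimising $x-y$ (all boundary points of that class being addable inside corners because $\Ein(v,[i])=\Sout(v,[i])=0$), check that the local two-edge swap projects under $q_{r,s,c}$ to exactly the multigraph successor move at $(v,[i])$, and obtain uniqueness from the same minimality considerations in the order $>_{r,s,c}$. The only cosmetic difference is direction: you construct $\lambda^+$ from the multigraph data first, whereas the paper starts from an arbitrary partition successor and computes its multigraph, but the ingredients and logic coincide.
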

\begin{proof}
Let $\lambda'$ be any successor of $\lambda$. Then, the Young diagram of $\lambda'$ consists of all boxes in the Young diagram of $\lambda$ and one additional box $\square$. Let the bottom left corner of $\square$ have co-ordinate $(x_1,y_1)$, where $x_1-y_1\equiv i \pmod{c}$ and $ry_1+sx_1=l.$ Then by definition of a successor, if we take minima over the points $(x,y)$ in $b(\lambda)$,
\begin{equation}
    l=\min (sx+ry)
\end{equation}and\begin{equation}\label{earliest}
    x_1-y_1=\min_{\substack{sx+ry=l \\
    [x-y]=[i]}}(x-y).
\end{equation}
In particular, $l$ and $[i]$ are sufficent to determine $x_1-y_1$. Let $s_1$ and $s_2$ be the edges in $b(\lambda)$ arriving at and departing from $(x_1,y_1)$ respectively, and let $s_1'$ and $s_2'$ be the edges in $b(\lambda')$ arriving at and departing from $(x_1+1,y_1+1)$ respectively, as shown in Figure~\ref{super important}.
\begin{figure}[ht]
\begin{center}
\begin{tikzpicture}\begin{scope}[yscale=1,xscale=-1,rotate=90, scale=1.2]
\draw (1,0)--(0,0)--(0,1);
\draw[dashed] (1,0)--(1,1)--(0,1);
\draw[dashed,->] (1,0)--(1,0.55);
\draw[->] (1,0)--(0.45,0);
\node[above left] at (0.5,0) {$s_1$};
\node[below] at (0,0.5) {$s_2$};
\node[above] at (1,0.5) {$s_1'$};
\node[right] at (0.25,1) {$s_2'$};
\draw[->] (0,0)--(0,0.55);
\draw[dashed,->] (1,1)--(0.45,1);
\draw (0,1)--(0,4)--(-3,4);
\draw (1,0)--(5,0)--(5,-2);
\draw[red] (-3,2)--(3,-2);
\draw[red] (-3,2.667)--(4,-2);
\draw[red] (-3,3)--(4.5,-2);
\draw[red] (-3,3.667)--(5.5,-2); 
\node[left] at (3,-2) {$sx+ry=l$};
\node[left] at (4,-2) {$sx+ry=l+r$};
\node[left] at (4.5,-2) {$sx+ry=l+s$};
\node[left] at (5.333,-2) {$sx+ry=l+r+s$};
\draw[fill=black] (0,0) circle[radius=0.05];
\node[below left] at (0,0) {$(x_1,y_1)$};
\end{scope}\end{tikzpicture}
\end{center}
\caption{a partition and its successor differ by replacing $s_1$ and $s_2$ with $s_1'$ and $s_2'$.}\label{super important}
\end{figure}
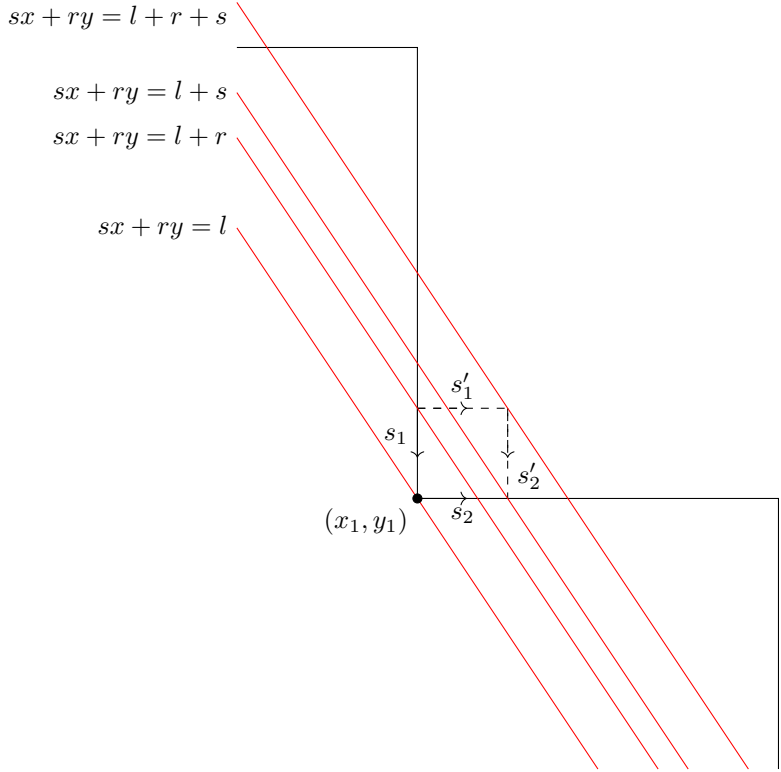
Then, the multigraph of $\lambda'$ differs from the multigraph of $\lambda$ only in that one edge from $(l+r,[i-1])$ to $(l,[i])$, and one edge from $(l,[i])$ to $(l+s,[i+1])$ corresponding to $s_1$ and $s_2$ respectively, are deleted, and one edge from $(l+r,[i-1])$ to $(l+r+s,[i])$, and one edge from $(l+r+s,[i])$ to $(l+s,[i+1])$, corresponding to $s_1'$ and $s_2'$ respectively are added. That is, $M_{r,s,c}(\lambda')$ is the successor of $M$ changing from $(l,[i])$. 

For uniqueness, given that $M^+$ changes from $M$ at $(l,[i]),$ any successor of $\lambda$ with multigraph $M^+$ must be $\lambda'$ by ~\eqref{earliest} because the value of $x-y$ increases by 1 at every consecutive point visited in the boundary. 

For existence, if $M^+$ changes from $M$ at $(v,[j])$ then $v$ is minimal such that there is a south edge into $(v,[j])$ and an east edge out of $(v,[j])$. So, $v=\min_{(x,y)\in b(\lambda)}(sx+ry)$ and there is at least one point $(x,y)$ on the boundary such that $[x-y]=[j].$ So, letting $(x_2,y_2)$ minimise $x-y$ over all such points, and adding a box with bottom left corner $(x_2,y_2)$ gives a successor $\lambda^+$ of $\lambda$ with multigraph $M^+$.  \end{proof}

We are now in a position to prove our key structural proposition.

\begin{prop}\label{acc points are useful}
Let $f:\Par\to\mathbb{R}.$ Suppose there is a function $g:\{M_{r,s,c}(\lambda)\mid\lambda\in\Par\}^2\to \mathbb{R}$ such that, if $\lambda$ is a partition, and $\lambda^+$ is a successor of $\lambda,$ where $\lambda$ and $\lambda^+$ have $(r,s,c)$-multigraphs $M$ and $M^+$ respectively, \begin{equation} f(\lambda^+)-f(\lambda)=g\left(M^+,M\right).\end{equation}

Then, for any partitions $\mu_1$ and $\mu_2$ with $M_{r,s,c}(\mu_1)=M_{r,s,c}(\mu_2)$, $f(\mu_1)=f(\mu_2).$
\end{prop}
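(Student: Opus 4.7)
The plan is to use the accumulation points $\lambda_{r,s,k}$ with $rsc \mid k$ as common endpoints that allow us to compare values of $f$ on any two partitions sharing a multigraph. Given partitions $\mu_1$ and $\mu_2$ with $M_{r,s,c}(\mu_1) = M_{r,s,c}(\mu_2) = M$, I would first fix an integer $k$ divisible by $rsc$ large enough that the Young diagrams of both $\mu_1$ and $\mu_2$ are contained in the Young diagram of $\lambda_{r,s,k}$; this is always possible, since each finite diagram fits inside $\lambda_{r,s,k}$ once $k$ exceeds the maximum of $sx + ry$ over top-right corners of boxes.

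Next, I would build a chain of partition successors $\mu_1 = \mu_1^0 <'_{r,s,c} \mu_1^1 <'_{r,s,c} \cdots <'_{r,s,c} \mu_1^m = \lambda_{r,s,k}$ of length $m = |\lambda_{r,s,k}| - |\mu_1|$, whose existence and termination at $\lambda_{r,s,k}$ are guaranteed by Proposition~\ref{rskeventualsuccessor}. Proposition~\ref{successors pass} translates this partition chain into a chain of multigraph successors $M = M^0 \to M^1 \to \cdots \to M^m = M_{r,s,c}(\lambda_{r,s,k})$. Reusing Proposition~\ref{successors pass} in the other direction, I would lift this same multigraph chain, starting from $\mu_2$, to a chain of partition successors $\mu_2 = \mu_2^0 <'_{r,s,c} \mu_2^1 <'_{r,s,c} \cdots <'_{r,s,c} \mu_2^m$ with $M_{r,s,c}(\mu_2^j) = M^j$ at every stage.

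The crucial step is that $M_{r,s,c}(\mu_2^m) = M_{r,s,c}(\lambda_{r,s,k})$; since $rsc \mid k$, the uniqueness asserted by Proposition~\ref{independence rsk} forces $\mu_2^m = \lambda_{r,s,k}$. The hypothesis on $g$ then telescopes along both chains:
\begin{equation*}
f(\lambda_{r,s,k}) - f(\mu_1) = \sum_{j=0}^{m-1} g(M^{j+1}, M^j) = f(\lambda_{r,s,k}) - f(\mu_2),
\end{equation*}
which immediately gives $f(\mu_1) = f(\mu_2)$.

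The main obstacle I anticipate is an apparent length mismatch if $|\mu_1| \neq |\mu_2|$: the lifted chain from $\mu_2$ has length $m$ and hence reaches a partition of size $|\mu_2| + m$, which is not obviously $|\lambda_{r,s,k}|$. This worry is resolved a posteriori, precisely because Proposition~\ref{independence rsk} pins down the partition with multigraph $M_{r,s,c}(\lambda_{r,s,k})$ uniquely; the lifted chain must therefore terminate at $\lambda_{r,s,k}$, forcing $|\mu_1| = |\mu_2|$. Thus the uniqueness of the $(r,s,c)$-tour at the accumulation points is the essential ingredient that goes beyond the bare correspondence of successors and makes the telescoping argument work simultaneously on both sides.
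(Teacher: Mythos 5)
Your proposal is correct and takes essentially the same route as the paper's proof: both arguments run parallel successor chains over a common multigraph successor sequence from $\mu_1$ and $\mu_2$ up to the accumulation point $\lambda_{r,s,k}$ (combining Propositions~\ref{successors pass}, \ref{rskeventualsuccessor} and~\ref{independence rsk}) and then telescope $g$ along both chains. Your a posteriori handling of the potential length mismatch via the uniqueness in Proposition~\ref{independence rsk} is exactly the mechanism by which the paper's proof forces both lifted chains to terminate at $\lambda_{r,s,k}$, so this is the same argument, merely built partition-chain-first rather than multigraph-chain-first.
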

\begin{proof} Let $M=M_{r,s,c}(\mu_1)=M_{r,s,c}(\mu_2)$. There is a sequence of multigraphs  $M=M_0,M_1,\ldots$ where $M_j$ is a successor of $M_{j-1}$ for each $j$. Set $\lambda_0=\mu_1$ and $\nu_0=\mu_2$. Then, by Proposition~\ref{successors pass} there are sequences of partitions $\lambda_0,\lambda_1,\ldots$ and $\nu_0,\nu_1,\ldots$ such that $M_j=M_{r,s,c}(\lambda_j)=M_{r,s,c}(\nu_j)$, \begin{equation}\lambda_0<'_{r,s,c}\lambda_1<'_{r,s,c}\ldots,\end{equation} and
\begin{equation}\nu_0<'_{r,s,c}\nu_1<'_{r,s,c}\ldots.\end{equation} 
Let $k$ be divisible by $rsc$ and large enough so that all boxes in the Young diagrams of $\mu_1$ or $\mu_2$ lie below the line $sx+ry=k$. By Proposition~\ref{rskeventualsuccessor}, there is some $m$ such that $M_m=M_{r,s,c}(\lambda_{r,s,k})$. By Proposition~\ref{independence rsk}, $\lambda_m=\nu_m=\lambda_{r,s,k}$. 
Then, 
\begin{align} f(\mu_1)&=f(\lambda_{r,s,k})-\sum_{i=1}^m\left(f(\lambda_{i})-f(\lambda_{i-1})\right)\\
&=f(\lambda_{r,s,k})-\sum_{i=1}^m g(M_i,M_{i-1})\\
&=f(\lambda_{r,s,k})-\sum_{i=1}^m\left(f(\nu_{i})-f(\nu_{i-1})\right)\\
&=f(\mu_2).
\end{align} \end{proof}

Armed with Proposition~\ref{acc points are useful}, checking that $M_{r,s,c}$ determines the area of a partition is particularly straightforward.

\begin{corol}\label{areacorol}
If $\lambda$ and $\mu$ are partitions with $M_{r,s,c}(\lambda)=M_{r,s,c}(\mu)$ then $|\lambda|=|\mu|.$
\end{corol}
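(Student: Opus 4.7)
The plan is to invoke Proposition~\ref{acc points are useful} with the target function $f(\lambda)=|\lambda|$, which reduces the claim to checking a very mild local property: when $\lambda^+>'_{r,s,c}\lambda$, the difference $|\lambda^+|-|\lambda|$ depends only on the pair of multigraphs $\bigl(M_{r,s,c}(\lambda^+),M_{r,s,c}(\lambda)\bigr)$. Since by the definition of $>'_{r,s,c}$ the partition $\lambda^+$ is obtained from $\lambda$ by adding a single box, this difference is always $1$, independent of anything. So the constant function $g(M^+,M)=1$ trivially witnesses the hypothesis of Proposition~\ref{acc points are useful}.

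More concretely, first I would unpack the definition of successor (recorded in Proposition~\ref{successors pass}): a successor $\lambda^+$ of $\lambda$ with respect to $>'_{r,s,c}$ is obtained by inserting exactly one box with bottom-left corner $(x_1,y_1)$, where $(x_1,y_1)$ is a $<_{r,s,c}$-minimal addable corner of $\lambda$. Hence $|\lambda^+|=|\lambda|+1$. Setting $g\equiv 1$, we have $f(\lambda^+)-f(\lambda)=1=g\bigl(M_{r,s,c}(\lambda^+),M_{r,s,c}(\lambda)\bigr)$ for every such pair, which is exactly the hypothesis required.

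Applying Proposition~\ref{acc points are useful} with these $f$ and $g$ then yields that if $M_{r,s,c}(\mu_1)=M_{r,s,c}(\mu_2)$ then $|\mu_1|=|\mu_2|$, which is the content of the corollary. I expect no real obstacle here; all the substantive work has already been done in assembling the inductive framework (the identification of $\lambda_{r,s,k}$ as accumulation points, Proposition~\ref{independence rsk} giving injectivity at the base case, and Proposition~\ref{successors pass} showing multigraph successors lift to partition successors). The corollary is essentially the most trivial instance of that framework, and I would frame it explicitly as a template for the more delicate area-type invariance results (namely the determination of $\middxc$ and $\critplusxc+\critminusxc$ by $M_{r,s,c}$) that are deferred to Section~6, where the analogous function $g$ will be nontrivial.
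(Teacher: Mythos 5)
Your proposal is correct and is exactly the paper's proof: Corollary~\ref{areacorol} is established by applying Proposition~\ref{acc points are useful} with the constant function $g(M^+,M)=1$, since a successor $\lambda^+>'_{r,s,c}\lambda$ adds precisely one box. Your additional unpacking of Proposition~\ref{successors pass} is a faithful elaboration of the same argument, not a different route.
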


\begin{proof} Apply Proposition~\ref{acc points are useful} with $g(M^+,M)=1.$\end{proof}

Having outlined the structure of the proofs that $M_{r,s,c}$ determines partition statistics, we defer the checks that $M_{r,s,c}$ determines $\middxc$ and $\critplusxc+\critminusxc$ to Section 6. We now turn our attention to defining $I_{r,s,c}$.

\section{The involution \texorpdfstring{$I_{r,s,c}$}{Irsc}}
\stepcounter{essaypart}
In this section we construct the bijection $I_{r,s,c}$ and check that it is well defined. In order to do so, we first need to understand how to recover a partition from a family of arrival words.

\subsection{Recovering a partition from the arrival words}
Thus far we have constructed $M_{r,s,c}(\lambda)$ and an $(r,s,c)$-tour from $b(\lambda)$. We will define $I_{r,s,c}$ as an involution that preserves $M_{r,s,c}$ but changes the $(r,s,c)$-tour, in fact by changing the order in which some of the letters appear in the arrival words. In order to check the result is well defined, we need to understand how to recover a boundary sequence from a family of arrival words, and indeed have a criterion for when it is possible to do so if the family of arrival words does not a priori arise from a partition.

If $v$ is minimal such that all boxes in the partition have top right corner on or below the line $sx+ry=v$, then we have that for all $w>v$, any arrival at a vertex $(w,[i])$ must be on a co-ordinate axis. So, 
\begin{equation}\label{rscboringvertex}(w,[i])_a=\begin{cases} 
      SE & \text{if }r\mid w, s\mid w, \frac{w}{s}\equiv \frac{-w}{r}\equiv i\pmod{c} \\
      E & \text{if }s\mid w, c\mid\left(\frac{w}{s}-i\right)\text{ and either }r\nmid w\text{ or }c\nmid(\frac{-w}{r}-i)\\ 
      S & \text{if }r\mid w, c\mid\left(\frac{-w}{r}-i\right)\text{ and either }s\nmid w\text{ or }c\nmid(\frac{w}{s}-i) \\
      \emptyset &\text{otherwise.} \\
   \end{cases}\end{equation}
Moreover, $v$ is uniquely specified as the largest vertex where the arrival word at $(v,[i])$ does \textit{not} satisfy \eqref{rscboringvertex} for some $i\in\{1,2,\ldots,c\}$.

So, we can identify $v$ and fill in the co-ordinate axes above or to the right of the line $sx+ry=v$ as part of the partition boundary. We may then fill in the remainder working backwards from the arrival words - we outline the method below by example.

\begin{ex}\label{fillinaxes} Suppose we have $r=3$, $s=2$, $c=2$, and the set of arrival words specified below
\begin{center}
\begin{tabular}{cccccccc}
(20,[1])&S&(22,[0])&E&(23,[0])&S&(23,[1])&S\\
(24,[0])&S&(24,[1])&E&(25,[0])&E&(25,[1])&S\\
(26,[0])&SE&(26,[1])&SSE&(27,[0])&E&(27,[1])&SSE\\
(28,[0])&EE&(28,[1])&E&(29,[0])&SS&(29,[1])&E\\
(30,[0])&SE&(30,[1])&E&&&&\\
\end{tabular}
\end{center}
Empty for all other vertices $(w,[j])$ with $w<30$.
Then for $w>30$, 

\begin{equation*}(w,[j])_a=\begin{cases} 
      SE & \text{if }6\mid w, \frac{w}{2}\equiv \frac{-w}{3}\equiv j\pmod{2} \\
      E & \text{if }2\mid w, 2\mid\left(\frac{w}{2}-j\right)\text{ and either }3\nmid w\text{ or }2\nmid(\frac{-w}{3}-j)\\ 
      S& \text{if }3\mid w, 2\mid\left(\frac{-w}{3}-j\right)\text{ and either }3\nmid w\text{ or }2\nmid(\frac{w}{2}-j) \\
      \emptyset &\text{otherwise.} \\
   \end{cases}\end{equation*}

Looking at the vertex $(30,[0])$, with $w=30$ and $j=0$ we have $\frac{w}{2}\not\equiv j\pmod{2}$, so 30 is maximal such that there is vertex $(30,[i])$ that does not satisfy~\eqref{rscboringvertex} for some $i$. So, $v=30$ and we draw a ray along the positive $x$-axis beginning at $(15,0),$ and a ray along the positive $y$-axis beginning at $(0,10).$ It then remains to fill in the boundary between the points $\left(\left\lceil\frac{v}{s}\right\rceil,0\right),$ and  $\left(0,\left\lceil\frac{v}{r}\right\rceil\right).$ To do this, we look first at the arrival word at $\left(s\left\lceil\frac{v}{s}\right\rceil,\left[\left\lceil\frac{v}{s}\right\rceil\right]\right)$, $(30,[1])$ in our example, corresponding to the point on the $x$-axis at which the ray begins. The last letter of this word tells us what kind of edge we should add to the boundary to arrive at $\left(\left\lceil\frac{v}{s}\right\rceil,0\right)$, in this case an $E$, so we add an edge from $(14,0)$ to $(15,0)$. and delete the last $E$ from $(30,[1])_a$. The same logic allows the rest of the boundary to be filled out edge by edge, as in Figure~\ref{fillinaxesfigure}.
\begin{figure}[!htbp]
\begin{center}
\begin{tikzpicture}\begin{scope}[yscale=0.9,xscale=-0.9,rotate=90,scale=0.7]
\draw (0,17)--(0,15);
\node[below] at (0,15) {$(15,0)$};
\draw (10,0)--(15,0);
\draw[gray] (0,14)--(0,15);
\draw[gray] (0,14) circle[radius=0.05];
\node[rotate=45, below left] at (0,13.75) {$(28,[0])_a=E$\sout{$E$}};
\draw[gray] (0,13)--(0,14);
\draw[gray] (0,13) circle[radius=0.05];
\node[rotate=45,below left] at (0,12.75) {$(26,[1])_a=SS$\sout{$E$}};
\draw[gray] (0,12)--(0,13);
\draw[gray] (0,12) circle[radius=0.05];
\node[rotate=45,below left] at (0,11.75) {$(24,[0])_a=$\sout{$S$}};
\draw[gray] (0,12)--(1,12);
\draw[gray] (1,12) circle[radius=0.05];

\node[rotate=45, right] at (1,12) {$(27,[1])_a= SE$\sout{$S$}};
\draw[gray] (1,12)--(2,12);
\draw[gray] (2,12) circle[radius=0.05];

\node[rotate=45, right] at (2,12) {$(30,[0])_a=SE$\sout{$S$}};
\draw[gray] (2,11)--(2,12);
\draw[gray] (2,11) circle[radius=0.05];

\node[rotate=45, below left] at (2,10.75) {$(28,[1])_a=$\sout{E}};
\draw[gray] (2,11)--(2,10);
\draw[gray] (2,10) circle[radius=0.05];

\node[rotate=45, below left] at (2,9.75) {$(26,[0]): E$\sout{S}};
\draw[gray] (3,10)--(2,10);
\draw[gray] (3,10) circle[radius=0.05];

\node[rotate=45, right] at (3,10) {$(29,[1])_a=$\sout{$E$}};
\draw[gray] (3,10)--(3,9);
\draw[gray] (3,9) circle[radius=0.05];

\node[rotate=45, below left] at (3,8.75) {$(27,[0])_a=E$\sout{$E$}};
\draw[gray] (3,8)--(3,9);
\draw[gray] (3,8) circle[radius=0.05];

\node[rotate=45, below left] at (3,7.75) {$(25,[1])_a=$\sout{$S$}};
\draw[gray] (3,8)--(4,8);
\draw[gray] (4,8) circle[radius=0.05];

\node[rotate=45, right] at (4,8) {$(28,[0])_a=E$\sout{$E$}};
\draw[gray] (4,8)--(4,7);
\draw[gray] (4,7) circle[radius=0.05];

\node[rotate=45, below left] at (4,6.75) {$(26,[1])_a=S$\sout{$SE$}};
\draw[gray] (5,7)--(4,7);
\draw[gray] (5,7) circle[radius=0.05];

\node[rotate=45, above right] at (5,7) {$(29,[0])_a=E$\sout{$E$}};
\draw[gray] (5,6)--(5,7);
\draw[gray] (5,6) circle[radius=0.05];

\node[rotate=45, below left] at (5,5.75) {$(27,[1])_a=S$\sout{$SE$}};
\draw[gray] (5,6)--(5,5);
\draw[gray] (5,5) circle[radius=0.05];

\node[rotate=45, below left] at (5,4.75) {$(25,[0])_a=$\sout{$E$}};
\draw[gray] (5,4)--(5,5);
\draw[gray] (5,4) circle[radius=0.05];

\node[rotate=45, below left] at (5,3.75) {$(23,[1])_a=$\sout{$S$}};
\draw[gray] (5,4)--(6,4);
\draw[gray] (6,4) circle[radius=0.05];

\node[rotate=45, right] at (6,4) {$(26,[0])_a=$\sout{$SE$}};
\draw[gray] (6,4)--(6,3);
\draw[gray] (6,3) circle[radius=0.05];

\node[rotate=45, below left] at (6,2.75) {$(24,[1])_a=$\sout{$E$}};
\draw[gray] (6,2)--(6,3);
\draw[gray] (6,2) circle[radius=0.05];

\node[rotate=45, below left] at (6,1.75) {$(22,[0])_a=$\sout{$E$}};
\draw[gray] (6,2)--(6,1);
\draw[gray] (6,1) circle[radius=0.05];

\node[rotate=45, below left] at (6,0.75) {$(20,[1])_a=$\sout{$S$}};
\draw[gray] (7,1)--(6,1);
\draw[gray] (7,1) circle[radius=0.05];
\coordinate (G) at (7.2,0.8);
\node[rotate=45, right] at (7,1) {$(23,[0])_a=$\sout{$S$}};
\draw[gray] (7,1)--(8,1);
\draw[gray] (8,1) circle[radius=0.05];

\node[rotate=45, right] at (8,1) {$(26,[1])_a=$\sout{$SSE$}};
\draw[gray] (9,1)--(8,1);
\draw[gray] (9,1) circle[radius=0.05];

\node[rotate=45, right] at (9,1) {$(29,[0])_a=$\sout{$EE$}};

\node[rotate=45, below left] at (9,-0.25) {$(27,[1])_a=$\sout{$SSE$}};
\draw[gray] (9,1)--(9,0);
\draw[gray] (9,0) circle[radius=0.05];
\draw[gray] (9,0)--(10,0);

\node[left] at (10,0) {$(0,10)$};
\end{scope}\end{tikzpicture}
\end{center}
\caption{the arrival words in Example~\ref{fillinaxes} give the partition $(12,12,10,8,7,4,1,1,1)$.}\label{fillinaxesfigure}
\end{figure}
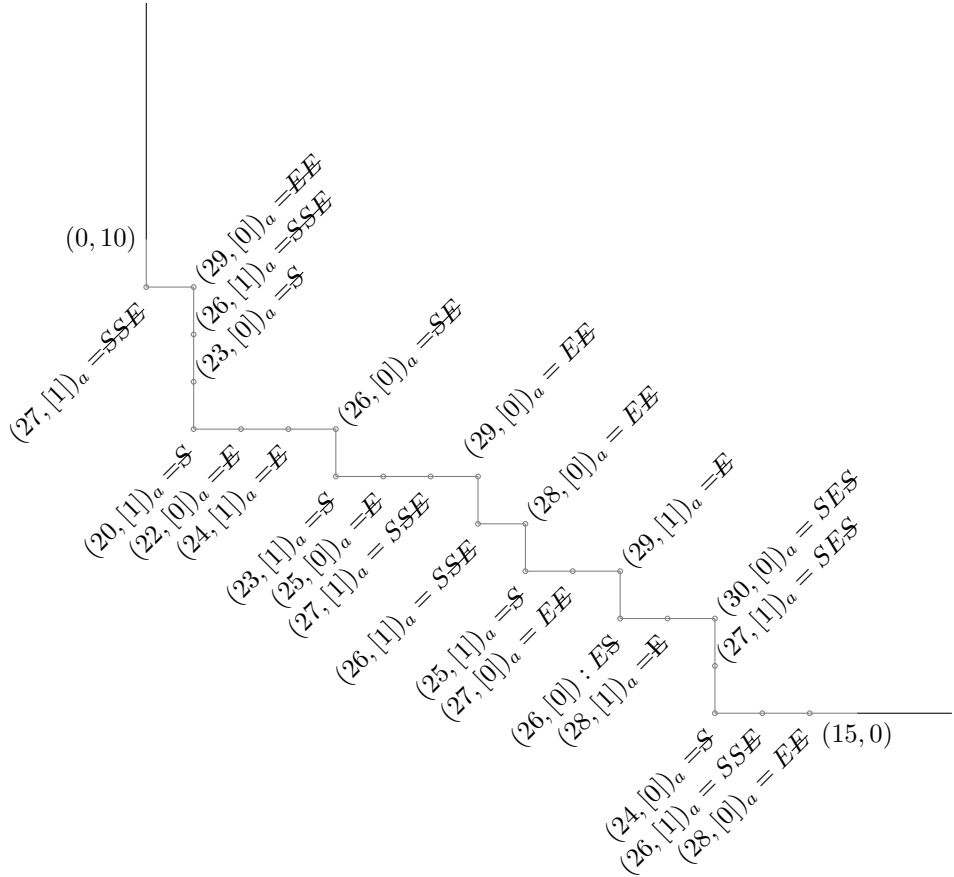
\end{ex}

\subsection{The first arrival tree}
Next we lay out a criterion for a family of arrival words to arise from a partition. We already know that any family of arrival words arising from a partition must satisfy ~\eqref{rscboringvertex} for $w>v$ large enough. We now give a criterion on the arrival words at the remaining vertices with $w\leq v$ to arise from a partition. 

\begin{defn}[First arrival graph]
Let $\lambda$ be a partition and let $M_{r,s,c}(\lambda)=M$. Let $V$ and $E$ be the vertex set and edge set of $M$ respectively. Let the $(r,s,c)$-tour of $M$ corresponding to $\lambda$ have arrival word $(v,[i])_a$ at each vertex $(v,[i])\in V$. Suppose there is another family of arrival words  \begin{equation} S= \{(v,[i])'_a: (v,[i])\in V\},
    \end{equation} such that for each $(v,[i])$, $(v,[i])'_a$ is a permutation of $(v,[i])_a$.
    Denote the first letter of the arrival word $(v,[i])'_a$ by $(v,[i])'_1$, and let the first arrival edge $e_1(v,[i])$ with respect to $S$ be any edge $e$ with $t(e)=(v,[i])$ and $d(e)=(v,[i])'_1.$ Let $T_S$ be the subgraph of $M$ with vertex set $V$ and directed edge set \begin{equation}E(T_S)=\{e_1(v,[i])'_a: (v,[i])\in M\}.\end{equation} In this case we call $T_S$ the \textit{first arrival graph with respect to $S$}. 
\end{defn}

\begin{defn}[The graphs $M^{\leq k}$ and $T^{\leq k}$]
For an integer $k$, and an $(r,s,c)$-multigraph $M,$ let $M^{\leq k}$ be the induced subgraph of $M$ with vertex set $$V(M^{\leq k})=\{(v,[i]): (v,[i])\in V(M)\text{ and }v\leq k \}.$$ For a family of arrival words $S$, let $T_S^{\leq k}$ be the induced subgraph of $T_S$ with vertex set $V(M^{\leq k}).$
\end{defn}

We require some preparation before proving Proposition~\ref{image works}, as we make use of \cite[Thm 5]{vAE}. The proof is not hard, but could possibly be disruptive to the flow of this paper, so the interested reader is referred to \cite{vAE} for a full proof. The notion we will need is that of a $T$-graph.
\begin{defn}[$T$-graph] A $T$-graph is a finite directed multigraph such that at each vertex, the number of edges arriving is the same as the number of edges departing. 
\end{defn}

Theorem 5a of~\cite{vAE} says that, given a complete circuit of a $T$-graph, starting and ending at a vertex $v,$ the set of edges given by the last departures from any given vertex give a spanning tree of the $T$-graph rooted at $v$. Reversing the direction of all edges, equivalantly, the first arrival graph arising from a complete circuit of a $T$-graph is a spanning tree. Conversely, \cite[Thm 5b]{vAE} says that any spanning tree rooted at $v$ gives rise to a complete circuit with last departures (or equivalently, first arrivals) agreeing with the edges of the spanning tree.

In order to apply these theorems to our situation, we need to separate $M$ into a $T$-graph and a well understood complement, which is how we prove Proposition~\ref{image works}.

\begin{prop}\label{image works}
Let $\lambda$ be a partition and let $M_{r,s,c}(\lambda)=M$. Suppose there is a family of arrival words  $S= \{(v,[i])'_a: (v,[i])\in V\}$ assigned to $M$. Let $T_S$ be the first arrival graph with respect to $S$. Then there is a partition $\mu$ with an $(r,s,c)$-tour having arrival words $S$ if and only if both of the following hold.
\begin{enumerate}
    \item There exists some $v$ such that for all $w>v$, and all $j$, $(w,[j])'_a$ satisfies~\eqref{rscboringvertex}.
    \item $T_S$ is a spanning tree of $M$.
\end{enumerate}
\end{prop}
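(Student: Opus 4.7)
The plan is to reduce both implications to \cite[Theorem 5]{vAE}, whose two halves establish a bijection between the Eulerian circuits of a finite $T$-graph starting at a prescribed edge and the spanning arborescences of that graph rooted at the source of that edge, the correspondence sending each circuit to its first-arrival graph. Since $M$ is infinite, the preliminary step is to truncate $M$ to a suitable finite $T$-graph $\widehat{M}$.

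Assume condition 1 and fix $v$ as in that condition. The arrival words forced by~\eqref{rscboringvertex} at vertices with $w>v$ correspond to a pair of infinite axis chains — a south chain descending the positive $y$-axis from $+\infty$ and an east chain extending the positive $x$-axis to $+\infty$ — each of which attaches to $M^{\leq v}$ along a single edge. Let $\widehat{M}$ be the finite multigraph obtained from the induced subgraph of $M$ on $V(M^{\leq v})$ by adjoining one auxiliary vertex $\ast$, together with the single edge from $\ast$ to $M^{\leq v}$ that terminates the south chain and the single edge from $M^{\leq v}$ to $\ast$ that initiates the east chain, while discarding the interiors of the two chains. A direct check using~\eqref{rscboringvertex} shows that $\widehat{M}$ is a $T$-graph.

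For the forward direction, if $S$ arises from a partition $\mu$, choose $v \geq \max_{(x,y)\in b(\mu)}(sx+ry)$. The boundary tour of $\mu$ restricts to an Eulerian circuit of $\widehat{M}$ based at $\ast$ whose arrival words at the vertices of $M^{\leq v}$ agree with $S$, and \cite[Theorem 5a]{vAE} identifies its first arrivals with a spanning arborescence of $\widehat{M}$ rooted at $\ast$. Combined with the forced first arrivals on the two axis chains, this spans $M$, yielding condition 2. Conversely, given conditions 1 and 2, restricting $T_S$ gives a spanning arborescence of $\widehat{M}$ rooted at $\ast$, and \cite[Theorem 5b]{vAE} produces an Eulerian circuit of $\widehat{M}$ whose first arrivals and arrival words match this data. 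Splicing the resulting circuit with the two axis chains gives a complete circuit of $M$, and the reconstruction procedure illustrated in Example~\ref{fillinaxes} extracts from it the unique partition $\mu$ whose $(r,s,c)$-tour realizes $S$. The main technical obstacle is the truncation step: verifying the $T$-graph property of $\widehat{M}$ and pinning down the two gluing edges rests on a careful case analysis of the divisibility conditions in~\eqref{rscboringvertex}.
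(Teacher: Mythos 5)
Your proposal is correct and takes essentially the same approach as the paper: both directions reduce to \cite[Thm 5a, 5b]{vAE} applied to a finite truncation of $M$ at a level beyond which~\eqref{rscboringvertex} forces the two axis rays, which are then spliced back onto the Eulerian circuit. The only difference is bookkeeping at the cut: the paper chooses the truncation level $k$ with $rsc\mid k$ so that both rays terminate at the genuine vertex $(k,[0])$, which serves as the root, whereas you collapse the two chains to an auxiliary root $\ast$ --- an equivalent device.
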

\begin{proof} The first condition has already been shown to be necessary, so we prove that assuming the first condition holds, the second condition is equivalent to the existence of $\mu$.
Fix $v$ such that for all $w\geq v$,~\eqref{rscboringvertex} holds for both $(w,[j])_a$ and $(w,[j])_a'.$ Let $k\geq v$ be such that $k=rsk_1$ for some integer $k_1$ with $c\mid k_1$. Let $M^{\leq k}$ and $M^{>k}$ be the induced subgraphs of $M$ with vertex sets given by \begin{equation}\label{Mleq k}
    V(M^{\leq k})=\{(v,[i])\in V(M) : v\leq k\},
\end{equation}\begin{equation}
    V(M^{> k})=\{(k,[0])\}\cup\{(v,[i])\in V(M) : v > k\},
\end{equation}
and let $T_S^{\leq k}$ and $T_S^{>k}$ be the induced subgraphs of $T_S$ with vertex sets $V(M^{\leq k})$ and $V(M^{> k})$. Then $T_S^{>k}$ is a spanning tree of $M^{>k}$. 

\begin{figure}[ht]
\begin{center}
    \begin{tikzpicture}[scale=1.8]
\node at (0,0) {$(k,[0])$};
\draw[->](0.7,4)--(1.1,4);
\draw[->](3.05,4)--(3.8,4);
\draw[->] (4.25,4)--(4.9,4);
\node at (6.1,0) {$(k+(cr-1)s,[0])$};
\node at (2.1,4) {$(k+(cr+1)s,[1])$};
\node at (4,3.97) {$\cdots$}; 
\node at (6,4) {$(k+(2cr-1)s,[-1])$};

\node at (0,1) {$(k+r,[-1])$};
\node at (0,2) {$(k+2r,[-2])$};
\draw[->] (0,0.8)--(0,0.2);
\draw[->] (0,1.8)--(0,1.2);
\draw[->] (0,2.8)--(0,2.2);
\node at (0,3) {$\vdots$};
\draw[->] (0,3.8)--(0,3.2);
\node at (0,4) {$(k+crs,[0])$};

\node at (2.1,0) {$(k+s,[1])$};
\node at (3.6,0) {$(k+2s,[2])$};
\draw[->](0.4,0)--(1.5,0);
\draw[->](2.7,0)--(3,0);
\draw[->](4.2,0)--(4.55,0);
\node at (4.7,-0.03) {$\cdots$};
\draw[->] (4.85,0)--(5.2,0);
\draw[->] (0,4.8)--(0,4.2);
\node at (0,5) {$\vdots$};
\end{tikzpicture}
\end{center}
\caption{$T_S^{>k}$ in the case $(c,r+s)=1$.}
\end{figure}
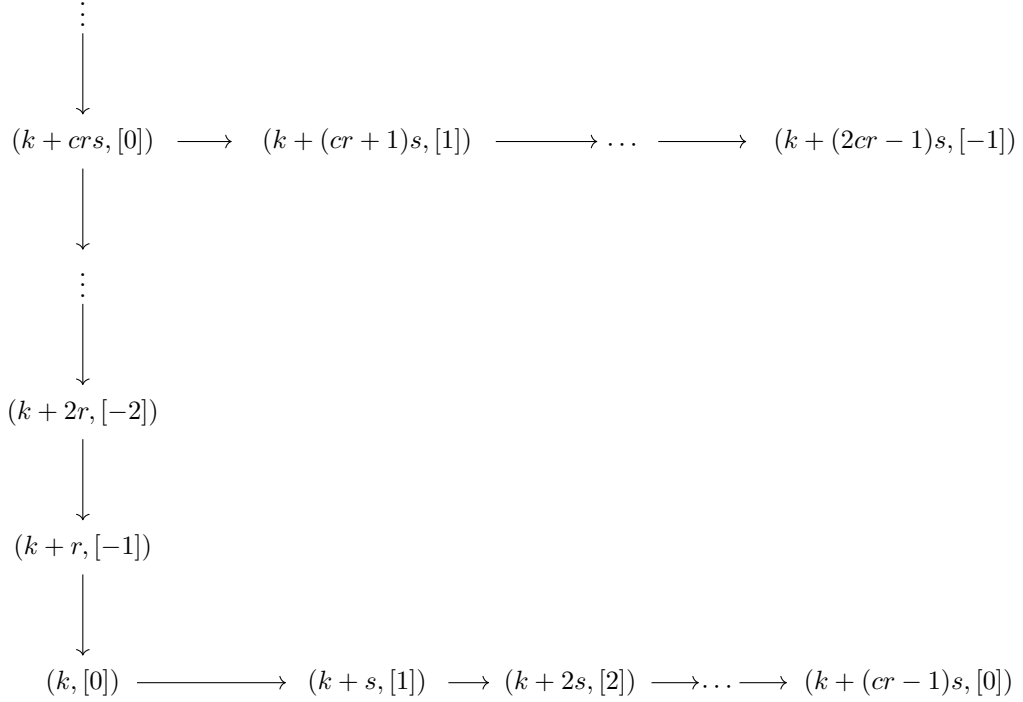

Let $x=(k_1r,0)$ and $y=(0,k_1s)$ on the boundary. Then, the edges in $M^{>k}$ correspond to the rays along the axes starting at $x$ and $y$. The $(r,s,c)$-tour corresponding to $\lambda$ restricted to $M^{\leq k}$ is a complete circuit starting and finishing at $(k,[0])$, and each edge corresponds to an edge in the boundary of $\lambda$ that occurs after the south edge arriving at $y$ and occurs before the east edge departing from $x$. So, $M^{\leq k}$ contains $|x|$ south edges and $|y|$ east edges. Therefore, there is a partition $\mu$ with arrival words $S$ if and only if there is a complete circuit of $M^{\leq k}$ such that the arrival words agree with $S$.


Assume that a complete circuit of $M^{\leq k}$ with arrival words as given in $S$ exists. The $(r,s,c)$-tour of $M$ corresponding to $\lambda$ consists of a circuit of $M^{>k}$ and $M^{\leq k},$ so the in-degree of any vertex $v$ of $M^{\leq k}$ is equal to the out-degree of $v$ in $M^{\leq k}$ and $M^{\leq k}$ is connected. In particular, $M^{\leq k}$ is a $T$-graph. So, ~\cite[Thm 5a]{vAE} implies that $T_S^{\leq k}$ is a tree rooted at $(k,[0]).$ Therefore, $T_S$ is a spanning tree of $M$.

Now assume that $T_S$ is a spanning tree of $M$. Then, $T_S^{\leq k}$ is a tree rooted at $(k,[0])$ and ~\cite[Thm 5b]{vAE} implies that there is a complete circuit of $M^{\leq k}$ with arrival words agreeing with $S$.\end{proof}

From now on, let $\lambda<_{r,s,c}\lambda_{r,s,k}$ where $k=rsk_1$ and $c\mid k_1$. Let $M_{r,s,c}(\lambda)=M$, let $S$ be the family of arrival words corresponding to $\lambda$. We will now refer to $T_S$ as the first arrival tree.

When we have a drawing of $M_{r,s,c}(\lambda)$ on the cylinder defined in Proposition~\ref{lattice points}, and a directed path $p$ from $(v,[i])$ to $(w,[j])$, we define the \textit{winding number of $p$} to be the number of times strictly after leaving $(v,[i])$ and before or on arriving at $(w,[j])$ that $p$ arrives at a vertex on the upper boundary strip. We will be particularly interested in the case where $(v,[i])=(k,[-k_1s])$ and $p$ is the unique path in the first arrival tree from $(k,[-k_1s])$ to $(w,[j]).$

\begin{defn}[Switch, eastern, southern]
Given a partition $\lambda$ with $\lambda<_{r,s,c} \lambda_{r,s,k}$, and $(r,s,c)$-multigraph $M$, let  $T$ denote the first arrival tree of $M$ corresponding to $\lambda$. Let $(v,[i])\in V(M)$ have $v\leq k$ and $(v,[i])\not=(k,[0]).$ Then $(v,[i])$ is a \textit{switch} if $(v+r,[i-1])$ and $(v-s,[i-1])$\footnote{these are the two equivalence classes that, if they are vertices of $M$, could form a tail of an edge to $(v,[i])$} are both vertices of $M$, and the distances in $T$ from the vertex $(k,[0])$ to $(v+r,[i-1])$, $(v-s,[i-1])$ are equal.
Now drop the condition that $v\leq k$ and $(v,[i])\not=(k,[0]).$
If $(v,[i])$ is not a switch and the first letter in the arrival word is $E$, say $(v,[i])$ is \textit{eastern}, and let $\Ea$ be the set of all eastern vertices $(v,[i])$ with $v\leq k$. 
If $(v,[i])$ is not a switch and the first letter in the arrival word is $S$, say $(v,[i])$ is \textit{southern}, and let $\So$ be the set of all southern vertices $(v,[i])$ with $v\leq k$. 
\end{defn}

\begin{ex} For $\mu=(12,12,10,8,7,4,1,1,1)$, the first arrival tree of $M_{3,2,2}(\mu)$ is as in Figure~\ref{Big tree 1}.

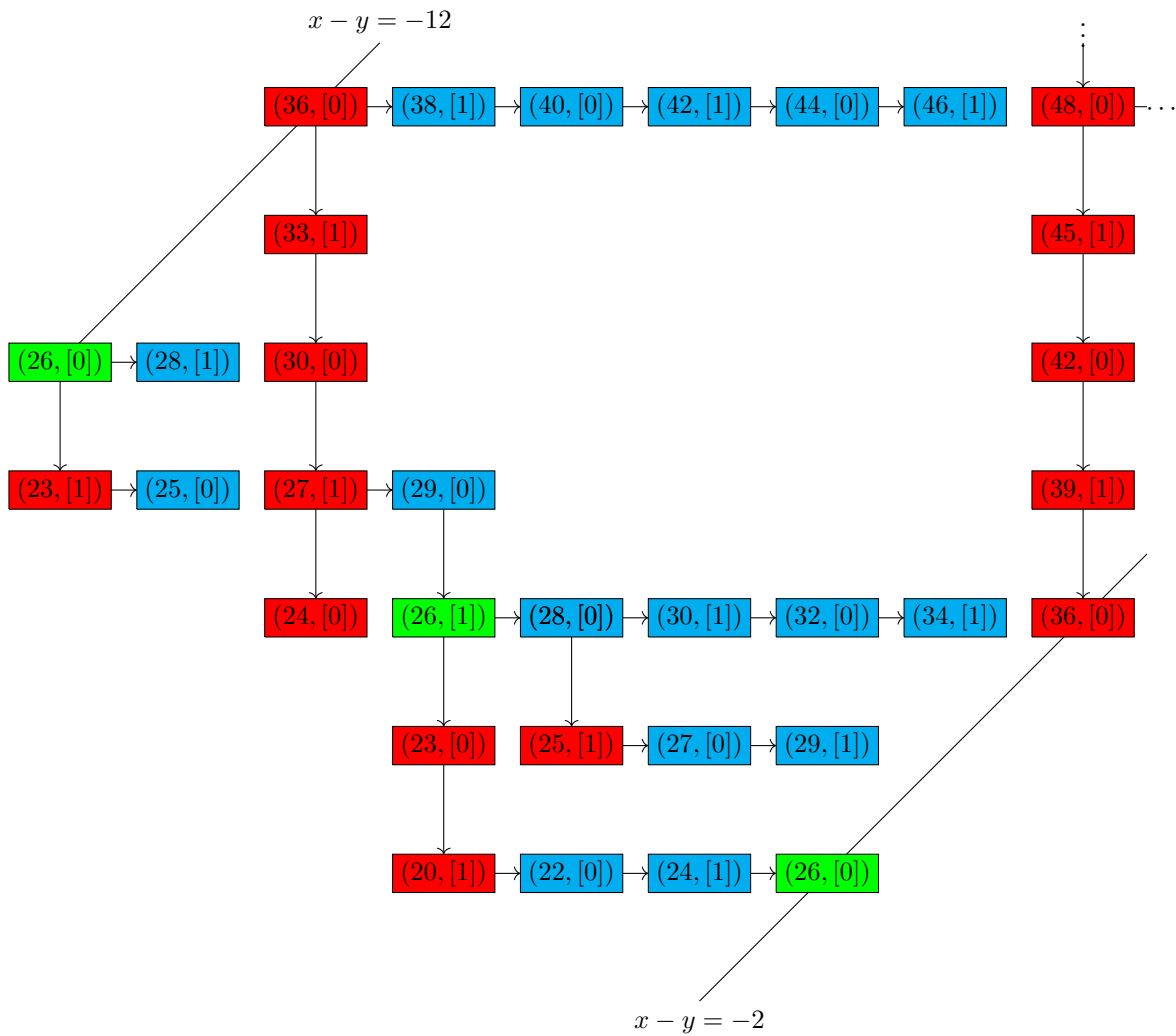
\begin{figure}[!htbp]
\begin{center}
\begin{tikzpicture}\begin{scope}[yscale=1,xscale=-1,rotate=90,scale=1.7]
\draw (5,3)--(8.5,6.5);
\node[below] at (5,3) {$x-y=-2$};
\draw (12.5,0.5)--(10,-2);
\node[above] at (12.5,0.5) {$x-y=-12$};
\draw[fill=red] (11.85,-0.4)--(12.15,-0.4)--(12.15,0.4)--(11.85,0.4)--(11.85,-0.4);
\node at (12,0) {$(36,[0])$};
\draw[fill=red] (10.85,-0.4)--(11.15,-0.4)--(11.15,0.4)--(10.85,0.4)--(10.85,-0.4);
\node at (11,0) {$(33,[1])$};
\draw[fill=red] (9.85,-0.4)--(10.15,-0.4)--(10.15,0.4)--(9.85,0.4)--(9.85,-0.4);
\node at (10,0) {$(30,[0])$};
\draw[fill=red] (8.85,-0.4)--(9.15,-0.4)--(9.15,0.4)--(8.85,0.4)--(8.85,-0.4);
\node at (9,0) {$(27,[1])$};
\draw[fill=cyan] (8.85,0.6)--(9.15,0.6)--(9.15,1.4)--(8.85,1.4)--(8.85,0.6);
\node at (9,1) {$(29,[0])$};
\draw[fill=green] (7.85,0.6)--(8.15,0.6)--(8.15,1.4)--(7.85,1.4)--(7.85,0.6);
\node at (8,1) {$(26,[1])$};
\draw[fill=red] (6.85,0.6)--(7.15,0.6)--(7.15,1.4)--(6.85,1.4)--(6.85,0.6);
\node at (7,1) {$(23,[0])$};
\draw[fill=red] (5.85,0.6)--(6.15,0.6)--(6.15,1.4)--(5.85,1.4)--(5.85,0.6);
\node at (6,1) {$(20,[1])$};
\draw[fill=cyan] (5.85,1.6)--(6.15,1.6)--(6.15,2.4)--(5.85,2.4)--(5.85,1.6);
\node at (6,2) {$(22,[0])$};
\draw[fill=cyan] (5.85,2.6)--(6.15,2.6)--(6.15,3.4)--(5.85,3.4)--(5.85,2.6);
\node at (6,3) {$(24,[1])$};
\draw[fill=green] (5.85,3.6)--(6.15,3.6)--(6.15,4.4)--(5.85,4.4)--(5.85,3.6);
\node at (6,4) {$(26,[0])$};
\draw[fill=green] (9.85,-2.4)--(10.15,-2.4)--(10.15,-1.6)--(9.85,-1.6)--(9.85,-2.4);
\node at (10,-2) {$(26,[0])$};
\draw[fill=red] (8.85,-2.4)--(9.15,-2.4)--(9.15,-1.6)--(8.85,-1.6)--(8.85,-2.4);
\node at (9,-2) {$(23,[1])$};
\draw[fill=cyan] (8.85,-1.4)--(9.15,-1.4)--(9.15,-0.6)--(8.85,-0.6)--(8.85,-1.4);
\node at (9,-1) {$(25,[0])$};
\draw[fill=cyan] (7.85,1.6)--(8.15,1.6)--(8.15,2.4)--(7.85,2.4)--(7.85,1.6);
\node at (8,2) {$(28,[0])$};
\draw[fill=red] (6.85,1.6)--(7.15,1.6)--(7.15,2.4)--(6.85,2.4)--(6.85,1.6);
\node at (7,2) {$(25,[1])$};
\draw[fill=cyan] (6.85,2.6)--(7.15,2.6)--(7.15,3.4)--(6.85,3.4)--(6.85,2.6);
\node at (7,3) {$(27,[0])$};
\draw[fill=cyan] (6.85,3.6)--(7.15,3.6)--(7.15,4.4)--(6.85,4.4)--(6.85,3.6);
\node at (7,4) {$(29,[1])$};
\draw[fill=cyan] (9.85,-1.4)--(10.15,-1.4)--(10.15,-0.6)--(9.85,-0.6)--(9.85,-1.4);
\node at (10,-1) {$(28,[1])$};
\draw[fill=red] (7.85,-0.4)--(8.15,-0.4)--(8.15,0.4)--(7.85,0.4)--(7.85,-0.4);
\node at (8,0) {$(24,[0])$};
\node at (8,2) {$(28,[0])$};
\draw[fill=cyan] (7.85,2.6)--(8.15,2.6)--(8.15,3.4)--(7.85,3.4)--(7.85,2.6);
\node at (8,3) {$(30,[1])$};
\draw[fill=cyan] (7.85,3.6)--(8.15,3.6)--(8.15,4.4)--(7.85,4.4)--(7.85,3.6);
\node at (8,4) {$(32,[0])$};
\draw[fill=cyan] (7.85,4.6)--(8.15,4.6)--(8.15,5.4)--(7.85,5.4)--(7.85,4.6);
\node at (8,5) {$(34,[1])$};
\draw[fill=red] (7.85,5.6)--(8.15,5.6)--(8.15,6.4)--(7.85,6.4)--(7.85,5.6);
\node at (8,6) {$(36,[0])$};
\draw[fill=red] (8.85,5.6)--(9.15,5.6)--(9.15,6.4)--(8.85,6.4)--(8.85,5.6);
\node at (9,6) {$(39,[1])$};
\draw[fill=red] (9.85,5.6)--(10.15,5.6)--(10.15,6.4)--(9.85,6.4)--(9.85,5.6);
\node at (10,6) {$(42,[0])$};
\draw[fill=red] (10.85,5.6)--(11.15,5.6)--(11.15,6.4)--(10.85,6.4)--(10.85,5.6);
\node at (11,6) {$(45,[1])$};
\draw[fill=red] (11.85,5.6)--(12.15,5.6)--(12.15,6.4)--(11.85,6.4)--(11.85,5.6);
\node at (12,6) {$(48,[0])$};
\draw[fill=cyan] (11.85,4.6)--(12.15,4.6)--(12.15,5.4)--(11.85,5.4)--(11.85,4.6);
\node at (12,5) {$(46,[1])$};
\draw[fill=cyan] (11.85,3.6)--(12.15,3.6)--(12.15,4.4)--(11.85,4.4)--(11.85,3.6);
\node at (12,4) {$(44,[0])$};
\draw[fill=cyan] (11.85,2.6)--(12.15,2.6)--(12.15,3.4)--(11.85,3.4)--(11.85,2.6);
\node at (12,3) {$(42,[1])$};
\draw[fill=cyan] (11.85,1.6)--(12.15,1.6)--(12.15,2.4)--(11.85,2.4)--(11.85,1.6);
\node at (12,2) {$(40,[0])$};
\draw[fill=cyan] (11.85,0.6)--(12.15,0.6)--(12.15,1.4)--(11.85,1.4)--(11.85,0.6);
\node at (12,1) {$(38,[1])$};
\draw[->] (11.85,0)--(11.15,0);
\draw[->] (10.85,0)--(10.15,0);
\draw[->] (9.85,0)--(9.15,0); 
\draw[->] (9,0.4)--(9,0.6); 
\draw[->] (8.85,1)--(8.15,1); 
\draw[->] (7.85,1)--(7.15,1);
\draw[->] (6.85,1)--(6.15,1);
\draw[->] (6,1.4)--(6,1.6);
\draw[->] (6,2.4)--(6,2.6);
\draw[->] (6,3.4)--(6,3.6);
\draw[->] (9.85,-2)--(9.15,-2); 
\draw[->] (9,-1.6)--(9,-1.4);
 \draw[->] (8,1.4)--(8,1.6); 
\draw[->] (7.85,2)--(7.15,2);
\draw[->] (7,2.4)--(7,2.6);
\draw[->] (7,3.4)--(7,3.6);
\draw[->] (10,-1.6)--(10,-1.4);
\draw[->] (8.85,0)--(8.15,0);
\draw[->] (8,2.4)--(8,2.6);
\draw[->] (8,3.4)--(8,3.6);
\draw[->] (8,4.4)--(8,4.6);
\node at (12.62,6) {$\vdots$};
\draw[->] (12.5,6)--(12.15,6);
\draw[->] (11.85,6)--(11.15,6);
\draw[->] (10.85,6)--(10.15,6);
\draw[->] (9.85,6)--(9.15,6);
\draw[->] (8.85,6)--(8.15,6);
\draw[->] (12,0.4)--(12,0.6);
\draw[->] (12,1.4)--(12,1.6);
\draw[->] (12,2.4)--(12,2.6);
\draw[->] (12,3.4)--(12,3.6);
\draw[->] (12,4.4)--(12,4.6);

\draw (12,6.4)--(12,6.5);
\node at (11.98,6.63) {$\cdots$};
\end{scope}\end{tikzpicture}
\end{center}

\caption{The first arrival tree in $M_{3,2,2}(12,12,10,8,7,4,1,1,1)$ with the eastern vertices coloured blue, the southern vertices coloured red, and the switches coloured green.}\label{Big tree 1}
\end{figure}
The paths in the first arrival tree from $(36,[0])$ to $(26,[0]),$ $(23,[1]),$ $(28,[1])$ and $(25,[0])$ have winding number $1$, whilst the other vertices $(v,[i])$ for which there is a path in the first arrival tree from $(36,[0])$ to $(v,[i])$ have winding number $0$. The switches are coloured green, the southern vertices red, and the eastern vertices blue (compare with Figure~\ref{Big multigraph 1} to verify the colouring). 
\end{ex}

\subsection{Definition of \texorpdfstring{$I_{r,s,c}$}{Irsc}}
Given a partition $\lambda$ with $\lambda\leq_{r,s,c} \lambda_{r,s,k}$, with multigraph $M$ and first arrival tree $T$, we define the partition $I_{r,s,c}(\lambda)$ as follows. The multigraph of $I_{r,s,c}(\lambda)$ is also given by $M$.

Now, we obtain the $(r,s,c)$-tour of $I_{r,s,c}(\lambda)$ by, at each switch, reversing the arrival word, and at each vertex that is not a switch, fixing the first letter of the arrival word and reversing the rest of the arrival word. 

To see that $I_{r,s,c}(\lambda)$ is well defined, we need to check that taking the first arrival at each vertex of $M$ gives a spanning tree. We do this by checking that in $T$, the move of deleting an east (respectively south) edge arriving at a switch $(v,[i])$ and adding a new south (respectively east) edge arriving at $(v,[i])$ gives another spanning tree $T'$. There are still edges arriving at every vertex we had edges arriving at before, but now the edge arriving at $(v,[i])$ might be departing from a different vertex. So, it suffices to check that $(v,[i])$ is still connected to each of $(v-s,[i-1])$ and $(v+r,[i-1])$, and that we have not introduced a cycle by adding the new edge. For the former, it suffices to check $(v-s,[i-1])$ and $(v+r,[i-1])$ are still connected to each other. In $T$, $(v+r,[i-1])$ and $(v-s,[i-1])$ are both connected to $(k,[0])$ by paths. Moreover, the distance in $T$ to $(k,[0])$ strictly decreases with each step along the path we take, so $(v,[i])$ is not a vertex on either of these paths. So, both of these paths exist $T'$, and $(v+r,[i-1])$ and $(v-s,[i-1])$ are connected to one another. To see that the new edge does not introduce a cycle, observe that if we had introduced a cycle, we would now have two distinct paths from $(k,[0])$ to $(v,[i])$. Since the only edge into $(v,[i])$ is from its new neighbour, we must have had two distinct paths from $(k,[0])$ to the new neighbour in $T$ originally. But then we had a cycle in $T$ originally, a contradiction. 

Hence, we may permute the letters in any arrival word at any vertex and the result will still correspond to a partition as long as we do not change the first letter in the arrival word at a vertex that is not a switch. Since we defined $I_{r,s,c}(\lambda)$ to fix the first letter in the arrival word at any vertex that is not a switch, $I_{r,s,c}(\lambda)$ is well defined. Moreover, we can recover $\lambda$ from $I_{r,s,c}(\lambda)$ by doing the same operation again, as each operation is self-inverse and preserves switches. 

Since $I_{r,s,c}$ does not change $M_{r,s,c}$, we can apply Proposition~\ref{independence core} and Corollary ~\ref{areacorol} respectively to obtain
\begin{equation}
    |I_{r,s,c}(\lambda)|=|\lambda|,
\end{equation}
and 
\begin{equation}
    \core_c(I_{r,s,c}(\lambda))=\core_c(\lambda).
\end{equation}

Moreover, the map sending $\lambda$ to $I_{r,s,c}(\lambda)$ is an involution - it is immediate from the definition that a vertex is a switch after this reassignment if and only if it were a switch before the reassignment. 

\section{Further statistics determined by \texorpdfstring{$M_{r,s,c}$}{the multigraph}}
\stepcounter{essaypart}
This section checks that $I_{r,s,c}$ satisfies hypotheses 2--4 in Proposition~\ref{bijection properties}.

First, we use the method introduced in Section 4 to prove that $M_{r,s,c}$ determines $\middxc$ and $\critplusxc+\critminusxc.$ Then we check that $I_{r,s,c}$ exchanges $\critplusxc$ and $\critminusxc$, concluding the proof of Theorem~\ref{main theorem}.  In the language of Proposition~\ref{acc points are useful}, Propositions~\ref{middifference} and ~\ref{crittotdiff} calculate $g(M^+,M)$ for $f(\lambda)=\middxc(\lambda)$ and $f(\lambda)=\critplusxc(\lambda)+\critminusxc(\lambda)$ respectively.

\subsection{\texorpdfstring{$M_{r,s,c}$}{The multigraph} determines \texorpdfstring{$\middxc$}{mid}}

\begin{notn}\label{arrivalwordsuccessors}
Let $\lambda$, $M$, $M^+$, $(x,y)$, $(l,[i])$, $s_1$, $s_2$ $s_1'$ and $s_2'$ be as in the proof of Proposition~\ref{successors pass}. For any edge $e$ in the boundary of $\lambda$, write 
$\Ein^{\to e}(w,[j])$ for the number of $E$s in the arrival word at a vertex $(w,[j])$ that correspond to east edges in the boundary of $\lambda$ that occur before $e$. Define $\Sin^{\to e}(w,[j])$ analogously for the number of $S$s. Write $\Ein^{e\to}(w,[j])$ for the number of $E$s in the arrival word at a vertex $(w,[j])$ that correspond to east edges in the boundary of $\lambda$ that occur after $e$. Define $\Sin^{e\to}(w,[j])$ analogously for the number of $S$s. Analogously define $\Sout^{\to e}(w,[j])$, $\Eout^{\to e}(w,[j])$, $\Sout^{\to e}(w,[j])$, and $\Eout^{\to e}(w,[j])$ for the departure words. We will use this notation with $e=s_1$ or $e=s_2$. Finally, write $\Einplus(w,[j])$ for the number of $E$s in the arrival word at $(w,[j])$ in $M^+$ and define analogously $\Sinplus, \Eoutplus$ and $\Soutplus$.

We work in the ring $R$ of functions $V(M)\to \mathbb{Z}.$ Practically, the only consequence of this is that we write $fg(v,[i])$ for the pointwise product $f(v,[i])g(v,[i])$ and $(f+g)(v,[i])$ for $f(v,[i])+g(v,[i]).$ There should be no confusion between composition and product of functions as functions from $V(M)$ to $\mathbb{Z}$ are not composable. 
\end{notn}

\begin{ex} Let $\lambda=(4,1),$ $r=3$, $s=1$ and $c=2.$ Then $\min_{(x,y)\in b(\lambda)}(3y+x)=4$ achieved at $(4,0)$ and $(1,1).$ Since $4-0\equiv 1-1 \pmod{2}$ and $1-1<4-0$, we have $(1,1)<_{3,1,2}(4,0)$ so $(4,2)$ is the only $(3,1,2)$-successor of $\lambda$. So, $(l,[i])=(4,[0]).$
\begin{figure}
\begin{center}
    \begin{tikzpicture}\begin{scope}[yscale=1,xscale=-1,rotate=90,scale=0.5]
    \draw (0,5)--(0,0);
    \draw (1,4)--(1,0);
    \draw (2,1)-- (2,0);
    \draw (0,0)--(4,0);
    \draw (0,1)--(2,1);
    \draw (0,2)--(1,2);
    \draw (0,3)--(1,3);
    \draw (0,4)--(1,4);
    \draw[dashed] (2,1)--(2,2)--(1,2);
\draw[dashed,->] (2,1)--(2,1.55);
\draw[->] (2,1)--(1.45,1);
\node[left] at (1.5,1) {$s_1$};
\node[below] at (1,1.5) {$s_2$};
\node[above] at (2,1.5) {$s_1'$};
\node[right] at (1.5,2) {$s_2'$};
\draw[->] (1,1)--(1,1.55);
\draw[dashed,->] (2,2)--(1.45,2);
    \end{scope}\end{tikzpicture}
\end{center}
\caption{the partition $\lambda$ with $(l,[i])=(4,[0])$ and $s_1,s_2,s_1'$ and $s_2'$ labelled}
\end{figure}
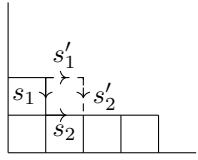
As an example of the use of Notation ~\ref{arrivalwordsuccessors}, $(\Ein^{\to s_1}\Sout^{s_1 \to}+\Eoutplus)(7,[1])=1\times1 +1=2.$
\end{ex}

\begin{prop}\label{middifference} Let $\lambda$ be a partition with $M_{r,s,c}(\lambda)=M$. Let $M^+$ be a successor of $M$ that changes from $(l,[i])$. If $\lambda^+>'_{r,s,c}\lambda$ and $M^+=M_{r,s,c}(\lambda^+),$
\begin{equation}\label{middifferenceformula}\middxc(\lambda^+)-\middxc(\lambda)=\sum_{w=l+1}^{l+s+r-1}\left(\Eout-\Ein\right)(w,[i]),\end{equation}
where $x=\frac{r}{s}$.
\end{prop}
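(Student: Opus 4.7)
By Proposition~\ref{motivate Mrsc}, a box of $\lambda$ contributing to $\middxc(\lambda)$ corresponds to an ordered pair $(f,h)$ in the boundary tour where $f$ is an east edge arriving at a vertex $(v_1,[j])$ of $M_{r,s,c}$, $h$ is a south edge arriving at $(v_2,[j])$ with $0 < v_1 - v_2 < r+s$, and $f$ precedes $h$. By Proposition~\ref{successors pass}, the boundary tour of $\lambda^+$ is obtained from that of $\lambda$ by replacing, at two consecutive tour positions, the edges $s_1$ (south arriving at $(l,[i])$) and $s_2$ (east arriving at $(l+s,[i+1])$) by $s_1'$ (east arriving at $(l+r+s,[i])$) and $s_2'$ (south arriving at $(l+s,[i+1])$). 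The only pairs whose $\middxc$-contribution can differ between $\lambda$ and $\lambda^+$ are those involving at least one of $s_1,s_2,s_1',s_2'$, so the first step is a direct case analysis of these pairs.

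The pairs destroyed in passing to $\lambda^+$ are $(f,s_1)$ with $f$ an unchanged east preceding $s_1$ arriving at some $(w,[i])$ with $w\in\{l+1,\dots,l+r+s-1\}$, and $(s_2,h)$ with $h$ an unchanged south following $s_2$ arriving at some $(w,[i+1])$ with $w\in\{l-r+1,\dots,l+s-1\}$. Symmetrically, the pairs created are $(s_1',h)$ and $(f,s_2')$ over the mirror ranges, together with the pair $(s_1',s_2')$ itself, which represents the newly added box (of hook length $1$) and therefore contributes exactly when $c\mid 1$. Each of these counts is an explicit sum of the partial arrival counts $\Ein^{\to s_1}$ and $\Sin^{s_2\to}$ from Notation~\ref{arrivalwordsuccessors}.

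The central step is then to collapse these four (or five) partial-count sums, which live over both the classes $[i]$ and $[i+1]$, into the single sum $\sum_{w=l+1}^{l+r+s-1}(\Eout-\Ein)(w,[i])$. The plan is to apply the decompositions
\begin{align*}
\Ein &= \Ein^{\to s_1} + [\,s_2\text{ arrives here}\,] + \Ein^{s_2\to}, \\
\Sin &= \Sin^{\to s_1} + [\,s_1\text{ arrives here}\,] + \Sin^{s_2\to}
\end{align*}
to trade each partial count for a total count modulo corrections supported only at the vertices $(l,[i])$ and $(l+s,[i+1])$, and to apply the vertex-wise conservation law $\Sin+\Ein=\Sout+\Eout$ (valid because the $(r,s,c)$-tour visits each vertex finitely often and each visit uses one arrival and one departure) to rewrite $\Sin-\Sout$ as $\Eout-\Ein$. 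The hard part will be the bookkeeping: verifying that the $[i+1]$-sums translate into $[i]$-sums on the advertised interval, that the boundary corrections at $w=l$ and $w=l+r+s$ cancel, and that the $[c=1]$ contribution from $(s_1',s_2')$ is absorbed in the same cancellation. Once the identity is established, Proposition~\ref{acc points are useful} applied with $g(M^+,M) := \sum_{w=l+1}^{l+r+s-1}(\Eout-\Ein)(w,[i])$ immediately yields that $M_{r,s,c}(\lambda)$ determines $\middxc(\lambda)$.
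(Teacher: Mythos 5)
Your overall route is the paper's: contributions to $\middxc$ are classified as east--south pairs via Proposition~\ref{motivate Mrsc}, the passage $\lambda\to\lambda^+$ is localised to the swap $s_1,s_2\mapsto s_1',s_2'$ via Proposition~\ref{successors pass}, the destroyed and created pairs are counted in exactly the four families and ranges the paper uses (its $S_1$ and $S_2$), and the partial counts are then traded for total counts by a conservation argument. The paper's algebra differs from your plan only in mechanics: it first converts the $[i+1]$-sums into $[i]$-sums by reading each east arrival at $(v,[i+1])$ as an east departure from $(v-s,[i])$ and each south arrival at $(w,[i+1])$ as a south departure from $(w+r,[i])$, uses that $s_1,s_2$ are consecutive to get $\Eout^{\to s_2}=\Eout^{\to s_1}$ and $\Sout^{s_2\to}=\Sout^{s_1\to}$, and then applies the one-sided conservation law $\left(\Sin^{s_1\to}+\Ein^{s_1\to}\right)(v,[j])=\left(\Sout^{s_1\to}+\Eout^{s_1\to}\right)(v,[j])$ at vertices other than $(l,[i])$; your indicator-corrected decompositions of $\Ein$ and $\Sin$ together with the global law $\Sin+\Ein=\Sout+\Eout$ are an equivalent bookkeeping and would go through.

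The genuine problem is your fifth pair. You are right that $(s_1',s_2')$ is a created pair contributing exactly when $c\mid 1$ (the new box has $a(\square)=l(\square)=0$ and hook length $1$, and its foot $s_1'$ and hand $s_2'$ arrive at classes $[i]$ and $[i+1]$, which coincide only for $c=1$) --- indeed the paper's own proof silently omits this pair, since its gain counts $\Sin^{s_1\to}$ and $\Ein^{\to s_2}$ are taken over edges of $\lambda$ only, to which $s_1'$ and $s_2'$ do not belong. But your assertion that this $[c{=}1]$ contribution ``is absorbed in the same cancellation'' is false: the cancellations in the paper's derivation are exact, and the displayed formula simply misses the term. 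Concretely, take $r=s=c=1$, $\lambda=\emptyset$, $\lambda^+=(1)$, so $l=0$: then $\middxc(\lambda^+)-\middxc(\lambda)=1$, while in $M_{1,1,1}(\emptyset)$ the only summand is $\left(\Eout-\Ein\right)(1,[0])=1-1=0$. So for $c=1$ the identity \eqref{middifferenceformula} as stated fails by exactly $1$ per successor step, and neither the paper's proof nor your plan can establish it there; for $c\geq 2$ the congruence obstruction kills the pair and your argument coincides with the paper's correct one. Note the damage is contained: the corrected difference $\sum_{w=l+1}^{l+s+r-1}\left(\Eout-\Ein\right)(w,[i])$ plus an extra $+1$ when $c=1$ is still a function of $(M^+,M)$ alone, so Proposition~\ref{acc points are useful} and Corollary~\ref{midcorol} survive unchanged. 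If you want the stated formula itself, either restrict to $c\geq 2$ or carry the indicator term through --- your bookkeeping should be done expecting it to survive, not cancel.
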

\begin{proof}
By Proposition~\ref{successors pass}, the Young diagram of $\lambda^+$ is obtained from that of $\lambda$ by adding a box with bottom corner $(x_1,y_1)$ where $ry_1+sx_1=l$ and $[x_1-y_1]=[i]$.

Proposition~\ref{motivate Mrsc} gives a formula for $\middxc(\lambda)$: it is the number of pairs of edges $e_1,e_2$ in the boundary sequence such that $e_1$ is an east edge arriving at a point $(x,y)$ satisfying $sx+ry=v$ and $[x-y]=[j]$ for some $j$, and $e_2$ is a south edge occurring after $e_1$ arriving at a point $(x',y')$ satisfying $ry'+sx'=w$ and $[x'-y']=[j]$, where $w$ and $v$ satisfy $-s-r<w-v<0$.
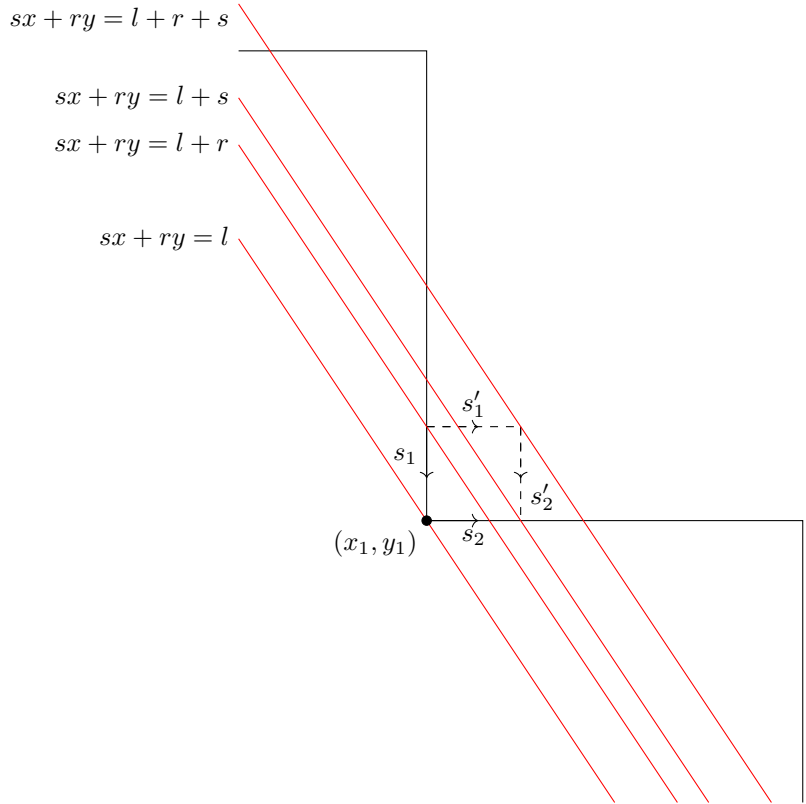
\begin{figure}[ht]
\begin{center}
\begin{tikzpicture}\begin{scope}[yscale=1,xscale=-1,rotate=90,scale=1.25]
\draw (1,0)--(0,0)--(0,1);
\draw[dashed] (1,0)--(1,1)--(0,1);
\draw[dashed,->] (1,0)--(1,0.55);
\draw[->] (1,0)--(0.45,0);
\node[above left] at (0.5,0) {$s_1$};
\node[below] at (0,0.5) {$s_2$};
\node[above] at (1,0.5) {$s_1'$};
\node[right] at (0.25,1) {$s_2'$};
\draw[->] (0,0)--(0,0.55);
\draw[dashed,->] (1,1)--(0.45,1);
\draw (0,1)--(0,4)--(-3,4);
\draw (1,0)--(5,0)--(5,-2);
\draw[red] (-3,2)--(3,-2);
\draw[red] (-3,2.667)--(4,-2);
\draw[red] (-3,3)--(4.5,-2);
\draw[red] (-3,3.667)--(5.5,-2); 
\node[left] at (3,-2) {$sx+ry=l$};
\node[left] at (4,-2) {$sx+ry=l+r$};
\node[left] at (4.5,-2) {$sx+ry=l+s$};
\node[left] at (5.333,-2) {$sx+ry=l+r+s$};
\draw[fill=black] (0,0) circle[radius=0.05];
\node[below left] at (0,0) {$(x_1,y_1)$};
\end{scope}\end{tikzpicture}
\end{center}
\caption{the diagrams of $\lambda$ and $\lambda^+$.}
\label{one figure to rule them all}
\end{figure}

We account for the change in the number of such pairs when changing $s_1$ to $s_1'$ and $s_2$ to $s_2'$ below: the only changes to $\middxc$ will be when $e_1\in \{s_2,s_1'\}$ or $e_2\in \{s_1,s_2'\}.$

By adding $s_1'$ we gain the number of south edges after $s_1$, arriving at points $(x,y)$ on lines $sx+ry=w$, such that $-s-r<w-(l+r+s)<0$ and  $[x-y]=[i]$. By deleting $s_1$ we lose the number of east edges occurring before $s_1$ arriving at points $(x,y)$ on lines $sx+ry=v$ such that $-s-r<l-v<0$ and $[x-y]=[i]$ So, the contribution to $\middxc(\lambda^+)-\middxc(\lambda)$ from switching $s_1$ to $s_1'$ is $S_1$ where

\begin{equation}\label{S1} S_1=\sum_{w=l+1}^{l+r+s-1}\left(\Sin^{s_1\to}-\Ein^{\to s_1}\right)(w,[i]).\end{equation}

By adding $s_2'$ we gain the number of east edges before $s_2$, arriving at points $(x,y)$ on lines $sx+ry=v$, such that $-s-r<l+s-v<0$, and $x-y\equiv i+1\pmod{c}$. By deleting $s_2$ we lose the number of south edges occurring after $s_2$ arriving at points $(x,y)$ on lines $sx+ry=w$ such that $-s-r<w-(l+s)<0$ and $[x-y]=[i+1].$
So, the contribution to $\middxc(\lambda^+)-\middxc(\lambda)$ from switching $s_2$ to $s_2'$ is $S_2$ where \begin{equation}\label{S2}S_2=\sum_{v=l+s+1}^{l+r+2s-1}\Ein^{\to s_2}(v,[i+1]) -\sum_{v=l-r+1}^{l+s-1}\Sin^{s_2\to}(v,[i+1]).\end{equation}

So, \begin{equation}\label{midbreakdown}\middxc(\lambda^+)-\middxc(\lambda)=S_1+S_2.\end{equation} Now, note that an east edge into $(v,[i+1])$ is also an east edge out of $(v-s,[i])$, and a south edge into $(w,[i+1])$ is also a south edge out of $(w+r,[i])$. Applying this reasoning to \eqref{S2}, 

\begin{equation}\label{S2v2}S_2=\sum_{w=l+1}^{l+r+s-1}\left(\Eout^{\to s_2} -\Sout^{s_2\to}\right)(w,[i]).\end{equation} Substituting~\eqref{S1} and~\eqref{S2v2} into~\eqref{midbreakdown}, 
\begin{equation}\label{diffmid}\middxc(\lambda^+)-\middxc(\lambda)=\sum_{w=l+1}^{l+s+r-1}\left(\Sin^{s_1\to}-\Ein^{\to s_1}+\Eout^{\to s_2}-\Sout^{s_2\to}\right)(w,[i]).\end{equation}
Since $s_2$ is an east edge occuring immediately after $s_1$, $\Sout^{s_2\to}=\Sout^{s_1\to}$, and since $s_1$ is a south edge immediately preceding $s_2$, $\Eout^{\to s_2}=\Eout^{\to s_1}.$ So,
\begin{equation}\label{diffmid2}\middxc(\lambda^+)-\middxc(\lambda)=\sum_{w=l+1}^{l+s+r-1}\left(\Sin^{s_1\to}-\Ein^{\to s_1}+\Eout^{\to s_1}-\Sout^{s_1\to}\right)(w,[i]).\end{equation}

Now, note that at any vertex $(v,[j])$ except $(l,[i])$, we have that \begin{equation}\left(\Sin^{s_1\to}+\Ein^{s_1\to}\right)(v,[j])=\left(\Sout^{s_1\to}+\Eout^{s_1\to}\right)(v,[j]),\end{equation} because after $s_1$ we depart every vertex after we arrive at it, the left hand side counting arrivals at the vertex after $s_1$ and the right side counting departures.
Rearranging gives 
\begin{equation}\label{afters1}\left(\Sin^{s_1\to}-\Sout^{s_1\to}\right)(v,[j])=\left(\Eout^{s_1\to}-\Ein^{s_1\to}\right)(v,[j]).\end{equation} 

Substituting \eqref{afters1} into \eqref{diffmid2}, 
\begin{equation}\middxc(\lambda^+)-\middxc(\lambda)=\sum_{w=l+1}^{l+s+r-1}\left(\Eout^{s_1\to}-\Ein^{s_1\to}-\Ein^{\to s_1}+\Eout^{\to s_1}\right)(w,[i]).\end{equation}
Since $s_1$ is a south edge, $\Ein^{s_1\to}+\Ein^{\to s_1}=\Ein$ and $\Eout^{s_1\to}+\Eout^{\to s_1}=\Eout,$ so
\begin{equation}
\middxc(\lambda^+)-\middxc(\lambda)=\sum_{w=l+1}^{l+s+r-1}\left(\Eout-\Ein\right)(w,[i]).
\end{equation}\end{proof}

\begin{corol}\label{midcorol}
If $\lambda$ and $\mu$ are partitions with $M_{r,s,c}(\lambda)=M_{r,s,c}(\mu),$ then $\middxc(\lambda)=\middxc(\mu).$
\end{corol}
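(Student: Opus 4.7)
The plan is to apply Proposition~\ref{acc points are useful} with $f = \middxc$, in exactly the same spirit as the proof of Corollary~\ref{areacorol}. What I need to produce is a function $g$ defined on pairs of $(r,s,c)$-multigraphs $(M^+,M)$ with $M^+$ a successor of $M$, such that whenever $\lambda^+ >'_{r,s,c} \lambda$ with $M_{r,s,c}(\lambda)=M$ and $M_{r,s,c}(\lambda^+)=M^+$, we have $\middxc(\lambda^+)-\middxc(\lambda) = g(M^+,M)$.

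Proposition~\ref{middifference} is precisely the required local formula. Given the pair $(M^+,M)$, let $(l,[i])$ denote the vertex at which $M^+$ changes from $M$; this vertex is determined purely by the pair of multigraphs (it is the unique vertex where edges are deleted and added, as in the definition of a multigraph successor). Then I set
\begin{equation*}
g(M^+,M) \;=\; \sum_{w=l+1}^{l+s+r-1}\bigl(\Eout-\Ein\bigr)(w,[i]),
\end{equation*}
where $\Eout(w,[i])$ and $\Ein(w,[i])$ are read off directly from $M$ (they count east edges departing from and arriving at the vertex $(w,[i])$ in $M$, and are intrinsic features of the multigraph, independent of any partition realising it). Proposition~\ref{middifference} states exactly that $\middxc(\lambda^+)-\middxc(\lambda)$ equals this quantity.

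With $f$ and $g$ related in this way, the hypotheses of Proposition~\ref{acc points are useful} are satisfied, and I conclude that $\middxc(\lambda)$ depends only on $M_{r,s,c}(\lambda)$; in particular $\middxc(\lambda)=\middxc(\mu)$ whenever $M_{r,s,c}(\lambda)=M_{r,s,c}(\mu)$. There is no genuine obstacle at this stage: the inductive framework based on the accumulation points $\lambda_{r,s,k}$ has been installed in Proposition~\ref{acc points are useful}, and the local change under a single $<_{r,s,c}$-successor step has been computed in Proposition~\ref{middifference}; the corollary is simply the assembly of these two ingredients.
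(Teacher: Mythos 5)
Your proposal is correct and is essentially identical to the paper's own proof: the paper likewise proves the corollary by applying Proposition~\ref{acc points are useful} with $g(M^+,M)=\sum_{w=l+1}^{l+s+r-1}\left(\Eout-\Ein\right)(w,[i])$, where $M^+$ changes from $M$ at $(l,[i])$, with Proposition~\ref{middifference} supplying the local formula. Your added remarks that $(l,[i])$ and the counts $\Eout,\Ein$ are intrinsic to the multigraph pair are exactly the (unstated) well-definedness checks the paper's one-line proof relies on.
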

\begin{proof}
 Apply Proposition~\ref{acc points are useful} with \begin{equation}g(M^+,M)=\sum_{w=l+1}^{l+s+r-1}\left(\Eout-\Ein\right)(w,[i]), \end{equation} where $M^+$ is the successor of $M$ that changes from $(l,[i]).$
\end{proof}

\subsection{\texorpdfstring{$M_{r,s,c}$}{The Multigraph} determines \texorpdfstring{$\critplusxc+\critminusxc$}{the sum crit+ + crit-}}

\begin{prop}\label{crittotdiff} Let $\lambda$ be a partition with $M_{r,s,c}(\lambda)=M$ and let $M^+$ be a successor of $M$ that changes from $(l,[i])$. Then, if $\lambda^+$ is the successor of $\lambda$ with multigraph $M^+,$ \begin{equation}
    \critplusxc(\lambda^+)+\critminusxc(\lambda^+)-\critplusxc(\lambda)-\critminusxc(\lambda)
\end{equation}
is equal to \begin{equation}\Sin(l,[i])-1+(\Sin-\Sout)(l+r+s,[i]),\end{equation}
where $x=\frac{r}{s}.$
\end{prop}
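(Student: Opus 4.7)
The strategy is to compute $\Delta(\critplusxc + \critminusxc)$ vertex-by-vertex, tracking exactly how the arrival and departure words change at the four multigraph vertices affected by the successor move, and then to collapse the bookkeeping using the rigidity of the multigraph and local balance at $A$ and $D$. Set $A = (l,[i])$, $B = (l+r,[i-1])$, $C = (l+s,[i+1])$, $D = (l+r+s,[i])$, and let $k_0 = i(s_1)$, so that $s_2$ has index $k_0+1$. The move replaces $s_1$ (south) and $s_2$ (east) by $s_1'$ (east, index $k_0$) and $s_2'$ (south, index $k_0+1$); hence the arrival word changes only at $A$ (loses one $S$), at $C$ (an $E$ at position $k_0+1$ becomes an $S$), and at $D$ (gains one $E$), while dually the departure word changes only at $A$, $B$, and $D$.

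Applying Corollary~\ref{firstcritformulae} and its analogue for $\critminusxc$, and using the elementary rules for how $\inv$ changes under a single deletion, insertion, or swap (following the same template as the proof of Proposition~\ref{middifference}), one computes
\begin{align*}
\Delta \critplusxc  &= \Ein^{\to s_2}(C) - \Sin^{s_2\to}(C) + \Sin^{s_1\to}(D),\\
\Delta \critminusxc &= \Sout^{s_1\to}(B) - \Eout^{\to s_1}(B) + \Eout^{\to s_2}(D),
\end{align*}
where the would-be terms at $A$ vanish by Corollary~\ref{noWoutNin(l,[i])}, which gives $\Ein(A)=\Sout(A)=0$.

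The key observation is that the congruences defining $\sim_{r,s,c}$ force index-preserving identifications of edges between these four vertices: every east edge into $C$ has source in $A$, every south edge into $C$ has source in $D$, every south edge into $A$ has source in $B$, and every east edge into $D$ has source in $B$. This yields $\Sin^{s_1\to}(A)=\Sout^{s_1\to}(B)$, $\Ein^{\to s_1}(D)=\Eout^{\to s_1}(B)$, $\Sin^{s_2\to}(C)=\Sout^{s_2\to}(D)$, and $\Ein^{\to s_2}(C) = \Eout^{\to s_2}(A) = \Sin^{\to s_1}(A)$, the last equality coming from the balance $\Eout(A)=\Sin(A)$ at $A$ together with the fact that no boundary edge of $\lambda$ at index $k_0$ departs from $A$.

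Substituting these identities, the $B$-contributions cancel $\Ein^{\to s_1}(D)$, the $A$-contributions combine to $\Sin^{\to s_1}(A) + \Sin^{s_1\to}(A) = \Sin(A) - 1$ (the $-1$ accounting for $s_1$ at index $k_0$), and the $D$-contributions, after applying local balance at $D$ restricted to indices strictly less than $k_0$ — valid because no boundary edge at index $k_0$ or $k_0+1$ touches $D$ in $\lambda$ — collapse to $(\Sin-\Sout)(D)$. This yields the claimed identity, and combined with Proposition~\ref{acc points are useful} it will imply that $\critplusxc + \critminusxc$ is a function of $M_{r,s,c}(\lambda)$. The main obstacle is the combinatorial bookkeeping: keeping straight which local counts fall before versus after the cut indices $k_0$ and $k_0+1$, and verifying that each of the four edge-identifications between $A,B,C,D$ is genuinely index-preserving.
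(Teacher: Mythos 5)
Your proposal is correct and is essentially the paper's own proof: the same vertex-by-vertex bookkeeping of inversion changes in the arrival words (for $\critplusxc$) and departure words (for $\critminusxc$) at the affected vertices, the same index-preserving edge identifications between $(l,[i])$, $(l+r,[i-1])$, $(l+s,[i+1])$, $(l+r+s,[i])$, and the same local balance identity at $(l+r+s,[i])$ restricted to edges before the cut. The only cosmetic difference is that you carry the (in fact vanishing) gain term $\Ein^{\to s_2}(l+s,[i+1])$ symbolically and arrive at $\Sin(l,[i])-1$ by combining the before-$s_1$ and after-$s_1$ counts, whereas the paper instead observes directly that $s_2$ is the first east arrival at $(l+s,[i+1])$ and that $s_1$ is the first arrival at $(l,[i])$.
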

\begin{proof} 

First, we compute $\critplusxc(\lambda^+)-\critplusxc(\lambda)$. Corollary~\ref{firstcritformulae} implies that 
\begin{equation}\label{differenceplus}
    \critplusxc(\lambda^+)-\critplusxc(\lambda)=\sum_{v\in M_{r,s,c}(\lambda^+)}\inv(v_a)-\sum_{v\in M_{r,s,c}(\lambda)}\inv(v_a)
\end{equation}

We keep the notation of the previous proposition and reference Figure~\ref{one figure to rule them all} throughout.
The only nonzero terms in the difference ~\eqref{differenceplus}
come from $v\in\{(l,[i]), (l+r+s,[i]), (l+s,[i+1])\}$. We work case-by-case through these vertices.
\begin{itemize}
\item We delete the first arrival at $(l,[i])$, corresponding to deleting $s_1.$ All arrivals at $(l,[i])$ are $S$s by Corollary~\ref{noWoutNin(l,[i])}, so this does not affect $\inv(l,[i])_a.$
\item We add an $E$ to the arrival word at $(l+r+s,[i])$, corresponding to adding $s_1'.$ 
\begin{center}
    \begin{tikzpicture}[scale=1.1]
    \draw (0,0.5)--(0,0)--(3.95,0)--(3.95,0.5);
    \draw (4.05,0.5)--(4.05,0)--(7,0)--(7,0.5);
    \node[below] at (1.975,0) {before $s_1$};
    \node[below] at (5.502,0) {after $s_1$};
    \draw (-0.25,-0.5)--(-0.25,-1)--(3.7,-1)--(3.7,-0.5);
    \draw (3.8,-0.5)--(3.8,-1)--(4.2,-1)--(4.2,-0.5);
    \node at (4,-0.75) {$E$};
    \draw (4.3,-0.5)--(4.3,-1)--(7.25,-1)--(7.25,-0.5);
    \node[below] at (1.725,-1) {before $s_1'$};
    \node[below] at (4,-1) {$s_1'$};
    \node[below] at (5.752,-1) {after $s_1'$};
\end{tikzpicture}
\end{center}

This $E$ is the first letter in an inversion with second letter any $S$ occuring after $s_1'$, so $(l+r+s,[i])$ contributes $\Sin^{s_1\to}(l+r+s,[i])$ to  ~\eqref{differenceplus}.
\item We replace the first $E$ in the arrival word at $(l+s,[i+1])$ (corresponding to $s_2$) with an $S$ (corresponding to $s_2'$). Therefore, we lose all inversions with the replaced $E$ edge as their first letter passing from $\lambda$ to $\lambda^+$. There are $\Sin^{s_2\to}(l+s,[i+1])$ such inversions.
\begin{center}
    \begin{tikzpicture}[scale=1.1]
    \draw (-0.25,0.5)--(-0.25,0)--(3.7,0)--(3.7,0.5);
    \draw (3.8,0.5)--(3.8,0)--(4.2,0)--(4.2,0.5);
    \node at (4,0.25) {$E$};
    \draw (4.3,0.5)--(4.3,0)--(7.25,0)--(7.25,0.5);
    \node at (1.8,0.25) {$SSSSSSSS\ldots SSSS$};
    \node[below] at (1.725,0) {before $s_2$};
    \node[below] at (4,0) {$s_2$};
    \node[below] at (5.752,0) {after $s_2$};
    \draw (-0.25,-0.5)--(-0.25,-1)--(3.7,-1)--(3.7,-0.5);
    \draw (3.8,-0.5)--(3.8,-1)--(4.2,-1)--(4.2,-0.5);
    \node at (4,-0.75) {$S$};
    \node at (1.8,-0.75) {$SSSSSSSS\ldots SSSS$};
    \draw (4.3,-0.5)--(4.3,-1)--(7.25,-1)--(7.25,-0.5);
    \node[below] at (1.725,-1) {before $s_2'$};
    \node[below] at (4,-1) {$s_2'$};
    \node[below] at (5.752,-1) {after $s_2'$};
\end{tikzpicture}
\end{center}

We gain no inversions from the new $S$ edge, because $s_2$ was the first east departure from $(l,[i])$ in the tour corresponding to $\lambda$. So, $(l+s,[i+1])$ contributes $-\Sin^{s_2\to}(l+s,[i+1])$ to ~\eqref{differenceplus}.
\end{itemize}

So, \begin{equation}\label{star2}\critplusxc(\lambda^+)-\critplusxc(\lambda)=\Sin^{s_1\to}(l+r+s,[i])-\Sin^{s_2\to}(l+s,[i+1]).\end{equation}

A south arrival before (respectively after) $s_2$ at $(l+s,[i+1])$ is a south departure before (respectively after) $s_2$ from $(l+r+s,[i]).$ Combining this logic with \eqref{star2},
 \begin{equation}\label{star3} \critplusxc(\lambda^+)-\critplusxc(\lambda) =
\left(\Sin^{s_1\to}-\Sout^{s_2\to}\right)(l+r+s,[i]).\end{equation}

We now analyse  \begin{equation}\label{differenceminus}\critminusxc(\lambda^+)-\critminusxc(\lambda)=\sum_{v\in M_{r,s,c}(\lambda^+)}\inv(v_d)-\sum_{v\in M_{r,s,c}(\lambda)}\inv(v_d).\end{equation} The departure words at every vertex except for $(l,[i])$, $(l+r+s,[i])$, and $(l+r,[i-1])$ are unchanged so the only nonzero terms in~\eqref{differenceminus} come from $v\in\{(l,[i]),(l+r+s,[i]),(l+r,[i-1])\}.$ An analogous argument to the above shows that the contribution of $(l+r,[i-1])$ to $~\eqref{differenceminus}$ is $\left(\Sout^{s_1\to}-\Eout^{\to s_1}\right)(l+r,[i-1])$, the contribution of $(l+r+s,[i])$ is $\Eout^{\to s_2}(l+r+s,[i])$, and $(l,[i])$ does not contribute. So,

\begin{equation}\label{dagger2}\critminusxc(\lambda^+)-\critminusxc(\lambda)=\left(\Sout^{s_1\to}-\Eout^{\to s_1}\right)(l+r,[i-1])+\Eout^{\to s_2}(l+r+s,[i]).\end{equation}
An east departure from $(l+r,[i-1])$ is an east arrival at $(l+r+s,[i]),$ so \begin{equation}\label{dagger3} \critminusxc(\lambda^+)-\critminusxc(\lambda)=
\Sin^{s_1\to}(l,[i])-\left(\Ein^{\to s_1}-\Eout^{\to s_2}\right)(l+r+s,[i]).\\
\end{equation}
Now, since $s_1$ is the first edge to arrive at $(l,[i]),$ \begin{equation}\label{s1firstwest} \Sin^{s_1\to}(l,[i])=\Sin(l,[i])-1.\end{equation}
Since $s_1$ does not arrive at $(l+r+s,[i])$, we leave $(l+r+s,[i])$ before $s_1$ the same number of times as we arrive before $s_1$. So,
\begin{equation}\label{arrivalsbeforearedeparturesafter}(\Ein^{\to s_1}+\Sin^{\to s_1})(l+r+s,[i])=(\Eout^{\to s_2}+\Sout^{\to s_2})(l+r+s,[i])).\end{equation}
Rearranging, \begin{equation}\label{reexpressNin}\Ein^{\to s_1}(l+r+s,[i])=(\Eout^{\to s_2}+\Sout^{\to s_2}-\Sin^{\to s_1})(l+r+s,[i]).\end{equation}
Substituting \eqref{reexpressNin} and \eqref{s1firstwest} into \eqref{dagger3},
\begin{equation}\label{dagger4}\critminusxc(\lambda^+)-\critminusxc(\lambda)=\Sin(l,[i])-1+(\Sin^{\to s_1}-\Sout^{\to s_2})(l+r+s,[i]).\end{equation}
Since $(l+r+s,[i])$ is not an endpoint of $s_1$ or $s_2$, \begin{equation}\label{westin} (\Sin^{\to s_1}+\Sin^{ s_1\to})(l+r+s,[i])=\Sin(l+r+s,[i])\end{equation} and \begin{equation}\label{westout}(\Sout^{\to s_2}+\Sout^{ s_2\to})(l+r+s,[i])=\Sout(l+r+s,[i]).\end{equation} Adding  \eqref{dagger4} and \eqref{star3}, and then applying \eqref{westout} and \eqref{westin} completes the proof.
\end{proof}
\begin{corol}\label{crittotcorol}
If $\lambda$ and $\mu$ are partitions such that $M_{r,s,c}(\mu)=M_{r,s,c}(\lambda)$ then \begin{equation}\critplusxc(\lambda)+\critminusxc(\lambda)=\critplusxc(\mu)+\critminusxc(\mu).\end{equation}
\end{corol}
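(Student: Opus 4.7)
The plan is to invoke Proposition~\ref{acc points are useful} directly, with $f(\lambda)=\critplusxc(\lambda)+\critminusxc(\lambda)$. All of the work has already been done in Proposition~\ref{crittotdiff}, so the corollary should fall out as a one-line consequence once we verify that the right-hand side of Proposition~\ref{crittotdiff} genuinely depends only on the pair $(M,M^+)$, not on the partition $\lambda$ realising $M$.

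For this, I would define
\begin{equation*}
g(M^+,M)=\Sin(l,[i])-1+(\Sin-\Sout)(l+r+s,[i]),
\end{equation*}
where $(l,[i])$ is the vertex at which $M^+$ differs from $M$ (as in the definition of multigraph successor). This $(l,[i])$ is intrinsic to the pair $(M^+,M)$: by construction of the successor relation on multigraphs, $(l,[i])$ is the unique vertex whose incident edges are altered, and it can be read off as the common endpoint of the two deleted edges. The three numbers $\Sin(l,[i])$, $\Sin(l+r+s,[i])$, $\Sout(l+r+s,[i])$ are in-degrees and out-degrees in $M$ counted by edge direction, so they are determined by $M$ alone, with no reference to the $(r,s,c)$-tour of $\lambda$. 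Hence $g$ is a well-defined function on pairs of $(r,s,c)$-multigraphs.

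With this $g$ in hand, Proposition~\ref{crittotdiff} is exactly the statement $f(\lambda^+)-f(\lambda)=g(M^+,M)$ whenever $\lambda^+>'_{r,s,c}\lambda$ and $M^+=M_{r,s,c}(\lambda^+)$, $M=M_{r,s,c}(\lambda)$. So the hypotheses of Proposition~\ref{acc points are useful} are satisfied, and we conclude that $f(\lambda)=f(\mu)$ whenever $M_{r,s,c}(\lambda)=M_{r,s,c}(\mu)$, which is the desired equality $\critplusxc(\lambda)+\critminusxc(\lambda)=\critplusxc(\mu)+\critminusxc(\mu)$. There is no real obstacle here: the hard technical content (tracking how the arrival and departure words change under a successor move and extracting a formula depending only on the multigraph) was already dispatched in Proposition~\ref{crittotdiff}, and the combinatorial framework of Proposition~\ref{acc points are useful} is specifically engineered to reduce corollaries of this shape to that single identity.
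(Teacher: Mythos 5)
Your proposal is correct and is essentially identical to the paper's proof, which likewise applies Proposition~\ref{acc points are useful} with $g(M^+,M)=\Sin(l,[i])-1+(\Sin-\Sout)(l+r+s,[i])$ computed with respect to $M$, where $M^+$ changes from $M$ at $(l,[i])$. Your added remark that $(l,[i])$ and the counts $\Sin$, $\Sout$ are intrinsic to the multigraph (in-degrees and out-degrees by direction, independent of the particular $(r,s,c)$-tour) is a correct and worthwhile explicit check that the paper leaves implicit.
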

\begin{proof}
Apply Proposition~\ref{acc points are useful} with \begin{equation}g(M^+,M)=\Sin(l,[i])-1+(\Sin-\Sout)(l+r+s,[i]).\end{equation} where the calculations $\Sin$ and $\Sout$ are done with respect to the multigraph $M$, and $M^+$ is the successor of $M$ changing from $(l,[i]).$
\end{proof}

So, we know that $M_{r,s,c}(\lambda)$ determines the $c$-core of $\lambda$, $|\lambda|,$ $\middxc(\lambda)$ and $\critplusxc(\lambda)+\critminusxc(\lambda),$ and that any bijection preserving $M_{r,s,c}$ therefore satisfies hypotheses 1-3 of Proposition~\ref{bijection properties}. It will be useful in our final remaining check, that $I_{r,s,c}$ satisfies the fourth criterion in Proposition~\ref{bijection properties}, to have a formula for $\critplusxc(\lambda)+\critminusxc(\lambda)$ in terms of $M_{r,s,c}(\lambda).$ This is what Proposition~\ref{ctot formula} computes.

\begin{prop}\label{ctot formula} Let $\lambda$ be a partition. If $k=rsk_1$ where $c\mid k_1$ and $\lambda <_{r,s,c} \lambda_{r,s,k},$ then

\begin{equation}\label{crittot}(\critplusxc+\critminusxc)(\lambda)= \sum_{\substack{(v,[j]) \\ v\leq k}} \Ein\Sin(v,[j])-\left\lfloor\frac{k_1(s+r)}{\lcm(c,s+r)}\right\rfloor.\end{equation}
\end{prop}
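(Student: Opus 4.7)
The plan is to prove the formula by induction with respect to $<_{r,s,c}$, using $\lambda_{r,s,k}$ as the base case and working downwards: by Proposition~\ref{rskeventualsuccessor}, any $\lambda <_{r,s,c} \lambda_{r,s,k}$ is connected to $\lambda_{r,s,k}$ by an iterated sequence of successor moves, so it suffices to verify the formula at $\lambda_{r,s,k}$ and show that both sides change by the same amount when passing from $\lambda$ to a successor $\lambda^+$.

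For the inductive step, fix a successor $\lambda^+$ of $\lambda$ changing from $(l,[i])$, with both $\leq_{r,s,c} \lambda_{r,s,k}$, so $l+r+s \leq k$. The correction term is independent of the partition, so it suffices to match the change in $(\critplusxc+\critminusxc)$ with the change in $\sum_{v\leq k}\Ein\Sin(v,[j])$. Proposition~\ref{crittotdiff} gives the former as $\Sin(l,[i])-1+(\Sin-\Sout)(l+r+s,[i])$. The passage from $M$ to $M^+$ modifies arrival counts only at $(l,[i])$ (where $\Sin$ drops by one, but $\Ein(l,[i])=0$ by Corollary~\ref{noWoutNin(l,[i])}, so $\Ein\Sin$ is unchanged), at $(l+s,[i+1])$ (where $\Ein$ drops by one and $\Sin$ rises by one), and at $(l+r+s,[i])$ (where $\Ein$ rises by one), and all three vertices have $v\leq k$. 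Expanding, the change in the truncated sum equals $(\Ein-\Sin-1)(l+s,[i+1])+\Sin(l+r+s,[i])$. Equating and cancelling, the required identity reduces, via the multigraph relations $\Sout(l+r+s,[i])=\Sin(l+s,[i+1])$ and $\Eout(l,[i])=\Ein(l+s,[i+1])$, to $\Sin(l,[i])=\Eout(l,[i])$, which is immediate from Corollary~\ref{noWoutNin(l,[i])} combined with the equality of arrivals and departures at every vertex of a complete circuit.

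For the base case $\lambda=\lambda_{r,s,k}$, Proposition~\ref{Multigraph rsk} pins down the arrival words at every vertex $(v,[j])$ with $v\leq k$: either all $S$s, all $E$s, or (exclusively at $(k,[0])$) a single $S$ followed by a string of $E$s. Hence $\Ein\Sin(v,[j])$ vanishes unless $(v,[j])=(k,[0])$, where it equals $\Ein(k,[0])$. A direct enumeration of lattice points on $sx+ry=k$ with $x,y\geq 0$ and $x-y\equiv 0\pmod c$ gives such points as $(tr,s(k_1-t))$ for $t\equiv 0\pmod{c/\gcd(c,r+s)}$; the point $t=0$ sits on the $y$-axis (no $E$ arrival) while each other point admits an $E$ arrival (being the top of its column in $\lambda_{r,s,k}$). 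This yields $\Ein(k,[0])=k_1\gcd(c,r+s)/c=\lfloor k_1(r+s)/\lcm(c,r+s)\rfloor$, so the right-hand side vanishes. On the left-hand side, for any box $\square$ in $\lambda_{r,s,k}$ the formulas for the top of its column and the rightmost box of its row yield $v_{\text{foot}}\in(k-r,k]$ and $v_{\text{hand}}\in(k-r-s,k-r]$, hence $0<v_{\text{foot}}-v_{\text{hand}}<r+s$; by Proposition~\ref{motivate Mrsc}, any $c$-divisible hook can only contribute to $\middxc$, never to $\critplusxc$ or $\critminusxc$, so the LHS also vanishes.

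The main obstacle is the base case: one must verify both the ``mid-slope'' location of every $c$-hook in $\lambda_{r,s,k}$ and the precise matching of the lattice-point count of $E$-arrivals at $(k,[0])$ with the closed form $\lfloor k_1(r+s)/\lcm(c,r+s)\rfloor$. Both are direct but require careful use of Proposition~\ref{Multigraph rsk} and the inequality $c\mid k_1$; in contrast, the inductive step reduces, after bookkeeping, to the single identity $\Sin=\Eout$ at $(l,[i])$, which Corollary~\ref{noWoutNin(l,[i])} supplies immediately.
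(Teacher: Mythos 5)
Your proposal is correct and follows essentially the same route as the paper: downward induction from the base case $\lambda_{r,s,k}$, matching the change $\Sin(l,[i])-1+(\Sin-\Sout)(l+r+s,[i])$ from Proposition~\ref{crittotdiff} against the change in $\sum_{v\leq k}\Ein\Sin(v,[j])$ at the three affected vertices and reducing to $\Eout(l,[i])=\Sin(l,[i])$ via Corollary~\ref{noWoutNin(l,[i])}, with the base case handled through Proposition~\ref{Multigraph rsk} and the same lattice-point count giving $\Ein(k,[0])=\left\lfloor\frac{k_1(r+s)}{\lcm(c,r+s)}\right\rfloor$. The only cosmetic difference is that you verify the vanishing of the left-hand side at $\lambda_{r,s,k}$ via foot/hand levels and Proposition~\ref{motivate Mrsc}, where the paper computes $-s<sa(\square)-rl(\square)<r$ directly from the arm and leg formulas; these are equivalent computations.
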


\begin{proof} First, we prove that~\eqref{crittot} holds when  $\lambda=\lambda_{r,s,k}$. 

We will show that for all boxes $\square\in\lambda_{r,s,k}$, $-s<sa(\square)-rl(\square)<r$, and hence that the left hand side of ~\eqref{crittot} is zero at $\lambda_{r,s,k}$. We will then check that the right hand side of \eqref{crittot} is zero at $\lambda_{r,s,k}.$

The $i$th part of $\lambda_{r,s,k}$ corresponds to a row with top right corner $(x_i,i)$ where $x_i$ is maximal such that $sx_i+ri\leq k$. So, \begin{equation}x_i=\left\lfloor\frac{k-ri}{s}\right\rfloor=\left\lfloor\frac{k_1rs-ri}{s}\right\rfloor=k_1r-\left\lceil\frac{ri}{s}\right\rceil.\end{equation} Similarly, the number of parts of $\lambda_{r,s,k}$ of size at least $j$ corresponds to a column with top right corner $(j,y_j)$ where $y_j$ is maximal such that $sj+ry_j\leq k$, so \begin{equation}y_j=\left\lfloor\frac{k-sj}{r}\right\rfloor=\left\lfloor\frac{k_1rs-sj}{r}\right\rfloor=k_1s-\left\lceil\frac{sj}{r}\right\rceil.\end{equation}

Now, let $\square\in\lambda$ be a box with top right corner $(i,j)$. Then, the arm of $\square$ is given by $x_i-j$ and the leg of $\square$ is given by $y_j-i$. So, \begin{align}sa(\square)-rl(\square)&=s(x_i-j)-r(y_j-i)\\
&=k_1rs-s\left\lceil\frac{ri}{s}\right\rceil-sj-k_1rs+r\left\lceil\frac{sj}{r}\right\rceil+ri\\
&=\left(r\left\lceil\frac{sj}{r}\right\rceil-sj\right)-\left(s\left\lceil\frac{ri}{s}\right\rceil+ri\right). 
\end{align}

Now, consider the two bracketed quantities separately, setting $x=\left(r\left\lceil\frac{sj}{r}\right\rceil-sj\right)$ and $y=-\left(s\left\lceil\frac{ri}{s}\right\rceil+ri\right)$. For the first bracket we have that \begin{equation}r\left(\frac{sj}{r}\right)\leq r\left\lceil \frac{sj}{r}\right\rceil< r\left(\frac{sj}{r}+1\right),\end{equation} so \begin{equation}0\leq r\left\lceil\frac{sj}{r}\right\rceil-sj< r.\end{equation}

Similarly for the second bracket, \begin{equation}-s< ri-s\left\lceil\frac{ri}{s}\right\rceil\leq 0.\end{equation}

So, $sa(\square)-rl(\square)$ can be written as $x+y$ for $x\in[0,r)$ and $y\in(-s,0]$ and therefore $-s<sa(\square)-rl(\square)<r.$

Therefore, \begin{equation}\critplusxc(\lambda_{r,s,k})+\critminusxc(\lambda_{r,s,k})=0.\end{equation}

Next we evaluate the right hand side of \eqref{crittot} at $\lambda_{r,s,k}$. Proposition \ref{Multigraph rsk} tells us that for all vertices $(v,[i])$ such that $0\leq v<k$, the arrival word at $(v,[i])$ in $M_{r,s,c}(\lambda_{r,s,k})$ does not contain both an $E$ and a $S$.  So, for all such $(v,[i])$ we have $\Ein\Sin(v,[i])=0$. So, the right hand side of \eqref{crittot} simplifies to
\begin{equation}\label{crittotrsk1}\sum_{i=0}^{c-1}\Ein\Sin(k,[i])-\left\lfloor\frac{k_1(s+r)}{\lcm(c,s+r)}\right\rfloor\end{equation}

Proposition~\ref{Multigraph rsk} also tells us that $\Sin(k,[i])=0$ unless $[i]=[0]$, and that $\Sin(k,[0])=1$, so we can rewrite \eqref{crittotrsk1} as

\begin{equation}\label{crittotrsk2}\Ein(k,[0])-\left\lfloor\frac{k_1(s+r)}{\lcm(c,s+r)}\right\rfloor.\end{equation}

So, it suffices to show that $\Ein(k,[0])=\left\lfloor\frac{k_1(s+r)}{\lcm(c,s+r)}\right\rfloor.$ The east edges in the boundary of $\lambda_{r,s,k}$ arriving at vertices $(k,[i])$ for some $i$ correspond to points $(x,y)$ with $x>0$ and $y\geq 0$ such that $sx+ry=k$. These points have coordinates $\{(r,s(k_1-1)), (2r,s(k_1-2)),\ldots, ((k_1-1)r,s), (k_1r,0)\}$. Now, $\Ein(k,[0])$ counts the number of these points that also lie on a line $x-y=i$ for $[i]=[0]$. The set of  values of $x-y$ for this set of points is $\{r+s-k_1s, 2(r+s)-k_1s,\ldots, k_1(r+s)-k_1s\}$. Letting $l(r+s)=\lcm(c,r+s)$, the values of $x-y$ that give us the same congruence class as $0$ when taken modulo $c$ are of the form $ml(r+s)-k_1s$ for some integer $m$. The number of values of this form in the given set is indeed $\left\lfloor\frac{k_1(s+r)}{\lcm(c,s+r)}\right\rfloor.$

Now suppose $\lambda<_{r,s,c}\lambda_{r,s,k}$ is maximal with respect to $>_{r,s,c}$ such that the proposition is false. In particular, the proposition holds for any successor $\lambda^+>'_{r,s,c}\lambda$. Let $M^+$ be a successor of $M$ that changes from $(l,[i]),$ and let $\lambda^+$ be the successor of $\lambda$ with multigraph $M^+$. Then, $(\critplusxc+\critminusxc)(\lambda^+)-(\critplusxc+\critminusxc)(\lambda)$ can be written as $\Delta_1$, where
\begin{equation}\label{Delta}\Delta_1=\Sin(l,[i])-1+(\Sin-\Sout)(l+r+s,[i]).\end{equation}

By assumption, \begin{equation}\label{crittotlambdaplus}(\critplusxc+\critminusxc)(\lambda^+)= \sum_{\substack{(v,[j]) \\ v\leq k}}\Einplus\Sinplus(v,[j])-\left\lfloor\frac{k_1(s+r)}{\lcm(c,s+r)}\right\rfloor\end{equation}
So, combining \eqref{Delta} and \eqref{crittotlambdaplus}, \begin{equation}(\critplusxc+\critminusxc)(\lambda)= \sum_{\substack{(v,[j]) \\ v\leq k}}\Einplus\Sinplus(v,[j])-\left\lfloor\frac{k_1(s+r)}{\lcm(c,s+r)}\right\rfloor-\Delta_1.\end{equation}

First, we note that a vertex $(v,[j])$ contributes the same to the sums $$\sum_{\substack{(v,[j]) \\ v\leq k}}\Ein\Sin(v,[j])$$ taken over the multigraphs $M$ or $M^+$ unless $(v,[j])\in\{(l,[i]),(l+r+s,[i],(l+s,[i+1])\}$. In fact, since there are no east edges into $(l,[i])$ in $M$ or $M^+$, we only need consider terms with $(v,[j])\in\{(l+r+s,[i],(l+s,[i+1])\}$. 
So, \begin{align*}(\critplusxc+\critminusxc)(\lambda)&= \sum_{\substack{(v,[j]) \\ v\leq k}}\Ein\Sin(v,[j])-\left\lfloor\frac{k_1(s+r)}{\lcm(c,s+r)}\right\rfloor -\Delta_1+\Delta_2 \stepcounter{equation}\tag{\theequation}\label{differencefromM}\end{align*}
where

\begin{equation}\Delta_2=(\Einplus\Sinplus-\Ein\Sin)(l+r+s,[i])+(\Einplus\Sinplus-\Ein\Sin)(l+s,[i+1]).\stepcounter{equation}\tag{\theequation}\label{Delta2}\end{equation}

Because $M^+$ changes from $M$ at $(l,[i])$, $\Einplus(l+s,[i+1])=\Ein(l+s,[i+1])-1$, $\Sinplus(l+s,[i+1])=\Sin(l+s,[i+1])+1$, $\Einplus(l+r+s,[i])=\Ein(l+r+s,[i])+1$ and $\Sinplus(l+r+s,[i])=\Sin(l+r+s,[i])$ so  \eqref{Delta2} simplifies to
\begin{equation}\label{Delta22}\Delta_2=\Ein(l+s,[i+1])-\Sin(l+s,[i+1])+\Sin(l+r+s,[i])-1.
\end{equation}

A south arrival at $(l+s,[i+1])$ is the same as a south departure from $(l+r+s,[i]),$ and an east arrival at $(l+s,[i+1])$ is the same as an east departure from $(l,[i]),$ so
\begin{equation}\label{Delta23}\Delta_2=\Eout(l,[i])-\left(\Sout-\Sin\right)(l+r+s,[i])-1.
\end{equation}
By Corollary~\ref{noWoutNin(l,[i])}, all edges leaving $(l,[i])$ are east edges and all edges arriving are south edges. The same number of edges arrive and leave, so $\Eout(l,[i])=\Sin(l,[i]).$ So,
 \begin{equation}\label{EqualDeltas}\Delta_2=(\Sin-\Sout)(l+r+s,[i]) +\Sin(l,[i])-1=\Delta_1.\end{equation}
Substituting \eqref{EqualDeltas} into \eqref{differencefromM} completes the proof.\end{proof}

It remains to check that $\critplusxc(I_{r,s,c}(\lambda))=\critminusxc(\lambda)$ and $\critminusxc(I_{r,s,c}(\lambda))=\critplusxc(\lambda)$.

First, we make some  make some straightforward but important observations about $M_{r,s,c}(\lambda)$ and winding numbers in Proposition~\ref{winding numbers}. Then, we apply these to the first arrival tree to prove some formulae about distances between consecutive vertices in the $(r,s,c)$-tour with respect to the first arrival tree, depending on whether the vertex is eastern, southern, or a switch in Proposition~\ref{di-di-1}. Finally, we apply these to proving $\critplusxc(I_{r,s,c}(\lambda))=\critminusxc(\lambda)$ and $\critminusxc(I_{r,s,c}(\lambda))=\critplusxc(\lambda)$ in Proposition~\ref{critexchange}.

\begin{prop}\label{winding numbers}
Let $(v,[i])$ and $(w,[j])$ be two vertices of $M_{r,s,c}(\lambda)$, and let $p_1$ and $p_2$ be directed paths between $(v,[i])$ and $(w,[j])$. Suppose $p_1$ is given by the sequence of vertices $(v,[i])=(v_0,[i_0]),\ldots, (v_{|p_1|,[i_0+|p_1|]})=(w,[j])$. Then,
\begin{enumerate}
\item $|p_1|-|p_2|$ is divisible by $\lcm(c,r+s)$.
\item Let $(v,[i])$ be $m$ lattice steps below the upper boundary of the cylinder, and let $|p_1|=q\lcm(c,r+s)+u$ where $-m < u\leq \lcm(c,r+s)-m$. The winding number of $p_1$ is $q$. 
\end{enumerate}
\end{prop}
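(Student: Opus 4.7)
The proof hinges on a single simple observation: every edge in $M_{r,s,c}(\lambda)$, whether east or south, increases the quantity $x-y$ by exactly $1$ when lifted back to an edge of the boundary graph. Hence a directed path of length $\ell$ lifts to a sequence of lattice points along which $x-y$ increases in unit steps, covering $\ell+1$ consecutive integers. This turns both parts of the proposition into statements about a one-dimensional clock.

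For part~(1), fix a single lattice representative $(x_v,y_v)$ of $(v,[i])$ and lift both $p_1$ and $p_2$ to paths in the plane starting from this common point. Their terminal points $(x_1,y_1)$ and $(x_2,y_2)$ are (possibly distinct) representatives of $(w,[j])$ satisfying
\begin{equation*}
(x_1-y_1)-(x_v-y_v)=|p_1|,\qquad (x_2-y_2)-(x_v-y_v)=|p_2|.
\end{equation*}
By the argument in Proposition~\ref{lattice points}, any two lattice representatives of a single $\sim_{r,s,c}$-class differ by an integer multiple of the translation $\bigl(\tfrac{r\lcm(c,r+s)}{r+s},-\tfrac{s\lcm(c,r+s)}{r+s}\bigr)$, which shifts $x-y$ by $\lcm(c,r+s)$. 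Subtracting the two equalities shows $|p_1|-|p_2|\in\lcm(c,r+s)\mathbb{Z}$.

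For part~(2), set $L=\lcm(c,r+s)$. Choose the representative of $(v,[i])$ inside the strip defining the cylindrical drawing so that the identification line (the upper boundary) sits exactly $m$ lattice steps above it in the $x-y$ direction. Lifting $p_1$ beginning at this representative, $x-y$ increases by $1$ at every step, so the lifted path arrives at a representative of the upper boundary precisely when $x-y$ has advanced by $m,\ m+L,\ m+2L,\ldots$ from the start. Using the decomposition $|p_1|=qL+u$ with $-m<u\leq L-m$, a direct count shows that exactly $q$ of these arrival times lie in the time interval singled out by the definition of winding number. The half-open constraint on $u$ is designed precisely so that the boundary-inclusion convention at the endpoint $(w,[j])$ never causes an off-by-one: either the endpoint coincides with an upper-boundary representative in a way consistent with the counting rule, or it sits strictly in the interior of a fundamental copy.

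\textbf{Main obstacle.} The only delicate point is pinning down the conventions so that the asymmetric bound $-m<u\leq L-m$ on $u$ matches the endpoint-inclusion rule ``strictly after leaving $(v,[i])$ and before or on arriving at $(w,[j])$'' in the definition of winding number. Once the choice of fundamental strip, the canonical representative of $(v,[i])$ inside it, and the half-open counting window are fixed compatibly, both parts reduce to transparent arithmetic driven by the fact that every edge of $M_{r,s,c}(\lambda)$ advances $x-y$ by exactly one.
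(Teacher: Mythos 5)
Your proposal is correct and takes essentially the same route as the paper's own proof: part (1) by lifting both paths to lattice paths in the plane and comparing representatives via Proposition~\ref{lattice points} (the paper lifts from possibly different representatives of the two endpoints and computes $|p_1|-|p_2|=(r+s)(a-b)$, while you lift from a common starting representative and compare the $x-y$ values of the two terminal representatives — an immaterial variation), and part (2) by the observation that every edge advances $x-y$ by exactly $1$, so arrivals at the identified boundary occur at times congruent to a fixed residue modulo $\lcm(c,r+s)$. If anything you are more explicit than the paper, whose entire argument for part (2) is the one-line remark that $x-y$ moves cyclically through the residue classes, with no discussion of the endpoint and half-open-window conventions you correctly single out as the only delicate point.
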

\begin{proof} The first point follows from Proposition~\ref{lattice points}: $p_1$ and $p_2$ are lattice paths from points $(x_1,y_1)$ and $(x_1+ar,y_1-as)$ respectively to points $(x_2,y_2)$ and $(x_2+br,y_2-bs)$ respectively, where $\lcm(c,r+s)$ divides $a(r+s)$ and $b(r+s)$. We have that $|p_1|=x_2-x_1+y_1-y_2$ and $|p_2|=x_2+br-x_1-ar+y_1-as-y_2+bs$, so $|p_1|-|p_2|=(r+s)(a-b)$, which is divisible by $\lcm(c,r+s)$.

The second point follows because as we trace out a directed path, the value of $x-y$ moves cyclically through the residue classes modulo $\lcm(c,r+s)$, incrementing by 1 with each step.
\end{proof}

\begin{prop}\label{di-di-1} Let $(k,[0])=v_0,v_1\ldots,v_{(r+s)k_1}=(k,[0])$ be the vertices visited, in order, by the $(r,s,c)$-tour, corresponding to the section of the boundary of $\lambda$ between $(0,k_1s)$ and $(k_1r,0)$. Let $d_i$ denote the distance in the first arrival tree $T$ from $(k,[0])$ to $v_i$. 
\begin{enumerate}
    \item If $v_i$ is a switch, or if there is a copy of the edge $(v_{i-1},v_i)$ in $T$, then $d_i-d_{i-1}=1$.
    \item If $v_i$ is an eastern vertex and there is no copy of $(v_{i-1},v_i)$ in $E(T)$, then $d_i-d_{i-1}=1+\lcm(c,r+s)$.
    \item If $v_i$ is a southern vertex and there is no copy of $(v_{i-1},v_i)$ in $E(T)$, then $d_i-d_{i-1}=1-\lcm(c,r+s)$.
\end{enumerate}
\end{prop}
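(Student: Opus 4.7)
The plan is a case analysis based on how the traversed edge relates to the tree $T$, using the congruence $d_i\equiv i\pmod m$ where $m=\lcm(c,r+s)$. This congruence holds because both the tour walk $v_0,\dots,v_i$ (of length $i$) and the path in $T$ from $(k,[0])$ to $v_i$ (of length $d_i$) are directed paths in $M$ with the same endpoints, so Proposition~\ref{winding numbers}(1) applies. Consequently $d_i-d_{i-1}\equiv 1\pmod m$, so the possible values are $1,\ 1\pm m,\ 1\pm 2m,\dots$, and it suffices to identify which one occurs in each case.

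Case 1 is direct. If a copy of $(v_{i-1},v_i)$ lies in $E(T)$ then $v_{i-1}$ is the source of the first arrival edge at $v_i$, and within the root-subtree traversed by the finite tour this gives $d_i=d_{i-1}+1$. If instead $v_i$ is a switch then its east and south predecessors are equidistant from $(k,[0])$ in $T$; since $v_{i-1}$ is one of them and the tree parent of $v_i$ is also a predecessor, we again obtain $d_i=1+d(\text{tree parent})=1+d_{i-1}$.

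For Cases 2 and 3 a short check forces $v_{i-1}$ to be the predecessor opposite to the tree parent: $v_{i-1}=P_S$ in the eastern case and $v_{i-1}=P_E$ in the southern case, where $P_E$ and $P_S$ denote the east and south predecessors of $v_i$. Writing $d_i=1+d(P_{\text{par}})$ and $d_{i-1}=d(P_{\text{other}})$, both target values $1\pm m$ reduce to the single identity $d(P_E)-d(P_S)=m$. To prove this, I would first check that $u(P_E)=u(P_S)$, where $u(v)$ denotes the minimum length of a directed path from $(k,[0])$ to $v$ in $M$: representatives $(a,b)$ and $(a+1,b+1)$ of $P_E$ and $P_S$ both admit monotone east-then-south paths of the same length from a suitable root representative. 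Combined with Proposition~\ref{winding numbers}, this forces $d(P_E)-d(P_S)$ to be an integer multiple of $m$.

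The main step, and the main obstacle, is pinning this multiple to exactly $+1$. My plan is to analyse the tour segment from the first visit to $v_i$ at some step $j<i$ (where, since $v_i$ is eastern, $v_{j-1}=P_E$) through step $i$ (where $v_{i-1}=P_S$): this segment is a closed walk at $v_i$ of length $i-j$, a positive multiple of $m$ by Proposition~\ref{winding numbers}(1), whose winding number around the cylinder is $(i-j)/m$. Exploiting both this winding information and the minimality of first arrival times—which forces each tree path from the root to realise the smallest allowable winding to its endpoint—one shows the tree path to $P_E$ winds exactly once more than the tree path to $P_S$, yielding $d(P_E)=d(P_S)+m$ in the eastern case; the southern case is symmetric. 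The technical heart is the bookkeeping that rules out $|d(P_E)-d(P_S)|\geq 2m$, which requires combining the cylindrical structure of $M_{r,s,c}$ with the minimality properties of the first arrival edges.
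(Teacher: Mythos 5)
Your skeleton is sound: the congruence $d_i-d_{i-1}\equiv 1\pmod{\lcm(c,r+s)}$ via Proposition~\ref{winding numbers}, the treatment of case 1, and the observation that cases 2 and 3 both reduce to the single identity $d(P_E)-d(P_S)=\lcm(c,r+s)$ are all correct (the paper phrases this same identity as $\wind(p_i)=\wind(p_{i-1})+1$ for the tree paths $p_i,p_{i-1}$). One local flaw before the main issue: your argument that $u(P_E)=u(P_S)$ via monotone east-then-south lattice paths from a root representative is illegitimate, because such lattice paths need not be paths in $M_{r,s,c}(\lambda)$ --- the edges of $M$ are only the images of boundary edges of $\lambda$, not all unit lattice steps. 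The step is also unnecessary: appending the east edge $(P_E,v_i)$ and the south edge $(P_S,v_i)$ (both of which exist in $M$ in the relevant cases) gives two directed paths in $M$ with common endpoints, so Proposition~\ref{winding numbers} already yields $\lcm(c,r+s)\mid d(P_E)-d(P_S)$, which indeed is just your opening congruence restated.

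The genuine gap is that the step you yourself flag as ``the main obstacle'' --- pinning the multiple to exactly $+1$ --- is the entire content of parts 2 and 3, and your proposal does not prove it. Your closed-walk analysis of the tour segment from the first visit to $v_i$ through step $i$ computes windings of \emph{tour} segments, but tour windings do not transfer to \emph{tree-path} windings without further input. The principle you invoke to bridge this, that first-arrival minimality forces each tree path from the root to realise the smallest allowable winding to its endpoint, is asserted without proof and is not evidently true: first-arrival times strictly decrease along a tree path toward the root, but nothing in the definition of $T$ controls winding a priori; and even granting it, you would still have to compare the minimal windings to $P_E$ and $P_S$. The paper's mechanism, which your proposal never touches, is planarity: distinct tree paths from the root, drawn on the cylinder, cannot cross, since a crossing would occur at a lattice point, hence at a shared vertex, creating a cycle in $T$. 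From this the paper deduces by induction that the windings of tree paths to vertices on the upper boundary strip weakly increase as one scans southwest, and then, taking $D$ and $E$ to be the last strip vertices on $p_i$ and $p_{i-1}$, that no tree path starting between $E$ and $D$ can cross the whole strip; this forces $\wind(p_i)=\wind(p_{i-1})+1$ exactly, simultaneously fixing the sign and excluding a difference of two or more windings --- precisely the cases $|d(P_E)-d(P_S)|\geq 2\lcm(c,r+s)$ that your unspecified ``bookkeeping'' would have to rule out. Until you either prove your minimal-winding claim or substitute this non-crossing argument, cases 2 and 3 remain unproven.
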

\begin{proof}
Write $p_i$ for the path in $T$ from $(k,[0])$ to $v_i$, so that $|p_i|=d_i.$ The first point follows immediately from the definition of a switch and the definition of $T$. 

In general, the winding number of a vertex $v$ is the same as the winding number of the last vertex on the upper boundary strip that $T$ before $v$. So, drawing $T$ on the cylinder and then forgetting the identification of the two boundary lines, the connected components form sets of vertices of equal winding number. 

Moreover, if $\wind(p_i)=\wind(p_{i-1})$, then by the second part of Proposition~\ref{winding numbers}, $||p_i|-|p_{i-1}||<\lcm(c,r+s).$ Since there is a path of length 1 (not necessarily in $T$) connecting $v_{i-1}$ and $v_i$, then by the first part of Proposition~\ref{winding numbers}, $|p_i|-|p_{i-1}|\equiv 1\pmod{\lcm(c,r+s)}$. Therefore, $d_i-d_{i-1}=1,$ so $v_i$ is a switch or there is a copy of $(v_{i-1},v_{i})$ in $E(T)$.

For 2 and 3, we first prove that as we scan southwest along the upper boundary strip, the winding numbers of the paths from $(k,[0])$ to the vertices on the strip weakly increase. We proceed by induction.

\begin{center}
\begin{tikzpicture}
\draw (0,0)--(-4,-1);
\draw (0,-2)--(-4,-3);

\node[above left] at (-3.5,-0.875) {$B$};
\node[above] at (-3.25,-0.8125) {$A$};
\draw[fill=black] (-3.5,-0.875) circle[radius=0.07];
\draw[fill=black] (-3.25,-0.8125) circle[radius=0.07];

\draw[blue] (-2.75,-0.675)--(-1.125,-1.275)--(-0.25,-2.0625);
\draw[red] (-1.75,-0.425)--(-0.5,-2.125);
\draw[fill=black] (-1.125,-1.275) circle[radius=0.07];
\node[above right] at (-1.125,-1.275) {$C$};

\node[below left] at (-0.5,-2.125) {$B$};
\node[below right] at (-0.25,-2.0625) {$A$};
\draw[fill=black] (-0.5,-2.125) circle[radius=0.07];
\draw[fill=black] (-0.25,-2.0625) circle[radius=0.07];
\node[above] at (-2.75,-0.675) {$A'$};
\node[above] at (-1.75,-0.425) {$B'$};
\draw[fill=black] (-2.75,-0.675) circle[radius=0.07];
\draw[fill=black] (-1.75,-0.425) circle[radius=0.07];
\end{tikzpicture}
\end{center}

Suppose $A$ and $B$ are vertices on the upper boundary strip and $B$ is southwest of $A$, and let $p_A$ and $p_B$ be the paths in $T$ from $(k,[0])$ to $A$ and $B$ respectively. We will show $\wind(p_B)\geq \wind(p_A)$. If $A=(k,[0])$ then we are done, so suppose not. There is a copy of both $A$ and $B$ on the lower boundary strip, with $B$ still southwest of $A$. Moreover, $p_A$ and $p_B$ run from points $A'$ and $B'$ respectively on the upper boundary strip to $A$ and $B$, where we possibly have $A'=B'.$ However, $A'$ cannot be strictly southwest of $B'$, as otherwise $p_A$ and $p_B$ would have to cross at a vertex $C$, introducing a cycle from $(k,[0])$ following $p_A$ to $C$ and then following $p_B$ back to $(k,[0])$. Let $p_B'$ and $p_A'$ be $p_B$ and $p_A$ shortened to finish at $B'$ and $A'$ respectively. Then, by strong induction, $\wind(p_B')\geq \wind(p_A')$. Adding 1 to both sides, $\wind(p_B)\geq \wind(p_A).$

Now, in the case that $v_i$ is eastern, and there is no copy of $(v_{i-1},v_i)$ in $E(T)$, $(v_{i-1},v_i)$ must be a south edge, and $v_{i-1}$ and $v_i$ lie in different connected components. Since $\wind(p_i)\not=\wind(p_{i-1}),$ $\wind(p_i)>\wind(p_{i-1}).$ Let $D$ and $E$ be the last vertices on the upper boundary strip on $p_i$ and $p_{i-1}$ respectively.

\begin{center}
\begin{tikzpicture}[scale=0.5]
\draw (0,0)--(-5,-5);

\node[above] at (-1,-1) {$D$};
\node[above] at (-3,-3) {$E$};
\draw[fill=black] (-1,-1) circle[radius=0.07];
\draw[fill=black] (-3,-3) circle[radius=0.07];
\draw[fill=black] (1,-4) circle[radius=0.07];
\draw[fill=black] (1,-5) circle[radius=0.07];
\draw[->,blue, densely dotted] (1,-4)--(1,-4.5);
\draw[blue, densely dotted] (1,-4.5)--(1,-5);
\draw (-1,-1)--(0,-1)--(1,-1)--(1,-2)--(1,-3)--(1,-4);
\draw (-3,-3)--(-2,-3)--(-2,-4)--(-2,-5)--(-1,-5)--(1,-5);
\node[right] at (1,-4) {$v_{i-1}$};
\node[right] at (1,-5) {$v_i$};
\end{tikzpicture}

\end{center}

Since all paths in $T$ have vertices at lattice points and do not intersect with each other, there can be no path that starts at a vertex on the upper boundary strip between $E$ and $D$ that crosses all the way to the lower boundary strip. Hence, the copy of $E$ on the lower boundary strip either lies in the same connected component as $D$ or in a component northeast of $D.$ So, $\wind(p_i)=\wind(p_{i-1})+1.$ Let $q_i$ be the path obtained by extending $p_{i-1}$ by the south edge $(v_{i-1},v_i)$. Then $|q_i|=d_{i-1}+1.$ The second part of Proposition~\ref{winding numbers} tells us that $|p_i|$ and $|q_i|$ agree modulo $\lcm(c,r+s)$ and therefore $d_i=d_{i-1}+1+\lcm(c,r+s)$.

An analogous argument proves the third formula.
\end{proof}

\begin{prop}\label{critexchange} Let $\lambda$ be a partition. Then
\begin{equation}\critplusxc(I_{r,s,c}(\lambda))=\critminusxc(\lambda)\end{equation} and \begin{equation}\critminusxc(I_{r,s,c}(\lambda))=\critplusxc(\lambda).\end{equation}
\end{prop}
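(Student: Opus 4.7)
The plan is to reduce the two claimed equalities to one and then compare inversion counts between old and new arrival words vertex by vertex.

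Since $I_{r,s,c}$ preserves $M_{r,s,c}$ by construction, Corollary~\ref{crittotcorol} gives $(\critplusxc+\critminusxc)(I_{r,s,c}(\lambda))=(\critplusxc+\critminusxc)(\lambda)$, so the two desired equalities are equivalent. It therefore suffices to prove $\critplusxc(I_{r,s,c}(\lambda))=\critminusxc(\lambda)$.

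The elementary identity $\inv(w)+\inv(\mathrm{rev}(w))=\Ein\Sin$ holds at every vertex, because every unordered $\{E,S\}$ pair in the arrival word $w$ is an inversion in exactly one of $w$ or $\mathrm{rev}(w)$. Combining this with Proposition~\ref{ctot formula} and Corollary~\ref{firstcritformulae}, I can rewrite
\begin{equation*}
\critminusxc(\lambda)=\sum_{v\leq k}\inv(\mathrm{rev}((v,[j])_a))-\Bigl\lfloor\tfrac{k_1(r+s)}{\lcm(c,r+s)}\Bigr\rfloor,
\end{equation*}
while $\critplusxc(I_{r,s,c}(\lambda))=\sum_v\inv((v,[j])_a^{\text{new}})$ by Corollary~\ref{firstcritformulae}. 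I would then compare these sums vertex by vertex using the case structure built into the definition of $I_{r,s,c}$: at a switch the new arrival word is the full reversal, so the two inversion counts coincide; at an eastern non-switch (first letter $E$, then tail $t$), writing out $\inv$ of both $E\cdot\mathrm{rev}(t)$ and $\mathrm{rev}(t)\cdot E$ gives a local difference of exactly $+\Sin(v,[j])$; at a southern non-switch, an analogous calculation yields $-\Ein(v,[j])$.

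Summing these local contributions, the proposition reduces to the global identity
\begin{equation*}
\sum_{(v,[j])\in\Ea}\Sin(v,[j])-\sum_{(v,[j])\in\So}\Ein(v,[j])=-\Bigl\lfloor\tfrac{k_1(r+s)}{\lcm(c,r+s)}\Bigr\rfloor,
\end{equation*}
and proving this identity is where the main technical work lies. The natural tool is Proposition~\ref{di-di-1} applied to the closed circuit of $M^{\leq k}$ based at the root $(k,[0])$: the telescoping identity $\sum_i(d_i-d_{i-1})=0$ together with the three cases of Proposition~\ref{di-di-1} yields $k_1(r+s)+L(N_2-N_3)=0$, where $L=\lcm(c,r+s)$ and $N_2,N_3$ count non-first-arrival steps into eastern, respectively southern, non-switch vertices. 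Since at an eastern vertex the $T^{\leq k}$ first-arrival edge is east and at a southern vertex it is south, one can rewrite $N_2$ and $N_3$ as $\sum_{\Ea}(\Ein+\Sin-1)$ and $\sum_{\So}(\Ein+\Sin-1)$ respectively; the hard part will be converting this into the precise combination $\sum_{\Ea}\Sin-\sum_{\So}\Ein$, which requires showing the auxiliary identity $\sum_{\Ea}(\Ein-1)=\sum_{\So}(\Sin-1)$ and giving special treatment to the root $(k,[0])$, whose first-arrival edge in the full tour may lie outside $M^{\leq k}$ (so that the root genuinely has no parent in $T^{\leq k}$ and its restricted-tour arrivals must be accounted for separately).
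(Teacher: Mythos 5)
Your reduction and local bookkeeping reproduce the paper's proof up to the final counting step: the paper likewise uses the invariance of $\critplusxc+\critminusxc$, and its vertex-by-vertex computation (contributions summing to $ab$ at a switch, $ab+a$ at an eastern vertex, $ab-b$ at a southern vertex, with $a=\Sin$, $b=\Ein$) is exactly your identity $\inv(w)+\inv(\mathrm{rev}(w))=\Ein\Sin$ together with your local differences $+\Sin$ and $-\Ein$; both routes land on the same key identity~\eqref{toprove}. The genuine gap is in how you feed Proposition~\ref{di-di-1} into the telescoping sum. Its case split is whether a \emph{copy} of the traversed edge $(v_{i-1},v_i)$ --- same source, same target, same direction --- lies in $E(T)$, not whether the step is the literal first arrival of the tour. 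In $M_{r,s,c}$ every east edge into $(v,[j])$ has source $(v-s,[j-1])$, so at an eastern vertex \emph{every} east arrival is a parallel copy of the first-arrival edge and falls into case 1, contributing $d_i-d_{i-1}=1$; likewise every south arrival at a southern vertex. Hence $N_2=\sum_{(v,[j])\in\Ea}\Sin(v,[j])$ and $N_3=\sum_{(v,[j])\in\So}\Ein(v,[j])$ outright, and your telescoping $0=k_1(r+s)+L(N_2-N_3)$ with $L=\lcm(c,r+s)$ finishes the proof with no auxiliary lemma --- this is precisely the paper's argument, which splits the same count into~\eqref{usetogglability} and~\eqref{stratifybyvertextype} and subtracts.

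Your proposed counts $N_2=\sum_{\Ea}(\Ein+\Sin-1)$, $N_3=\sum_{\So}(\Ein+\Sin-1)$ are wrong, and the auxiliary identity $\sum_{\Ea}(\Ein-1)=\sum_{\So}(\Sin-1)$ you flag as the remaining hard step is false in general. Take $r=s=c=1$, $\lambda=(1)$, $k=4$: the vertices with $v\le k$ are $v=1,2,3,4$ with arrival words $SS$, $SEE$, $SE$, $SE$, all southern, so $\Ea=\emptyset$ and the left side is $0$ while $\sum_{\So}(\Sin-1)=1$; meanwhile the correct counts $N_3=\sum_{\So}\Ein=4$, $N_2=0$ do satisfy $8+2(0-4)=0$, and your counts give $N_3=5$, already contradicting the telescoping identity. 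Your worry about the root also dissolves once the counting is set up as above: the tree edge into $(k,[0])$ is the south arrival at $(0,k_1s)$, which is the step \emph{before} $v_0$ in the circuit $v_0,\ldots,v_{(r+s)k_1}$, so it is not among the telescoped steps and needs no $d$-formula; the root is southern, and its in-circuit arrivals are all east arrivals, correctly absorbed into $\sum_{\So}\Ein$.
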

\begin{proof}
 We will check that $\critplusxc(\lambda)=(\critplusxc+\critminusxc)(M_{r,s,c}(\lambda))-\critplusxc(I_{r,s,c}(\lambda)).$

Recall that $\critplusxc$ counts the total number of inversions in the arrival word at vertices in $M_{r,s,c}(\lambda).$ Suppose the arrival word at vertex $(v,[i])$ has $a$ south edges and $b$ east edges. 
If $v$ is a switch, then $I$ reverses the arrival word at $(v,[i])$, so the pairs of $S,E$ edges that contribute to $\critplusxc(I(\lambda))$ are exactly those that do not contribute to $\critplusxc(\lambda)$, so the contributions over $I(\lambda)$ and $\lambda$ at $(v,[i])$ sum to $ab$.

Note that if $(v,[i])\in \Ea$  then $I(\lambda)$ has inversions in the arrival word at $(v,[i])$ using the first $E$ and any $S$ in the arrival word, and then any other pair of south and east edges contribute to $I(\lambda)$ if and only if they do not contribute to $\lambda$, so the two contributions sum to $ab+a.$ 
Similarly, if $(v,[i])\in \So$ then the contributions sum to $ab-b$.   Hence, we have that the total $\critplusxc(\lambda)+\critplusxc(I_{r,s,c}(\lambda))$ can be written as $S_1+S_2+S_3$ where $$S_1 = \sum_{(v,[i])\text{is a switch}}\Ein(v,[i])\Sin(v,[i])$$
$$S_2=\sum_{(v,[i])\in \Ea}\Ein(v,[i])\Sin(v,[i])+\Sin(v,[i])$$
$$S_3=\sum_{(v,[i])\in \So}\Ein(v,[i])\Sin(v,[i])-\Ein(v,[i]).$$

Now, note first that no vertex $(v,[i])$ with $v>k$ contributes to any of these sums. Indeed, no such vertex is a switch, and the arrival word at any such $(v,[i])$ has length $0,1$ or $2$, containing at most one $S$ and at most one $E$. If the arrival word is empty there is nothing to prove. If the arrival word is $E$ then the vertex is eastern, and $\Ein(v,[i])\Sin(v,[i])+\Sin(v,[i])=0.$ If the arrival word is $S$ then the vertex is southern and $\Ein(v,[i])\Sin(v,[i])-\Ein(v,[i])=0.$ The only other possible arrival word is $SE$, in which case the vertex is southern and $\Ein(v,[i])\Sin(v,[i])-\Ein(v,[i])=1-1=0.$
So, we may restrict our sum to vertices $(v,[i])$ with $v\leq k$. 

Proposition~\ref{crittotdiff} proves~\eqref{crittot}, $$(\critplusxc+\critminusxc)(M_{r,s,c}(\lambda))=\sum_{v=0}^k\sum_{i=0}^{c-1}\Ein(v,[i])\Sin(v,[i])-\left\lfloor\frac{k_1(s+r)}{\lcm(c,r+s)}\right\rfloor.$$

We wish to show that~\eqref{crittot} is equal to $S_1+S_2+S_3$, and therefore it suffices to check that

\begin{equation}\label{toprove}\sum_{(v,[i])\in\So}\Ein(v,[i])-\sum_{(v,[i])\in\Ea}\Sin(v,[i])=\left\lfloor\frac{k_1(s+r)}{\lcm(c,s+r)}\right\rfloor.\end{equation}

Note that the east edges entering southern vertices and the south edges entering eastern vertices are exactly the edges in $M_{r,s,c}(\lambda)$ arriving at non-switch vertices that are \textit{not} a copy of an edge in the first arrival tree $T$. Hence, if we let $n_0$ denote the number of edges $e$ entering vertices $(v,[i])$ with $v\leq k$ such that either 
\begin{itemize}
    \item $(v,[i])$ is a switch, or
\item $(v,[i])$ is not a switch and there is a copy of $e$ in the first arrival tree $T$,
\end{itemize} then \begin{equation}\label{usetogglability}n_0+\sum_{(v,[i])\in\So}\Ein(v,[i])+\sum_{(v,[i])\in\Ea}\Sin(v,[i])=k_1(r+s).\end{equation}

Now, let $(k,[-k_1s])=v_0,v_1\ldots,v_{(r+s)k_1}= (k,[k_1r])$ be the vertices visited, in order, possibly with repetition, by the $(r,s,c)$-tour. Let $d_i$ denote the distance in the first arrival tree from $(k,[-k_1s])$ to $v_i$. Now $c\mid k_1$ by assumption, and thus $c\mid(r+s)k_1$, so we have that \begin{equation}\label{telescoper}0=d_{(r+s)k_1}=\sum_{i=1}^{(r+s)k_1} d_i-d_{i-1}.\end{equation} Substituting the formulae for $d_i-d_{i-1}$ proven in Proposition~\ref{di-di-1} into ~\eqref{telescoper} and writing $l$ for $\lcm(c,r+s)$, 
\begin{equation}\label{stratifybyvertextype}n_0+(1+l)\sum_{(v,[i])\in\Ea}\Sin(v,[i])+(1-l)\sum_{(v,[i])\in\So}\Ein(v,[i])=0.\end{equation}

Subtracting~\eqref{stratifybyvertextype} from~\eqref{usetogglability} gives  \begin{equation}k_1(r+s)=\lcm(c,r+s)\left(\sum_{(v,[i])\in\So} \Ein(v,[i])-\sum_{(v,[i])\in\Ea} \Sin(v,[i])\right).\end{equation} Now, since $k$ is divisible by $rsc$, $k_1=\frac{k}{rs}$ is divisible by $c$, so $\lcm(c,r+s)$ divides $k_1(r+s)$. Therefore, $$\left\lfloor\frac{k_1(r+s)}{\lcm(c,r+s)}\right\rfloor=\frac{k_1(r+s)}{\lcm(c,r+s)}=\sum_{(v,[i])\in\So} \Ein(v,[i])-\sum_{(v,[i])\in\Ea} \Sin(v,[i])),$$ which is~\eqref{toprove}, which completes the proof.
\end{proof}

\subsection{Extended Example}
Let $c=2$ and $n=7$, and $\mu=(2,1).$ Then \begin{equation}\Par^2_{\mu}(7)=\{(6,1),(4,3),(4,1,1,1),(2,2,2,1),(2,1,1,1,1,1)\}.\end{equation}
\begin{figure}[ht]
\begin{center}
    \begin{tikzpicture}\begin{scope}[yscale=1,xscale=-1,rotate=90,scale=0.4]
    \draw[white, fill=yellow] (0,5)--(1,5)--(1,4)--(0,4)--(0,5);
    \draw[white, fill=yellow] (0,3)--(1,3)--(1,2)--(0,2)--(0,3);
    \draw (0,6)--(0,0);
    \draw (1,6)--(1,0);
    \draw (2,1)--(2,0);
    \draw (0,6)--(1,6);
    \draw (0,5)--(1,5);
    \draw (0,4)--(1,4);
    \draw (0,3)--(1,3);
    \draw (0,2)--(1,2);
    \draw (0,1)--(2,1);
    \draw (0,0)--(2,0);
    \end{scope}\end{tikzpicture}
      \begin{tikzpicture}\begin{scope}[yscale=1,xscale=-1,rotate=90,scale=0.4]
    \draw[white, fill=yellow] (0,2)--(1,2)--(1,1)--(0,1)--(0,2);
    \draw[white, fill=yellow] (1,2)--(2,2)--(2,1)--(1,1)--(1,2);
    \draw (0,4)--(0,0);
    \draw (1,4)--(1,0);
    \draw (2,3)--(2,0);
    \draw (0,4)--(1,4);
    \draw (0,3)--(2,3);
    \draw (0,2)--(2,2);
    \draw (0,1)--(2,1);
    \draw (0,0)--(2,0);
    \end{scope}\end{tikzpicture}
     \begin{tikzpicture}\begin{scope}[yscale=1,xscale=-1,rotate=90,scale=0.4]
    \draw[white, fill=yellow] (0,3)--(1,3)--(1,2)--(0,2)--(0,3);
    \draw[white, fill=yellow] (2,1)--(3,1)--(3,0)--(2,0)--(2,1);
    \draw (0,4)--(0,0);
    \draw (1,4)--(1,0);
    \draw (2,1)--(2,0);
    \draw (3,1)--(3,0);
    \draw (4,1)--(4,0);
    \draw (0,4)--(1,4);
    \draw (0,3)--(1,3);
    \draw (0,2)--(1,2);
    \draw (0,1)--(4,1);
    \draw (0,0)--(4,0);\end{scope}\end{tikzpicture}
     \begin{tikzpicture}\begin{scope}[yscale=1,xscale=-1,rotate=90,scale=0.4]
    \draw[white, fill=yellow] (1,2)--(2,2)--(2,1)--(1,1)--(1,2);
    \draw[white, fill=yellow] (1,1)--(2,1)--(2,0)--(1,0)--(1,1);
    \draw (0,2)--(0,0);
    \draw (1,2)--(1,0);
    \draw (2,2)--(2,0);
    \draw (3,2)--(3,0);
   \draw (4,1)--(4,0);
    \draw (0,2)--(3,2);
    \draw (0,1)--(4,1);
    \draw (0,0)--(4,0);\end{scope}\end{tikzpicture}
     \begin{tikzpicture}\begin{scope}[yscale=1,xscale=-1,rotate=90,scale=0.4]
         \draw[white, fill=yellow] (5,0)--(5,1)--(4,1)--(4,0)--(5,0);
    \draw[white, fill=yellow] (3,0)--(3,1)--(2,1)--(2,0)--(3,0);
    \draw (0,2)--(0,0);
    \draw (1,2)--(1,0);
    \draw (2,1)--(2,0);
    \draw (3,1)--(3,0);
    \draw (4,1)--(4,0);
    \draw (5,1)--(5,0);
    \draw (6,1)--(6,0);
    \draw (0,2)--(1,2);
    \draw (0,1)--(6,1);
    \draw (0,0)--(6,0);\end{scope}\end{tikzpicture}    
\end{center}
\caption{the partitions in $\Par^2_{\mu}(7)$ with boxes of even hook length coloured yellow.}\label{extex}
\end{figure}
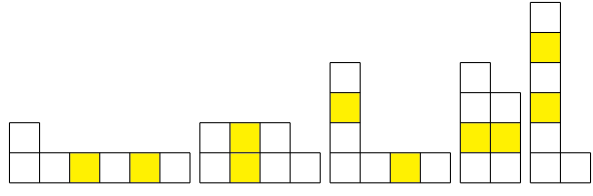

For the shaded cells, the set of values of $\frac{a(\square)}{l(\square)+1}$ is $\left\{3,1,0,\frac{1}{3}\right\},$ and the set of values of $\frac{a(\square)+1}{l(\square)}$ is $\left\{\infty,3,1,\frac{1}{3}\right\}$. So, the critical rationals are $\left\{0,\frac{1}{3},1,3,\infty\right\}.$

In this example, we will verify that $$\sum_{\lambda\in\Par^2_{\mu}(7)}t^{h_{4,2}^+(\lambda)}=\sum_{\Par^2_{\mu}(7)}t^{\lambda^{2*}_{\square}}.$$
Recall
\begin{equation} h_{4,2}^+(\lambda)=\left|\left\{\square \in \lambda: 2\mid  h(\square) \text{ and } \frac{a(\square)}{l(\square)+1}\leq 4<\frac{a(\square)+1}{l(\square)}\right\}\right|.\end{equation}

From our computation of the critical rationals, given that $2\mid h(\square)$ for some box in a partition $\lambda\in\Par^2_{\mu}(7),$ $4<\frac{a(\square)+1}{l(\square)}$ if and only if $3< \frac{a(\square)+1}{l(\square)}$, and $\frac{a(\square)}{l(\square)+1}\leq 4$ if and only if $\frac{a(\square)}{l(\square)+1}\leq 3.$ So, $h_{4,2}^+(\lambda)=h_{3,2}^+(\lambda)$. 
Now we use $I_{3,1,2}:\Par^2_{\mu}(7)\to\Par^2_{\mu}(7)$. Because $\operatorname{mid}_{3,2}(\lambda)=\operatorname{mid}_{3,2}(I_{3,1,2}(\lambda))$ and $\operatorname{crit}^{\pm}_{3,2}(\lambda)=\operatorname{crit}^{\mp}_{3,2}(\lambda)$, $I_{3,1,2}$ is a bijection exchanging $h_{3,2}^+$ and $h_{3,2}^-,$ so
$$\sum_{\lambda\in\Par^2_{\mu}(7)}t^{h_{3,2}^+(\lambda)}=\sum_{\lambda\in\Par^2_{\mu}(7)}t^{h_{3,2}^-(\lambda)}.$$

We now explicitly compute $I_{3,1,2}(\lambda)$ for $\lambda=(6,1).$

The diagram of $(6,1)$ lies below the line $3y+x=9.$ So, we choose the smallest value $k\geq 9$ such that $3\times2\times1\mid k,$ $k=12.$ Then, $k_1=\frac{12}{3}=4.$ 

The $(r,s,c)$-tour of $M_{3,1,2}((6,1))$ is defined by the following family of arrival words.
\begin{center}
    
\begin{tabular}{cccccc}
(4,[0])&S&(5,[1])&E&(6,[0])&SSE\\
(7,[1])&EEE&(8,[0])&E&(9,[1])&SEE
\end{tabular}
\end{center}
and for $w>9,$ \begin{equation}(w,[i])_a=\begin{cases} 
      SE & 3\mid w, w\equiv \frac{-w}{3}\equiv i\pmod{2} \\
      E &  2\mid\left(w-i\right)\text{ and either }3\nmid w\text{ or }2\nmid(\frac{-w}{3}-i)\\ 
      S & 3\mid w, 2\mid\left(\frac{-w}{3}-i\right), 2\nmid(w-i) \\
      \text{empty} &\text{otherwise} \\
   \end{cases}.\end{equation}
   The multigraph is given in Figure~\ref{multex1} with the edges in the first arrival tree in bold.
   \begin{figure}[ht]
\begin{center}
\begin{tikzpicture}\begin{scope}[yscale=0.9,xscale=-0.9,rotate=90,scale=2.25]
\draw (1.5,-2.5)--(5,1);
\draw (0.5,0.5)--(3.5,3.5);
\draw[fill=red] (3.85,0.3)--(4.15,0.3)--(4.15,-0.3)--(3.85,-0.3)--(3.85,0.3);
\draw[fill=red] (2.85,0.3)--(3.15,0.3)--(3.15,-0.3)--(2.85,-0.3)--(2.85,0.3);
\draw[fill=red] (1.85,0.3)--(2.15,0.3)--(2.15,-0.3)--(1.85,-0.3)--(1.85,0.3);
\draw[fill=cyan] (1.85,1.3)--(2.15,1.3)--(2.15,0.7)--(1.85,0.7)--(1.85,1.3);
\draw[fill=red] (0.85,1.3)--(1.15,1.3)--(1.15,0.7)--(0.85,0.7)--(0.85,1.3);
\draw[fill=red] (1.85,-1.7)--(2.15,-1.7)--(2.15,-2.3)--(1.85,-2.3)--(1.85,-1.7);
\draw[fill=cyan] (1.85,-0.7)--(2.15,-0.7)--(2.15,-1.3)--(1.85,-1.3)--(1.85,-0.7);
\draw[fill=cyan] (1.85,2.3)--(2.15,2.3)--(2.15,1.7)--(1.85,1.7)--(1.85,2.3);
\draw[fill=cyan] (2.85,-0.7)--(3.15,-0.7)--(3.15,-1.3)--(2.85,-1.3)--(2.85,-0.7);
\draw[fill=cyan] (2.85,0.7)--(3.15,0.7)--(3.15,1.3)--(2.85,1.3)--(2.85,0.7);
\draw[fill=cyan] (2.85,1.7)--(3.15,1.7)--(3.15,2.3)--(2.85,2.3)--(2.85,1.7);
\draw[fill=red] (2.85,2.7)--(3.15,2.7)--(3.15,3.3)--(2.85,3.3)--(2.85,2.7);
\draw[very thick,->] (3.85,0)--(3.15,0);
\draw[very thick,->] (2.85,0.05)--(2.15,0.05);
\draw[->] (2.85,-0.05)--(2.15,-0.05);
\draw[very thick,->] (1.85,1)--(1.15,1);
\draw[very thick,->] (2.05,0.3)--(2.05,0.7);
\draw[->] (1.95,0.3)--(1.95,0.7);
\draw[very thick,->] (1.95,1.3)--(1.95,1.7);
\draw[->] (2.05,1.3)--(2.05,1.7);
\draw[very thick,->] (2,-1.7)--(2,-1.3);
\draw[->] (2,-0.7)--(2,-0.3);
\draw[->] (3.05,-0.7)--(3.05,-0.3);
\draw[->] (2.95,-0.7)--(2.95,-0.3);
\draw[very thick,->] (3,0.3)--(3,0.7);
\draw[very thick,->] (3,1.3)--(3,1.7);
\draw[very thick,->] (3.85,3)--(3.15,3);
\node[above] at (3.8,3) {$\vdots$};
\draw[->] (3,2.3)--(3,2.7);
\node at (4,0) {(12,[0])};
\node at (3,0) {(9,[1])};
\node at (2,0) {(6,[0])};
\node at (2,1) {(7,[1])};
\node at (1,1) {(4,[0])};
\node at (2,-2) {(4,[0])};
\node at (2,-1) {(5,[1])};
\node at (2,2) {(8,[0])};
\node at (3,-1) {(8,[0])};
\node at (3,1) {(10,[0])};
\node at (3,2) {(11,[1])};
\node at (3,3) {(12,[0])};
\end{scope}\end{tikzpicture}
\end{center}

\caption{$M_{3,1,2}((6,1))$ with the edges of the first arrival tree in bold.}\label{multex1}
\end{figure}

After applying $I_{3,1,2}$ the arrival words are
\begin{center}
\begin{tabular}{cccccc}
(4,[0])&S&(5,[1])&E&(6,[0])&SSE\\
(7,[1])&EEE&(8,[0])&EE&(9,[1])&SEE
\end{tabular}
\end{center}
with all arrival words at $(w,[i])$ with $w>9$ unchanged. These arrival words correspond to the partition $(4,3).$ So, $h_{3,2}^+((6,1))=h_{3,2}^-((4,3)).$

From our computation of the critical rationals, given that $2\mid h(\square)$ for some box in a partition $\lambda\in\Par^2_{\mu}(7),$ $3\leq\frac{a(\square)+1}{l(\square)}$ if and only if $1< \frac{a(\square)+1}{l(\square)}$, and $\frac{a(\square)}{l(\square)+1} < 3$ if and only if $\frac{a(\square)}{l(\square)+1}\leq 1.$ So, $h_{3,2}^-(\lambda)=h_{1,2}^+(\lambda)$ for all $\lambda\in\Par^2_{\mu}(7)$. Now, $I_{1,1,2}$ exchanges $h_{1,2}^+$ and $h_{1,2}^-$, and $I_{1,1,2}((4,3))=(2,2,2,1)$, so $h_{4,2}^+((6,1))=h_{1,2}^+((2,2,2,1)).$ Using the same logic again $h_{1,2}^+(\lambda)=h_{\frac{1}{3},2}^-(\lambda)$ for each $\lambda\in\Par^2_{\mu}(7).$ Using $I_{1,3,2}$, $I_{1,3,2}((2,2,2,1))=(2,1,1,1,1,1),$ so $h_{4,2}^+((6,1))=h_{\frac{1}{3},2}^{-}((2,1,1,1,1,1))$. Finally, for any partition $\lambda\in\Par^2_{\mu}(7),$ $\frac{1}{3}\leq\frac{a(\square)+1}{l(\square)}$ if and only if $0<\frac{a(\square)+1}{l(\square)}$, and $\frac{a(\square)}{l(\square)+1}< \frac{1}{3}$ if and only if $a(\square)=0,$ if and only if $\frac{a(\square)}{l(\square)+1}\leq 0,$ so $h^-_{\frac{1}{3},2}(\lambda)=h^+_{0,2}(\lambda).$

Therefore, since $$I_{1,3,2}\circ I_{1,1,2}\circ I_{3,1,2}((6,1))=(2,1,1,1,1,1),$$ we have that $h^+_{4,2}(6,1)=h^+_{0,2}(2,1,1,1,1,1).$
For the other partitions in $\Par^2_{\mu}(7),$
$$I_{1,3,2}\circ I_{1,1,2}\circ I_{3,1,2}(4,3)=I_{1,3,2}\circ I_{1,1,2}((6,1))=I_{1,3,2}(2,1,1,1,1,1)=(2,2,2,1),$$
$$I_{1,3,2}\circ I_{1,1,2}\circ I_{3,1,2}(4,1,1,1)=I_{1,3,2}\circ I_{1,1,2}((4,1,1,1))=I_{1,3,2}(4,1,1,1)=(4,1,1,1).$$
$$I_{1,3,2}\circ I_{1,1,2}\circ I_{3,1,2}(2,2,2,1)=I_{1,3,2}\circ I_{1,1,2}((2,2,2,1))=I_{1,3,2}(4,3)=(4,3).$$
$$I_{1,3,2}\circ I_{1,1,2}\circ I_{3,1,2}(2,1,1,1,1,1)=I_{1,3,2}\circ I_{1,1,2}((2,1,1,1,1,1))=I_{1,3,2}(6,1)=(6,1).$$

Hence we can verify the equidistribution of $h^+_{x,2}$ with $h^-_{x,2}$ over $\Par_{\mu}^2(7)$ for each $x\in\mathbb{R}_{>0},$ thus verifying Theorem 3.3 in this case.

\section{Further Work}
We note here that Problem 8.9 in \cite{Walsh} may be amenable to similar techniques.

\subsection*{Acknowledgements}
The author is indebted to her PhD supervisor Paul Johnson for suggesting the problem and for many helpful conversations. The author is funded by an EPSRC studentship\footnote{UKRI grant number EP/R513313/1.}.



\begin{thebibliography}{99}
\bibitem{vAE}{T. van Aardenne-Ehrenfest and N. G.
 de Bruijn, Circuits and trees in oriented linear graphs, \textit{Simon Stevin}, 28: 203--217, 1951.
}

\bibitem{BB}A. Bia\l ynicki-Birula, Some properties of the decompositions of algebraic varieties determined by actions of a torus,
\textit{Bull. Acad. Polon. Sci. S\'{e}r. Sci. Math. Astronom. Phys.}, 24(9): 667--674, 1976.


\bibitem{BFN} Alexandr Buryak, Boris Lvovich Feigin, and Hiraku Nakajima, A simple proof of the formula for the Betti numbers of the
   quasihomogeneous Hilbert schemes,
\textit{Int. Math. Res. Not. IMRN}, 13: 4708--4715, 2015.


\bibitem{James} G.D. James, Some combinatorial results involving Young diagrams,
 \textit{Math. Proc. Cambridge Philos. Soc.}, 83(1):1--10, 1978.


\bibitem{JK} Gordon James and Adalbert Kerber, The representation theory of the symmetric group, \textit{Encyclopedia of Mathematics and its Applications}, vol. 16, xxviii+510,
Addison-Wesley, 
1981.


\bibitem{Littlewood} D.E. Littlewood, Modular representations of symmetric groups,
\textit{Proc. Roy. Soc. London Ser. A}, 209,   333--353, 1951.


\bibitem{LW} Nicholas A. Loehr and Gregory S. Warrington, A continuous family of partition statistics equidistributed with length,
\textit{J. Combin. Theory Ser. A}, 116(2): 379--403, 2009.

\bibitem{Nakayama}Tadasi Nakayama, On some modular properties of irreducible representations of
   symmetric groups. II, \textit{Jpn. J. Math.}, 17, 411--423, 1941.
	


\bibitem{Walsh} Adam Walsh and S. Ole Warnaar,
Modular Nekrasov-Okounkov formulas,
\textit{S\'{e}m. Lothar. Combin.}, 81, Art. B81c, 28, 2020.



		








  

\end{thebibliography}
\end{document}